\renewcommand*{\backrefalt}[4]{%
    \ifcase #1 ()%
    \or        (Page~#2)%
    \else      (Pages~#2)%
    \fi}
\newif\ifbackrefshowonlyfirst
\let\BR@direct@old@hyper@natlinkstart\hyper@natlinkstart
\renewcommand*{\hyper@natlinkstart}{\phantomsection\BR@direct@old@hyper@natlinkstart}
\let\BR@direct@oldBR@citex\BR@citex
\renewcommand*{\BR@citex}{\phantomsection\BR@direct@oldBR@citex}%
\long\def\hyper@page@BR@direct@ref#1#2#3{\hyperlink{#3}{#1}}
    \let\backrefxxx\hyper@page@BR@direct@ref
\patchcmd{\Hy@backout}{Doc-Start}{\@currentHref}{}{\errmessage{I can't seem to patch backref}}
\theoremstyle{definition}
\newtheorem{theorem}{Theorem}[section]
\newtheorem{definition}[theorem]{Definition}
\newtheorem{lemma}[theorem]{Lemma}
\newtheorem{conjecture}[theorem]{Conjecture}
\newtheorem{corollary}[theorem]{Corollary}
 \newtheorem*{claim*}{Claim}
\newtheorem*{remark}{Remark}
\newtheoremstyle{OldTheorem}
    {\topsep}{\topsep}  
    {}          
    {}          
    {\bfseries}         
    {.}                 
    { }                 
    {\thmname{#1}\thmnote{ \bfseries #3}}
\theoremstyle{OldTheorem}
\newtheorem{oldlemma}{Lemma}
\newtheorem{olddef}{Definition}
\newcommand{\ra}{\rightarrow}
\newcommand{\Ra}{\Rightarrow}
\newcommand{\La}{\Leftarrow}
\newcommand{\Lra}{\Leftrightarrow}
\newcommand{\eps}{\varepsilon}
\newcommand{\N}{\mathbb{N}}
\newcommand{\Q}{\mathbb{Q}}
\newcommand{\B}{\mathfrak{B}}
\newcommand{\C}{\mathfrak{C}}
\newcommand{\Cant}{2^\omega}
\newcommand{\upto} {{\upharpoonright}}
\newcommand{\MLR}{Martin-L\"of random}
\newcommand{\rbpi}{\widehat{\Pi}^0_1}
\newcommand{\abs}[1]{\lvert#1\rvert}
\newcommand{\halts}{\!\!\downarrow}
\newcommand{\divs}{\!\!\uparrow}
\DeclareMathOperator{\Either}{Either}
\DeclareMathOperator{\KLR}{KLR}
\DeclareMathOperator{\DIM}{DIM}
\DeclareMathOperator{\Some}{Some}
\DeclareMathOperator{\Many}{Many}
\begin{document}
\title{On New Notions of Algorithmic Dimension, Immunity, and Medvedev Degree}
\author{David J. Webb}
\degreemonth{August}
\degreeyear{2022}
\degree{Doctor of Philosophy}
\chair{Bjorn Kjos-Hanssen}
\othermembers{Monique Chyba\\
Ruth Haas\\
Michelle Manes\\
David Ross\\
Michelle Seidel}
\numberofmembers{6}
\field{Mathematics}
\versionnum{0.0.1}

\maketitle

\begin{frontmatter}
\signaturepage 
\copyrightpage
\dedication{\emph{For my mother, and hers.}}


\begin{acknowledgments}
It is my distinct pleasure to acknowledge and thank: \begin{itemize}
\item[-] My advisor, Bj\o rn Kjos-Hanssen, for his endless patience and careful tutelage. It was a tremendous stroke of good fortune that I arrived at the University when I did, to have been able to take my first courses in set theory and logic with you. They set me on the path through the Garden of Logic whose culmination - for the moment - is this document.

\item[-] Rachael Alvir, for helpful conversations about the Ershov Hierarchy (and for giving me my favorite story).

\item[-] My friends and peers Sam Birns and Carl Eadler, conversations with whom clarified my thinking about randomness and truth-table reductions, respectively.

\item[-] My mentors Jamie Pommersheim (\emph{``1. Gil is a Pavel\dots"}) and Jerry Shurman (\emph{``To do mathematics effectively we should characterize our objects
rather than construct them."}), for the fine examples they set when I was a young mathematician. I am indebted to you both, for those quotes that live rent-free in my head, and for so much more. 

\item[-] The computability theory community, who have been nothing but welcoming. Special thanks to Mushfeq Khan, for teaching an excellent first course in the subject, and to Dan Turetsky, for attending my first public research talk in 2019 and solving the conjecture I posed there.

\item[-] The choral community on O`ahu, in particular the directors I've had the pleasure to sing under.

\item[-] Patchen Mortimer, whose radio show has been the soundtrack of this endeavor for years.

\item[-] Andrew and Amber Baker, Alex Char, Arc Chase, Aaron Do, Shane and Nolan Endicott, Candice Ferge, Aaron King, Scott Peiterson, Lion Pirsig, Nora Sender, and Chase Yap, whose friendship has kept me sane over the last decade (or more, in some cases). The best ears I've ever been lent have been yours: few folks would be keen to listen to my ramblings, and fewer still would reply with kindness, advice, and other immeasurably useful conversations. Here's to more D\&D and curry in days to come.

\item[-] And last but certainly not least, Alan Turing and Kurt G\"odel. Though I certainly couldn't hope to fill your shoes, following in your footsteps has been one of the principal joys of my life.
\end{itemize}
\end{acknowledgments}

\begin{abstract}
We prove various results connected together by the common thread of computability theory.

First, we investigate a new notion of algorithmic dimension, the inescapable dimension, which lies between the effective Hausdorff and packing dimensions. We also study its generalizations, obtaining an embedding of the Turing degrees into notions of dimension.

We then investigate a new notion of computability theoretic immunity that arose in the course of the previous study, that of a set of natural numbers with no co-enumerable subsets. We demonstrate how this notion of $\Pi^0_1$-immunity is connected to other immunity notions, and construct $\Pi^0_1$-immune reals throughout the high/low and Ershov hierarchies. We also study those degrees that cannot compute or cannot co-enumerate a $\Pi^0_1$-immune set.

Finally, we discuss a recently discovered truth-table reduction for transforming a Kolmogorov--Loveland random input into a Martin-L\"of random output by exploiting the fact that at least one half of such a KL-random is itself ML-random. We show that there is no better algorithm relying on this fact, i.e.\ there is no positive, linear, or bounded truth-table reduction which does this. We also generalize these results to the problem of outputting randomness from infinitely many inputs, only some of which are random.

\end{abstract}

\tableofcontents
\listoffigures
\end{frontmatter}
\renewcommand{\null}{\emptyset}
\chapter{Introduction}\label{ch:intro}
Computability theory is concerned with the computational strength of mathematical objects, usually viewed as infinite sequences of $0$s and $1$s (also called \emph{reals}). For instance, given a listing of all computer programs, consider the sequence $K\in 2^\omega$ such that $K(e) = 1$ iff the $e$th program will halt, and $0$ otherwise. Alan Turing famously showed that $K$ is not computable --- one must either prove that the program in question will (not) halt, or run it and hope that it does. But as an object unto itself, we can ask many questions about $K$: what else can it compute? Is any regularity to which of its entries are $1$? In computability theory, we seek to answer such questions, and to develop the necessary tools to do so.

One such tool that has been very effective in this study is Kolmogorov complexity. For a finite string $\sigma\in2^{<\omega}$, $K(\sigma)$\footnote{It is an unfortunate notational collision that $K$ is both the halting problem, an infinite binary sequence, and Kolmogorov complexity, a function from finite strings to naturals.} is (essentially) the length of the shortest program whose output is $\sigma$. This allows for elegant characterizations of randomness --- for instance, a random sequence should be as difficult to describe as possible, and so all of its initial segments should have high Kolmogorov complexity. 

This leads naturally to `effective' versions of fractal dimensions from geometry. For instance the effective packing dimension of $X\in2^\omega$ is $$\dim_p(X) =\limsup_{n\in\N} \dfrac{K(X\upto n)}{n}$$
where $X\upto n$ is the first $n$ bits of $X$. In \Cref{ch:dim} we investigate a modification of this, the \emph{inescapable} dimension, where one takes infimums of supremums over computable ($\Delta^0_1$) sets of natural numbers:
$$\dim_i(X) = \inf_{N\in\Delta^0_1}\sup_{n\in N} \dfrac{K(X\upto n)}{n}.$$

We then generalize further by considering oracles, i.e.\ non-computable reals. With an oracle $A$ in hand, we can consider the $\Delta^0_1(A)$ sets, i.e.\ those computed by some program with access to the non-computable information contained in $A$. For instance, the halting problem $K$ can compute a random sequence, which is necessarily not $\Delta^0_1$. Each oracle thus corresponds to a notion of dimension, and we ultimately obtain an embedding theorem between the Turing degrees and the $\Delta^0_1(A)$ dimensions. We also prove corresponding results for generalizations of the complex packing dimension, which was defined in \cite{CompPack}.

In addition to what can be computed (possibly by an oracle), computability theory is also concerned with weaker notions of computation. A set $W$ is enumerable (or $\Sigma^0_1$) if there is an algorithm which lists its members in some order --- if $n\in W$, we will eventually know it, but until the program enumerates $n$, we can never be sure. Classical computability theory has much to say about \emph{immune} reals, those with no enumerable subsets (again, random sets provide an easy example).

In \Cref{ch:pi}, we explore a related notion that arose in the course of studying the $\Pi^0_1(A)$ dimensions: that of a $\Pi^0_1$-immune set (the $\Pi^0_1$ sets are \emph{co}-enumerable, their complements are $\Sigma^0_1$). This notion appears (though is not studied unto itself) in \cite{MinInd1, MinInd2, Teutsch}, in connection with sets of minimal indices. We explore connections between this notion and previously studied immunity notions in classical computability theory, and construct $\Pi^0_1$-immune sets that fit into various computability theoretic hierarchies. Finally, we study the classes of sets that compute or co-enumerate no $\Pi^0_1$-immune sets, and make connections with notions of computational weakness. In the course of doing so we obtain a result of independent interest, that the class of hyperimmune-free sets coincides with those that compute no truth-table CEA set.

Finally in \Cref{ch:either}, we shine a small light on one of the biggest open questions in algorithmic randomness: the Kolmogorov--Loveland randomness problem. While it is known that Martin-L\"of random (MLR) sequences are Kolmogorov--Loveland random (KLR), the reverse implication remains open. Several partial results are known; for instance, if some $X\in\mathrm{KLR}$ is decomposed into its even and odd entries $X_0$ and $X_1$, at least one $X_i$ is Martin-L\"of random \cite{MR2183813}. This gives a weak equivalence between the notions, as the reduction from KLR to MLR is non-uniform. Miyabe asked if this could be strengthened to a uniform reduction \cite{Miyabe}, and this was answered in the affirmative in \cite{review}. Here we prove that this reduction is in a sense optimal for the following problem: what kind of algorithm suffices to always output randomness given two inputs, an unknown one of which is random? We also generalize this result to the setting of infinitely many inputs, an unknown one of which is known to be random.

Material in Chapters \ref{ch:dim} and \ref{ch:either} previously appeared in proceedings in \emph{Computability in Europe} in 2021 and 2022, respectively \cite{KHW,CiE2022}.
\section{Notation and Preliminaries}

Our notation follows the standard texts in the fields of computability theory \cite{Soare, SoareTuring}  and algorithmic randomness \cite{DaH, Nies}.

The natural numbers are denoted $\omega$, and contain $0$. We will often make use of $\langle\cdot, \cdot \rangle$, a fixed computable bijective pairing function from $\omega^2$ to $\omega$. Any such function should suffice, but to be explicit, we use the Cantor pairing function: $\langle x, y\rangle = \frac{1}{2}(x+y)(x+y+1)+y$.

Strings are functions $\sigma:\{0, 1, \dots, n-1\}\ra\{0, 1\}$, while reals are functions $A:\omega\ra\{0, 1\}$ (in analogy to binary expansions of real numbers in $[0,1]$). They are generally denoted by lowercase Greek and capital Latin letters, respectively. We say $\sigma\preceq\tau$ if, as sets of ordered pairs, $\sigma\subseteq\tau$, and similarly for $\sigma\prec A$. The set of all strings of length $n$ is $2^n$, and the set of all strings of any length is $2^{<\omega}$. The set of all reals is $2^\omega$.

It is often convenient to write strings and reals as binary sequences, e.g.\ $\sigma = 010$. In this view, each element of the sequence is a \emph{bit}. We will write $\sigma i$ or $\sigma^\frown i$ to mean the sequence $\sigma$ with the bit $i$ appended. Other strings or a real may be appended this way as well. The sequence of the first $n$ bits of a real $A$ is written $A\upto n$.

We also use $A$ to denote $\{n\mid A(n) = 1\}$. We denote complements with overlines, with the ambient set as $\omega$ or $2^\omega$ taken to be clear from context. The complement of a real $A$ is $\overline{A}=\{n\mid A(n) = 0\}$.

Partial computable functions are indexed by $e\in\omega$ as $\varphi_e$, and their domains as $W_e$. These functions are represented by Turing machines, which compute in steps $s\in\omega$. We can similarly define partial functions $\varphi_{e, s}$ and $W_{e, s}$ by only running $\varphi_e$ for $s$ steps on inputs $n\leq s$. We say $\varphi_{e,s}(n)\halts$ (the computation halts) if there is a stage when the computation $\varphi_{e, s}(n)$ has halted, and $\varphi_{e}(n)\divs$ if there is no such stage (the computation diverges). 

We often view $\varphi_e$ as enumerating a list of elements --- we imagine all computations $\varphi_e(n)$ being run in parallel, with $n$ being added to $W_{e, s}$ when $\varphi_{e,s}(n)$ is defined.

A function $f$ is computable (or recursive) iff there is an $e$ such that $f = \varphi_e$, and $\varphi_e$ is total. In the language of Turing machines, $f$ is computable iff there is a Turing machine $M$ that is guaranteed to halt when run on any natural number input $n$.

A set $X$ is computably enumerable (c.e.) if there is some $e$ for which $X = W_e$. We also say $X$ is $\Sigma^0_1$ if it can be written $X = \{x\in\omega\mid (\exists y\in\omega) R(x, y)\}$, where $R$ is a computable binary predicate. These notions are equivalent.

A set is co-c.e.\ iff its complement is c.e., or equivalently if it can be written using a computable predicate $R$ as $\{x\in\omega\mid (\forall y\in\omega) R(x, y)\}$. We thus write $\Sigma^0_1$ and $\Pi^0_1$ for the sets of c.e.\ and co-c.e.\ reals, respectively, and $\Delta^0_1=\Sigma^0_1\cap\Pi^0_1$ for the computable sets. In general, a $\Sigma^0_n$ set is one that can be written $\{x\in\omega\mid (\exists y_1\in\omega)(\forall y_2\in\omega)\cdots (Q y_n\in\omega)R(x, y_1, y_2, \dots, y_n)\}$ for a computable $n+1$-ary predicate $R$, where $Q$ is $\forall$ if $n$ is even, and $\exists$ otherwise. Similarly a $\Pi^0_n$ set is one that can be written $\{x\in\omega\mid (\forall y_1\in\omega)(\exists y_2\in\omega)\cdots (Q y_n\in\omega)R(x, y_1, y_2, \dots, y_n)\}$. In general $\Delta^0_n = \Sigma^0_n\cap\Pi^0_n$.

We very frequently give our functions access to an oracle $X$, and write $\Phi^X_{e}$ and $W^X_e$ in analogy to $\varphi_e$ and $W_e$. If $\Phi^X_{e}(n)\halts$, its \emph{use} $\varphi^X_{e}(n)$ is the largest bit of $X$ that is queried during the computation.

If for sets $A$ and $B$, there is an $e$ such that $A = \Phi^B_e$, we say that $A$ is Turing reducible to $B$, written $A\leq_T B$. We also say ``$B$ is above $A$" or ``$B$ bounds $A$''. If $B\leq_T A$ as well, we say $A\equiv_T B$. This is an equivalence relation, whose equivalence classes are called Turing degrees.

We describe a Turing degree as having a certain property of reals iff it contains a real with that property, i.e. ``a c.e.\ degree" is one that contains a c.e.\ real.

For $A\in2^\omega$, $\Delta^0_1(A) = \{B\in 2^\omega\mid B\leq_T A\}$ is the set of reals computed by $A$. Similarly $\Sigma^0_1(A) = \{B\in 2^\omega\mid (\exists e\in\omega)\ B = W^A_e\}$, the set of reals enumerated by $A$ (also called ``$A$-c.e." sets). $\Pi^0_1(A)=\{B\in 2^{\omega}\mid \overline{B}\in\Sigma^0_1(A)\}$ is the sets of reals co-enumerated by $A$ (also called ``$A$-co-c.e."). $\Sigma^0_n(A)$ and $\Pi^0_n(A)$ are defined analogously. We may also refer to sets of strings as having an arithmetic complexity by computably encoding strings as natural numbers.

The halting problem is $\null' = \{\langle e, x\rangle \mid \varphi_e(x)\halts\}$. In constructions it will be useful that as an oracle, the halting problem can settle any $\Sigma^0_1$ or $\Pi^0_1$ question.

The jump of $A$ is $A' = \{\langle e, x\rangle \mid \Phi^A_e(x)\halts\}$. Subsequent jumps can be abbreviated $A^{(n)}$.

Post's theorem will often be used without mention: $\Delta^0_n(A) = \{B\in 2^\omega\mid B\leq_T A^{(n-1)}\}$.

When defining an algorithm $\varphi_e$ or $\Phi^X_e$, rather than writing out the precise Turing machine corresponding to our algorithm, we implicitly appeal to the Church--Turing thesis, that any ``effectively calculable" function is describable by a Turing machine. 
\chapter{Arithmetical Oracle Dimensions}\label{ch:dim}
\noindent The first four sections of this chapter previously appeared in print in \cite{KHW}.
\section{The Complex Packing and Inescapable Dimensions}
\label{sec:intro}
Let $K(\sigma)$ denote the prefix-free Kolmogorov complexity of a string $\sigma\in 2^{<\omega}$. We will not consider other variants (such as plain complexity) in the sequel, so for notation we may drop `prefix-free' and/or `Kolmogorov'. While prefix-free Kolmogorov complexity is not computable, it is at least approximable from above in stages $s$, so let $K_s(\sigma)\geq K(\sigma)$ be such an approximation. For more on Kolmogorov complexity, see \cite{KolmApp}.

Viewed this way \cite{Athrey1,Mayor1}, the Hausdorff and packing dimensions are dual to one another:
\begin{definition}\label{Hausdorff}\label{packing}
	The effective \emph{Hausdorff} and \emph{packing} dimensions of $A\in 2^{\omega}$ are, respectively
\[\displaystyle\dim_{H}(A) = \sup_{m\in\N} \inf_{n\ge m} \dfrac{K(A\upto n)}{n}\]
\[
\displaystyle\dim_{p}(A) = \inf_{m\in\N} \sup_{n\ge m} \dfrac{K(A\upto n)}{n}.\]
	\end{definition}
Another notion of dimension was defined in previous work by Kjos-Hanssen and Freer \cite{CompPack}.
Let $\mathfrak{D}$ denote the collection of all infinite $\Delta^0_1$ elements of $\Cant$.
\begin{definition}
The \emph{complex packing dimension} of $A\in 2^{\omega}$ is
\label{cp}
	$\displaystyle \dim_{cp}(A) = \sup_{N\in \mathfrak{D}} \inf_{n\in N} \dfrac{K(A\upto n)}{n}$.
\end{definition}
This leads naturally to a dual notion, obtained by switching the order of $\inf$ and $\sup$
\begin{definition}\label{ines}The \emph{inescapable dimension} of $A\in 2^\omega$ is
	$\displaystyle \dim_i(A) = \inf_{N\in\mathfrak{D}} \sup_{n\in N} \dfrac{K(A\upto n)}{n}.$
\end{definition}
This is so named because if $\dim_i(A) = \alpha$, every infinite computable collection of prefixes of $A$ must contain prefixes with $K(A\upto n)/n$ arbitrarily close to $\alpha$. For such a real, there is no (computable) escape from high complexity prefixes.
As Freer and Kjos-Hanssen show in \cite{CompPack}, 
\begin{theorem}\label{cpineq}
    For any $A\in \Cant$, $0\le \dim_H(A)\le \dim_{cp}(A)\le \dim_p(A)\le 1.$
\end{theorem}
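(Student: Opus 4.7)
The outer inequalities $0 \le \dim_H(A)$ and $\dim_p(A) \le 1$ are essentially automatic from general properties of prefix-free complexity: $K(\sigma) \ge 0$ gives the left-hand bound, and $K(A\upto n) \le n + O(1)$ gives the right-hand bound after dividing by $n$ and taking the appropriate iterated $\inf$/$\sup$. I would dispense with these first as a warm-up.

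The interesting content is the pair of middle inequalities. The plan is to show both by selecting particular witnesses $N \in \mathfrak{D}$. For $\dim_H(A) \le \dim_{cp}(A)$, the key observation is that the tail sets $N_m := \{n \in \omega : n \ge m\}$ are all infinite and computable, hence lie in $\mathfrak{D}$. For each such $m$, plugging $N_m$ into the outer supremum defining $\dim_{cp}(A)$ gives
\[
    \inf_{n \ge m} \dfrac{K(A\upto n)}{n} \;=\; \inf_{n \in N_m} \dfrac{K(A\upto n)}{n} \;\le\; \dim_{cp}(A),
\]
and taking $\sup_m$ on the left converts this into $\dim_H(A) \le \dim_{cp}(A)$.

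For $\dim_{cp}(A) \le \dim_p(A)$, I would reverse the roles. Fix an arbitrary $N \in \mathfrak{D}$ and any $m \in \N$. Since $N$ is infinite, it contains elements $n \ge m$, and for every such $n$,
\[
    \inf_{k \in N} \dfrac{K(A\upto k)}{k} \;\le\; \dfrac{K(A\upto n)}{n} \;\le\; \sup_{k \ge m} \dfrac{K(A\upto k)}{k}.
\]
The left-hand side does not depend on $m$, so taking $\inf_m$ on the right gives $\inf_{n \in N} K(A\upto n)/n \le \dim_p(A)$. This bound is uniform in $N$, so taking $\sup_N$ produces $\dim_{cp}(A) \le \dim_p(A)$.

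There is no real obstacle here: the proof is essentially a bookkeeping exercise in swapping $\inf$ and $\sup$, with the only input beyond order-of-quantifier manipulation being that tails $\{n \ge m\}$ belong to $\mathfrak{D}$, so they are admissible witnesses in the definition of $\dim_{cp}$. The mild subtlety, if any, is making sure that $\inf$ over $N \cap [m,\infty)$ is compared correctly to $\sup$ over $[m,\infty)$ — one needs the infimum on a subset of $[m,\infty)$ to be bounded by the supremum on all of $[m,\infty)$, which holds precisely because $N \cap [m,\infty)$ is nonempty (by infinitude of $N$).
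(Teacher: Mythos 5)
Your proof is correct, and it is essentially the paper's own argument: the paper does not prove Theorem~\ref{cpineq} directly (it cites Freer and Kjos-Hanssen), but its proof of the dual Theorem~\ref{iineq} uses exactly your two moves in mirror image --- the tails $[m,\infty)$ as admissible witnesses in $\mathfrak{D}$, and the chain through the nonempty set $N\cap[m,\infty)$ to compare an $\inf$ over $N$ with a $\sup$ over a tail. One small caveat on the warm-up: for \emph{prefix-free} complexity the bound $K(A\upto n)\le n+O(1)$ is false (that is the plain-complexity bound); you need $K(\sigma)\le |\sigma|+K(|\sigma|)+O(1)\le |\sigma|+2\log|\sigma|+O(1)$, whose correction term is $o(n)$ and so still yields $\dim_p(A)\le 1$.
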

	The expected analogous result also holds:
	\begin{theorem}\label{iineq}
		For any $A\in \Cant$, $0\le \dim_H(A)\le \dim_{i}(A)\le \dim_p(A)\le 1$.
	\end{theorem}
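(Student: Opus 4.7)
The plan is to note the well-known reformulations $\dim_H(A) = \liminf_n K(A\upto n)/n$ and $\dim_p(A) = \limsup_n K(A\upto n)/n$, and then sandwich $\dim_i$ between them by exploiting what kinds of sets appear in the collection $\mathfrak{D}$ over which the infimum is taken. The bounds $0 \le \dim_H(A)$ and $\dim_p(A) \le 1$ are standard (the lower bound is trivial; the upper bound follows from the fact that $K(\sigma) \le |\sigma| + O(1)$ via the prefix-free machine that copies its input).

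For the upper bound $\dim_i(A) \le \dim_p(A)$, I would observe that every cofinite set $\{n : n \ge m\}$ is certainly infinite and $\Delta^0_1$, hence lies in $\mathfrak{D}$. Therefore the infimum defining $\dim_i$ is taken over a superset of the collection used to compute $\dim_p$, and so
\[
\dim_i(A) \;\le\; \inf_{m\in\N}\,\sup_{n\ge m}\,\frac{K(A\upto n)}{n} \;=\; \dim_p(A).
\]

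For the lower bound $\dim_H(A) \le \dim_i(A)$, I would argue directly from the $\liminf$ reformulation. Fix any infinite $N \in \mathfrak{D}$ and let $\alpha = \sup_{n\in N} K(A\upto n)/n$. Since $N$ is infinite, the inequality $K(A\upto n)/n \le \alpha$ holds for infinitely many $n$, whence $\liminf_n K(A\upto n)/n \le \alpha$. Thus $\dim_H(A) \le \sup_{n\in N} K(A\upto n)/n$ for every $N \in \mathfrak{D}$, and taking the infimum over $N$ gives $\dim_H(A)\le\dim_i(A)$.

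There is no real obstacle here; the argument is essentially a packaging of the fact that cofinite sets lie in $\mathfrak{D}$ and that an infimum over any infinite index set bounds the overall $\liminf$ from above. Notably, the lower inequality holds with $\mathfrak{D}$ replaced by the collection of all infinite subsets of $\N$, so it is the upper inequality that genuinely uses the computability constraint in the definition of $\dim_i$.
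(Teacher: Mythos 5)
Your argument is correct and is essentially the paper's own proof: the paper likewise obtains $\dim_i(A)\le\dim_p(A)$ from the observation that the cofinite sets $[m,\infty)$ belong to $\mathfrak{D}$, and obtains $\dim_H(A)\le\dim_i(A)$ from the chain $\inf_{n\ge m}K(A\upto n)/n\le\sup_{n\in N}K(A\upto n)/n$, valid for every $m$ and every infinite $N\in\mathfrak{D}$ precisely because such $N$ meet every tail $[m,\infty)$ --- the same use of infinitude as in your $\liminf$ phrasing. One small correction to your aside: for \emph{prefix-free} complexity the bound $K(\sigma)\le|\sigma|+O(1)$ is false (the machine that copies its input does not have prefix-free domain); the correct bound $K(\sigma)\le|\sigma|+2\log|\sigma|+O(1)$ still gives $\dim_p(A)\le 1$, so the conclusion stands.
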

	\begin{proof} As the sets $[n, \infty)$ are computable subsets of $\N$, $ \dim_{i}(A)\le \dim_p(A)$. For the second inequality, notice that for all $m\in\N$ and all $N\in\Delta^0_1$,
		$$
			\inf_{n\in [m, \infty)} \dfrac{K(A\upto n)}{n}
			\le \inf_{n\in N\cap[m, \infty)}\dfrac{K(A\upto n)}{n}\le \sup_{n\in N\cap[m, \infty)}\dfrac{K(A\upto n)}{n} \le \sup_{n\in N}\dfrac{K(A\upto n)}{n}.\qedhere
		$$
	\end{proof}
\section{Incomparability}
Unexpectedly, Theorems \ref{cpineq} and \ref{iineq} are the best one can do --- while the packing dimension of a string is always lower than its Hausdorff dimension, any permutation is possible for the complex packing and inescapable dimensions of a real:
\begin{theorem}\label{incomp}
	There exist $A$ and $B$ such that $\dim_{cp}(A)<\dim_{cp}(B)$, but $\dim_{i}(B)<\dim_i(A)$. \end{theorem}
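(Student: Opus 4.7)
The plan is to exhibit two reals with complementary complexity profiles: $A$ will have $K(A\upto n)/n$ converging uniformly to $1/2$ (which pinches both $\dim_{cp}(A)$ and $\dim_i(A)$ to $1/2$), while $B$ will have explicit computable sets of prefix lengths witnessing $\dim_{cp}(B)\geq 2/3$ and $\dim_i(B)\leq 1/3$. Once those bounds are in place the required separations are immediate.

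For $A$, let $R$ be Martin-L\"of random and set $A(2k)=R(k)$, $A(2k+1)=0$. A routine prefix-free encoding argument gives $K(A\upto n)=K(R\upto\lceil n/2\rceil)+O(\log n)$, and combined with the MLR lower bound $K(R\upto m)\geq m-O(1)$ this forces $K(A\upto n)/n\to 1/2$. Consequently $\dim_H(A)=\dim_p(A)=1/2$, and the squeeze in \Cref{cpineq} and \Cref{iineq} yields $\dim_{cp}(A)=\dim_i(A)=1/2$.

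For $B$, concatenate alternating random and zero blocks drawn from $R$: set $B=R_1\,0^{m_1}\,R_2\,0^{m_2}\cdots$, where $R_i$ consists of the next $\ell_i=2^{2i-1}$ bits of $R$ and $m_i=2^{2i}$. A geometric sum gives $\sum_{j\leq i}\ell_j=\tfrac{2}{3}(4^i-1)$, and the end-of-block positions $n_i^R=4^i-2$ and $n_i^0=2\cdot 4^i-2$ form computable subsets of $\N$. Up to $O(\log n/n)$, the ratio $K(B\upto n)/n$ at these positions evaluates to $\frac{2(4^i-1)}{3(4^i-2)}\to 2/3$ and $\frac{2(4^i-1)/3}{2(4^i-1)}=1/3$ respectively. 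Hence $\{n_i^R\}$ witnesses $\dim_{cp}(B)\geq 2/3$ while $\{n_i^0\}$ witnesses $\dim_i(B)\leq 1/3$.

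The main technical step I anticipate is controlling $K(B\upto n)$ at these distinguished positions. Because the block structure is computable in $i$, the prefix $B\upto n$ at a block boundary is determined (up to $O(\log n)$ overhead) by the initial segment $R\upto\bigl(\sum_{j\leq i}\ell_j\bigr)$, yielding $K(B\upto n)\leq\sum_{j\leq i}\ell_j+O(\log n)$. Conversely, that prefix of $R$ can be extracted from $B\upto n$, and the MLR bound $K(R\upto m)\geq m-O(1)$ supplies the matching lower bound. Putting everything together gives $\dim_{cp}(A)=1/2<2/3\leq\dim_{cp}(B)$ and $\dim_i(B)\leq 1/3<1/2=\dim_i(A)$. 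The only subtlety is keeping careful track of the logarithmic error terms, but they vanish asymptotically upon division by $n$, so they do not affect the conclusion.
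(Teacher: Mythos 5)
Your proposal is correct, and it takes a genuinely different route from the paper. The paper takes $A$ to be weakly $2$-generic and uses dense $\Sigma^0_1$ and $\Sigma^0_2$ sets of strings tied to each computable $N$ to force the extreme values $\dim_{cp}(A)=0$ and $\dim_i(A)=1$ simultaneously, while $B$ is built from super-exponentially growing segments (lengths on the order of $2^{k^2}$, so each segment dwarfs the sum of all earlier ones) to get $\dim_{cp}(B)=1$ and $\dim_i(B)=0$; this yields the maximal $0$-versus-$1$ separation. You instead avoid genericity entirely: both of your reals are explicit arithmetic manipulations of a single Martin-L\"of random $R$, with $A$ a dilution pinning all four dimensions at $1/2$ via the squeeze of \Cref{cpineq} and \Cref{iineq}, and $B$ a geometrically growing block construction whose comparable block lengths produce the intermediate limit ratios $2/3$ and $1/3$ at the two families of computable block boundaries. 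Your one-sided bounds $\dim_{cp}(B)\geq 2/3$ and $\dim_i(B)\leq 1/3$ are exactly what the statement needs, and the complexity estimates you anticipate (extracting $R\upto\sum_{j\leq i}\ell_j$ from $B\upto n$ and vice versa, with $O(\log n)$ overhead) go through. What your approach buys is elementarity and explicitness --- no forcing with $\Sigma^0_2$ sets of strings --- at the cost of the extremal separation: the paper's proof also establishes that the complex packing and inescapable dimensions of a single real can sit at opposite ends of $[0,1]$ in either order, which is the stronger incomparability phenomenon the surrounding section is after.
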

We first set up some definitions and notation.

For a real $A$, let us write $A[m,n]$ to denote the string $A(m) A(m+1) \dots A(n-1)$. For two functions $f(n), g(n)$ we write $f(n)\le^+ g(n)$ to denote $\exists c\forall n\, f(n)\le g(n)+c$. We write $f(n) = \mathcal{O}(g(n))$ to denote $\exists M\exists n_0\forall n>n_0\,f(n)\leq Mg(n)$.
\begin{definition}\label{sch}
	$A$ is Martin-L\"of random\footnote{We will not consider another notion of randomness until \Cref{ch:either}, so we may write `random' to mean Martin-L\"of random when it is clear in context.} iff $n\le^+ K(A\upto n)$.
\end{definition}
While this is not Martin-L\"of's original definition, it is an equivalent characterization due to Schnorr \cite{DaH}. 
\begin{definition} Let $\mathcal{S}\subseteq 2^{<\omega}$. A real $A$ \emph{meets} $\mathcal{S}$ iff some prefix of $A$ is in $\mathcal{S}$. $A$ \emph{avoids} $\mathcal{S}$ iff it has a prefix $\sigma$ such that no extension $\tau\succ\sigma$ is in $\mathcal{S}$. $\mathcal{S}$ is \emph{dense} iff every $\sigma\in2^{<\omega}$ has an extension $\tau\in \mathcal{S}$.
\end{definition}
\begin{definition}
A real $A$ is \emph{$n$-generic} iff for every $\Sigma^0_n$ set $\mathcal{S}\subseteq 2^{<\omega}$, $A$ meets or avoids $\mathcal{S}$.
\end{definition}
\begin{definition}
A real $A$ is \emph{weakly $n$-generic} iff it meets every dense $\Sigma^0_n$ set $\mathcal{S}$.
\end{definition}

Finally, for a real $A$ and $n\in\omega$ we use the indicator function $1_A$ defined by $$1_A(n)=\begin{cases} 1& \text{if } n\in A,\\ 0&\text{otherwise.}\end{cases}$$
\begin{proof}[Proof of \Cref{incomp}]
	Let $A$ be a weakly 2-generic real, and let $R$ be a Martin-L\"of random real.
	Let $s_k=2^{k^2}$, $k_n = \max\{k\mid s_k\le n\}$, and $C = (01)^\omega$. Define
	$B(n) = R\left(n-s_{k_n}\right)\cdot 1_{C}(k_n).$
	
	Unpacking this slightly, this is
	$$B(n) =
	    \begin{cases}
		R\left(n - s_k\right), & \text{if }\ s_k\le n < s_{k+1} \text{ for some odd }k,\\
		0, & \text{otherwise.}
		\end{cases}$$
     In this proof, let us say that an $R$-segment is a string of the form $B[s_{2m},s_{2m+1}]$ for some $m$, and say that a 0-segment is a string of the form $B\upto [s_{2m+1},s_{2m+2})$ for some $m$. These are named so that an $R$-segment consists of random bits, and a 0-segment consists of zeros.

	Notice that by construction, each such segment is much longer than the combined length of all previous segments. This guarantees certain complexity bounds at the segments' right endpoints. For instance, $B$ has high complexity at the end of $R$-segments: for any even $k\in\N$,
		\begin{align*}
			s_{k+1} - s_k\le^+K\left(B\left[s_k, s_{k+1}\right]\right)
			\le^+& K(B\upto s_k) + K(B\upto s_{k+1})
			\le^+ 2 s_k + K(B\upto s_{k+1}).
		\end{align*}
	The first inequality holds by \Cref{sch} because $B\left[s_k, s_{k+1}\right] = R\upto(s_{k+1} - s_k)$.
	The second (rather weak) inequality holds because prefix-free complexity is subadditive: from descriptions of $B\upto s_k$ and $B\upto s_{k+1}$ we can recover $B[s_k,s_{k+1}]$. Finally, $K(\sigma)\le^+ 2\abs{\sigma}$ is a property of prefix-free complexity.
	Combining and dividing by $s_{k+1}$ gives
	\begin{align}\label{using}
		s_{k+1} - 3s_k&\le^+K(B\upto s_{k+1})\nonumber \\
		1-3\cdot 2^{-(2k+1)}&\le\dfrac{K(B\upto s_{k+1})}{s_{k+1}}+ \mathcal{O}\left(2^{-(k+1)^2}\right) \quad\text{as $k\to\infty$.}
	\end{align}
	Dually, the right endpoints of $0$-segments have low complexity: for any odd $k\in\N$,
	\begin{align*}
		K(B\upto s_{k+1})&\le^+ K(B\upto s_k) + K(B[s_k, s_{k+1}]) \le^+ 2s_k + 2\log(s_{k+1} - s_k).
	\end{align*}
	The first inequality is again the weak bound that $B\upto s_{k+1}$ can be recovered from descriptions of $B\upto s_k$ and $B[s_k, s_{k+1}]$. For the second, we apply the $2\abs{\sigma}$ complexity bound to $B\upto s_k$, but also notice that since $B[s_k, s_{k+1}] = 0^{s_{k+1}-s_k}$, it can be recovered effectively from a code for its length. Combining and dividing by $s_{k+1}$, we have
	\begin{align}\label{using2}
		K(B\upto s_{k+1})&\le^+ 2s_k + 2(k+1)^2\text{, and hence} \nonumber\\
		\dfrac{K(B\upto s_{k+1})}{s_{k+1}}&\le 2^{-(2k+1)} + \mathcal{O}\left(2^{-(k+1)^2}\right) \quad\text{as $k\to\infty$.}
	\end{align}
		Now we can examine the dimensions of $A$ and $B$.
		
		\noindent\textbf{Claim 1:} $\dim_{cp}(B) = 1$.
		
		\noindent \emph{Proof:} Let $R_n$ be the set of right endpoints of $R$-segments of $B$, except for the first $n$ of them, i.e.~$R_n = \{s_{2k+1}\}_{k=n}^\infty$.
		Then the collection of these $R_n$ is a subfamily of $\mathfrak{D}$, so that a supremum over $\mathfrak{D}$ will be at least the supremum over this family. Using (\ref{using}), we find that
            $$\sup_{N\in\mathfrak{D}} \inf_{n\in N} \dfrac{K(B\upto n)}{n}
            \geq \sup_{n\in\N} \inf_{s\in R_n} \dfrac{K(B\upto s)}{s}
			\geq \sup_{n\in\N} \inf_{s\in R_n} 1-3\cdot 2^{-(2s+1)}
			= \sup_{m\in\N} 1-3\cdot 2^{-(2m+1)}
			=1.$$

		\noindent \textbf{Claim 2:} $\dim_{i}(B) = 0$.
		
		\noindent \emph{Proof:} Let $Z_n$ be the set of right endpoints of $0$-segments of $B$, except for the first $n$ of\linebreak them, i.e.~$Z_n = \{s_{2k}\}_{k=n}^\infty$. Similarly to Claim 1, we use (\ref{using2}) to obtain
	    $$\inf_{N\in\mathfrak{D}} \sup_{n\in N} \dfrac{K(B\upto n)}{n}
	    \leq \inf_{n\in\N} \sup_{s\in Z_n} \dfrac{K(B\upto s)}{s}
		\leq \inf_{n\in\N} \sup_{s\in Z_n} 2^{-(2s+1)}
		= \inf_{m\in\N}2^{-(2m+1)}=0. \qedhere$$

		\noindent \textbf{Claim 3:} $\dim_{cp}(A) = 0$.
		
		\noindent \emph{Proof:} For each $N\in\mathfrak{D}$ and each natural $k$, the following sets are dense $\Sigma^0_1$:
		\[
			\left\{\sigma\in\Cant : \abs{\sigma}\in N \text{ and } (\exists s)\ K_s(\sigma)<\abs{\sigma}/k]\right\}.
		\]
		As $A$ is weakly 2-generic, it meets all of them. Hence
	    $\displaystyle\sup_{N\in\mathfrak{D}}\inf_{m\in N}\dfrac{K(\sigma\upto m)}{m}=0.$
	    
        \noindent \textbf{Claim 4:} $\dim_{i}(A) = 1$.
		
		\noindent \emph{Proof:} For each $N\in\mathfrak{D}$ and each natural $k$,
		\[
			\left\{\sigma\in\Cant : \abs{\sigma}\in N \text{ and } (\forall s)\ K_s(\sigma)>\abs{\sigma}(1-1/k)\right\}
		\]
		is a dense $\Sigma^0_2$ set. As $A$ is weakly 2-generic, it meets all such sets. Hence 
		$\displaystyle\inf_{N\in\mathfrak{D}}\sup_{m\in N}\dfrac{K(A\upto m)}{m}=1.$
	    \end{proof}
\section{Further Dimensions: (Non-)Collapse and Embedding}
\label{sec:separation}
After considering supremums and infimums of $\Delta^0_1$ sets, it is natural to extend these definitions into the arithmetic hierarchy. For full generality, we say that $A$ is finite-to-one reducible to $B$ iff there is a total computable function $f:\omega\to\omega$ such that the preimage of each $n\in\omega$ is finite and for all $n$, $n\in A\iff f(n)\in B$.
\begin{definition}
		Let $\B$ be a class of infinite sets that is downward closed under finite-to-one reducibility. For $A\in \Cant$, define
		\[
			 \dim_{is\B}(A) = \inf_{N\in\B} \sup_{n\in N} \dfrac{K(A\upto n)}{n}\quad\text{and}\quad
			 \dim_{si\B}(A) = \sup_{N\in\B} \inf_{n\in N} \dfrac{K(A\upto n)}{n}.
		\]
	\end{definition}

	Notice that for any oracle $X$, the classes of infinite sets that are $\Delta^0_n(X), \Sigma^0_n(X)$ or $\Pi^0_n(X)$ are downward closed under finite-to-one reducibility, and so give rise to notions of dimension of this form.
	We will label these $\mathfrak{D}_n(X)$, $\mathfrak{S}_n(X)$, and $\mathfrak{P}_n(X)$ respectively, leaving off $X$ when $X$ is computable. Interestingly, for fixed $n$, the first two give the same notion of dimension.
	\begin{theorem} For all $A\in\Cant$ and $n\in\N$, $\dim_{is\Sigma^0_n}(A) = \dim_{is\Delta^0_n}(A)$.
	\end{theorem}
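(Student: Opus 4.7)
The plan is to prove the two inequalities separately. Since every infinite $\Delta^0_n$ set is a $\Sigma^0_n$ set, the family $\mathfrak{D}_n$ is a subfamily of $\mathfrak{S}_n$, and taking an infimum of a fixed function over the larger family can only decrease it. Hence $\dim_{is\Sigma^0_n}(A) \le \dim_{is\Delta^0_n}(A)$ is immediate from the definitions.

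For the reverse inequality, the key lemma is that every infinite $\Sigma^0_n$ set $N$ contains an infinite $\Delta^0_n$ subset $M$. Granting this, for any $N\in\mathfrak{S}_n$ we can fix such an $M \subseteq N$ in $\mathfrak{D}_n$, and since $M\subseteq N$ we have $\sup_{m\in M} K(A\upto m)/m \le \sup_{n\in N} K(A\upto n)/n$. Taking the infimum over all $N\in\mathfrak{S}_n$ then gives $\dim_{is\Delta^0_n}(A) \le \dim_{is\Sigma^0_n}(A)$, completing the equality.

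To establish the lemma, I would relativize the classical fact that every infinite c.e.\ set has an infinite computable subset. By Post's theorem, any infinite $\Sigma^0_n$ set $N$ is enumerable by a procedure with oracle $\emptyset^{(n-1)}$; fix such an enumeration $a_0, a_1, a_2, \ldots$ (not necessarily in increasing order or without repetition). Greedily build a strictly increasing subsequence by setting $b_0 = a_0$ and, given $b_k$, letting $b_{k+1}$ be the first $a_i$ appearing in the enumeration with $a_i > b_k$. Since $N$ is infinite the process never stalls, and the map $k \mapsto b_k$ is a strictly increasing function computable in $\emptyset^{(n-1)}$. Its range $M$ is then itself computable in $\emptyset^{(n-1)}$: to decide $x \in M$, compute $b_0, b_1, \ldots$ until reaching the first $b_k \ge x$ and check whether $b_k = x$. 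By Post's theorem again, $M \in \Delta^0_n$, as required.

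I anticipate no real obstacle: the content of the theorem is essentially a clean relativization of the textbook result on c.e.\ sets, combined with the trivial observation that restricting to an infinite subset can only shrink the running supremum. The only minor point to handle carefully is the bookkeeping needed to verify that the greedily chosen subsequence is $\Delta^0_n$ rather than merely $\Sigma^0_n$, which is handled by the standard ``increasing enumeration'' argument above.
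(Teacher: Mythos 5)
Your proof is correct and follows essentially the same route as the paper: the inclusion $\mathfrak{D}_n\subseteq\mathfrak{S}_n$ gives one inequality for free, and the other follows from the fact that every infinite $\Sigma^0_n$ set contains an infinite $\Delta^0_n$ subset. The paper simply cites that fact (and only writes out the $n=1$ case), whereas you supply its standard proof via Post's theorem and the increasing-enumeration argument; this is a fine addition but not a different approach.
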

	\begin{proof}
		We prove the unrelativized version of the statement, $n=1$.\\
		\textbf{[$\le$]} As $\Delta^0_1\subseteq \Sigma^0_1$, this direction is trivial.\\
		\textbf{[$\ge$]} As every infinite $\Sigma^0_1$ set $N$ contains an infinite $\Delta^0_1$ set $N_1$, we have
		\begin{align*}
			\dim_{is\Sigma^0_1}(A)  = \inf_{N\in \mathfrak{S}_1} \sup_{n\in N} \dfrac{K(A\upto n)}{n}
			&\ge \inf_{N\in \mathfrak{S}_1} \sup_{n\in N_1} \dfrac{K(A\upto n)}{n}
			&\ge \inf_{N\in \mathfrak{D}_1} \sup_{n\in N} \dfrac{K(A\upto n)}{n}
			= \dim_{is\Delta^0_1}(A).\ \ \ \ \qedhere
		\end{align*}
	\end{proof}
	By a similar analysis, the analogous result for $si$ dimensions is also true.
	\begin{theorem} For all $A\in\Cant$ and $n\in\N$, $\dim_{si\Sigma^0_n}(A) = \dim_{si\Delta^0_n}(A)$.\label{sigeqdelsi}\end{theorem}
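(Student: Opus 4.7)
The plan is to mirror the previous theorem's proof verbatim with $\sup$ and $\inf$ interchanged, and to check that restricting to a subset flips the direction of one of the two inequalities in exactly the way that makes the argument go through. As in the previous theorem, I would prove only the unrelativized case $n=1$, since the argument relativizes: by Post's theorem, a $\Sigma^0_n$ (resp.\ $\Delta^0_n$) set is $\Sigma^0_1$ (resp.\ $\Delta^0_1$) relative to $\emptyset^{(n-1)}$, and nothing in the argument depends on which oracle we use to define the relevant set class.

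The direction $[\ge]$ is now the trivial one: since $\mathfrak{D}_1\subseteq\mathfrak{S}_1$, a supremum over the larger class is at least the supremum over the smaller, giving $\dim_{si\Sigma^0_1}(A)\ge\dim_{si\Delta^0_1}(A)$. For $[\le]$, the key fact is the same one used in the previous proof: every infinite c.e.\ set $N$ contains an infinite computable subset $N_1$ (run the enumeration of $N$ and keep only those elements that exceed all previously enumerated ones, which yields a computable listing of an infinite subset). This time, however, we use it with the infimum in mind rather than the supremum: because $N_1\subseteq N$, we have $\inf_{n\in N_1}\frac{K(A\upto n)}{n}\ge\inf_{n\in N}\frac{K(A\upto n)}{n}$. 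Chaining this gives
$$\sup_{N\in\mathfrak{S}_1}\inf_{n\in N}\frac{K(A\upto n)}{n}\le\sup_{N\in\mathfrak{S}_1}\inf_{n\in N_1}\frac{K(A\upto n)}{n}\le\sup_{N\in\mathfrak{D}_1}\inf_{n\in N}\frac{K(A\upto n)}{n},$$
which is the desired inequality.

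I do not expect any real obstacle here; the only thing requiring care is to notice that passing from $N$ to a subset $N_1$ reverses the inequality for infima compared to suprema, which is precisely why the roles of the two directions in the $is$ and $si$ proofs are swapped. Everything else, including the reduction to $n=1$ via relativization, is identical to the previous theorem.
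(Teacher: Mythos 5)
Your proof is correct and is essentially the paper's own argument: the paper proves the $is$ case in detail and simply remarks that the $si$ case follows "by a similar analysis," which is exactly the swap of inequality directions you carry out (trivial direction from $\mathfrak{D}_1\subseteq\mathfrak{S}_1$, the other from passing to an infinite computable subset $N_1\subseteq N$ and noting $\inf_{n\in N_1}\ge\inf_{n\in N}$). Nothing further is needed.
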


	What about the $\Pi^0_n$ dimensions? Unlike the $\Sigma^0_n$ case, these do not collapse down to their $\Delta^0_n$ counterparts, nor up to the $\Delta^0_{n+1}$ dimensions. Two lemmas will be useful in proving this.
	The first (which was implicit in Claims 1 and 2 of \Cref{iineq}) will allow us to show that an $si$-dimension of a real is high by demonstrating a sequence that witnesses this.\footnote{The converse is not true --- this is the content of \Cref{witness}.}
	The second is a generalization of the segment technique, forcing a dimension to be $0$ by alternating $0$- and $R$-segments in a more intricate way, according to the prescriptions of a certain real.
	The constructions below proceed by selecting a real that will guarantee that one dimension is 0 while leaving room to find a witnessing sequence for another.
	\begin{lemma}[Sequence Lemma]\label{seqlem}
	Let $\B$ be a class of infinite sets downward closed under finite-to-one reducibility, and let $N= \{n_k\mid k\in\omega\}\in\B$.
	\begin{enumerate}
		\item \label{one} If $\displaystyle  \lim_{k\ra\infty}\dfrac{K(X\upto n_k)}{n_k}=1$, then $\dim_{si\mathfrak{B}}(X) = 1$.
		\item\label{two} If $\displaystyle\lim_{k\ra\infty}\dfrac{K(X\upto n_k)}{n_k}=0$, then $\dim_{is\mathfrak{B}}(X)=0$.
	\end{enumerate}
	\end{lemma}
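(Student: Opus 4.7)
The plan is to prove both parts by passing to tails of the given sequence $(n_k)$ and then using the downward-closure hypothesis to keep these tails inside $\B$. Writing $N_m = \{n_k : k \geq m\}$, one first verifies that $N_m \in \B$ by exhibiting a finite-to-one reduction $N_m \leq N$; thereafter both claims fall out from a routine $\varepsilon$-argument with the assumed limit and the definitions of $\dim_{si\B}$ and $\dim_{is\B}$.

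For the tail claim, pick some $a \in \omega \setminus N$ (if $N = \omega$, every class $\B$ arising in this paper already contains all cofinite subsets of $\omega$, so $N_m \in \B$ trivially) and define $f : \omega \to \omega$ by $f(n) = a$ when $n \in N \setminus N_m$ and $f(n) = n$ otherwise. Since $N \setminus N_m$ is finite, $f$ is total and computable; its only nontrivial fibre is $f^{-1}(a) = \{a\} \cup (N \setminus N_m)$, which is finite; and an easy case check shows $n \in N_m \iff f(n) \in N$. Downward closure of $\B$ under finite-to-one reducibility then gives $N_m \in \B$.

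For part (i), fix $\varepsilon > 0$ and use $\lim_k K(X \upto n_k)/n_k = 1$ to choose $m$ with $K(X \upto n_k)/n_k > 1 - \varepsilon$ for all $k \geq m$; then $\inf_{n \in N_m} K(X \upto n)/n \geq 1 - \varepsilon$, so $\dim_{si\B}(X) \geq 1 - \varepsilon$, and letting $\varepsilon \to 0$ yields $\dim_{si\B}(X) \geq 1$. The matching upper bound $\dim_{si\B}(X) \leq 1$ comes from the standard inequality $K(\sigma) \leq \abs{\sigma} + O(\log \abs{\sigma})$ for prefix-free complexity. Part (ii) is exactly dual: pick $m$ with $K(X \upto n_k)/n_k < \varepsilon$ for all $k \geq m$, whence $\sup_{n \in N_m} K(X \upto n)/n \leq \varepsilon$, yielding $\dim_{is\B}(X) \leq \varepsilon$ and hence $\dim_{is\B}(X) = 0$.

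The one genuinely delicate ingredient is the tail-in-$\B$ reduction in Step 1: it is the single place where the downward-closure hypothesis is actually invoked, and it is also where the boundary case $N = \omega$ has to be handled by a direct appeal to the structure of the particular class at hand. Everything after Step 1 is an unobstructed $\varepsilon$-manipulation of the defining $\sup$ and $\inf$.
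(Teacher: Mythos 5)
Your proof is correct and follows essentially the same route as the paper's: pass to the tails $N_m$, keep them in $\B$ via downward closure under finite-to-one reducibility, and conclude with the $\varepsilon$-argument on the defining $\sup\inf$ (resp.\ $\inf\sup$). You are in fact somewhat more careful than the paper, which merely asserts that the $N_m$ lie in $\B$ without exhibiting the reduction and leaves the upper bound $\dim_{si\B}(X)\le 1$ implicit.
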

	\begin{proof}
		We prove (i); (ii) is similar.

		Form the infinite family of sets $\{N_m\}$ defined by $N_m = \{n_k\mid k\geq m\}$. From the definition of the limit, for any $\eps>0$ there  is an $l$ such that $$\inf_{N_l}\dfrac{K(X\upto n_k)}{n_k}> 1-\eps.$$ As $\eps$ was arbitrary, $$\sup_m \inf_{N_m}\dfrac{K(X\upto n_m)}{n_m}= 1.$$
		Thus as $\B$ is closed under finite-to-one reduction, the $N_m$ form a subfamily of $\B$, so that $\displaystyle \sup_{N\in \mathfrak B} \inf_{n\in N} K(X\upto n)/n= 1$.
	\end{proof}

\begin{definition}\label{def:im}
A real $A$ is \emph{immune} to a class $\B$ if there is no infinite member $B\in\B$ such that $B\subseteq A$ as sets. $A$ is \emph{co-immune} to a class $\B$ if its complement is immune to $\B$. $A$ is \emph{bi-immune} to $\B$ iff it is immune and co-immune to $\B$.
\end{definition}
We will often refer to these properties as $\B$-immunity, co-$\B$-immunity, and bi-$\B$-immunity, respectively. In the case that $\B = \Delta^0_1$, we drop the $\B$ and simply say $A$ is immune.
\begin{definition}\label{directsum}
For reals $A$ and $B$, $A\oplus B = \{2k\mid k\in A\}\cup\{2k+1\mid k\in B\}$.
\end{definition}
	\begin{lemma}[Double Segment Lemma]\label{dslem} Let $X_0\in \Cant$ be such that $X_0$ is co-immune to reals of a class $\mathfrak{B}$, and set $X = X_0\oplus X_0$.
		For all natural $n$, define $k_n = \max\{\text{odd }
		k\mid 2^{k^2}\le n\}$. Let $A$ be an arbitrary real and let $R$ be Martin-L\"of random. \begin{enumerate}
		\item\label{dslem-one} If $B = A\left(n-2^{k_n^2}\right)\cdot 1_{\overline{X}}(k_n)$, then $\dim_{si\mathfrak{B}}(B) = 0$.
		\item\label{dslem-two} If $B = R\left(n-2^{k_n^2}\right)\cdot 1_{X}(k_n)$, then $\dim_{is\mathfrak{B}}(B) = 1$.
	\end{enumerate}
	\end{lemma}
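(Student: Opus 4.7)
The plan is to prove both parts by contradiction, deriving in each case an infinite $\mathfrak{B}$-subset of $\overline{X_0}$ from any putative counterexample $N \in \mathfrak{B}$, which then contradicts co-immunity of $X_0$. I will sketch (ii); part (i) proceeds by a mirror argument with $R$-segments replaced by $0$-segments (where only the trivial upper bound on $K$, independent of the arbitrary $A$, is needed).

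First I will sharpen the estimates of Claims~1 and~2 in \Cref{incomp} using the structural observation that every $R$-segment of $B$ is a prefix of the same random $R$: the string $B\upto n$ is described by $X_0 \cap [0,j]$ together with a single sufficiently long prefix of $R$, giving $K(B\upto n) = L^*(n) \pm O((\log n)^2)$, where $L^*(n)$ is the length of the longest contiguous prefix of $R$ appearing inside $B\upto n$. Suppose toward contradiction that $K(B\upto n)/n < 1 - \varepsilon$ for every $n \in N$. Then $L^*(n)/n < 1 - \varepsilon + o(1)$, and this forces two computable exclusions on $N$: for $j \in X_0$, the current $R$-prefix makes $N$ miss the cofinal \emph{deep tail} of $I_j$; and for $j - 1 \in X_0$, the preceding $R$-segment of length $\approx 2^{(2j+1)^2}$ makes $N$ miss the \emph{initial block} of $I_j$.

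Next I will form the bounded-existential derivatives $\tilde N_D := \{j : N \cap (\text{deep tail of } I_j) \ne \emptyset\}$ and $\tilde N_E := \{j : N \cap (\text{initial block of } I_j) \ne \emptyset\}$. Because each tail and block is a finite interval computable uniformly in $j$, both derivatives will lie in $\mathfrak{B}$ by the bounded-quantifier closure of the standard classes $\mathfrak{D}_n, \mathfrak{S}_n, \mathfrak{P}_n$ to which the lemma will be applied. The localization yields $\tilde N_D \subseteq^* \overline{X_0}$ and $\tilde N_E \subseteq^* \overline{X_0} + 1$, so if $\tilde N_D$ is infinite we contradict co-immunity directly, and if $\tilde N_E$ is infinite, a shift by $-1$ (an f.t.o.\ reduction preserving $\mathfrak{B}$) produces an infinite $\mathfrak{B}$-subset of $\overline{X_0}$. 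The remaining case has both derivatives finite, forcing $N$ eventually into a narrow computable middle band of each segment; tightening $\varepsilon$ slightly and reapplying the localization should then produce a final contradiction, since the current $R$-prefix already approaches the forbidden ratio at the upper edge of this band.

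The main obstacle will be this third case: one has to squeeze a strict bound out of the asymptotic equality $K(B\upto n) = L^*(n) \pm O((\log n)^2)$, which is routine but requires choosing the constants with a small slack. Some attention to the shift reduction witnessing closure of $\mathfrak{B}$ under $+1$ is also needed, though for the standard classes of interest this is automatic.
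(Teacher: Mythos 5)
Your proposal diverges from the paper's argument in a way that leaves two genuine gaps. First, you never use the hypothesis $X = X_0\oplus X_0$ --- the ``double'' in Double Segment --- and this is not cosmetic. The paper's proof turns on the fact that, because each $m\in X_0$ governs the two consecutive indices $2m$ and $2m+1$, any $n$ that is \emph{not} in a $0$-segment immediately following a $0$-segment satisfies $k_n\in X$ or $k_n-1\in X$, and \emph{either} disjunct yields the same conclusion about which fixed side of $X_0$ contains $\lfloor k_n/2\rfloor$. Hence the single finite-to-one map $n\mapsto\lfloor k_n/2\rfloor$ sends any putative bad $N$ to an infinite $\mathfrak{B}$-set inside one side of $X_0$, and the immunity hypothesis applies; closure under finite-to-one reducibility is the \emph{only} property of $\mathfrak{B}$ available. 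Your derivatives $\tilde N_D,\tilde N_E$ are defined by a bounded existential quantifier over $N$, and your closure claim is explicitly restricted to ``the standard classes $\mathfrak{D}_n,\mathfrak{S}_n,\mathfrak{P}_n$''; but the lemma is applied in the paper to $\Delta^0_1(B)$ for an arbitrary real $B$ (the Embedding Theorems) and to $\rbpi$, where neither bounded-quantifier closure nor closure under your shift and intersection operations is part of the hypothesis. This is precisely the difficulty the doubling was engineered to avoid.

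Second, and more seriously, your third case does not close. If $\tilde N_D$ and $\tilde N_E$ are both finite, all you know is that $n\in N$ eventually lies in the middle band of its segment. But if that segment and its predecessor are both $0$-segments, then $K(B\upto n)/n$ really is small there, and no contradiction with $K(B\upto n)/n<1-\varepsilon$ is available --- such positions are exactly the witnesses used for part (i). The contradiction must come from showing that infinitely many $n\in N$ sit in (or immediately after) $R$-segments, and that is where the co-immunity of $X_0$ has to be invoked; your third case never uses it. The remark that ``the current $R$-prefix already approaches the forbidden ratio'' presupposes the current segment is an $R$-segment, which is the very thing not known at that point. The same gap infects your one-line dispatch of part (i): the ``trivial upper bound, independent of $A$'' fails at positions near the start of a $0$-segment that immediately follows an $A$-segment, where $K(B\upto n)$ can be comparable to $n$ for a badly chosen $A$ --- this is exactly the obstruction the paper discusses in the paragraph preceding its proof, and again it is the doubling together with immunity, not a sharper complexity estimate, that removes it.
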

		Again, we will give a detailed proof of only the $\dim_{si\B}$ result (though the necessary changes for $\dim_{is\B}$ are detailed below). Unpacking the definition of $B$,
		\[
			B(n) = \begin{cases}
					A\left(n - s_k\right) & \text{if $k_n\in X$}\\
					0 & \text{otherwise.}
					\end{cases}
		\] $B$ is once again built out of segments of the form $B\left[s_{k_n}, s_{k_n+2}\right]$ for odd $k$. Here a segment is a $0$-segment if $k_n\not\in X$, or an $A$-segment if $k_n\in X$, which by definition is a prefix of $A$. These segments are now placed in a more intricate order according to $X$, with a value $n$ being contained in a $0$-segment if $X(k_n) = 0$, and in an $A$-segment if $X(k_n) = 1$. With some care, this will allow us to leverage the $\B$-immunity of $X_0$ to perform the desired complexity calculations.
			
        Specifically, we want to show that for any $N\in\B$, $\inf_NK(B\upto n)/n=0$. It is tempting to place the segments according to $X_0$ and invoke its $\B$-immunity to show that for any $N\in\B$, there are infinitely many $n\in N$ such that $n$ is in a $0$-segment, then argue that complexity will be low there. The problem is that we have no control over \emph{where} in the $0$-segment $n$ falls. Consider in this case the start of any segment following an $A$-segment: $n=s_{k_n}$ for $k_{n}-1\in X_0$ and $k_n\in X_0$. We can break $A$ and $B$ into sections to compute \begin{align*}K(A\upto n) &\leq^+ K(A\upto(n-s_{k_n-1}))+K(A[n-s_{k_n-1}, n]) \\
        &=K(B[s_{k_n-1},n])+K(A[n-s_{k_n-1}, n])\tag{$k_n-1\in X_0$}\\
        &\leq^+K(B\upto n) + K(B\upto s_{k_n-1})+K(A[n-s_{k_n-1}, n])\\
         K(A\upto n)&\leq^+K(B\upto n)+4s_{k_n-1}\tag{$K(\sigma)\leq^+2|\sigma|$}
        \end{align*} Even if $n$ is the start of a $0$-segment, if $K(A\upto n)$ is high, $K(B\upto n)$ may not be as low as needed for the proof. Our definition of $X$ avoids this problem:
	\begin{proof}[Proof of Theorem \ref{dslem}]
	Suppose for the sake of contradiction that for some $N\in\B$, there are only finitely many $n\in N$ with $k_n, k_n-1\in \overline{X}$, i.e., that are in a $0$-segment immediately following another $0$-segment.
			Removing these finitely many counterexamples we are left with a set $N_1\in\B$ such that for all $n\in N_1$, $\lnot[(k_n\not\in X) \land (k_n-1\not\in X)].$
			As $k_n$ is odd, the definition of $X$ gives that $\lfloor k_n/2\rfloor\in X_0$.
			By a finite-to-one reduction from $N_1$, the infinite set $\{\lfloor k_n/2\rfloor\}_{n\in N_1}$ is a member of $\B$ and is contained in $X_0$, but $\overline{X_0}$ is immune to such sets.
			
			Instead it must be the case that there are infinitely many $n\in N$ in a $0$-segment following a $0$-segment, where the complexity is \begin{align*}
			K(B\upto n) &\leq^+ K\left(B\upto s_{n_{k-1}}\right)+ K\left(B\left[s_{n_{k-1}},n\right]\right)\\
			&\le^+ 2s_{n_{k-1}} + 2\log\left(n-s_{n_{k-1}}\right).
			\end{align*}
			Here the second inequality follows from the usual $2\abs{\sigma}$ bound and the fact that $B \left[s_{n_{k-1}},n\right]$ contains only $0$s. As $2^{k_n^2}\le n$, we can divide by $n$ to get
			\begin{align*}
			\dfrac{K(B\upto n)}{n}\le^+ \dfrac{2^{k_n^2-2k_n}}{2^{k_n^2}} + \dfrac{2\log(n)}{n}= 2^{-2k_n}+\dfrac{2\log(n)}{n}.
			\end{align*}
			As there are infinitely many of these $n$, it must be that $\inf_{n\in N} K(B\upto n)/n = 0$.
			This holds for every set $N$ in the class $\B$, so taking a supremum gives the result.
			
			The $\dim_{is\B}$ version concerns reals $B$ constructed in a slightly different way. Here, the same argument now shows there are infinitely many $n\in N$ in an $R$-segment following an $R$-segment. At these locations, the complexity $K(B\upto n)$ can be shown to be high enough that $\sup_N K(B\upto n)/n = 1$, as desired. 
	\end{proof}
	
	With these lemmata in hand, we are ready to prove
	\begin{theorem}\label{pidelsepsi}
		For all natural $n$ there is a set $A$ with $\dim_{si\Delta^0_n}(A)=0$ and $\dim_{si\Pi^0_n}(A) = 1$.
	\end{theorem}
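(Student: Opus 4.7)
The plan is to apply part (i) of the Double Segment Lemma (\Cref{dslem}) with $\B = \Delta^0_n$, using a set $X_0$ whose arithmetic complexity is $\Sigma^0_n$ and whose complement is immune to all infinite $\Delta^0_n$ sets. The constructed real $B$ will then have $\dim_{si\Delta^0_n}(B) = 0$ directly by the lemma, and I will handle the other equality by feeding a $\Pi^0_n$ witness set built from $\overline{X_0}$ into part (i) of the Sequence Lemma (\Cref{seqlem}).

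To produce such an $X_0$, I would relativize Post's simple-set construction to $\emptyset^{(n-1)}$. The result is a set c.e.\ in $\emptyset^{(n-1)}$ (hence $\Sigma^0_n$) whose complement $\overline{X_0}$ is $\Pi^0_n$, infinite, and contains no infinite subset c.e.\ in $\emptyset^{(n-1)}$; in particular, no infinite $\Delta^0_n$ subset. Thus $X_0$ is co-immune to $\Delta^0_n$-reals. Let $R$ be Martin-L\"of random, set $X = X_0 \oplus X_0$, and build $B$ as in part (i) of the Double Segment Lemma using $R$ in the role of the arbitrary real $A$. The lemma with $\B = \Delta^0_n$ immediately gives $\dim_{si\Delta^0_n}(B) = 0$.

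For the $\Pi^0_n$ side, I propose the witness set $N = \{2^{(k+2)^2} : k \text{ odd and } k \in \overline{X}\}$. Writing odd $k = 2j + 1$, one checks $k \in \overline{X}$ iff $j \in \overline{X_0}$, so $N$ is the computable image of the $\Pi^0_n$ set $\overline{X_0}$ and is itself $\Pi^0_n$ and infinite. For $n = 2^{(k+2)^2} \in N$ the block $B[2^{k^2}, 2^{(k+2)^2})$ equals $R \upto (2^{(k+2)^2} - 2^{k^2})$ by construction, so a description of $B\upto n$ (together with $k$, which is computable from $n$) yields a description of $R\upto(2^{(k+2)^2} - 2^{k^2})$ of the same length up to a constant. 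Combining the characterization $m \leq^+ K(R\upto m)$ from \Cref{sch} with the ratio $2^{k^2}/2^{(k+2)^2} = 2^{-(4k+4)}$ yields $K(B\upto n)/n \to 1$ along $N$, and part (i) of the Sequence Lemma produces $\dim_{si\Pi^0_n}(B) = 1$.

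The main obstacle is calibrating $X_0$: a single set must simultaneously have immunity strong enough to drive $\dim_{si\Delta^0_n}$ to $0$ through the Double Segment Lemma, and arithmetic complexity low enough that the right endpoints of the random segments assemble into a genuinely $\Pi^0_n$ set. Post's construction relativized to $\emptyset^{(n-1)}$ lands at exactly this level; a naive attempt with, say, $X_0 \in \Pi^0_n$ would make the witness set $\Sigma^0_n$ and would also run into the obstruction that every infinite c.e.-in-$\emptyset^{(n-1)}$ set contains an infinite $\Delta^0_n$ subset, which would defeat the required co-immunity.
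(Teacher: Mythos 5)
Your proposal is correct and follows essentially the same route as the paper: a simple set relative to $\emptyset^{(n-1)}$ (the paper states only the $n=1$ case with an ordinary simple set and declares higher $n$ analogous), the Double Segment Lemma for $\dim_{si\Delta^0_n}=0$, and the Sequence Lemma applied to the $\Pi^0_n$ set of right endpoints of the $R$-segments for $\dim_{si\Pi^0_n}=1$. The only differences are cosmetic indexing conventions in the witness set and a cleaner direct extraction of $R\upto(n-2^{k^2})$ from $B\upto n$ in place of the paper's subadditivity estimate.
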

	\begin{proof}
		We prove the $n=1$ case, as the proofs for higher $n$ are analogous.
		
		Let $S_0$ be c.e.~and co-immune set\footnote{These are also called \emph{simple} sets, and were shown to exist by Post \cite{Post}.}, and let $R$ be \MLR. Let $S = S_0\oplus S_0$, and define $k_n = \max\{k\mid 2^{k^2}\le n\}$. Define $A(n) = R\left(n-2^{k_n^2}\right)\cdot 1_{\overline S}(k_n)$, 
		so that $A$ is made of $0$-segments and $R$-segments.
		
		As $S$ is $\Sigma^0_1$, the set of right endpoints of $R$-segments, $M = \left\{2^{k^2}\mid k-1\in \overline{S}\right\}$ is $\Pi^0_1$.
		By construction $\lim_{m\in M}K(A\upto m)/m = 1$ and thus the Sequence Lemma \ref{seqlem} gives that $\dim_{si\Pi^0_1}(A) = 1$.

		As $\overline{S}$ is immune, the Double Segment Lemma \ref{dslem} shows that $\dim_{si\Delta^0_1}(A) = 0$.
	\end{proof}

	The proof of analogous result for the $is$-dimensions is similar, using the same $S_0$ and $S$, and the real defined by $B(n) = R\left(n-2^{k_n^2}\right)\cdot 1_{\overline{S}}(k_n)$.
	\begin{theorem} For all natural $n$ there is a set $B$ with 
		$\dim_{is\Delta^0_{n}}(B) = 1$ and $\dim_{is\Pi^0_n}(B) = 0$. \end{theorem}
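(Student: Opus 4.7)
The plan is to dualize the construction from Theorem \ref{pidelsepsi}, swapping the roles of $S$ and $\overline{S}$ in the multiplier so that the low-complexity right endpoints land on a $\Pi^0_n$-indexed set (rather than the high-complexity ones, as in the $si$ version). Concretely for $n = 1$, keeping $S_0$ a simple set, $R$ Martin-L\"of random, $S = S_0 \oplus S_0$, $s_k = 2^{k^2}$, and $k_n = \max\{k \mid s_k \le n\}$, I would take
\[
B(n) = R\!\left(n - s_{k_n}\right)\cdot 1_{S}(k_n),
\]
so that the $R$-segments sit at the common indices $k_n \in S$ and the $0$-segments sit at the sparse indices $k_n \in \overline{S}$.

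First, to establish $\dim_{is\Pi^0_1}(B) = 0$, I would observe that $N = \{s_{k+1} : k \in \overline{S}\}$, the set of right endpoints of $0$-segments, is $\Pi^0_1$ since $\overline{S}$ is and the map $k \mapsto s_{k+1}$ is computable. The same estimate used in \eqref{using2} gives $K(B\upto s_{k+1}) \leq^+ 2 s_k + 2\log(s_{k+1} - s_k)$, so $K(B\upto m)/m \to 0$ along $N$. Part (ii) of the Sequence Lemma applied to $N$ with $\mathfrak{B} = \mathfrak{P}_1$ then yields $\dim_{is\Pi^0_1}(B) = 0$.

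For the complementary equality $\dim_{is\Delta^0_1}(B) = 1$, I would invoke part (ii) of the Double Segment Lemma with $X_0 = S_0$ and $\mathfrak{B} = \mathfrak{D}_1$. Its hypothesis --- that $S_0$ is co-immune to $\Delta^0_1$, i.e.\ that $\overline{S_0}$ has no infinite computable subset --- is precisely the simplicity of $S_0$, and its conclusion is exactly the desired equality.

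For arbitrary $n$, replace $S_0$ by a $\Sigma^0_n$ set whose complement is $\Delta^0_n$-immune, obtained by relativizing Post's simple-set construction to $\emptyset^{(n-1)}$. Since both $\Pi^0_n$ and $\Delta^0_n$ are closed under finite-to-one reducibility, the Sequence and Double Segment Lemmas apply verbatim and the proof lifts without change. The one point requiring care is the swap $S \leftrightarrow \overline{S}$ relative to Theorem \ref{pidelsepsi}: this is exactly what arranges for the low-complexity right endpoints to land on a $\Pi^0_n$-indexed set rather than a $\Sigma^0_n$-indexed one, which is what the $is$-dimension requires. Without this swap, the $0$-segment right endpoints would only be $\Sigma^0_n$, and one could extract from them an infinite $\Delta^0_n$ subsequence of low-complexity prefixes, which would force $\dim_{is\Delta^0_n}(B) = 0$ and collapse the theorem.
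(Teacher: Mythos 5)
Your proof is correct and follows the paper's intended route exactly: the same simple $S_0$ and segment construction, the Sequence Lemma for the $\Pi^0_n$ side, and part (ii) of the Double Segment Lemma for the $\Delta^0_n$ side. In fact your version fixes the formula as printed in the paper, which defines $B(n) = R\left(n-2^{k_n^2}\right)\cdot 1_{\overline{S}}(k_n)$: as your last paragraph observes, with that multiplier the $0$-segment right endpoints are indexed by the c.e.\ set $S_0$, which has an infinite computable subset and hence forces $\dim_{is\Delta^0_1}(B)=0$ (and the Double Segment Lemma's co-immunity hypothesis would fail, since it would require the simple set $S_0$ itself to be immune); the multiplier must be $1_S$ as you have it.
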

	
	It remains to show that the $\Delta^0_{n+1}$ and $\Pi^0_n$ dimensions are all distinct.
	We can use the above lemmata for this, so the only difficulty is finding sets of the appropriate arithmetic complexity with the relevant immunity properties.

\remark{In \Cref{ch:pi}, we give a fuller account of $\Pi^0_1$-immune sets and their properties. But for the sake of keeping this chapter self-contained, we include the following definition and lemma now:}
\begin{definition}\label{pi:coh}
A real $C$ is \emph{cohesive} iff it cannot be split into two infinite halves by a c.e.\ set, i.e.\ for all $e$ either $W_e\cap C$ or $\overline{W_e}\cap C$ is finite.
\end{definition}
\begin{lemma}\label{piim} For all $n\geq 1$, there is an infinite $\Delta^0_{n+1}$ set $S$ that is $\Pi^0_n$-immune.
\end{lemma}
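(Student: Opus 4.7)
The plan is to perform an effective diagonalization against a universal enumeration of the $\Pi^0_n$ sets. Fix a universal $\Pi^0_n$ predicate, yielding an enumeration $\{P_e\}_{e\in\omega}$ with $(e,x)\mapsto x\in P_e$ in $\Pi^0_n$ and every $\Pi^0_n$ subset of $\omega$ appearing as some $P_e$. For each $e$, let $a_e$ be the least $x>2^e$ with $x\in P_e$, if such an element exists, and define
\[
S \;=\; \omega\setminus\{a_e : a_e\text{ is defined}\}.
\]

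I would verify the three needed properties in turn. For infinitude, the requirement $a_e>2^e$ forces any $a_i$ lying in $[2^e,2^{e+1})$ to have $i\le e$, so at most $e+1$ elements are removed from this interval of length $2^e$, leaving $S$ cofinitely nonempty on each such block. For $\Pi^0_n$-immunity, given any infinite $\Pi^0_n$ set $P$, pick $e$ with $P=P_e$; infinitude guarantees that $a_e$ is defined, and then $a_e\in P\setminus S$ witnesses $P\not\subseteq S$. For the complexity bound $S\in\Delta^0_{n+1}$, the predicate ``$x=a_e$'' unpacks to the conjunction of ``$x>2^e$'' (arithmetic), ``$x\in P_e$'' ($\Pi^0_n$), and ``$\forall y(2^e<y<x\to y\notin P_e)$'' (a bounded universal over a $\Sigma^0_n$ predicate, hence $\Sigma^0_n$ for $n\ge 1$). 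A Boolean combination of $\Sigma^0_n$ and $\Pi^0_n$ predicates lies in $\Delta^0_{n+1}$. The key move is then to observe that $a_e>2^e$ forces $e<\log_2 x$ whenever $x=a_e$, so $x\in S$ is equivalent to the bounded universal statement $\forall e\le\lceil\log_2 x\rceil: x\ne a_e$, which remains $\Delta^0_{n+1}$.

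The main obstacle is precisely this last complexity calculation. Without the exponential threshold, deciding membership in $\{a_e\}$ would need an unbounded existential over $e$, pushing the predicate to $\Sigma^0_{n+1}$ and breaking the $\Delta^0_{n+1}$ bound. The threshold $2^e$ is engineered to accomplish two things simultaneously: it is sparse enough to permit the bounded-quantifier trick that keeps $S\in\Delta^0_{n+1}$, while leaving the removed set $\{a_e\}$ thin enough that the resulting $S$ remains infinite.
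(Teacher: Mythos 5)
Your proof is correct, but it takes a genuinely different route from the one in the text. The thesis obtains the set from cohesiveness: it takes a $\Delta^0_2$ cohesive set $C$ that is not co-c.e.\ (citing Jockusch for existence), observes that $\overline{C}$ then cannot finitely differ from any $W_e$, and concludes that any $\overline{W_e}\subseteq C$ must be finite; the higher levels are handled by relativization. You instead run a Post-style diagonalization: the set $\{a_e\}$ you remove is the $\Pi^0_n$-analogue of a simple set (it meets every infinite $\Pi^0_n$ set), and the threshold $a_e>2^e$ plays exactly Post's dual role of keeping the complement infinite while bounding the quantifier over $e$ so that membership stays decidable from $\emptyset^{(n)}$. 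Your argument is more elementary and self-contained --- it needs only a universal $\Pi^0_n$ predicate and standard closure of $\Sigma^0_n$ under bounded quantification, rather than the nontrivial existence of non-co-c.e.\ $\Delta^0_2$ cohesive sets --- and it handles all $n$ uniformly without an appeal to relativization. What the paper's construction buys in exchange is additional structure: its witness is cohesive, which ties the lemma into the chapter's broader theme (cohesive sets that are not co-c.e.\ are $\Pi^0_1$-immune, Lemma~\ref{cohpi}) and is the kind of example reused elsewhere (e.g.\ Theorem~\ref{hcompPi01im}), whereas your $S$ is co-sparse and in particular not cohesive. Both verifications of immunity and of the $\Delta^0_{n+1}$ bound in your write-up are sound.
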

\begin{proof} We prove the unrelativized version, $n=1$. Let $C$ be a $\Delta^0_2$ cohesive set that is not co-c.e.~(such a set exists by \cite{JockHighCoh})\footnote{We will also construct more explicit $\Delta^0_2\setminus\Pi^0_1$ cohesive sets in \Cref{hcompPi01im}.}. As $\overline{C}$ is not c.e.\ it cannot finitely differ from any $W_e$, so for all $e$, $W_e\setminus \overline{C} = W_e\cap C$ is infinite. Hence if $\overline{W_e}\subseteq C$, then by cohesiveness, $\overline{W_e}\cap C = \overline{W_e}$ is finite.
\end{proof}

\begin{theorem} For all $n\ge 1$ there exists a set $A$ with $\dim_{si\Pi^0_n}(A)=0$ and $\dim_{si\Delta^0_{n+1}}(A) = 1$. \label{pidelsepsin+1}\end{theorem}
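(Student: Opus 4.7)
The plan is to mirror the construction in the proof of \Cref{pidelsepsi}, but with a $\Pi^0_n$-immune set (provided by \Cref{piim}) in place of a simple set. I will write out the unrelativized case $n=1$; higher $n$ follows by relativizing every ingredient to $\emptyset^{(n-1)}$.

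First, apply \Cref{piim} to obtain an infinite $\Delta^0_2$ set $T$ that is $\Pi^0_1$-immune; as the proof of \Cref{piim} takes $T$ to be a cohesive set that is not co-c.e., its complement is not c.e., hence infinite. Set $S_0 = \overline{T}$, so $S_0$ is $\Delta^0_2$ and $\overline{S_0} = T$ is $\Pi^0_1$-immune. Now let $S = S_0 \oplus S_0$ and fix a Martin-L\"of random $R$, and define
\[
A(n) = R\bigl(n - 2^{k_n^2}\bigr)\cdot 1_{\overline{S}}(k_n),
\]
exactly as in the proof of \Cref{pidelsepsi}. Two small bookkeeping observations are needed: $S$ is $\Delta^0_2$ (hence so is $\overline{S}$), and $\overline{S} = T \oplus T$ is $\Pi^0_1$-immune, because any infinite $\Pi^0_1$ subset of $T \oplus T$ would project under a finite-to-one reduction (either $n \mapsto 2n$ or $n \mapsto 2n+1$) to an infinite $\Pi^0_1$ subset of $T$, contradicting $\Pi^0_1$-immunity of $T$.

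For $\dim_{si\Delta^0_2}(A) = 1$: the set $M = \{2^{k^2} : k-1 \in \overline{S}\}$ of right endpoints of $R$-segments is $\Delta^0_2$ since $S$ is, and it is infinite because $\overline{S}$ is. The same complexity estimate that gave inequality \textup{(\ref{using})} in \Cref{incomp} applies verbatim here, showing $K(A\upto m)/m \to 1$ as $m \to \infty$ through $M$. The Sequence Lemma \ref{seqlem}(i), applied with $\mathfrak{B} = \mathfrak{D}_2$, then forces $\dim_{si\Delta^0_2}(A) = 1$.

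For $\dim_{si\Pi^0_1}(A) = 0$: invoke the Double Segment Lemma \ref{dslem}(i) with $\mathfrak{B} = \Pi^0_1$ and $X_0 = S_0$, using the arbitrary real from that lemma to be our $R$. The only hypothesis that needs checking is that $X_0$ is co-$\Pi^0_1$-immune, which is precisely the statement that $\overline{S_0} = T$ is $\Pi^0_1$-immune, secured in Step~1. The lemma then delivers $\dim_{si\Pi^0_1}(A) = 0$.

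The one point that genuinely needs verification (and which I expect to be the main, if modest, obstacle) is the inheritance of $\Pi^0_n$-immunity under $\oplus$, since the Double Segment Lemma is stated in terms of $X_0$ while the construction uses $X = X_0 \oplus X_0$; the finite-to-one-reduction argument above handles this, and it is exactly here that closure of $\mathfrak{B} = \Pi^0_n$ under finite-to-one reducibility (built into the hypothesis of the lemma) is used. Beyond that the proof is a direct transcription of the pattern set by \Cref{pidelsepsi}.
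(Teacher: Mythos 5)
Your proposal is correct and is essentially the paper's own proof, which simply says to rerun the construction of \Cref{pidelsepsi} with the set from \Cref{piim} in place of the simple set. Your extra care in taking $S_0=\overline{T}$ (so that it is $\overline{S_0}$ that is $\Pi^0_1$-immune, matching the hypothesis of the Double Segment Lemma as used in \Cref{pidelsepsi}) and in checking infinitude of the relevant sets is exactly the right way to instantiate that one-line instruction.
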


\begin{proof}
		This is exactly like the proof of Theorem \ref{pidelsepsi}, but $S_0$ is now the $\Pi^0_1$-immune set guaranteed by Lemma \ref{piim}.
	\end{proof}

	Again, the analogous result for $is$-dimensions is similar:
	\begin{theorem} For all $n\geq 1$ there exists a set $B$ with $\dim_{is\Pi^0_n}(B)=1$ and $\dim_{is\Delta^0_{n+1}}(B) = 0$. \label{pidelsepisn+1}\end{theorem}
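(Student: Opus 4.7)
The plan is to mirror the proof of Theorem~\ref{pidelsepsin+1}, keeping the same construction but invoking part~(ii) of the Double Segment Lemma and of the Sequence Lemma instead of their part~(i) counterparts --- exactly as the earlier (unlabelled) $is$-analogue of Theorem~\ref{pidelsepsi} was obtained from Theorem~\ref{pidelsepsi} itself.

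By Lemma~\ref{piim} fix an infinite $\Delta^0_{n+1}$ set $S_0$ that is $\Pi^0_n$-immune, let $S = S_0 \oplus S_0$, let $R$ be Martin-L\"of random, and define
\[
B(n) = R\!\left(n - 2^{k_n^2}\right)\cdot 1_{\overline S}(k_n).
\]
To obtain $\dim_{is\Pi^0_n}(B) = 1$, I would apply Lemma~\ref{dslem}~(ii) with $\B = \Pi^0_n$. Matching $B$ against the Lemma's form $R(n - 2^{k_n^2}) \cdot 1_X(k_n)$ forces $X = \overline S$ and hence $X_0 = \overline{S_0}$. The hypothesis ``$X_0$ is co-immune to $\Pi^0_n$'' then reduces to ``$\overline{X_0} = S_0$ is $\Pi^0_n$-immune,'' which is precisely what Lemma~\ref{piim} provides.

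To obtain $\dim_{is\Delta^0_{n+1}}(B) = 0$, I would invoke Sequence Lemma~\ref{seqlem}~(ii) with the set of right endpoints of $0$-segments,
\[
M = \left\{2^{(k+1)^2} : k \in S\right\}.
\]
Since $S \in \Delta^0_{n+1}$ and $k \mapsto 2^{(k+1)^2}$ is a computable, strictly increasing function, $M \in \Delta^0_{n+1}$. At each $m = 2^{(k+1)^2} \in M$, the prefix $B \upto m$ consists of $B \upto 2^{k^2}$ followed by a block of $2^{(k+1)^2} - 2^{k^2}$ zeros, so subadditivity of prefix-free complexity together with the $2\abs{\sigma}$ bound yield $K(B \upto m) \le^+ 2 \cdot 2^{k^2} + 2(k+1)^2$. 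Dividing by $m = 2^{(k+1)^2}$ produces a bound of order $2^{-(2k+1)}$ plus a term of order $(k+1)^2/2^{(k+1)^2}$, which tends to $0$ as $k \to \infty$, so the Sequence Lemma delivers the conclusion.

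There is no genuine obstacle: the argument is a near-mechanical ``$is$''-dualization of Theorem~\ref{pidelsepsin+1}. The only two points worth keeping straight are the indicator/co-indicator bookkeeping when aligning $B$ with Lemma~\ref{dslem}~(ii) --- it is $X_0 = \overline{S_0}$, not $S_0$, that fills the required role --- and the routine observation that a computable injection carries a $\Delta^0_{n+1}$ set to another $\Delta^0_{n+1}$ set.
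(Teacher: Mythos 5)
Your proof is correct and follows essentially the route the paper intends: the paper's own proof of this theorem is only the remark that it is ``similar'' to \Cref{pidelsepsin+1}, i.e.\ the construction of \Cref{pidelsepsi} run with the $\Pi^0_n$-immune $\Delta^0_{n+1}$ set from \Cref{piim}, invoking parts (ii) of \Cref{seqlem} and \Cref{dslem} in place of parts (i). Your complementation bookkeeping --- identifying $X_0=\overline{S_0}$ as the set playing the lemma's role, so that the co-immunity hypothesis becomes $\Pi^0_n$-immunity of $S_0$ itself, exactly what \Cref{piim} supplies --- is if anything more careful than the paper's own presentation.
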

After asking questions about the arithmetic hierarchy, it is natural to turn our attention to the Turing degrees.
	We shall embed the Turing degrees into the $si\Delta^0_1(A)$ (and dually, $is\Delta^0_1(A)$) dimensions. First, a helpful lemma:
	\begin{lemma}[Immunity Lemma]\label{immlem} If $A\nleq_T B$, there is an $S\le_T A$ such that $S$ is $B$-immune.
	\end{lemma}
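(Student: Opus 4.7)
The plan is to avoid stagewise diagonalization (which seems awkward since $A$ cannot uniformly evaluate the $B$-computable sets that $S$ would need to dodge) and instead build $S$ by a coding trick: encode every initial segment of $A$ into the elements of $S$ so densely that even a single infinite subset of $S$ reveals all of $A$.

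Concretely, I would fix a computable bijection $c:2^{<\omega}\to\omega$ and define
\[ S = \{\langle n, c(A\upto n)\rangle : n\in\omega\}. \]
Then $S\leq_T A$ by construction, and $S$ is infinite because distinct $n$ produce distinct pair-codes under the injective Cantor pairing.

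Next I would verify $B$-immunity by contrapositive: assume some infinite $T\subseteq S$ is $B$-computable and deduce $A\leq_T B$. Since each element of $S$ has a unique first coordinate, the first coordinates of elements of $T$ form an unbounded subset of $\omega$. To compute $A(m)$ from $B$: search $B$-computably for any $x$ with $T(x)=1$ whose first coordinate $n$ exceeds $m$; unpair $x=\langle n,k\rangle$, decode $A\upto n = c^{-1}(k)$, and read off the $m$-th bit. The search terminates because $T$ is infinite, giving $A\leq_T B$ and contradicting the hypothesis.

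The main obstacle I anticipate is conceptual rather than technical: resisting the temptation to take the obvious choice $S=A$ itself (or the graph of $A$'s principal function), which fails because $A$ may contain $\Delta^0_1(B)$ infinite subsets even when $A\nleq_T B$. The encoding sidesteps this by making each element of $S$ ``heavy'' enough to record an entire prefix of $A$, so no infinite selection made by $B$ can suppress any bit. As a bonus, the same argument shows $S$ is immune even to $\Sigma^0_1(B)$, since a $B$-enumeration of $T$ still produces an unbounded stream of decoded prefixes.
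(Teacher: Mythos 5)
Your proof is correct and is essentially the paper's own argument: the paper takes $S$ to be the set of finite prefixes of $A$ (under the implicit coding of strings as naturals), which is exactly your $\{\langle n, c(A\upto n)\rangle : n\in\omega\}$ up to the choice of encoding, and the $B$-immunity verification is the same recovery of $A$ from any infinite $B$-computable subset. Your closing observation that the construction even defeats $\Sigma^0_1(B)$ subsets is also in the spirit of how the paper later reuses this set (e.g.\ in \Cref{BNDelta}).
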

	\begin{proof}
		Let $S$ be the set of finite prefixes of $A$. If $S$ contains an infinite $B$-computable subset $C$, then we can recover $A$ from $C$, but then $A\leq_T C\leq_T B$.
	\end{proof}
	\begin{theorem}[{$si$-$\Delta^0_1$ Embedding Theorem}]\label{delembedding} Let $A,B\in \Cant$. Then  $A\le_T B$ iff
	for all $X\in \Cant, \dim_{si\Delta^0_1 (A)} (X) \le \dim_{si\Delta^0_1 (B)} (X)$.
	\end{theorem}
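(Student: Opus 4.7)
The plan is to handle the two directions separately. The forward direction is immediate: if $A \leq_T B$ then $\Delta^0_1(A) \subseteq \Delta^0_1(B)$, so the supremum defining $\dim_{si\Delta^0_1(A)}(X)$ is taken over a subfamily of the family defining $\dim_{si\Delta^0_1(B)}(X)$, and the inequality holds for every $X$.

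For the reverse direction I argue by contrapositive: assuming $A \not\leq_T B$, I construct a single $Y$ with $\dim_{si\Delta^0_1(A)}(Y) > \dim_{si\Delta^0_1(B)}(Y)$. First, the Immunity Lemma \ref{immlem} supplies an infinite $S \leq_T A$ that is $\Delta^0_1(B)$-immune. Setting $X_0 = \overline{S}$ makes $X_0$ co-immune to $\Delta^0_1(B)$ while remaining $A$-computable. With $U = X_0 \oplus X_0$, fix a Martin-L\"of random $R$ and define
\[
    Y(n) = R\bigl(n - 2^{k_n^2}\bigr) \cdot 1_{\overline{U}}(k_n),
\]
where $k_n = \max\{k \text{ odd} : 2^{k^2} \leq n\}$. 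Applying part (i) of Lemma \ref{dslem} with $\mathfrak{B} = \Delta^0_1(B)$ yields $\dim_{si\Delta^0_1(B)}(Y) = 0$ at once.

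To finish, I must show $\dim_{si\Delta^0_1(A)}(Y) = 1$ by exhibiting an $A$-computable witnessing sequence. Take $N = \{2^{(k+2)^2} : k \text{ odd}, \ k \in \overline{U}\}$; since $U \leq_T A$ and $S$ is infinite, $N$ is an infinite $\Delta^0_1(A)$ set whose elements are right endpoints of $R$-segments of $Y$. At each $n = 2^{(k+2)^2} \in N$, the block $Y[2^{k^2}, 2^{(k+2)^2}]$ is a prefix of $R$, so the estimate (\ref{using}) from Claim 1 of Theorem \ref{incomp} --- in which the content before the segment contributes negligibly because $2^{k^2}$ is dwarfed by $2^{(k+2)^2}$ --- gives $K(Y\upto n)/n \to 1$ along $N$. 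Part (i) of the Sequence Lemma \ref{seqlem} then forces $\dim_{si\Delta^0_1(A)}(Y) = 1$, completing the contradiction. The main technical hurdle is producing a set that is simultaneously $A$-computable and $\Delta^0_1(B)$-immune, which the Immunity Lemma supplies directly; the remainder is a routine appeal to the segment and sequence machinery developed earlier in the chapter.
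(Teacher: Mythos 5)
Your proof is correct and takes essentially the same route as the paper: the forward direction by inclusion of the families, and the reverse direction by feeding the set from the Immunity Lemma \ref{immlem} into the segment construction of Theorem \ref{pidelsepsi}, using the Double Segment Lemma \ref{dslem} for the $\Delta^0_1(B)$ side and the Sequence Lemma \ref{seqlem} for the $\Delta^0_1(A)$ side. You even make explicit a detail the paper's one-line citation glosses over, namely that the immune set supplied by Lemma \ref{immlem} must be complemented before forming the double join so that the co-immunity hypothesis of Lemma \ref{dslem} is satisfied.
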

	\begin{proof}
		\textbf{[$\Ra$]} Immediate, as $\Delta^0_1(A)\subseteq \Delta^0_1(B)$.\\
		\textbf{[$\La$]} This is again exactly like the proof of \Cref{pidelsepsi}, now using the set guaranteed by the Immunity Lemma \ref{immlem} as $S_0$.
	\end{proof}

The result for $is$-dimensions is again similar:

\begin{theorem}[{$is$-$\Delta^0_1$ Embedding Theorem}]\label{delembedding2} Let $A,B\in \Cant$. Then  $A\le_T B$ iff
	for all $X\in \Cant, \dim_{is\Delta^0_1 (A)} (X) \ge \dim_{is\Delta^0_1 (B)} (X)$.
	\end{theorem}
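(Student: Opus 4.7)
The plan is to mirror the proof of \Cref{delembedding}, replacing the $si$-machinery with its $is$-counterparts. The forward direction is immediate: if $A \leq_T B$ then $\Delta^0_1(A) \subseteq \Delta^0_1(B)$, so the infimum defining $\dim_{is\Delta^0_1(B)}$ ranges over a larger family and can only be smaller, giving $\dim_{is\Delta^0_1(B)}(X) \leq \dim_{is\Delta^0_1(A)}(X)$ for every $X \in \Cant$.

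For the reverse direction I would argue by contrapositive: assuming $A \nleq_T B$, I would produce an $X$ with $\dim_{is\Delta^0_1(A)}(X) = 0$ and $\dim_{is\Delta^0_1(B)}(X) = 1$. First, apply the Immunity Lemma (\Cref{immlem}) to obtain $S \leq_T A$ that is $B$-immune. To match the hypothesis of the Double Segment Lemma (\Cref{dslem}), set $X_0 = \overline{S}$, so that $\overline{X_0} = S$ is immune to $\Delta^0_1(B)$ and hence $X_0$ is co-immune to $\Delta^0_1(B)$. Let $X = X_0 \oplus X_0$, fix a Martin-L\"of random real $R$, and define $Z(n) = R\bigl(n - 2^{k_n^2}\bigr) \cdot 1_{X}(k_n)$ with $k_n$ as in \Cref{dslem}. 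Part (ii) of that lemma then immediately yields $\dim_{is\Delta^0_1(B)}(Z) = 1$.

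It remains to show $\dim_{is\Delta^0_1(A)}(Z) = 0$. Since $S \leq_T A$, so is $X = \overline{S} \oplus \overline{S}$; therefore the set $N = \{\,2^{(k+1)^2} : k \in \overline{X}\,\}$ of right endpoints of $0$-segments of $Z$ lies in $\Delta^0_1(A)$, and is infinite because $\overline{X} = S \oplus S$ is. At each $n = 2^{(k+1)^2} \in N$ the terminal block of $Z\upto n$ consists entirely of zeros, so the $2\abs{\sigma}$ bound together with subadditivity of $K$ gives $K(Z\upto n) \leq^+ 2 \cdot 2^{k^2} + 2(k+1)^2$, and hence $K(Z\upto n)/n \to 0$ along $N$. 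The Sequence Lemma (\Cref{seqlem}(ii)) then delivers $\dim_{is\Delta^0_1(A)}(Z) = 0$, contradicting $\dim_{is\Delta^0_1(A)}(Z) \geq \dim_{is\Delta^0_1(B)}(Z)$. The one piece of bookkeeping to be careful with is feeding $\overline{S}$, rather than $S$ itself, into the Double Segment Lemma, so that the co-immunity hypothesis there aligns with the immunity property the Immunity Lemma actually provides.
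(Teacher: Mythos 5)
Your proposal is correct and follows essentially the same route the paper intends: the forward direction from $\Delta^0_1(A)\subseteq\Delta^0_1(B)$, and the reverse via the Immunity Lemma feeding the Double Segment Lemma (part (ii)) for $\dim_{is\Delta^0_1(B)}=1$ and the Sequence Lemma along the $A$-computable right endpoints of $0$-segments for $\dim_{is\Delta^0_1(A)}=0$. Your explicit care in taking $X_0=\overline{S}$ so that the co-immunity hypothesis of \Cref{dslem} matches the immunity conclusion of \Cref{immlem} is exactly the bookkeeping the paper leaves implicit when it says the $is$ case is ``similar.''
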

\section{Weak Truth Table Reduction}
We can push this a little further by considering weak truth table reductions.
\begin{definition} $A$ is \emph{weak truth table reducible to} $B$ ($A\leq_{wtt} B$) if there exists a computable function $f$ and an oracle machine $\Phi$ such that $\Phi^B = A$, and the use of $\Phi^X(n)$ is bounded by $f(n)$ for all $n$ ($\Phi^X(n)$ is not guaranteed to halt).
\end{definition}
	\begin{theorem}\label{tt}
		If $A\not\le_T B$, then for all wtt-reductions $\Phi$ there exists an $X$ such that $\dim_{si\Delta^0_1(A)}(X)=1$ and, if $\Phi^X$ is total, $\dim_{si\Delta^0_1(B)}(\Phi^X)=0$.
	\end{theorem}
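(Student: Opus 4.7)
The plan adapts the proof of \Cref{delembedding} to account for the wtt-use of $\Phi$. I may assume $f$ is strictly increasing and satisfies $f(m)\ge m$, replacing $f$ by $\max(f(\cdot),\mathrm{id})$ if necessary. The key new ingredient is a refinement of \Cref{immlem}: fix a computable encoding $\mathrm{enc}:2^{<\omega}\to\omega$ and put $X_0=\{2\,\mathrm{enc}(A\upto n):n\in\omega\}\le_T A$. The usual proof of \Cref{immlem} shows $X_0$ is $B$-immune, and moreover $X_0\cup(X_0+1)$ is $B$-immune, since any infinite $B$-computable $C\subseteq X_0\cup(X_0+1)$ yields the infinite $B$-computable set $\{\lfloor c/2\rfloor:c\in C\}$ of prefix-encodings of $A$, whence $A\le_T B$. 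Choose computable block boundaries $s_0=0$, $s_{k+1}=f(k\cdot s_k)+k+2$, so that $s_k/s_{k+1}\to 0$ and $f^{-1}(s_{k+1}-1)\ge k\cdot s_k$. With $R$ a Martin-L\"of random and $k(n)=\max\{k:s_k\le n\}$, define
$$X(n)=R(n-s_{k(n)})\cdot 1_{X_0}(k(n)),$$
so the block $[s_k,s_{k+1})$ of $X$ is an R-block when $k\in X_0$ and a 0-block otherwise.

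For $\dim_{si\Delta^0_1(A)}(X)=1$, I use the $A$-computable infinite set $\{s_{k+1}:k\in X_0\}$: at each such point, the incompressibility of $R$ together with the trivial bound $K(X\upto s_k)\le^+ 2s_k$ give $K(X\upto s_{k+1})\ge^+ s_{k+1}-3s_k$, so $K(X\upto n)/n\to 1$ along this set and \Cref{seqlem} applies. For $\dim_{si\Delta^0_1(B)}(\Phi^X)=0$ (when $\Phi^X$ is total), let $M$ be $B$-computable infinite and put $N=\{f(m-1)+1:m\in M\}$; then $\{k(n):n\in N\}$ is $B$-computable and infinite, so the refined immunity of $X_0\cup(X_0+1)$ yields infinitely many $n\in N$ with $k(n),k(n)-1\notin X_0$. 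For each such $n$, both blocks $k(n)-1$ and $k(n)$ are 0-blocks, so $X\upto n$ equals $X\upto s_{k(n)-1}$ followed by zeros; since $n$ is computable from $m$, the wtt-bound yields
$$K(\Phi^X\upto m)\le^+ K(X\upto n)+2\log m\le^+ 2s_{k(n)-1}+O(\log m).$$
Dividing by $m\ge(k(n)-1)s_{k(n)-1}$ produces $K(\Phi^X\upto m)/m\le 2/(k(n)-1)+o(1)\to 0$, so $\inf_{m\in M}K(\Phi^X\upto m)/m=0$, as required.

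The main obstacle is the refinement of \Cref{immlem}: the wtt-bound forces the complexity estimate to reach one block further back (from $s_{k(n)}$ to $s_{k(n)-1}$) in order to be small relative to $m$, which in turn requires not just $X_0$ but $X_0\cup(X_0+1)$ to be $B$-immune. The doubling encoding $X_0=\{2\,\mathrm{enc}(A\upto n)\}$ is engineered precisely so that $c\mapsto\lfloor c/2\rfloor$ collapses $X_0\cup(X_0+1)$ onto the prefix-encoding set, where the original argument of \Cref{immlem} applies.
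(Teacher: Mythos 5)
Your argument is correct, and while it shares the paper's overall architecture (a segment construction driven by a $B$-immune, $A$-computable set of prefix-codes, with \Cref{seqlem} giving the dimension-one half), it resolves the central difficulty differently. Both proofs run into the same obstacle: the wtt-use forces the complexity estimate at a point of a $0$-block to reach back past the \emph{previous} block boundary, so one needs the immune set to supply runs of consecutive indices outside it. The paper engineers runs of length three via the triple join $S=\bigoplus_{i=0}^2 S_0$ and keeps two interleaved scales, $\lambda_k$ on the output side and $\ell_k=\min\{2^{j^2}\mid g(\lambda_k)<2^{j^2}\}$ on the oracle side; the estimate reaches back to $\ell_{a-2}$ and the decay comes from $\ell_{a-2}/\ell_{a-1}\to 0$, which the $2^{j^2}$-type growth builds in. You instead engineer runs of length two via the doubled encoding (so that $X_0\cup(X_0+1)$ is $B$-immune), keep a single oracle-side scale $s_{k+1}=f(k\,s_k)+k+2$, and extract the decay from the explicit factor $k$: monotonicity of $f$ pulls $n=f(m-1)+1\ge s_{k(n)}$ back to $m>(k(n)-1)\,s_{k(n)-1}$, giving $K(\Phi^X\upto m)/m\le 2/(k(n)-1)+o(1)$ with only one block of reach-back. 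Your bookkeeping is lighter (no $\lambda_k$/$\ell_k$ interleaving), at the cost of a slightly more ad hoc immunity lemma; the paper's version has the advantage of reusing its Double Segment Lemma machinery essentially verbatim. Two small points to tidy: $\max(f(\cdot),\mathrm{id})$ gives $f(m)\ge m$ but not monotonicity, so take $\hat f(m)=m+\max_{i\le m}f(i)$ to justify the step ``$f(m-1)>f((k-1)s_{k-1})$ implies $m-1>(k-1)s_{k-1}$''; and in the chain $K(X\upto n)\le^+ 2s_{k(n)-1}+O(\log m)$ one should note explicitly that the length $n$ of the zero-padded string is recovered from $m$ (so costs $K(m)$, not $K(n)$), since $\log n$ need not be $o(m)$ when $f$ grows fast.
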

That is, Turing irreducibility of degrees implies wtt-irreducibility of $si$-dimensions. It will be illuminating to consider a proof sketch first, to illustrate the ideas at play.

\emph{Proof Sketch:} Fix a $wtt$-reduction $\Phi$ with use bounded by $g(n)$. We wish to construct segments $[\Lambda_k, \Lambda_{k+1}]$ of length $\lambda_k$ in $\Phi^X$ based on use-segments $[L_k, L_{k+1}]$ of length $\ell_k$ in $X$. That is, the $L_k$ are chosen so that each segment is much longer than those that have come before, such that $g(n)\leq L_k$ for $n\leq \Lambda_{k}$, and that $\lambda_{k+1}$ is much longer than $L_k$. These requirements are all computable, as $g$ is.

For $X$, we fill use-segments in alternating fashion just as in the previous proof --- even segments are filled with $0$, and odd segments are filled with $0$s or \MLR\ bits according to what $S$ prescribes. We imagine $\Phi$ as an antagonist, trying to fill $\Phi^X$ with as much complexity as possible in the hopes of attaining a non-zero infimum on \emph{some} $B$-recursive infinite set.

For the first segment, $\Phi$ only has access to $0$s, so despite its best efforts it cannot push up complexity at all. However, as soon as some use-segment is filled with random bits, $\Phi$ takes full advantage of this, pushing complexity up as high as it likes (as the length of the segment provides at least enough random bits to choose from). Once some randomness has appeared above, $\Phi$ can try to access it when it is otherwise stuck with zeroes in the latest use segment --- it still has access to the same random bits it has already used. But here the requirement that $\lambda_{k+1}$ is much longer than $L_k$ comes in: $\Phi$ tries to fill a tremendous number of entries with randomness, but only has access to a small number of random bits. Despite $\Phi$'s best efforts, the final complexity cannot be that high, as $\Phi$'s use is computable and we can hard-code these random bits for a small cost relative to the number of bits in the segment.

Even in this worst-case scenario where $\Phi$ is playing against us, in a sense it can at best match the pattern of the segments in $\Phi^X$ to the pattern in $X$. Defining $X$ via an $A$-computable, $B$-immune set thus ensures that $\dim_{si\Delta^0_1(B)}(\Phi^X) = 0$.

For the actual proof, we assume a general $\Phi$ with unknown (rather than antagonistic) motives, and formally carry out the proof by contradiction:

	\begin{proof}[Proof of \Cref{tt}]
		Let $A\not\le_T B$, and let $\Phi$ be a wtt-reduction. Let $f$ be a computable bound on the use of $\Phi$, and define $g(n) = \max\{f(i)\mid i\le n\}$, so that $K(\Phi^X\upto n)\le^+ K(X\upto g(n)) + 2\log(n)$. For notational clarity, for the rest of this proof we will denote inequalities that hold up to logarithmic (in $n$) terms as $\leq^{\log}$.
		
		Next, we define two sequences $\ell_k$ and $\lambda_k$ which play the role $2^{k^2}$ played in previous constructions:
		\begin{align*}
			\ell_0 = \lambda_0 = 1, &&\lambda_k = \lambda_{k-1} +  \ell_{k-1}, && \ell_k = \min\left\{2^{n^2}\mid g(\lambda_{k})<2^{n^2}\right\}.
		\end{align*}
	    These definitions have the useful consequence that $\lim_k \ell_{k-1}/\ell_{k} = 0$. To see this, suppose $\ell_{k-1} = 2^{(n-1)^2}$. As $g$ is an increasing function, the definitions give
		$$\ell_k>g(\lambda_{k}) \ge \lambda_{k}=\lambda_{k-1} + \ell_{k-1}\ge \ell_{k-1}=2^{(n-1)^2}.$$
		Hence $\ell_{k}\ge 2^{n^2}$, so that $\ell_{k-1}/\ell_k \le 2^{-2n+1}$. As $\ell_k>\ell_{k-1}$ for all $k$, this ratio can be made arbitrarily small, giving the limit.

        A triple recursive join operation is defined by
        \[
        \bigoplus_{i=0}^2 A_i = \{3k+j \mid k\in A_j,\quad 0\le j\le 2\},\quad A_0,A_1,A_2\subseteq\omega.
        \]
		
		Let $S_0\leq_T A$ be as guaranteed by \Cref{immlem}, and define $S=\bigoplus_{i=0}^2 S_0$.
		Let $R$ be \MLR, and define $X(n) = R\left(n - \ell_{k_n}\right) \cdot 1_{S}(k_n)$, where $k_n = \max\{k = 2 \pmod 3\mid \ell_k\le n\}$. This definition takes an unusual form compared to the previous ones we have seen in order to handle the interplay between $\lambda_k$ and $\ell_k$ --- specifically the growth rate of $g(n)$.
	
		\noindent \textbf{Claim 1:} $\dim_{si\Delta^0_1(A)}(X) = 1$.
		
		\noindent \emph{Proof:} As $N= \left\{\ell_k\right\}_{k\in S}$ is an $A$-computable set, by the Sequence Lemma \ref{seqlem} it suffices to show that
		\(
			\lim_{k\in S} K(X\upto \ell_k)/\ell_k = 1.
		\)
		For $\ell_k\in N$,
		\begin{align*}
			K(X\upto \ell_k)&\ge^+ K(X[\ell_{k-1}, \ell_{k}]) - K(X\upto \ell_{k-1})\tag{subadditivity}\\
			&\ge^+ K(R\upto(\ell_k-\ell_{k-1})) - 2\ell_{k-1} \tag{$k\in S$}\\
			&\ge^+ \ell_k - \ell_{k-1} - 2\ell_{k-1} \tag{$R$ is \MLR}\\
			\dfrac{K(X\upto \ell_k)}{\ell_k}&\ge^+ \dfrac{\ell_k - 3\ell_{k-1}}{\ell_k}= 1 - 3\dfrac{\ell_{k-1}}{\ell_k}.
		\end{align*}
		which gives the desired limit by the above.
		
		\noindent\textbf{Claim 2:} If $\Phi^X$ is total, $\dim_{si\Delta^0_1(B)}(\Phi^X) = 0$.
		
		\noindent \emph{Proof:} Suppose $N\leq_T B$. By mimicking the proof of Lemma \ref{dslem}, we can use the $B$-immunity of $S$ to show that there are infinitely many $n\in N$ such that $g(n)$ is in a $0$-segment following two $0$-segments.
		For such an $n$, define $a = k_{g(n)}$, so that $a-2, a-1, a\not\in S$.
		As $g(n)<\ell_{a+1}$, to compute $\Phi^X\upto n$, it suffices to know $X\upto \ell_{a+1}$. By assumption, $X[\ell_{a-2}, \ell_{a+1}]$ contains only $0$s, so a program that outputs $X\upto\ell_{a-2}$ followed by $0$s until the output is of length $n$ will compute $X\upto\ell_{a+1}$. Thus
		$$K(\Phi^X\upto n)\le^+ K(X\upto \ell_{a-2})+2\log(n)\leq^{\log} 2\ell_{a-2}.$$
        As $g(n)>\ell_{a}$, by the definition of $\ell_a$, $n>\lambda_a$. Dividing by $n$, we find that
		\begin{align*}
		\dfrac{K(\Phi^X\upto n)}{n}\leq^{\log}\dfrac{2\ell_{a-2}}{\lambda_a}=
		 \dfrac{2\ell_{a-2}}{\lambda_{a-1}+\ell_{a-1}}
		 <\dfrac{2\ell_{a-2}}{\ell_{a-1}}.
		 \end{align*}
		As there are infinitely many of these $n$, it must be that $\inf_{n\in N} K(\Phi^X\upto n)/n = 0$.
This holds for every $N\leq_T B$, so taking a supremum gives the result.\end{proof}
\begin{remark} We only consider $si$-dimensions for this theorem, as it is not clear what an appropriate analogue for $is$-dimensions would be. The natural dual statement for $is$-dimensions would be that for all reductions $\Phi$ there is an $X$ such that $\dim_{is\Delta^0_1(A)}(X)=0$, and either $\Phi^X$ is not total or $\dim_{is\Delta^0_1(B)}(\Phi^X)=1$.
But many reductions use only computably much of their oracle, so that $\Phi^X$ is a computable set.
This degenerate case is not a problem for the $si$ theorem, as its conclusion requires $\dim_{\Delta^0_1(B)}(\Phi^X)=0$. But for an $is$ version, it is not even enough to require that $\Phi^X$ is not computable: consider the reduction that repeats the $n$th bit of $X$ $2n-1$ times, so that $n$ bits of $X$ suffice to compute $n^2$ bits of $\Phi^X$.
Certainly $\Phi^X\equiv_{wtt} X$, so that $\Phi^X$ is non-computable iff $X$ is.
But 
\begin{align*} \dfrac{K(\Phi^X\upto n)}{n}\leq^+\dfrac{K(X\upto \sqrt{n})}{n}\leq^+\dfrac{2\sqrt{n}}{n}
\end{align*}
for all $n$, so that $\dim_p(\Phi^X) = 0$, and hence all other dimensions are 0 as well.
\end{remark}
\section{Failure of the Converse of the Sequence Lemma}
\label{sec:properties}

Recall the Sequence Lemma for the inescapable dimension:
\begin{oldlemma}[\ref{seqlem}] If there is an $N\in\Delta^0_1$ such that $\displaystyle \lim_{n\in N} \dfrac{K(X\upto n)}{n} $, then $\dim_i(X) = 0$.
\end{oldlemma}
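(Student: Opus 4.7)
The final statement is simply the Sequence Lemma, Lemma \ref{seqlem}(ii), specialized from an arbitrary downward-closed class $\mathfrak{B}$ to the particular class $\mathfrak{D}$ of infinite $\Delta^0_1$ subsets of $\omega$. My plan is therefore to reduce directly to the more general lemma already in hand, which does all the work.

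First I would verify the two hypotheses needed. The class $\mathfrak{D}$ is closed under finite-to-one reducibility: given $N\in\mathfrak{D}$ and a total computable $f:\omega\to\omega$ with finite fibers, the preimage $f^{-1}(N)$ is again $\Delta^0_1$ (one simply computes $f(n)$ and asks whether it lies in $N$) and infinite (since $N$ is infinite and the fibers of $f$ are finite). Next, unraveling Definitions \ref{ines} and the definition of $\dim_{is\mathfrak{B}}$, one sees that $\dim_i(X) = \dim_{is\mathfrak{D}}(X)$ literally by definition. So enumerating the hypothesized $N$ in increasing order as $\{n_k\}_{k\in\omega}$ and applying Lemma \ref{seqlem}(ii) with $\mathfrak{B}=\mathfrak{D}$ yields $\dim_{is\mathfrak{D}}(X)=0$, which is $\dim_i(X)=0$.

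If one prefers a self-contained argument, the strategy is to mimic the proof of the Sequence Lemma in this specific setting. Consider the tail sets $N_m = N\setminus\{n_0,\ldots,n_{m-1}\}$; each $N_m$ is still $\Delta^0_1$ (having removed only finitely many elements) and hence a member of $\mathfrak{D}$. The convergence hypothesis then guarantees that for any $\varepsilon>0$ there is some $m$ beyond which $K(X\upto n_k)/n_k<\varepsilon$, so $\sup_{n\in N_m}K(X\upto n)/n\le\varepsilon$. Taking the infimum over $m$ gives $\inf_m\sup_{n\in N_m}K(X\upto n)/n=0$, and since the infimum over the subfamily $\{N_m\}$ is an upper bound for the infimum over all of $\mathfrak{D}$, we conclude $\dim_i(X)=0$.

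The main obstacle is purely bookkeeping: one must track that removing finitely many elements from a $\Delta^0_1$ set leaves it $\Delta^0_1$, and that restricting an infimum to a subfamily gives an upper (not lower) bound on the original. Neither presents any substantive difficulty, since the conceptual content has already been absorbed into Lemma \ref{seqlem}; this restatement is really a convenient shorthand for the reader in preparation for the discussion of the failure of its converse.
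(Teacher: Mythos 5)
Your proposal is correct and matches the paper: this \emph{oldlemma} is just the Sequence Lemma (\ref{seqlem}(ii)) specialized to $\mathfrak{B}=\mathfrak{D}$, and your self-contained version with the tail sets $N_m$ is exactly the ``similar'' dual of the paper's proof of part (i). The only nitpick is that $f^{-1}(N)$ need not be infinite for arbitrary finite-to-one $f$; but since $\mathfrak{B}$ is by definition a class of \emph{infinite} sets, downward closure only concerns infinite preimages, and for the tail sets in question this is immediate.
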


It is important to note that this is not a characterization of the inescapable dimension. It is possible that no single computable set witnesses complexity going all the way to zero (even as an infimum), while complexity $<\eps$ can always be computably witnessed.
\theorem\label{witness} There is a real with $\dim_i(Y) = 0$ such that for any $N\in\Delta^0_1$, $\displaystyle\lim_{n\in N} \dfrac{K(Y\upto n)}{n}\neq 0$.
\begin{proof}
For strings $\sigma$, say that $\sigma$ is the $\ell(\sigma)$th element of the lexicographic order of $2^{<\omega}$.

Define $s_k=2^{k^2}$, $k_n =\max\left\{k\in\omega : 2^{k^2}\leq n\right\}$, and let $R$ be Martin-L\"of random. For each $k$, let $A_k = \{kn\mid n\in\omega\}$, and define $R_k$ by replacing the $n$th $1$ in $A_k$ with the $n$th bit of $R$.\footnote{In the notation of \Cref{oplus}, $R_k=R\oplus_{A_k} \emptyset$.}
For $n\in\omega$, let $\sigma_n$ be the string such that $k_n = \langle m, \ell(\sigma_n)\rangle$ for some $m$. Finally define
$$Y(n) = \begin{cases}
R_{|\sigma_n|}(n - s_{k_n})& \sigma_n\prec R
\\ R(n)&\sigma_n\not\prec R
\end{cases}$$
That is, start with a random $R$, and build a ``semirandom" string but replace bits $n$ such that $\sigma_n\prec R$ with bits from $R_{|\sigma_n|}$. 
For notation, call the bits $Y[s_{k_n}, s_{k_n+1}]$ a $\sigma_n$-segment, where $k_n = \langle m, \ell(\sigma_n)\rangle$. 

\noindent\textbf{Claim 1:} $\dim_i(Y) = 0$.

\noindent\emph{Proof:} To the nearest integer, $A_k\upto n$ contains $n/k$ 1s. So as $A_k$ is computable, to describe $R_k\upto n$ it suffices to know the first $n/k$ bits of $R$.  
Hence 
$K(R_k\upto n)\leq^+ K(R\upto n/k)\leq^+2n/k$.

Fix a $\sigma\prec R$. Following the derivation of \Cref{using2} in the proof of \Cref{incomp}, the right enpoints $n$ of sufficiently large $\sigma$-segments have\footnote{As in \Cref{using2}, this is technically up to a vanishing error term, which we leave off here for notational clarity.} that $K(Y\upto n)/n\leq |\sigma|$.  As $\sigma$ is computable, these large enough right endpoints form a computable set $N_\sigma$. Thus $$\dim_i(Y) = \inf_{N\in\Delta^0_1}\sup_{n\in N}\dfrac{K(Y\upto n)}{n} \leq \inf_{\sigma\prec R}\sup_{n\in N_\sigma}\dfrac{K(Y\upto n)}{n}\leq \inf_{\sigma\prec R}\sup_{n\in N_\sigma}\dfrac{n}{n|\sigma|}\leq \inf_{\sigma\prec R}\dfrac{1}{|\sigma|}=\inf_{n>0}\dfrac{1}{n}=0.$$

For notation, let $\tau_n$ be the lexicographic predecessor of $\sigma_n$.

\noindent \textbf{Claim 2:} For any $\eps>0$, for large enough $n$, if $K(Y\upto n)/n<1-\eps$, then $\sigma_n\prec R$ or $\tau_n\prec R$.

\noindent \emph{Proof:}
For contraposition, let $n$ be in a $\sigma$ segment following a $\tau$ segment such that $\sigma, \tau\not\prec R$ (so that both segments are filled with random bits). Let $s_\ell$ be the right endpoint of the longest semirandom segment of $Y\upto n$. By the definition of $n$, $k_n\geq\ell+1$, so $n\geq s_{k_n}\geq s_{\ell +1}>s_\ell$.
\begin{align*}
K(Y\upto n)&\geq^+K(Y[s_\ell, n]) - K(Y\upto s_\ell) &\text{property of Kolmogorov complexity}\\
&=K(R[s_\ell,n]) - K(Y\upto s_\ell)&\text{definition of $Y$} \\
&\geq^+ K(R\upto n)-K(R\upto s_\ell)-K(Y\upto s_\ell) &\text{property of Kolmogorov complexity}
\end{align*}
As $R$ is \MLR, $K(R\upto n)\geq^+ n$. For any string $\sigma$, $K(\sigma)\leq^+ 2|\sigma|$. Therefore
\begin{align*}
K(Y\upto n)&\geq^+n-4s_\ell\\
\dfrac{K(Y\upto n)}{n}&\geq 1 - 4\dfrac{s_\ell}{s_{\ell+1}} + \mathcal{O}(1/n)\\
&\geq 1 - 2^{-2\ell + 1}+\mathcal{O}(1/n)
\end{align*}
For any $\eps$, this can be made to be greater than $1-\eps$ for sufficiently large $n$.

\noindent \textbf{Claim 3:} There is no computable set $N$ such that $\displaystyle \lim_{n\in N}\dfrac{K(Y\upto n)}{n}=0$.

\noindent \emph{Proof:} Suppose towards a contradiction that such an $N$ exists. Let $\eps>0$. By Claim 2 and the convergence of $K(Y\upto n)/n$, let $M$ be large enough that for $n>M$, $K(Y\upto n)<\eps n$ and one of $\sigma_n$ or $\tau_n$ is a prefix of $R$. Note that as  $\tau_n<_{lex}\sigma$, by looking at longer $\sigma_m$ and $\tau_m$ we can decide which of $\sigma_n$ or $\tau_n$ is a prefix of $R$.\pagebreak

Suppose $\sigma_n\prec R$ infinitely often. Write $k$ for $k_n$ and $\sigma$ for $\sigma_n$ for ease of notation.
We have that \begin{align*}
|\sigma|^{-1}(n-s_{k})\leq^+K(R\upto |\sigma|^{-1}(n-s_{k}))\leq^+K(R_{|\sigma|}\upto (n-s_{k}))=K(Y[s_{k}, n])\leq^+K(Y\upto n)< \eps n.
\end{align*}
The first inequality follows from our definition of Martin-L\"of randomness. For the second, $m$ bits of $R_k$ can be used to recover $m/k$ bits of $R$ by looking at every $k$th bit. The equality is the definition of $Y$, and for the penultimate inequality, $s_{k_n}$ can be obtained computably from $n$. The final strict inequality is by hypothesis. Rearranging slightly, $|\sigma|^{-1}n\leq^+ \eps n + |\sigma|^{-1}s_k$.

We can also compute
\begin{align*}
s_k - 2s_{k-1}\leq^+ K(R\upto s_k) - K(R\upto s_{k-1})\leq^+ K(R[s_{k-1}, s_k])= K(Y[s_{k-1}, s_k])\leq^+ K(Y\upto n)<\eps n.
\end{align*}
Here the first inequality uses the definition of Martin-L\"of randomness, and the $K(\sigma)\leq^+2|\sigma|$ complexity upper bound. The second inequality is a property of prefix-free Kolmogorov complexity, and the equality is the definition of $Y$. Finally $s_{k-1}$ and $s_k$ can be computed from $n$, for the penultimate inequality. Rearranging, $s_k \leq^+ \eps n + 2s_{k-1}$.

Combining the rearranged inequalities, we have that  \begin{align*}
    |\sigma|^{-1}n
    \leq^+ \eps n + |\sigma|^{-1}s_k
    &\leq^+ \eps n + |\sigma|^{-1}(\eps n + 2s_{k-1}),
    \end{align*}
    so that a bit of algebra gives\begin{align*}
    |\sigma|^{-1}n(1 - \eps-2s_{k-1}/n)&\leq^+ \eps n.
\end{align*}
As $n>s_k$, $2s_{k-1}/n$ is less than $2s_{k-1}/s_k = 2^{-2k+2}$. So
\begin{align*}
|\sigma|^{-1}n(1 - \eps-2^{-2k+2})&\leq^+ \eps n.
\end{align*}
As $n$ increases, so does $k_n$, so by shrinking $\eps$, $1 - \eps-2^{-2k+2}$ can be made as close to 1 as needed. This forces $|\sigma_n|^{-1}<\eps$, so that $N$ computes arbitrarily long prefixes of $R$. As $N$ is computable, given $M$ we can recover arbitrarily long prefixes of $R$, and hence $R$.

If instead $\sigma_n\prec R$ only finitely often, then for large enough $n$, $\tau_n\prec R$. So ``shift'' the $\tau_n$ somewhat: define $\hat{N}= \{s_{k_n}-1\mid n\in N\}$. By definition, coinfinitely many $\sigma_n\prec R$, so $R\leq_T \hat{N}\leq_T N$.

In either case, $R$ is now computable, a contradiction.
\end{proof}
\section{The Recursively Bounded $\Pi^0_1$ Case}
\label{sec:rbpi}
Recall \Cref{delembedding}: $A\leq_T B$ iff for all $X\in 2^\omega, \dim_{si\Delta^0_1 (A)} (X) \leq \dim_{si\Delta^0_1 (B)} (X)$. We would like to establish a similar ``if and only if" theorem for $\Pi^0_1$ dimensions:

\begin{conjecture} $A\leq_T B$ iff for all $X\in 2^\omega, \dim_{si\Pi^0_1 (A)} (X) \leq \dim_{si\Pi^0_1 (B)} (X)$.\end{conjecture}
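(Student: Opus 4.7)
The forward direction $(\Rightarrow)$ is immediate: if $A \leq_T B$ then every $A$-co-c.e.\ set is $B$-co-c.e., so $\Pi^0_1(A) \subseteq \Pi^0_1(B)$, and the supremum defining $\dim_{si\Pi^0_1(B)}(X)$ is taken over a larger family than that for $\dim_{si\Pi^0_1(A)}(X)$.

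For the reverse direction I would argue by contrapositive and mimic the proof of \Cref{delembedding}. Assume $A \not\leq_T B$; the goal is to produce $X$ with $\dim_{si\Pi^0_1(A)}(X) = 1$ and $\dim_{si\Pi^0_1(B)}(X) = 0$. The natural template is that of \Cref{pidelsepsi}: locate an infinite set $S_0 \in \Sigma^0_1(A)$ whose complement $\overline{S_0}$ is $\Pi^0_1(B)$-immune, put $S = S_0 \oplus S_0$, and define $X(n) = R(n - 2^{k_n^2}) \cdot 1_{\overline{S}}(k_n)$ for a \MLR\ $R$ and the odd $k_n$ from the Double Segment Lemma. Given such an $S_0$, the remainder is mechanical: \Cref{dslem}(i) with $\mathfrak{B} = \Pi^0_1(B)$ delivers $\dim_{si\Pi^0_1(B)}(X) = 0$, and because $S \in \Sigma^0_1(A)$ the set $M = \{2^{k^2} : k-1 \in \overline{S}\}$ of R-segment right endpoints lies in $\Pi^0_1(A)$, on which the usual calculation forces $K(X \upto m)/m \to 1$, so \Cref{seqlem}(i) yields $\dim_{si\Pi^0_1(A)}(X) = 1$.

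The hard part will be supplying the required $\Pi^0_1$ analog of the Immunity Lemma: if $A \not\leq_T B$, there is $S_0 \in \Sigma^0_1(A)$ with $\overline{S_0}$ infinite and $\Pi^0_1(B)$-immune. In the $\Delta^0_1$ setting of \Cref{immlem} one takes the set of codes for prefixes of $A$ and observes that an infinite $B$-computable subset lets $B$ reconstruct $A$ directly. The naive adaptation fails here: a $B$-co-c.e.\ chain of prefixes of $A$ can be approximated from above using $B$, but deciding which candidate prefix is never discarded is itself a $\Pi^0_1(B)$ question, so the argument only delivers $A \leq_T B'$. A successful proof would need either a cleverer encoding of $A$ into a $\Sigma^0_1(A)$ set, or an appeal to the structural theory of $\Pi^0_1$-immune sets developed in \Cref{ch:pi}.

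It also remains possible that the conjecture itself must be adjusted — strengthening the hypothesis to $A \not\leq_T B'$ would make the prefix construction go through, while restricting attention to degrees that co-enumerate some $\Pi^0_1$-immune set avoids the obstruction entirely. Clarifying which of these refinements (if any) is forced is, in my view, the right way to attack the problem, and it is precisely why the investigation of $\Pi^0_1$-immune sets in the next chapter is warranted.
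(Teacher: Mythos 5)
This statement is an open conjecture in the paper, not a theorem, and your proposal reproduces the paper's own assessment of it almost exactly: the forward direction is trivial as you say, and the reverse direction reduces to precisely the $\Pi^0_1$ Immunity Lemma you isolate (\Cref{PiImLem}), which the paper shows is in fact logically \emph{equivalent} to the conjecture and which is the stated motivation for the study of $\Pi^0_1$-immunity in \Cref{ch:pi}. Your observation that the naive prefix-of-$A$ argument only delivers $A\leq_T B'$ matches the paper's weak theorem to the same effect, and your construction template (a $\Sigma^0_1(A)$ set $S_0$ with $\overline{S_0}$ being $\Pi^0_1(B)$-immune, fed into \Cref{dslem} and \Cref{seqlem}) is exactly how the paper would complete the proof were the lemma available.
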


However, there is a central difficulty in adapting the proof: the notion of reals immune to $\Pi^0_1(B)$ sets. Before considering a different setting to avoid this problem, note that we at least get a weak result fairly easily:
\begin{theorem}If for all $X\in 2^\omega, \dim_{\Pi^0_1(A)}(X)\leq \dim_{\Pi^0_1(B)}(X)$, then $A\leq_T B'$.\end{theorem}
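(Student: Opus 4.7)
The plan is to mirror the proof of the $si$-$\Delta^0_1$ Embedding Theorem (\Cref{delembedding}) in this new setting. I argue by contrapositive: assuming $A \not\leq_T B'$, I construct an $X \in \Cant$ witnessing $\dim_{si\Pi^0_1(A)}(X) = 1$ and $\dim_{si\Pi^0_1(B)}(X) = 0$, directly contradicting the hypothesis.

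The key new ingredient is a $\Pi^0_1$ analogue of the Immunity Lemma (\Cref{immlem}). I claim that whenever $A \not\leq_T B'$, the set $T$ of (codes of) finite prefixes of $A$ is $\Pi^0_1(B)$-immune. Clearly $T \leq_T A$. Any infinite set of prefixes of $A$ in fact computes $A$: to determine $A(m)$, locate any $\sigma \in T$ with $|\sigma| > m$ and read off $\sigma(m)$. By Post's theorem every $\Pi^0_1(B)$ set lies in $\Delta^0_1(B')$, so an infinite $P \subseteq T$ with $P \in \Pi^0_1(B)$ would yield $A \leq_T P \leq_T B'$, a contradiction.

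Having established this, set $S_0 = \overline T$, so that $S_0 \in \Delta^0_1(A) \subseteq \Sigma^0_1(A)$ while $\overline{S_0} = T$ is $\Pi^0_1(B)$-immune. Now build $X$ exactly as in the proof of \Cref{pidelsepsi}: with $S = S_0 \oplus S_0$, $k_n = \max\{k : 2^{k^2} \leq n\}$, and $R$ a \MLR, define $X(n) = R(n - 2^{k_n^2}) \cdot 1_{\overline S}(k_n)$. The right-endpoint set $M = \{2^{k^2} : k-1 \in \overline S\}$ lies in $\Pi^0_1(A)$ because $S \in \Sigma^0_1(A)$, and the standard calculation shows $K(X\upto m)/m \to 1$ along $M$; the Sequence Lemma (\Cref{seqlem}) then delivers $\dim_{si\Pi^0_1(A)}(X) = 1$. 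Meanwhile, the Double Segment Lemma (\Cref{dslem}) applied with $X_0 = S_0$ and $\mathfrak{B} = \Pi^0_1(B)$ yields $\dim_{si\Pi^0_1(B)}(X) = 0$.

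The main obstacle, and the reason this argument produces only $A \leq_T B'$ rather than the $A \leq_T B$ of the full conjecture, is that the immunity step leverages merely the coarse containment $\Pi^0_1(B) \subseteq \Delta^0_1(B')$. Proving the conjecture would seemingly require a more delicate immunity lemma exploiting the $B$-enumerability of complements of $\Pi^0_1(B)$ sets directly, rather than just their Turing reduction to $B'$; this sharpening is precisely the difficulty the subsequent discussion in the section addresses.
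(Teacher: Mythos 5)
Your proof is correct, but it is a genuinely different argument from the one in the paper. The paper disposes of this theorem in four lines by pure monotonicity: $\dim_{si\Delta^0_1(A)}(X)\leq\dim_{si\Pi^0_1(A)}(X)\leq\dim_{si\Pi^0_1(B)}(X)\leq\dim_{si\Delta^0_2(B)}(X)=\dim_{si\Delta^0_1(B')}(X)$, where the outer inequalities come from $\Delta^0_1(A)\subseteq\Pi^0_1(A)$ and $\Pi^0_1(B)\subseteq\Delta^0_2(B)$ and the final equality from relativized Post's theorem; it then cites \Cref{delembedding} as a black box to conclude $A\leq_T B'$. You instead re-run the embedding construction from scratch in the $\Pi^0_1$ setting, first proving a $\Pi^0_1(B)$-immunity lemma for the prefix set of $A$. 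Note that your lemma is exactly \Cref{immlem} applied with $B'$ in place of $B$, followed by the containment $\Pi^0_1(B)\subseteq\Delta^0_1(B')$ --- the same coarse fact driving the paper's inequality chain --- so the two proofs have essentially identical mathematical content; the paper simply factors it through the already-established $\Delta^0_1$ theorem, while you inline the construction. What your version buys is self-containedness, an explicit witness $X$ on which the two $\Pi^0_1$ dimensions take the extreme values $1$ and $0$, and a clean diagnosis (your closing paragraph) of exactly where the argument degrades from $B$ to $B'$, which is indeed the point the surrounding section goes on to develop. One small caution: when invoking \Cref{dslem} you must keep straight which of $S_0$ and $\overline{S_0}$ carries the immunity; your choice ($\overline{S_0}=T$ immune, with the $R$-segments placed according to $T$) matches the paper's own applications in \Cref{pidelsepsi} and \Cref{delembedding}, so the application is sound.
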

\begin{proof}\ \\[-13ex] \begin{align*}
\dim_{si\Delta^0_1(A)}(X) &\leq \dim_{si\Pi^0_1(A)}(X) &\Delta^0_1(A)\subseteq \Pi^0_1(A)\\
&\leq\dim_{si\Pi^0_1(B)}(X) &\text{hypothesis}\\
&\leq\dim_{si\Delta^0_2(B)}(X) &\Pi^0_1(B)\subseteq \Delta^0_2(B)\\
\dim_{si\Delta^0_1(A)}(X)&\leq\dim_{si\Delta^0_1(B')}(X) &\text{relativized Post's Theorem}
\end{align*}
Hence by \Cref{delembedding}, $A\leq_T B'$.\end{proof}

\begin{definition}\label{prin}
The \emph{principal function} of an infinite set $A = \{a_0<a_1<a_2<\cdots\}$ is defined by $p_A(n)= a_n$. For a string $\sigma$, $p_\sigma(n)$ is the position of the $n$th 1 in $\sigma$, and undefined otherwise.
\end{definition}

\begin{definition}
A string $X\in2^\omega$ is \emph{($A$-)computably bounded} if its principal function $p_X$ is bounded above by some ($A$-)computable function $f$ (for all $n$, $p_X(n)\leq f(n)$).
\end{definition}
Write $\widehat{\Sigma}^0_1(A)$ for the $A$-computably bounded $A$-c.e. sets, and similarly $\rbpi$ for the $A$-computably bounded $A$-co-c.e.\ sets. We are motivated to consider these sets by the following observation:
\theorem For all $A\in2^\omega$, $\widehat{\Sigma}^0_1(A) = \Sigma^0_1(A)$.
\begin{proof} The $\subseteq$ inclusion is by definition. For $\supseteq$, we prove the unrelativized version.

Define $X = W_e$, so $X_s = W_{e, s}$. If $X$ is computable, its principal function is computable. If $X$ is not computable, it is infinite, so for each $n$, let $s(n)$ be the least stage when $|X_{s(n)}|\geq n$. For all $s$, elements are never removed from $X_s$, only added, so that $p_{X_{s}}(n)\leq p_{X_{s+1}}(n)$. Thus $p_X(n)\leq \max\{X_{s(n)}\}$, a computable function. \end{proof}

Thus the dual notion to $\Sigma^0_1$ could equally well be taken to be $\Pi^0_1$ or $\rbpi$, depending on the setting. In fact, the two yield distinct notions for dimension. We prove this for the non-relativized, $si$- case:

\theorem There exists an $X$ such that $\dim_{si\Pi^0_1}(X) = 1$ and $\dim_{si\rbpi}(X) = 0$.
\begin{proof} The template of \Cref{pidelsepsi} works here, now using a hypersimple set\footnote{A c.e.~set with hyperimmune (\Cref{hyperimmune}) complement. Every non-computable c.e.~degree contains one \cite{Dekker}.
}.\end{proof}
In this setting, obtaining separation results is as easy as it was for $\Pi^0_1$ dimensions:
\theorem There exists $X$ with $\dim_{si\rbpi}(X) = 1$ and $\dim_{si\Delta^0_1}(X) = 0$.
\begin{proof} Follow \Cref{pidelsepsi} using a simple but not hypersimple set\footnote{Such sets can also be found in every non-computable c.e.\ degree \cite{Yates}} $S_0$.\end{proof}
\theorem There exists $X$ with $\dim_{si\rbpi}(X) = 0$ and $\dim_{si\Delta^0_2}(X) = 1$.
\begin{proof} Since $\rbpi\subseteq \Pi^0_1$, this is a corollary of the $n=1$ case of \Cref{pidelsepsin+1}.\end{proof}
\lemma[\textbf{$\rbpi$ Immunity Lemma}]\label{pihatembedding} If $A\nleq_T B$, there is a $\rbpi(B)$-immune $S\leq_T A$.
\begin{proof}  Let $S$ be the set of prefixes of $A$. Suppose $S$ contains a $B$-co-c.e.~set $C$ that is $B$-computably bounded by $f$. To compute $A(n)$ from $B$, compute $f(n)$ and co-enumerate $C$. The computably bounded condition guarantees that there will be at least $n$ distinct $\sigma_i\in C$ less than $f(n)$ which are never enumerated out, so we can run the co-enumeration until the strings of size $|\sigma|<f(n)$ form a linear order under $\subseteq$. As $C\subseteq S$, these $\sigma_i$ are distinct prefixes of $A$, so they have different lengths. Hence the longest is at least $n$ bits long, giving $A(n)$. Now $A\leq_T B$. Contrapose.\end{proof}
This lemma allows us to establish the following, the desired analogue \Cref{delembedding}:

\begin{theorem}[\textbf{$\rbpi$ Embedding Theorem}]Let $A,B\in \Cant$. Then  $A\le_T B$ iff for all $X\in \Cant, \dim_{si\rbpi (A)} (X) \leq \dim_{si\rbpi (B)} (X)$.
\end{theorem}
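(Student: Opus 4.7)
The forward direction is immediate: if $A \leq_T B$, then every $A$-co-c.e.~set is $B$-co-c.e., and every $A$-computable bound on a principal function is also $B$-computable, so $\rbpi(A) \subseteq \rbpi(B)$. Hence the supremum defining $\dim_{si\rbpi(B)}(X)$ is taken over at least as large a family as that defining $\dim_{si\rbpi(A)}(X)$.

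For the reverse direction, I would argue by contrapositive, following the template of the $\Delta^0_1$ Embedding Theorem (\Cref{delembedding}) and \Cref{pidelsepsi}. Suppose $A \not\leq_T B$. The $\rbpi$ Immunity Lemma (\Cref{pihatembedding}) yields $T_0 \leq_T A$ that is $\rbpi(B)$-immune; set $S_0 = \overline{T_0}$, so that $\overline{S_0}$ is $\rbpi(B)$-immune while $S_0 \leq_T A$. With $S = S_0 \oplus S_0$, $R$ Martin-L\"of random, and $k_n = \max\{k \mid 2^{k^2} \leq n\}$, define
\[
X(n) = R(n - 2^{k_n^2}) \cdot 1_{\overline S}(k_n),
\]
exactly as in \Cref{pidelsepsi}. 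The set $M = \{2^{k^2} : k - 1 \in \overline S\}$ of right endpoints of $R$-segments is $A$-computable (since $S \leq_T A$), hence lies in $\rbpi(A)$; by the same complexity calculation that produced (\ref{using}), $\lim_{m \in M} K(X\upto m)/m = 1$, so the Sequence Lemma (\Cref{seqlem}) gives $\dim_{si\rbpi(A)}(X) = 1$. Applying the Double Segment Lemma (\Cref{dslem}) with $\mathfrak{B} = \rbpi(B)$ and $X_0 = S_0$ then gives $\dim_{si\rbpi(B)}(X) = 0$, yielding the desired separation.

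The main technical point, and the only real departure from earlier proofs, is verifying that $\rbpi(B)$ is a class of infinite sets downward closed under the operations used by the two lemmata. Closure under removing finitely many elements (required by the Sequence Lemma) is clear, since both $B$-co-c.e.-ness and the $B$-computable bound on the principal function are preserved. The Double Segment Lemma's proof additionally requires that the image of a member of $\rbpi(B)$ under the computable finite-to-one map $n \mapsto \lfloor k_n/2 \rfloor$ remains in $\rbpi(B)$: such an image is $\Pi^0_1(B)$ because $m$ lies in it iff some $n$ in the finite interval $[2^{(2m)^2}, 2^{(2m+2)^2})$ lies in the original set (a bounded disjunction of $\Pi^0_1(B)$ conditions), and its principal function is $B$-computable by enumerating the original set through its $B$-computable principal function, applying the map, and recording first occurrences of each value.
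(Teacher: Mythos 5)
Your proof follows the paper's route exactly: the forward direction from $\rbpi(A)\subseteq\rbpi(B)$, and the reverse direction by rerunning the construction of \Cref{pidelsepsi} with the set supplied by the $\rbpi$ Immunity Lemma, invoking the Sequence Lemma on the $A$-computable set of right endpoints of $R$-segments and the Double Segment Lemma with $\mathfrak{B}=\rbpi(B)$. You are also right that the one step genuinely requiring new work is showing that the image of an $N\in\rbpi(B)$ under the segment-indexing map $g(n)=\lfloor k_n/2\rfloor$ stays in $\rbpi(B)$, and your argument that this image is $\Pi^0_1(B)$ (a bounded disjunction of $\Pi^0_1(B)$ conditions over each computable fiber) is fine.

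The gap is in the bounding half of that verification. You propose to compute the principal function of the image "by enumerating the original set through its $B$-computable principal function," but membership in $\rbpi(B)$ only supplies a $B$-computable \emph{upper bound} $f$ on $p_N$, not $p_N$ itself; a set whose principal function is actually $B$-computable is $B$-computable, and the whole point of the $\rbpi$ setting is that it is strictly larger than $\Delta^0_1$. Since $N$ is merely co-enumerable you cannot list it in order, and knowing only $p_N\le f$ does not tell you at which index of $N$ a new fiber is first hit: the first $10^{100}$ elements of $N$ could all lie in a single fiber $I_m=\{n:g(n)=m\}$, so "recording first occurrences" gives no computable bound. Your claim is nevertheless true, but the correct reason uses the fact that the fibers are intervals of computable size: if $m_j$ denotes the $j$th element of $g(N)$, then the first element of $N$ in $I_{m_j}$ has index at most $\sum_{l<j}|I_{m_l}|\le j\cdot|I_{m_{j-1}}|$ within $N$, whence $m_j\le g\bigl(f(j\cdot|I_{m_{j-1}}|)\bigr)$; taking $f$ and $g$ monotone and unrolling this recursion produces a $B$-computable $h$ with $p_{g(N)}\le h$. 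With that repair (and noting the fiber of $m$ is $[2^{(2m+1)^2},2^{(2m+3)^2})$, since $k_n$ ranges over odd indices, rather than the interval you wrote), your proof goes through.
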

\begin{proof}\textbf{[$\Ra$]} Immediate, as $\rbpi(A)\subseteq\rbpi(B)$.
\\
\textbf{[$\La$]} Just as in \Cref{delembedding}, using \Cref{pihatembedding} to provide the appropriate immune set.
\end{proof}
\chapter{$\Pi^0_1$-Immunity}\label{ch:pi}
\section{The Motivating Conjecture}
\label{sec:pi0nfail}
While the new $\rbpi$ setting seems to be the correct dual to $\Sigma^0_1$ for our dimension results, it is instructive to examine the difficulty in proving results for the $\Pi^0_1$ case. The desired theorem is

\begin{conjecture}[$\Pi^0_1$ Embedding Theorem]\label{PiEmbedConj} $A\leq_T B$ iff for all $X\in 2^\omega$, $\dim_{si\Pi^0_1(A)}(X)\leq\dim_{si\Pi^0_1(B)}(X)$.
\end{conjecture}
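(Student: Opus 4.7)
The forward direction is immediate: $A\leq_T B$ gives $\Pi^0_1(A)\subseteq\Pi^0_1(B)$, so the supremum over the larger family dominates.

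For the reverse direction, the plan is to follow the templates of the $\Delta^0_1$ Embedding Theorem and the $\rbpi$ Embedding Theorem. Given $A\not\leq_T B$, I would construct a witness $X\leq_T A$ in three steps. First, prove a ``$\Pi^0_1$ Immunity Lemma'' producing a set $S_0\leq_T A$ whose complement is $\Pi^0_1(B)$-immune. Second, form a triple join $S=S_0\oplus S_0\oplus S_0$ and feed it to the Double Segment Lemma relativized to $B$ to produce an $X$ with $\dim_{si\Pi^0_1(B)}(X)=0$. Third, use Claim 1 of the incomparability argument, relativized to $A$, to conclude $\dim_{si\Pi^0_1(A)}(X)=1$, yielding the separation.

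The main obstacle is the Immunity Lemma. In analogy with the Immunity Lemma and the $\rbpi$ Immunity Lemma, the natural candidate is $S_0=\{\langle\sigma\rangle:\sigma\prec A\}$. Suppose $C\subseteq S_0$ is infinite and $\Pi^0_1(B)$; then $C$ is a chain of prefixes of $A$ containing arbitrarily long such prefixes. Since $C\in\Pi^0_1(B)\subseteq\Delta^0_1(B')$, we can enumerate $C$ using $B'$ and wait for some $\sigma\in C$ with $|\sigma|\geq n$; this $\sigma$ must equal $A\upto|\sigma|$, so it recovers $A\upto n$. This yields only $A\leq_T B'$, not $A\leq_T B$, so the argument establishes the conjecture under the stronger hypothesis $A\not\leq_T B'$ — exactly the weak corollary already noted at the start of this section via Post's theorem and the $\Delta^0_1$ Embedding Theorem.

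Closing the gap between $B$ and $B'$ appears to require a fundamentally different $S_0$, one in which any infinite $\Pi^0_1(B)$ subset carries enough \emph{eventually stable} information about $A$ to permit a genuinely co-enumerative reconstruction of $A$ from $B$, rather than one mediated by $B'$. Plausible routes include sparsifying the coding of $A$ so that long gaps between encoded bits let the co-enumeration settle between bits, or interleaving the embedding with forcing against $\Pi^0_1(B)$ dense open sets in the style of the weakly $2$-generic arguments used in the incomparability theorem. I expect this immunity step to carry the entire substance of the conjecture: $\Pi^0_1(B)$ approximations do not converge effectively in $B$, so extracting $A$ from a $\Pi^0_1(B)$ chain of its prefixes seems to demand structural features of $A$ beyond mere Turing-incomparability from $B$.
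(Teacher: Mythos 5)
You have not proved the statement, but neither does the paper: it is stated there as a conjecture and left open, and your analysis lands in essentially the same place the paper does. The easy direction ($\Pi^0_1(A)\subseteq\Pi^0_1(B)$) is right, and your identification of the crux --- a ``$\Pi^0_1$ Immunity Lemma'' asserting that $A\nleq_T B$ yields a $\Pi^0_1(A)$ (or $A$-computable) set that is $\Pi^0_1(B)$-immune --- is exactly the paper's Conjecture on which the whole chapter is predicated. Your observation that the prefix-set candidate $S_0=\{\sigma:\sigma\prec A\}$ only yields $A\leq_T B'$ likewise recovers the paper's weak corollary (which the paper derives slightly differently, via $\Pi^0_1(B)\subseteq\Delta^0_2(B)$ and the relativized Post theorem together with the $\Delta^0_1$ Embedding Theorem).

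One substantive point you miss: the paper proves that the Embedding Theorem conjecture and the Immunity Lemma conjecture are logically \emph{equivalent}, not merely that the lemma suffices. The extra direction is that if every $\Pi^0_1(A)$ set has an infinite $\Pi^0_1(B)$ subset, then for every $X$ the supremum defining $\dim_{si\Pi^0_1(A)}(X)$ is dominated by the one defining $\dim_{si\Pi^0_1(B)}(X)$ (each inner infimum over a $\Pi^0_1(A)$ set is bounded by the infimum over its $\Pi^0_1(B)$ subset); combined with the standard construction this shows one cannot hope to prove the embedding theorem by a route that circumvents the immunity lemma. Your closing speculation that ``the immunity step carries the entire substance of the conjecture'' is therefore not just a heuristic but a theorem, and any genuinely different $S_0$ you propose must in particular witness the unrelativized statement that every non-computable real co-enumerates a $\Pi^0_1$-immune set --- which the paper also leaves open, with partial progress (e.g.\ it holds for any $A$ computing a non-computable c.e.\ set, any non-$\Delta^0_2$ $A$, and any $A$ bounding a CEA set).
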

To prove this in a manner analogous to \Cref{delembedding}, we would need to have

\begin{conjecture}[$\Pi^0_1$ Immunity Lemma]\label{PiImLem} If $A\nleq_T B$, there is an $S\in\Pi^0_1(A)$ such that $S$ is $\Pi^0_1(B)$-immune.
\end{conjecture}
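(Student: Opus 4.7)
The natural attempt is to imitate \Cref{immlem} and \Cref{pihatembedding} by letting $S$ be the set of (codes of) finite prefixes of $A$, which lies in $\Delta^0_1(A) \subseteq \Pi^0_1(A)$, so that every infinite subset is an infinite chain of initial segments of $A$. The game is then to show that any hypothetical infinite $\Pi^0_1(B)$ subset $C = \overline{W^B_e}$ of $S$ lets $B$ compute $A$, contradicting $A \not\leq_T B$.

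The main obstacle is this extraction. With oracle $B$ we obtain the usual co-enumeration $C_s = \omega \setminus W^B_{e,s} \supseteq C$; the true elements of $C$ are pairwise comparable as strings, but the stage-$s$ approximations need not be, and no computable ``deadline'' tells us when every prefix of $A$ of length at most $n$ has settled into $C_s$. The \Cref{pihatembedding} argument used a computable bound on the principal function of $C$ to force stability below $f(n)$; without such a bound, the best characterization I can find is that $A\upto n$ is the unique $\tau \in 2^n$ with infinitely many extensions in $C$, which is a $\Pi^0_2(B)$ condition and only yields $A \leq_T B''$ rather than $A \leq_T B$.

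An alternative I would try is a priority construction that builds $\overline{S}$ directly as a $\Sigma^0_1(A)$ set meeting the requirements $R_e$: $\overline{S} \cap \overline{W^B_e} \neq \emptyset$ whenever $\overline{W^B_e}$ is infinite, subject to a Post-style density condition keeping $S$ infinite. The obstruction here is that satisfying $R_e$ calls for choosing some $n \notin W^B_e$, a $\Pi^0_1(B)$ question the oracle $A$ cannot decide; and using $A \not\leq_T B$ to extract a disagreement $n$ with $\Phi^B_e(n) \neq A(n)$ does not help either, since without $B$ the oracle $A$ cannot simulate $\Phi^B_e$ to witness any disagreement.

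Both roadblocks are precisely the ones that motivated the $\rbpi$ detour in the preceding section, and I suspect they are genuine. Absent a new mechanism for reading $B$-co-c.e.\ behavior with only $A$ as oracle, or for coding $A$ into a $\Pi^0_1(A)$ set so redundantly that even unbounded $\Pi^0_1(B)$ co-enumeration recovers it, I would take seriously the possibility that the conjecture actually fails. A plausible line of attack for a counterexample is to pair a low $A$ with a sufficiently generic $B$, so that $A \not\leq_T B$ yet every infinite $\Pi^0_1(A)$ set is forced to contain an infinite $\Pi^0_1(B)$ subset --- though I do not see a direct construction.
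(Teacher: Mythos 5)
There is no proof in the paper to compare against: the statement you were handed is \Cref{PiImLem}, which the paper states as an open \emph{conjecture} and never proves. Your refusal to manufacture a proof is the correct outcome, and your diagnosis of the obstacles is essentially the paper's own. The paper shows the conjecture is logically equivalent to the $\Pi^0_1$ embedding conjecture (\Cref{PiEmbedConj}), notes that its unrelativized form says every non-computable real co-enumerates a $\Pi^0_1$-immune set (\Cref{PiBNconj}), and then spends \Cref{ch:pi} on partial progress: the conjecture holds when $A$ is not $\Delta^0_2$ (\Cref{BNPiImDelta2}), when $A$ computes a non-computable c.e.\ set (via Lachlan's major subsets, \Cref{ceBPiIm}), and whenever $A$ bounds a CEA or $1$-generic set (\Cref{PiBNPiBnCEA}); it is also shown equivalent to the corresponding statement for weakly $1$-generic degrees (\Cref{w1gconj}). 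The $\rbpi$ detour you point to is precisely the paper's workaround: the computable bound on the principal function restores the extraction step you correctly identified as missing.

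Two refinements to your analysis. First, with $S$ the set of prefixes of $A$, an infinite $\Pi^0_1(B)$ subset $C$ gives $A\leq_T C\leq_T B'$ directly --- $C$ is $\Delta^0_2(B)$, and with $C$ as an oracle one computes $A(n)$ by searching $C$ for a prefix of length exceeding $n$ --- so the prefix approach loses only one jump, not two; this is exactly the paper's weak theorem that the dimension inequality implies $A\leq_T B'$, and it shows any counterexample must have $A\leq_T B'$ while $A\not\leq_T B$. Second, your proposed counterexample direction runs against the paper's stated expectation that the lemma is true; in the unrelativized case the surviving candidates are exactly the non-computable $\Delta^0_2$ degrees bounding no CEA set, and \Cref{lowceBN} (low c.e.\ sets \emph{compute} no $\Pi^0_1$-immune set, yet by \Cref{ceBPiIm} still \emph{co-enumerate} one) is a warning that intuitions about computing do not transfer to co-enumerating. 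Whether a genuinely new mechanism or a counterexample resolves the conjecture remains open in the paper as well.
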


We could cast doubt on the theorem by disproving the lemma, but a priori this would only show that this particular proof technique is flawed: the theorem could be true while the lemma is false. Fortunately, this is not the case:
\begin{theorem} \Cref{PiEmbedConj} and \Cref{PiImLem} are logically equivalent.
\end{theorem}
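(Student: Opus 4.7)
The plan is to prove the two implications separately. The forward direction, Embedding Theorem $\Rightarrow$ Immunity Lemma, is essentially a direct unpacking of the definitions of the $si$-dimensions. The reverse direction, Immunity Lemma $\Rightarrow$ Embedding Theorem, is a close adaptation of the proof of \Cref{delembedding}, built on top of the Double Segment and Sequence Lemmas.

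First I would show that the Embedding Theorem implies the Immunity Lemma. Assume $A\nleq_T B$. By the contrapositive of the nontrivial direction of the Embedding Theorem, there is some $X\in\Cant$ with $\alpha:=\dim_{si\Pi^0_1(A)}(X) > \dim_{si\Pi^0_1(B)}(X)=:\beta$. Fix $\gamma\in(\beta,\alpha)$. By the definition of $\alpha$ as a supremum over $\Pi^0_1(A)$, there is some infinite $N\in\Pi^0_1(A)$ with $\inf_{n\in N}K(X\upto n)/n>\gamma$. I claim $N$ is $\Pi^0_1(B)$-immune: any infinite $N'\sub N$ with $N'\in\Pi^0_1(B)$ would satisfy $\inf_{n\in N'}K(X\upto n)/n\geq\inf_{n\in N}K(X\upto n)/n>\gamma>\beta$, contradicting the definition of $\beta$ as a supremum over $\Pi^0_1(B)$. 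Thus $N$ itself witnesses the conclusion of the Immunity Lemma.

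For the converse, the forward direction of the Embedding Theorem is immediate from $\Pi^0_1(A)\sub\Pi^0_1(B)$. For the reverse direction, I would argue contrapositively: given $A\nleq_T B$, the Immunity Lemma provides a $\Pi^0_1(B)$-immune set $T\in\Pi^0_1(A)$. Set $X_0=\overline{T}\in\Sigma^0_1(A)$ and $X=X_0\oplus X_0$; since $\overline{X_0}=T$ is $\Pi^0_1(B)$-immune, $X_0$ is co-immune to the class $\Pi^0_1(B)$, which is closed under finite-to-one reducibility. Letting $R$ be \MLR\ and $s_k=2^{k^2}$, define $Y(n)=R(n-s_{k_n})\cdot 1_{\overline{X}}(k_n)$. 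Part (\ref{dslem-one}) of the Double Segment Lemma (\Cref{dslem}) with $\mathfrak{B}=\Pi^0_1(B)$ then yields $\dim_{si\Pi^0_1(B)}(Y)=0$. On the other side, the set of right endpoints of $R$-segments is $\{s_{k+1}\mid k\in\overline{X}\}$, which lies in $\Pi^0_1(A)$ since $X\in\Sigma^0_1(A)$, and the analysis from Claim 1 of \Cref{incomp} shows $K(Y\upto n)/n\to 1$ along this set. The Sequence Lemma therefore gives $\dim_{si\Pi^0_1(A)}(Y)=1$, and the dimensional inequality fails for this $Y$, as needed.

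The main point here is conceptual rather than technical: the two statements are tightly dual, in that a $\Pi^0_1(A)$ set witnessing high $si$-dimension is forced to be $\Pi^0_1(B)$-immune precisely when no $\Pi^0_1(B)$ subset can witness the same complexity from below. Once this is noticed, the equivalence is essentially bookkeeping on top of the machinery already developed in this section, and no new construction is required.
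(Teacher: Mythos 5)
Your proposal is correct and takes essentially the same route as the paper: your forward direction is the contrapositive of the paper's chain of inequalities showing that if every infinite $\Pi^0_1(A)$ set had an infinite $\Pi^0_1(B)$ subset then the dimension inequality would follow, and your reverse direction spells out the same Double Segment/Sequence Lemma construction that the paper invokes as ``the usual construction.''
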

\begin{proof} Define the statements\\[-6.5ex] \begin{align*}
X&: A\leq_T B\\
Y&: \left(\forall X\in 2^\omega\right)\ \dim_{si\Pi^0_1(A)}(X)\leq\dim_{si\Pi^0_1(B)}(X)\text{, and}\\
Z&: \left(\forall S\in\Pi^0_1 (A)\right)\left(\exists C\in\Pi^0_1(B)\right)\ C\subseteq S,\\[-6.5ex]
\end{align*}
so that the theorem is $X\Lra Y$, and the lemma is $\lnot X\Ra \lnot Z$. We wish to show
\begin{center}
$(\lnot X\Ra \lnot Z)\Lra (X\Lra Y).$
\end{center}
It's clear that $X\Ra Y$. In the presence of the Immunity \Cref{immlem}, we can prove the embedding theorem by the usual construction, so $(\lnot X\Ra\lnot Z)\Ra(Y\Ra X)$. Thus to prove their equivalence, it suffices to show $(Y\Ra X)\Ra (\lnot X\Ra\lnot Z)$. Tautologically, this is $\lnot X\Ra (Z\Ra Y)$. In fact, $Z\Ra Y$:
\begin{align*}
Z&\Lra\left(\forall S\in \Pi^0_1(A)\right)\left(\exists C\in\Pi^0_1(B)\right)\ C\subseteq S\\
&\Ra \left(\forall X\in 2^\omega\right)\left(\forall S\in\Pi^0_1(A)\right)\left(\exists C\in\Pi^0_1(B)\right) \inf_{n\in S} \dfrac{K(X\upto n)}{n}\leq \inf_{m\in C}\dfrac{K(X\upto m)}{m}\\
&\Ra  \left(\forall X\in 2^\omega\right)\left(\forall S\in\Pi^0_1(A)\right) \inf_{n\in S} \dfrac{K(X\upto n)}{n}\leq \sup_{M\in \Pi^0_1(B)}\inf_{m\in M}\dfrac{K(X\upto m)}{m}\\
&\Lra \left(\forall X\in 2^\omega\right) \sup_{N\in \Pi^0_1(A)} \inf_{n\in N} \dfrac{K(X\upto n)}{n}\leq \sup_{M\in \Pi^0_1(B)}\inf_{m\in M}\dfrac{K(X\upto m)}{m}\\
&\Lra \left(\forall X\in 2^\omega\right)\  \dim_{si\Pi^0_1(A)}(X)\leq\dim_{si\Pi^0_1(B)}(X)\\
Z&\Ra Y\qedhere
\end{align*}
\end{proof}
\vspace{-.5cm}
The full lemma can be viewed as a relativization of the following statement:
\begin{center} If $A$ is not computable, $A$ co-enumerates a $\Pi^0_1$-immune real.
\end{center}
This motivates our study of $\Pi^0_1$-immunity.
\section{$\Pi^0_1$-Immunity and Cohesiveness}

As mentioned in \Cref{sec:separation}, $\Pi^0_1$-immunity (see \Cref{def:im}) is closely related to cohesiveness (see \Cref{pi:coh}). Here we will expand on exactly how.

\definition A coinfinite c.e.\ set $M$ is \emph{maximal} iff for all indices $e$, if $M\subseteq W_e$, then $\overline{W_e}$ is finite or $W_e\setminus M$ is.

This definition comes from considering c.e.\ sets as a lattice under set inclusion, modulo finite differences: a maximal set in the sense above is a maximal element of this lattice.

The following characterization is also commonly used as a definition:
\begin{theorem}\label{cocoh} An infinite c.e. set $M$ is maximal iff its complement is cohesive.\end{theorem}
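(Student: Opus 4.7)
The plan is to prove this by simply unpacking both definitions and observing that they line up after one elementary manipulation. The key bridge between the two notions is the c.e.\ set $W_e \cup M$, which is the natural ``maximality test'' to apply when given an arbitrary $W_e$ for the cohesiveness condition.

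For the forward direction, assume $M$ is maximal and let $W_e$ be arbitrary. The set $W_e \cup M$ is c.e.\ and contains $M$, so maximality says that either $\overline{W_e \cup M}$ is finite or $(W_e \cup M)\setminus M$ is finite. I would then observe the two set identities
\[
\overline{W_e \cup M} = \overline{W_e} \cap \overline{M}, \qquad (W_e \cup M)\setminus M = W_e \cap \overline{M},
\]
which immediately translate the maximality dichotomy into the cohesiveness dichotomy for $\overline{M}$. Since $M$ is coinfinite, $\overline{M}$ is infinite, so $\overline{M}$ is indeed cohesive.

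For the backward direction, assume $\overline{M}$ is cohesive (hence infinite, so $M$ is coinfinite) and let $W_e$ be any c.e.\ superset of $M$. Apply cohesiveness of $\overline{M}$ to the c.e.\ set $W_e$: either $W_e \cap \overline{M}$ is finite or $\overline{W_e} \cap \overline{M}$ is finite. Using $M \subseteq W_e$, we have $\overline{W_e} \subseteq \overline{M}$, so $\overline{W_e} \cap \overline{M} = \overline{W_e}$, and also $W_e \cap \overline{M} = W_e \setminus M$. Hence the cohesive dichotomy yields ``$W_e\setminus M$ finite or $\overline{W_e}$ finite,'' which is the maximality condition.

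There is no real obstacle here: everything follows from the two boolean identities above together with the fact that maximality can be tested on $W_e \cup M$ rather than on $W_e$ itself, which is precisely what allows us to drop the hypothesis $M \subseteq W_e$ and recover the full cohesiveness quantifier over all indices $e$.
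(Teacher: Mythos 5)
Your proof is correct and follows essentially the same route as the paper's: the forward direction tests maximality on $W_e\cup M$ and uses the identities $\overline{W_e\cup M}=\overline{W_e}\cap\overline{M}$ and $(W_e\cup M)\setminus M=W_e\cap\overline{M}$, while the backward direction applies cohesiveness of $\overline{M}$ to $W_e$ and uses $\overline{W_e}\subseteq\overline{M}$. Your additional remarks about infinitude and coinfinitude are a harmless (and slightly more careful) bookkeeping step that the paper leaves implicit.
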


\begin{proof}
\textbf{[$\Ra$]} As $M$ is c.e., all $W_e\cup M$ are c.e.\ as well. As $M$ is maximal and a subset of $W_e\cup M$, either $\overline{W_e\cup M} = \overline{W_e}\cap\overline{M}$ is finite or $(W_e\cup M)\setminus M = W_e\cap \overline{M}$ is.

\noindent \textbf{[$\La$]} Suppose $M\subseteq W_e$ for some $e$. By cohesiveness, either $W_e\cap\overline{M}$ is finite or $\overline{W_e}\cap \overline{M} = \overline{W_e}$ is.
\end{proof}

Notice that as cohesive sets are not required to be co-c.e., the reverse direction of this theorem connects cohesiveness to only part of our definition of maximality. Indeed cohesive sets are either co-maximal or $\Pi^0_1$-immune:

\theorem\label{neximal} Let $A$ be an infinite set such that for all $e$, if $A\subseteq W_e$, then $\overline{W_e}$ is finite or $W_e\setminus A$ is.\linebreak Then $A$ is c.e.\ iff $A$ is maximal, and $A$ is not c.e.\footnote{If this notion of ``a non-c.e.\ that has has no c.e.\ supersets" does not have a name, we suggest the neologism \emph{neximal}, so called because in the setting of enumerable sets, it is the characterizing property of maximal sets, but here emphasizing sets that are \underline{\textbf{n}}ot \underline{\textbf{e}}numerable.}\ iff $\overline{A}$ is $\Pi^0_1$-immune.
\begin{proof} The first biconditional is the definition of maximality. We prove the second in cases:

\noindent \textbf{[$\Ra$]} If $\overline{W_e}\subseteq \overline{A}$, $A\subseteq W_e$. As $A$ is not c.e., $|W_e\setminus A|=\infty$. Instead, $\overline{W_e}$ is finite.

\noindent \textbf{[$\La$]} As $\Delta^0_1=\Sigma^0_1\cap\Pi^0_1$, $\Pi^0_1$-immune sets are immune. By definition, immune sets are not c.e.
\end{proof}

We will use the following corollary so often that it deserves to be called a lemma:

\begin{lemma}\label{cohpi} Cohesive sets are not co-c.e.\ iff they are $\Pi^0_1$-immune.\end{lemma}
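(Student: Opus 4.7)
The plan is to observe that this lemma is essentially a direct corollary of Theorem \ref{neximal} applied to $\overline{C}$. So first I would verify that the complement of a cohesive set satisfies the hypothesis of \Cref{neximal}: given $C$ cohesive, suppose $\overline{C}\subseteq W_e$. Then $\overline{W_e}\subseteq C$, so $\overline{W_e}\cap C = \overline{W_e}$. Cohesiveness applied to the splitting of $C$ by $W_e$ then tells us that one of $\overline{W_e}\cap C = \overline{W_e}$ or $W_e\cap C = W_e\setminus\overline{C}$ is finite, which is exactly the hypothesis of \Cref{neximal}. The second biconditional of \Cref{neximal} then says $\overline{C}$ is not c.e.\ iff $C$ is $\Pi^0_1$-immune, which is precisely the claim.

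Alternatively, one can prove it directly in a few lines, which I would include for self-containment. For the forward direction, I contrapose: if $C$ is co-c.e., then $C$ is itself an infinite $\Pi^0_1$ subset of $C$, so $C$ is not $\Pi^0_1$-immune. For the reverse direction, again contrapose: suppose $C$ is not $\Pi^0_1$-immune, so some infinite $\overline{W_e}\subseteq C$. By cohesiveness, either $W_e\cap C$ or $\overline{W_e}\cap C = \overline{W_e}$ is finite; since $\overline{W_e}$ is infinite, $W_e\cap C$ must be finite. Writing $F = W_e\cap C$, we get $C = F \cup \overline{W_e}$, a finite modification of a co-c.e.\ set, hence co-c.e.

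The main ``obstacle'' is really just bookkeeping: making sure one interprets the cohesiveness condition correctly (applied to the partition of $C$ by $W_e$ rather than a partition of $\omega$), and noticing that once $W_e\cap C$ is forced finite, the identity $C = (W_e\cap C)\cup(\overline{W_e}\cap C)$ collapses to a co-c.e.\ description of $C$. Both the direct argument and the appeal to \Cref{neximal} are quite short, so I would likely present the direct two-paragraph proof and mention \Cref{neximal} parenthetically.
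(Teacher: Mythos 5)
Your proposal is correct and matches the paper's treatment: the paper states this lemma as an immediate corollary of Theorem \ref{neximal} (applied to $\overline{C}$), which is exactly your first argument, and your verification that the complement of a cohesive set satisfies the hypothesis of \Cref{neximal} is the right (implicit) step. Your supplementary direct two-line argument is also correct and is essentially the same computation unwound.
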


Nevertheless, every cohesive degree is $\Pi^0_1$-immune. To prove this, we need a new piece of notation. Recall \Cref{directsum}:

\begin{olddef}[\ref{directsum}]$A\oplus B = \{2k\mid k\in A\}\cup \{2k+1\mid k\in B\}$.
\end{olddef}
This replaces the $n$th even bit with the $n$th bit of $A$, and similarly the $n$th odd bit with the $n$th bit of $B$. There is nothing special about the even and odd numbers here, we could generalize to an arbitrary set $X$:

\begin{definition}\label{oplus} $A\oplus_X B = \{p_X(n)\mid n\in A\}\cup\{p_{\overline{X}}(n)\mid n\in B\}.$\end{definition}
Now the $n$th $1$ in $X$ is replaced with the $n$th bit of $A$ and the $n$th $0$ in $X$ is replaced with the $n$th bit of $B$. It will be useful to notice that $\overline{A\oplus_X B} = \overline{A}\oplus_{X}\overline{B}$ and $X = \omega\oplus_X \null$.
\theorem\label{hcompPi01im} Every cohesive set $C$ has a $\Pi^0_1$-immune subset $D\equiv_T C$.
\begin{proof} If $C$ is not $\Pi^0_1$, then it is itself $\Pi^0_1$-immune by \Cref{cohpi}. So assume $C$ is $\Pi^0_1.$

Define $D = C\oplus_C\null$. By definition $D = \{p_C(n)\mid n\in C\}$, so that $D=\{p_C(p_C(n))\mid n\in\omega\}$.

Note that as $C$ is cohesive, it is infinite and coinfinite.

As $C$ is infinite, $D$ is infinite by definition. Notice $D\subseteq C$, so that $D\cap C = D$ is also infinite.

As $C$ is coinfinite, $\overline{D}\cap C = (\overline{C}\oplus_C\omega)\cap (\omega\oplus_C \null) = \overline{C}\oplus_C\null$ is infinite by definition.

As $C$ is cohesive, the above shows that $D$ and $\overline{D}$ cannot be c.e., so in particular $D$ is not co-c.e.\ As $D$ is an infinite subset of a cohesive set, it is itself cohesive. By \Cref{cohpi}, $D$ is $\Pi^0_1$-immune.

To see that $D\geq_T C$, fix an index $e$ with $C=\overline{W_e}$, and write $C_s = \overline{W_{e, s}}$. Then $D_s = C_s\oplus_{C_s}\null$ is a $\Delta^0_2$ approximation of $D$. As above, $D_s = \{p_{C_s}(p_{C_s}(n))\mid n\in\omega\}$. To keep track of the $k$th element of $D_s$, define movable markers $m_k$ (for the direct sum over $C$), and let $m_{k,s}$ be the location of $m_k$ at stage $s$. The placements of the markers can be tracked in stages: \begin{itemize}
    \item[-] At stage $0$, the markers are set to $m_{k,0}=k$ for all $k$.
    \item[-] At stage $s+1$, if $n$ is removed from $C_{s+1}$, then for all $k\geq n$, put $m_{k,s+1} = m_{k+1, s}$.
\end{itemize}

In this framework, $\{m_{k,s}\}_{s\in\omega} = C_s$, so that $D_s = \{m_{k,s}\mid k\in C_s\}$.

Let $n_s=p_{D_s}(n)$, the $n$th element of $D_s$. By definition, this is always some element with a marker $m_k$ on it, so it can only increase as stages run, whether because the markers move right, or because for some $\ell<k$, $\ell\in C_s$ but $\ell\not\in C_{s+1}$ (so that at stage $s$, the first $k$ markers contain the first $n$ elements of $D_s$, but at stage $s+1$ one of them leaves $D_{s+1}$). So the $n_s$ are non-decreasing: once the first $n$ bits of $D_s$ agree with $D$, at no later stage do they disagree.

To decide if $x\in C$, let $s$ be the first stage when $x_s = p_{D(x)}$. By construction some marker $m_k=x_s$, so necessarily $k\geq x$. At this stage, markers $m_\ell$ with $\ell\leq k$ never move again, meaning elements $\ell\leq k$ are never again enumerated out of $C$. Thus $C_s$ correctly approximates $C$ up to $k$, so as $k\geq x$, $C_s(x) = C(x)$. As $D$ can find this stage $s$, $D\geq_T C$.
\end{proof}

\begin{corollary}\label{cohdeg}Every cohesive degree contains a $\Pi^0_1$-immune real.
\end{corollary}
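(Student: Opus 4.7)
The plan is to derive this as an immediate consequence of Theorem \ref{hcompPi01im}. By definition, a cohesive Turing degree $\mathbf{d}$ is one that contains some cohesive set $C$. Fixing such a $C$, I would invoke Theorem \ref{hcompPi01im} to obtain a $\Pi^0_1$-immune set $D \subseteq C$ with $D \equiv_T C$. Since $D \equiv_T C$ and $C \in \mathbf{d}$, we have $D \in \mathbf{d}$, producing the desired $\Pi^0_1$-immune real in the degree.

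There is essentially no obstacle here, as all the real work has been done in Theorem \ref{hcompPi01im}: the case split on whether $C$ is $\Pi^0_1$, the construction $D = C \oplus_C \emptyset$, the verification (via \Cref{cohpi}) that $D$ is $\Pi^0_1$-immune, and the Turing-equivalence argument via the movable markers $m_k$ have all been handled in the preceding theorem. The corollary is thus a one-line invocation, and I would present it as such rather than repeating any of the construction.
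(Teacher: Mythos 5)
Your proposal is correct and is exactly the paper's intended argument: the paper states \Cref{cohdeg} as an immediate consequence of \Cref{hcompPi01im} with no further proof, relying (as you do) on the convention that a cohesive degree is one containing a cohesive set $C$, to which the theorem supplies a $\Pi^0_1$-immune $D\equiv_T C$ in the same degree.
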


\section{Other Immunity Notions}
Cohesiveness is a very strong property, implying a tower of immunity notions. It is natural to wonder where $\Pi^0_1$-immunity falls in this tower, both in terms of degrees and individual reals. For instance, every $\Pi^0_1$-immune real is immune (as $\Delta^0_1\subseteq\Pi^0_1$), but the existence of co-c.e.~immune sets means the converse does not hold.

One slight strengthening of immunity is \emph{hyper}immunity:
\begin{definition}
If $f, g:\omega\ra\omega$, then $f$ \emph{dominates} $g$ if for all but finitely many $n$, $g(n) < f(n)$. If $f$ does not dominate $g$, then $g$ \emph{escapes} $f$, i.e.\ there are infinitely many $n$ such that $g(n) > f(n)$.
\end{definition}
\begin{definition}\label{hyperimmune}
A set $A$ is hyperimmune iff its principal function $p_A$ escapes every computable function. 
\end{definition}
The question of which degrees are (not) $\Pi^0_1$-immune and (not) hyperimmune is ultimately uninteresting, as it merely hinges on whether a degree is below $\null'$. Every non-computable $\Delta^0_2$ degree is hyperimmune \cite{MillerMartin},
but every degree outside $\Delta^0_2$ is $\Pi^0_1$-immune:
\theorem \label{BNDelta} $S(X) = \{\sigma\in2^{<\omega}|\sigma\prec X\}$ is not $\Pi^0_1$-immune iff $X\in\Delta^0_2$.
\begin{proof} Notice that any infinite subset of $S(X)$ suffices to compute $X$.

If there is an infinite $\overline{W_e}\subseteq S(X)$, then $X\leq_T \overline{W_e}\leq_T \null'$. Similarly if $X\not<_T\null'$, then no $\Pi^0_1$ set computes $X$, so every infinite subset of $S(X)$ is not $\Delta^0_2$, let alone $\Pi^0_1$.
\end{proof}
The observation that $S(X)\equiv_T X$ immediately gives two useful corollaries:

\corollary\label{nonDelta2PiIm} Every non-$\Delta^0_2$ degree contains a $\Pi^0_1$-immune real.

\corollary\label{BNPiImDelta2} Every real that computes no $\Pi^0_1$-immune set is $\Delta^0_2$.

The question of which \emph{sets} are (not) $\Pi^0_1$-immune is much more interesting; despite its close relationship to cohesiveness, $\Pi^0_1$-immunity does not imply most immunity notions. In fact this can be witnessed at a level even weaker than hyperimmunity, recently studied by Astor \cite{astor1,astor2}:
\definition The \emph{upper density} of $A$ is $\overline{\rho}(A) = \displaystyle\limsup_{n\ra\infty}|A\upto n|/n$. If for any computable permutation $\pi:\omega\ra\omega$, the value of $\overline\rho(\pi(A))$ is the same, this is the \emph{intrinsic upper density} of $A$. Similarly define \emph{lower density} $\underline\rho$ and \emph{intrinsic lower density} using $\liminf$. If the intrinsic upper and lower densities of $A$ are equal, they are its \emph{intrinsic density}.

These definitions give rise to two new classes of reals: ID0, reals with intrinsic density 0, and ILD0, for intrinsic lower density 0. Their place in the hierarchy of immunity notions is shown in \Cref{fig:densityImmunityHierarchy}.
To see that intrinsic density and $\Pi^0_1$-immunity are incomparable notions, we will make use of the following definitions:
\begin{definition} A function $f$ is \emph{dominant} iff it dominates every computable function.
\end{definition}
\begin{definition}
A real $A$ is \emph{dense immune} iff its principal function $p_A$ is dominant.
\end{definition}

\begin{theorem}\label{ID1}
There is a $\Delta^0_2$, dense immune set whose complement is $\Pi^0_1$-immune.\end{theorem}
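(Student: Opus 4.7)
The plan is to build $A$ directly using the oracle $\emptyset'$. First fix a strictly increasing dominant function $f\leq_T\emptyset'$ (for concreteness, a modulus of convergence for $\emptyset'$) and partition $[f(0),\infty)$ into the finite blocks $B_n=[f(n),f(n+1))$. I will put exactly one element $a_n\in B_n$ into $A$ at each stage $n$, which automatically forces $p_A(n)=a_n\geq f(n)$; since $f$ dominates every computable function, $A$ is immediately dense immune. All the remaining work goes into choosing $a_n$ within $B_n$ so that $\overline{A}$ is $\Pi^0_1$-immune, i.e., so that $A$ meets every infinite $\Pi^0_1$ set $\overline{W_e}$.

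Accordingly, the requirements are $R_e$: if $\overline{W_e}$ is infinite, then $A\cap\overline{W_e}\neq\emptyset$. At stage $n$, using $\emptyset'$: (i) compute $E_n=\{e\leq n : \text{no }a_i,\ i<n,\ \text{lies in }\overline{W_e}\}$, the as-yet-unsatisfied requirements; (ii) for each $e\in E_n$ in increasing order, ask whether $\overline{W_e}\cap B_n\neq\emptyset$; (iii) if the least such $e^*$ exists, set $a_n$ to be the least element of $\overline{W_{e^*}}\cap B_n$, and otherwise take $a_n=f(n)$. All queries in (i) and (ii) are finite disjunctions or conjunctions of $\Pi^0_1$ questions about already-chosen parameters, hence uniformly $\emptyset'$-decidable, and so $A\leq_T\emptyset'$.

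To verify that every $R_e$ with $\overline{W_e}$ infinite is met, I would induct on $e$. By the inductive hypothesis, any $e'<e$ with $\overline{W_{e'}}$ infinite leaves $E_n$ permanently after a finite stage, while any $e'<e$ with $\overline{W_{e'}}$ finite has $\overline{W_{e'}}\cap B_n=\emptyset$ once $f(n)>\max\overline{W_{e'}}$. Hence for all sufficiently large $n$, no $e'<e$ can win the selection against $e$. Since $\overline{W_e}$ is infinite and the $B_n$ partition $[f(0),\infty)$ into finite pieces, $\overline{W_e}\cap B_n\neq\emptyset$ for infinitely many $n$, and at some such $n$ beyond the threshold $a_n$ is forced into $\overline{W_e}$, satisfying $R_e$. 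The main obstacle I anticipate is keeping every step $\emptyset'$-decidable: the genuine $\Sigma^0_2$ question ``is $\overline{W_e}$ infinite?'' must never be asked directly. The block partition is what resolves this, reducing every query to a finitary $\Pi^0_1$ question while the induction recovers the infinite-hitting property as a consequence rather than as an assumption.
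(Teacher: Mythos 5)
Your construction is correct, but it takes a genuinely different route from the paper's. The paper avoids any stage-by-stage construction: it fixes a $\emptyset'$-computable dominant $f$ with $f(n+1)>f(n)>n$ and simply defines $A=\left\{p_{\overline{W_e}}(f(e)) : |\overline{W_e}|\geq f(e)\right\}$, i.e.\ $A$ collects the $f(e)$th element of every sufficiently large co-c.e.\ set. Every infinite $\overline{W_e}$ then contributes an element to $A$ by fiat, so $\Pi^0_1$-immunity of $\overline{A}$ is immediate and no priority ordering or injury bookkeeping is needed; the work instead goes into a short counting argument showing that the $n$th element of $A$ equals $p_{\overline{W_k}}(f(k))\geq f(k)>f(n)$ for some $k\geq n$, whence $p_A$ is dominant. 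Your block-based construction inverts this trade-off: dense immunity is immediate from $p_A(n)=a_n\geq f(n)$, while meeting each infinite $\overline{W_e}$ requires the induction on priorities. Both work, and your version is more explicit about why every query is $\emptyset'$-decidable. One small point to add: immunity is only a sensible notion for infinite sets, so you must check that $\overline{A}$ is infinite; since your $A$ takes one element from every block $[f(n),f(n+1))$, you need infinitely many blocks of size at least $2$. This is automatic --- if $f(n+1)=f(n)+1$ cofinitely often then $f$ is linearly bounded, hence not dominant --- but it deserves a sentence, or you can simply replace $f$ by $n\mapsto 2f(n)+n$ at the outset.
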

\begin{proof}
Let $f(n)$ be a $\null'$-computable dominant function. Without loss of generality, assume $f(n+1)>f(n)>n$ for all $n$. Define $A = \left\{p_{\overline{W_e}}(f(e)): \left|\overline{W_e}\right|\geq f(e)\right\}$.

The $f(e)$th $1$ of any real is necessarily at least $f(e)$. So to determine if $A(n) = 1$, $\null'$ can first find all $e$ such that $f(e)<n$. As $f(n)>n$, there will only be finitely many such $e$. For these indices, $\null'$ can then compute $\varphi_e(k)$ for all $k\leq n$, to determine if $p_{\overline{W_e}}(f(e))= n$. Altogether, $A\leq_T\null'$.

If $p_{\overline{W_e}}(f(e)) = n$, then $A(n) = 1$. By definition, $p_A$ lists these values $p_{\overline{W_e}}(f(e))$ in increasing order.
Then for all $n$, $p_A(n)=p_{\overline{W_k}}(f(k))$ for some $k\geq n$ (the $n$th input such that this function is defined), and $p_A(n) = p_{\overline{W_k}}(f(k))\geq f(k)>f(n)$. As $f$ is dominant, so is $p_A$.

Finally if $\overline{W_e}$ is infinite, then $\overline{W_e}$ has an $f(e)$th element $n$, so that by construction $\overline{A}(n) = 0$. Thus $\overline{A}$ is $\Pi^0_1$-immune.
\end{proof}
\begin{corollary}\label{PiDense1}There is a $\Delta^0_2$, $\Pi^0_1$-immune set with intrinsic density 1.
\end{corollary}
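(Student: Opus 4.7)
The plan is to pass to complements: take $A$ as constructed in \Cref{ID1} and set $B = \overline{A}$. Then $B \in \Delta^0_2$ is immediate since complementation preserves $\Delta^0_2$, and $\Pi^0_1$-immunity of $B = \overline{A}$ is already established in the final line of the proof of \Cref{ID1} (for every infinite $\overline{W_e}$ it exhibits an $n \in \overline{W_e}$ with $n \in A$, so $\overline{W_e}\not\subseteq\overline{A}$). Everything therefore reduces to showing that $B$ has intrinsic density $1$, which is equivalent to showing that $A$ has intrinsic density $0$.

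The key step is to extract intrinsic density $0$ for $A$ from its dense immunity, and this is really the only calculation worth doing. Given a computable permutation $\pi$ of $\omega$, the plan is to bound $|\pi(A)\cap[0,n)|$ from above by $|A \cap [0, f(n))|$, where $f(n) = 1 + \max\{\pi^{-1}(k) : k < n\}$ is computable and non-decreasing and satisfies $\pi^{-1}[0,n)\subseteq[0,f(n))$. For a fixed $\eps > 0$, the composition $h(k) = f(\lceil k/\eps \rceil)$ is also computable, so dense immunity gives $p_A(k) > h(k)$ for all sufficiently large $k$. Specializing to $k = \lceil \eps n \rceil$ yields $p_A(\lceil \eps n \rceil) > f(n)$ for all large $n$, which forces $|A \cap [0, f(n))| < \lceil \eps n \rceil$, and therefore $\overline{\rho}(\pi(A)) \leq \eps$. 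Since $\eps$ was arbitrary, $\overline{\rho}(\pi(A)) = 0$.

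The remainder is formal bookkeeping: $\overline{\rho}(\pi(A)) = 0$ for every computable permutation $\pi$ says $A$ has intrinsic upper density $0$, which forces its intrinsic (lower) density to be $0$ as well, and reading off from $|\pi(B)\cap[0,n)| = n - |\pi(A)\cap[0,n)|$ then gives $\underline{\rho}(\pi(B)) = \overline{\rho}(\pi(B)) = 1$ for every computable $\pi$, so $B$ has intrinsic density $1$. No real obstacle is anticipated; the whole argument hinges on the short $\eps$-calculation in the second paragraph, after which the corollary is genuinely just an exercise in complementation.
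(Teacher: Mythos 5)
Your proof is correct and takes essentially the same route as the paper: both arguments take the set $A$ of \Cref{ID1}, observe that its dense immunity forces intrinsic density $0$, and pass to the complement. The only difference is that the paper simply cites Astor's result that dense immune reals have intrinsic density $0$, whereas you prove that implication from scratch; your $\eps$-calculation with $f(n) = 1+\max\{\pi^{-1}(k):k<n\}$ is sound, so this just makes the corollary self-contained.
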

\begin{proof}
As Astor shows in \cite{astor1}, dense immune reals have intrinsic density 0. So taking $A$ to be as in \Cref{ID1}, $\overline{A}$ has intrinsic density 1 and is $\Pi^0_1$-immune.
\end{proof}
\begin{theorem}\label{NotILD0} In the non-computable $\Delta^0_2$ degrees, there are reals of every combination of being (not) $\Pi^0_1$-immune and having intrinsic lower density (greater than) 0.
\end{theorem}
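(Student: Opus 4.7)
The plan is to dispatch each of the four combinations with a direct example, each yielding a non-computable $\Delta^0_2$ real, so that the theorem is proved by exhibiting four representative sets.

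For ($\Pi^0_1$-immune, ILD $>0$), I would simply cite \Cref{PiDense1}, which gives a $\Delta^0_2$ $\Pi^0_1$-immune set of intrinsic density $1$, hence ILD $=1$. For ($\Pi^0_1$-immune, ILD $=0$), I would take a $\Delta^0_2$ cohesive set $C$ that is not co-c.e.\ (available from \cite{JockHighCoh}); $\Pi^0_1$-immunity is immediate from \Cref{cohpi}. To show ILD $=0$, fix any computable permutation $\pi$ and any $k\in\N$, and consider the computable partition $\omega = P_0\cup\cdots\cup P_{k-1}$ where $P_i = \pi^{-1}\{n : n\equiv i\pmod{k}\}$. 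Since $C$ is infinite, some $C\cap P_i$ is infinite by pigeonhole; then cohesiveness applied to this $P_i$ forces $C\cap\overline{P_i}$ to be finite, so $\pi(C)$ lies almost entirely in the single residue class $\{n : n\equiv i\pmod{k}\}$, giving $\overline{\rho}(\pi(C))\leq 1/k$. Letting $k\to\infty$ yields $\underline{\rho}(\pi(C))=0$ for every $\pi$.

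For the last two combinations, my plan is to use dense simple c.e.\ sets, i.e.\ c.e.\ sets $B$ whose complements are dense immune (their principal functions are dominant), which are known to exist. The key calculation is that $|\overline{B}\cap\pi^{-1}[0,n)| = o(n)$ for every computable permutation $\pi$: since $\max\pi^{-1}[0,n)$ is bounded by some computable increasing $h(n)$, dominance of $p_{\overline{B}}$ over $k\mapsto h(\lceil k/\epsilon\rceil)$ forces $|\overline{B}\cap[0,h(n))|\leq\epsilon n$ eventually, for any fixed rational $\epsilon>0$. Thus $\underline{\rho}(\pi(B))=1$ and $\underline{\rho}(\pi(\overline{B}))=0$ for every $\pi$. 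Taking $A=B$ handles (not $\Pi^0_1$-immune, ILD $>0$): $B$ is c.e., non-computable (since $\overline{B}$ is immune), hence $\Delta^0_2$, and every infinite c.e.\ set contains an infinite computable subset, so $B$ is not $\Pi^0_1$-immune, with ILD $=1$. Taking $A=\overline{B}$ handles (not $\Pi^0_1$-immune, ILD $=0$): $A$ is $\Pi^0_1$ (hence $\Delta^0_2$), non-computable, is an infinite $\Pi^0_1$ subset of itself, and has ILD $=0$.

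The main obstacle is verifying the intrinsic lower density conditions uniformly across all computable permutations $\pi$, since bounds based merely on the density of $A$ in $\omega$ can be broken by permutations that concentrate or disperse $A$'s elements. The resolution in each case is to lean on a structural invariant that survives pullback by arbitrary computable permutations --- cohesiveness in the ($\Pi^0_1$-immune, ILD $=0$) case, and dominance of the complement's principal function in the two dense-simple cases --- rather than on direct counting of $A\cap[0,n)$.
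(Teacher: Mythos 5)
Your proof is correct and follows the same overall strategy as the paper's: exhibit one non-computable $\Delta^0_2$ witness for each of the four combinations. Two of your witnesses coincide with the paper's, namely \Cref{PiDense1} for the ($\Pi^0_1$-immune, ILD $>0$) case and a $\Delta^0_2$ non-co-c.e.\ cohesive set for the ($\Pi^0_1$-immune, ILD $=0$) case. The other two differ: for (not $\Pi^0_1$-immune, ILD $=0$) the paper takes a co-c.e.\ cohesive set (the complement of a maximal set, handled by the other half of \Cref{cohpi}), whereas you take the complement of a dense simple set --- a strictly more general choice, since maximal sets are dense simple; and for (not $\Pi^0_1$-immune, ILD $>0$) the paper simply observes that intrinsic lower density $0$ implies immunity (the implication from \cite{astor2} recorded in \Cref{fig:densityImmunityHierarchy}), so that \emph{any} non-immune non-computable $\Delta^0_2$ real works, whereas you take a dense simple set and compute its intrinsic density to be $1$ via the domination argument. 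Your route is more self-contained --- you prove, rather than cite, that cohesive sets have intrinsic density $0$ (the residue-class pigeonhole argument is exactly the standard one) and you carry out the density calculation for dense simple sets explicitly --- at the cost of invoking the classical existence of dense simple sets; the paper's proof is shorter because it leans entirely on cited implications. Both arguments are valid, and all of your witnesses are indeed non-computable and $\Delta^0_2$ as required.
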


\begin{proof} \renewcommand\qedsymbol{} Cohesive reals have intrinsic density 0, so \Cref{cohpi} gives two cases. \Cref{PiDense1} gives a third, leaving only the case of a non-$\Pi^0_1$-immune real with intrinsic lower density greater than 0, for which any non-immune real suffices.
\end{proof}
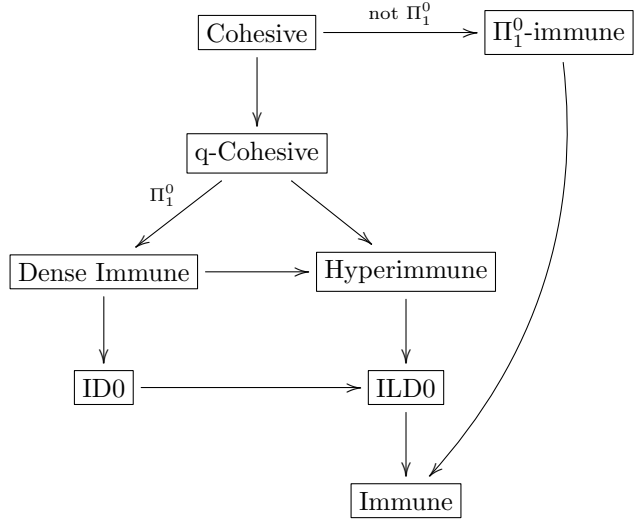
\begin{figure}
\centering
{\ }{
\xymatrix@C=-10pt{
	& *+[F]+\hbox{Cohesive} \ar[d] \ar[rr]^-{\text{not }\Pi^0_1} &&*+[F]+\hbox{$\Pi^0_1$-immune} \ar@/^2pc/[ddddl] \\
	&*+[F]+\hbox{q-Cohesive}\ar[dr]\ar[dl]_-{\Pi^0_1} & &\\
	*+[F]+\hbox{Dense Immune} \ar[d] \ar[rr] & & *+[F]+\hbox{Hyperimmune} \ar[d]\\
	 *+[F]+\hbox{ID0} \ar[rr] && *+[F]+\hbox{ILD0} \ar[d]\\
	& & *+[F]+\hbox{Immune}
	}
}
\caption[Implications between immunity notions]{The graph of implications between immunity notions considered in this section. Note that certain implications only hold when the reals under consideration are (not) $\Pi^0_1$. Implications not proven here are taken from \cite{astor2}.}
\label{fig:densityImmunityHierarchy}
\end{figure}
We can use the same reals to prove
\begin{corollary}\label{NonCoh}
In the non-computable $\Delta^0_2$ degrees, there are reals of every combination of being (not) $\Pi^0_1$-immune and (not) cohesive.
\end{corollary}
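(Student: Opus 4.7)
The plan is to prove this by exhibiting, for each of the four combinations of (not) $\Pi^0_1$-immunity and (not) cohesiveness, a non-computable $\Delta^0_2$ real witnessing that combination. The structure will closely parallel the proof of \Cref{NotILD0}, since three of the four witnesses used there already work here once one observes that cohesive reals have intrinsic lower density $0$ (hence any real with positive intrinsic lower density is automatically non-cohesive).

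First I would handle the two cohesive cases. For \textbf{cohesive and $\Pi^0_1$-immune}, I would invoke the $\Delta^0_2$ cohesive set that is not co-c.e.\ guaranteed by \cite{JockHighCoh} (used already in \Cref{piim}); by \Cref{cohpi} such a set is $\Pi^0_1$-immune, and it is non-computable since no cohesive set is computable. For \textbf{cohesive and not $\Pi^0_1$-immune}, I would take the complement of a maximal c.e.\ set: this is $\Pi^0_1 \subseteq \Delta^0_2$, it is cohesive by \Cref{cocoh}, it is non-computable (otherwise the maximal set would be computable and so not maximal), and it is co-c.e., hence by \Cref{cohpi} it is not $\Pi^0_1$-immune.

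Next I would handle the two non-cohesive cases. For \textbf{not cohesive and $\Pi^0_1$-immune}, I would invoke \Cref{PiDense1}, which furnishes a $\Delta^0_2$, $\Pi^0_1$-immune set of intrinsic density $1$; since cohesive reals have intrinsic density $0$ (cited in the proof of \Cref{NotILD0}), this set is not cohesive and (having $\Pi^0_1$-immune complement or directly being $\Pi^0_1$-immune, as presented in \Cref{PiDense1}) sits in a non-computable $\Delta^0_2$ degree. For \textbf{not cohesive and not $\Pi^0_1$-immune}, I would just choose a non-computable c.e.\ set whose intersections with the evens and odds are both infinite, for instance $X = \{2n \mid n \in W\} \cup \{2n+1 \mid n \in \omega\}$ for any non-computable c.e.\ $W$: then $X$ is c.e.\ and hence $\Delta^0_2$, $X \equiv_T W$ is non-computable, $X$ contains the infinite computable set of odd numbers and so is not $\Pi^0_1$-immune, and the c.e.\ set of even numbers splits $X$ into two infinite pieces, witnessing non-cohesiveness.

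There is no real obstacle: each piece is a one-line application of an already established result. The only point requiring a moment of care is recording the easy observation that cohesive sets have intrinsic lower density $0$ (so that the intrinsic-density-$1$ example from \Cref{PiDense1} is automatically non-cohesive), and the equally easy observation that every non-immune real is not $\Pi^0_1$-immune (because an infinite c.e.\ set has an infinite computable, hence $\Pi^0_1$, subset), so that the case 4 witness above indeed fails to be $\Pi^0_1$-immune.
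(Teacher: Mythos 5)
Your proposal is correct and matches the paper's argument, which simply reuses the witnesses from \Cref{NotILD0}: the two cases of \Cref{cohpi} for the cohesive sets, the intrinsic-density-$1$ set of \Cref{PiDense1} for the non-cohesive $\Pi^0_1$-immune case, and a non-immune (hence non-cohesive) real for the last case. You have merely spelled out explicitly what the paper leaves implicit, and every verification you record is sound.
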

Altogether, there is no general relationship between $\Pi^0_1$-immunity and any of the notions considered above. But in the case of non-$\Pi^0_1$ reals, we have that cohesiveness implies $\Pi^0_1$-immunity. A reasonable question arises: does this implication hold for any weaker cohesiveness property? One natural candidate is $q$-cohesiveness:

\definition A set is \emph{quasicohesive} (\emph{$q$-cohesive}) iff it is the union of finitely many cohesive sets.

Classically, being $q$-cohesive implies many other commomly considered immunity properties, such as  (strong) hyperhyperimmunity, or ((finite) strong) hyperimmunity (see Figure 1 of \cite{astor2}).
In fact in the co-c.e.\ case, $q$-cohesiveness even implies dense immunity, so the following theorem rules out a host of possibilities:

\begin{theorem} There is a $q$-cohesive set that is neither $\Pi^0_1$-immune nor $\Pi^0_1$.
\end{theorem}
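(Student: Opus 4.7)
The plan is to construct $A$ as a disjoint union $A = C_1 \cup C_2$ of two cohesive sets living on complementary halves of $\omega$: $C_1$ inside the evens $E = \{2n : n\in\omega\}$ will be cohesive and $\Pi^0_1$, while $C_2$ inside the odds $O = \{2n+1 : n\in\omega\}$ will be cohesive and $\Pi^0_1$-immune. Then $A$ is automatically $q$-cohesive, the infinite co-c.e.\ subset $C_1$ witnesses that $A$ is not $\Pi^0_1$-immune, and the $\Pi^0_1$-immunity of $C_2$ will propagate up to $A$ because membership in $C_2$ is recoverable from membership in $A$ by intersecting with the computable set $O$.

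For the construction itself, I would begin with a maximal set $M_0$ (Friedberg); by \Cref{cocoh} its complement $\overline{M_0}$ is cohesive, and of course $\Pi^0_1$. Transport $\overline{M_0}$ via the computable injection $n \mapsto 2n$ to obtain $C_1 := \{2n : n \in \overline{M_0}\} \subseteq E$; a routine check using the computable inverse shows $C_1$ remains cohesive and $\Pi^0_1$. Similarly, let $D$ be a cohesive $\Pi^0_1$-immune set (which exists by \Cref{piim}) and set $C_2 := \{2n+1 : n \in D\} \subseteq O$; since $n \mapsto 2n+1$ pulls c.e.\ sets back to c.e.\ sets and co-c.e.\ sets back to co-c.e.\ sets, $C_2$ inherits cohesiveness and $\Pi^0_1$-immunity from $D$.

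The three verifications for $A := C_1 \cup C_2$ are short. $A$ is $q$-cohesive by construction. $C_1 \subseteq A$ is infinite and co-c.e., so $A$ is not $\Pi^0_1$-immune. Finally, suppose for contradiction that $\overline{A}$ were c.e. Then $\overline{A} \cap O$ would be c.e.\ as the intersection of a c.e.\ set with a computable one; but $C_1 \cap O = \emptyset$ gives $A \cap O = C_2$, so $\overline{A} \cap O = O \setminus C_2$ would be c.e. Then $C_2 = O \cap \overline{O \setminus C_2}$ would be co-c.e.\ as an intersection of two co-c.e.\ sets, contradicting the $\Pi^0_1$-immunity of $C_2$ (an infinite set cannot be a co-c.e.\ subset of itself if it is $\Pi^0_1$-immune).

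The main obstacle, which this construction is engineered to sidestep, is that a union of two cohesive sets whose interplay with c.e.\ enumerations is uncontrolled can easily be $\Pi^0_1$: for instance, if one cohesive piece almost contains the other, then by cohesiveness the union is co-c.e.\ up to finite difference, and the $\Pi^0_1$-immunity of the other piece is wasted. Segregating the two cohesive pieces onto a pre-chosen computable partition of $\omega$ is what lets us cleanly recover $C_2$ from $A$ by a computable projection, so that the $\Pi^0_1$-immunity of $C_2$ transfers up to $A$ while the presence of $C_1$ kills $\Pi^0_1$-immunity.
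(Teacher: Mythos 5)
Your proof is correct, but it takes a genuinely different route from the paper's. The paper argues non-constructively by counting: it fixes a $\Pi^0_1$ cohesive set $C$, a cohesive $C_2\subseteq\overline{C}$, and observes that the uncountably many sets $C\cup D$ with $D\subseteq C_2$ are all $q$-cohesive and non-$\Pi^0_1$-immune, while only countably many of them can be $\Pi^0_1$; some $Q=C\cup D$ therefore works, but (as the paper itself remarks immediately afterwards) this gives no information about where $Q$ lives. Your construction instead segregates the two cohesive pieces onto the evens and odds, so that the $\Pi^0_1$-immune piece $C_2$ is computably recoverable from $A$ by intersecting with the odds; this certifies non-$\Pi^0_1$-ness directly rather than by cardinality. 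The payoff is effectiveness: taking $D$ to be the $\Delta^0_2$ cohesive non-co-c.e.\ set used in the proof of \Cref{piim} (strictly speaking, the \emph{statement} of \Cref{piim} does not assert cohesiveness, but its proof produces a cohesive set, or one can cite \Cref{cohpi} together with the existence of a $\Delta^0_2$ cohesive set that is not co-c.e.), your $A$ is $\Delta^0_2$, so your argument in fact also yields the paper's subsequent, stronger theorem, which the paper proves separately via Lachlan's result on maximal supersets. The routine verifications you defer --- that cohesiveness and $\Pi^0_1$-immunity transfer along the computable injections $n\mapsto 2n$ and $n\mapsto 2n+1$ (by pulling c.e.\ sets back through the injection), and that an infinite $\Pi^0_1$-immune set cannot itself be $\Pi^0_1$ --- all check out.
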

\begin{proof}
Let $C$ be any $\Pi^0_1$ cohesive set. Every infinite set has a cohesive subset (see Exercise III.4.17 in \cite{OdiI}), so there is a cohesive $C_2\subset\overline{C}$. Every subset of $C_2$ is also cohesive, and there are uncountably many such subsets $D$. They are in bijection with the collection of all sets $C\cup D$, so there are uncountably many of these as well. There are only countably many $\Pi^0_1$ sets, so there must be some $D$ such that $Q=C\cup D$ is not $\Pi^0_1$. As $C$ is not $\Pi^0_1$-immune, neither is $Q$.
\end{proof}
Of course, this result gives no hint whatsoever as to where such a $Q$ might live --- perhaps with an additional restriction, not being $\Pi^0_1$ is enough to guarantee $\Pi^0_1$-immunity. Motivated by \Cref{NotILD0}, we show that being $\Delta^0_2$ is not a sufficient restriction.
\begin{lemma}[Lachlan \cite{LachlanMaximal}]\label{MaxSup}
If $A$ is a coinfinite c.e.\ real with no maximal superset, then $A''>_T\null''$.
\end{lemma}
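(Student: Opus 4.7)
The plan is to prove the contrapositive: assume $A$ is a coinfinite c.e.\ set with $A''\leq_T\emptyset''$ (so that $A$ is $\mathrm{low}_2$) and produce a maximal c.e.\ superset $M\supseteq A$. This yields the lemma, since the automatic inequality $A''\geq_T\emptyset''$ forces $A''\equiv_T\emptyset''$ whenever $A$ is $\mathrm{low}_2$, and hence the failure of $A'' >_T \emptyset''$ is equivalent to $A$ being $\mathrm{low}_2$.

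The construction I would carry out is the usual Friedberg $e$-state maximal set argument, modified in one essential way to guarantee $A\subseteq M$. As in Friedberg, I maintain movable markers $a_0<a_1<a_2<\cdots$ placed on the successive elements of $\overline{M_s}\setminus A_s$, track their lexicographic $e$-states $\langle W_{0,s}(a_e),\dots,W_{e,s}(a_e)\rangle$, and enumerate $a_e$ into $M_{s+1}$ whenever some later marker $a_j$ currently displays a strictly larger $e$-state. The modification is that whenever $A_{s+1}$ enumerates the element presently carrying a marker, that element is absorbed into $M$ and the affected marker shifts rightward to the next element of $\overline{M}\setminus A$. If every marker eventually settles, the standard argument shows that $M$ is maximal; and by design $M\supseteq A$.

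The main obstacle is verifying that the markers actually settle, despite being shoved around by two interacting sources of injury --- the natural $e$-state improvements built into Friedberg's scheme, and the forced moves induced by the enumeration of $A$. I would handle this with a tree of strategies whose nodes guess, for each $e$, the eventual $e$-state of $a_e$ together with a finite template of $A$-induced displacements. The question ``does marker $a_e$ eventually settle with $e$-state $\tau$?'' is $\Pi^0_2$ in $A$, so the true path through the tree is computable from $A''$; the hypothesis $A''\leq_T\emptyset''$ then places the true path at the level of $\emptyset''$, which is precisely the computational strength required for the standard $\mathrm{low}_2$ true-path analysis to identify a coherent sequence of outcomes and conclude that each marker stabilizes. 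Carrying out that analysis in the usual way delivers the maximal superset and completes the proof --- following the template of Robinson's theorem that every low c.e.\ set has a maximal superset, upgraded one level in the hierarchy.
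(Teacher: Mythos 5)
The paper does not prove this lemma at all --- it is imported verbatim from Lachlan --- so there is no in-paper argument to compare against; your proposal has to stand on its own as a reconstruction of Lachlan's theorem. Your framing is correct as far as it goes: since $A''\geq_T\emptyset''$ automatically, the statement is equivalent to ``every coinfinite low$_2$ c.e.\ set has a maximal superset,'' and the Friedberg marker machinery with markers confined to $\overline{M}\setminus A$ is the right skeleton.

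The problem is that the proposal defers precisely the step that \emph{is} the theorem. The naive construction you describe fails for general c.e.\ $A$ --- there really are coinfinite c.e.\ sets with no maximal superset (e.g.\ atomless ones) --- because the enumeration of $A$ can displace a marker infinitely often, and the ``each marker moves at most $2^{e+1}$ times'' argument that makes Friedberg's markers settle is destroyed once moves are forced by $A$ rather than by $e$-state improvements. So the low$_2$ hypothesis must be woven into the \emph{construction}, not just the verification; your remark that ``the true path is computable from $A''\leq_T\emptyset''$'' does not accomplish this, since the construction must be computable for $M$ to be c.e.\ and has no access to $\emptyset''$. (Relatedly, if the construction is computable then the settling of a marker is a $\Sigma^0_2$ fact about $\emptyset$, not a $\Pi^0_2(A)$ fact, so the complexity bookkeeping in your sketch is off.) The standard mechanism --- Lachlan's original argument, or the modern version via the low$_2$ domination property that some $\emptyset'$-computable function dominates every total $A$-computable function --- is what allows the tree nodes to correctly guess which candidate positions lie in $\overline{A}$ and attain maximal $e$-states among elements of $\overline{A\cup M}$. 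Without specifying that mechanism and proving that the true-path markers settle with eventually constant, maximal $e$-states, ``carrying out that analysis in the usual way'' asserts the lemma rather than proving it.
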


\begin{theorem}There is a $\Delta^0_2$ $q$-cohesive set that is neither $\Pi^0_1$-immune nor $\Pi^0_1$.
\end{theorem}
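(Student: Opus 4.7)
The plan is to build $Q$ as the complement of a carefully chosen $\Delta^0_2$ set $A'$ that is not c.e., arranging $\overline{A'}$ to decompose into two cohesive pieces. Fix a maximal c.e.\ set $M$ and let $A\subsetneq M$ be a Lachlan \emph{major subset} of $M$ (a classical construction of Lachlan from the same line of work as \Cref{MaxSup}), so that $D_0:=M\setminus A$ is infinite and cohesive. As a difference of c.e.\ sets, $D_0\in\Delta^0_2$.

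Using $\null'$ as an oracle, I would build an infinite coinfinite $B\subseteq D_0$ in stages in order to diagonalize. At stage $e$, pick two fresh elements $d_e, w_e\in D_0$ (the $\Delta^0_2$ set $D_0$ is $\null'$-enumerable): always enroll $d_e$ in $B$ so that $B$ becomes infinite, and use $\null'$ to decide whether $w_e\in W_e$, including $w_e$ in $B$ iff $w_e\notin W_e$. Since $w_e\in D_0\subseteq\overline{A}$, the membership $w_e\in A\cup B$ reduces to $w_e\in B$, so this choice forces $A\cup B\neq W_e$. The complement $D_0\setminus B$ is automatically infinite because only finitely many elements of $D_0$ are enrolled in $B$ at each stage.

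Set $A':=A\cup B$ and $Q:=\overline{A'}=\overline{M}\cup(D_0\setminus B)$. Then $Q\in\Delta^0_2$ (as $A'$ is), $Q$ is $q$-cohesive because both $\overline{M}$ and $D_0\setminus B$ are cohesive (the latter as an infinite subset of the cohesive $D_0$), $Q$ is not $\Pi^0_1$-immune since $\overline{M}\subseteq Q$ is an infinite $\Pi^0_1$ subset, and $Q\notin\Pi^0_1$ since $\overline{Q}=A'$ is not c.e.\ by the diagonalization.

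The main obstacle is engineering $B$ so that both $B$ and $D_0\setminus B$ are infinite while $A\cup B$ evades every c.e.\ set, all within a single $\null'$-computable staging. The two witnesses $d_e$ and $w_e$ per stage cleanly separate these tasks, and cohesiveness of $D_0\setminus B$ is inherited for free from that of $D_0$. The essential structural input beyond \Cref{MaxSup} is the classical existence of a major subset $A$ of $M$, which supplies the cohesive $\Delta^0_2$ set $D_0$ in which to carry out the diagonalization.
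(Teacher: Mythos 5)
There is a genuine gap, and it sits exactly at the load-bearing point of the argument: the claim that $D_0=M\setminus A$ is cohesive when $A$ is a major subset of the maximal set $M$ is false. Indeed it contradicts two results the paper itself uses. Since $A\subset_m M$ and $M$ is non-computable, \Cref{PiImMaj} gives that $M\setminus A$ is $\Pi^0_1$-immune, and being infinite it is therefore not $\Pi^0_1$. But $M\setminus A$ is a difference of c.e.\ sets, hence $2$-c.e., and by \Cref{ncecoh} every cohesive $n$-c.e.\ set \emph{is} $\Pi^0_1$. So no major subset of $M$ can have a cohesive difference: majorness buys you precisely $\Pi^0_1$-immunity of $M\setminus A$, which is a much weaker splitting property than cohesiveness --- a c.e.\ set may still cut $M\setminus A$ into two infinite pieces; it just cannot sit inside it as an infinite $\Pi^0_1$ subset. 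Consequently $D_0\setminus B$ inherits nothing, and the $q$-cohesiveness of $Q=\overline{M}\cup(D_0\setminus B)$ is unjustified. This is the heart of the theorem; your other three verifications ($Q\in\Delta^0_2$, $Q$ not $\Pi^0_1$-immune via the infinite $\Pi^0_1$ subset $\overline{M}$, and $Q$ not $\Pi^0_1$ via the diagonalization at the witnesses $w_e$) are correct.

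The repair forces you back to the paper's route: the second cohesive piece must come from \emph{another maximal set}, not from a major-subset difference. The paper takes a $\Pi^0_1$ cohesive $C$ (playing the role of your $\overline{M}$), an infinite computable $D\subseteq\overline{C}$, and applies \Cref{MaxSup} to get a maximal $M_1\supseteq\overline{D}$, so that $\overline{M_1}\subseteq D\subseteq\overline{C}$ is cohesive and disjoint from $C$; it then runs your style of diagonalization inside $\overline{M_1}$, putting $p_{\overline{M_1}}(n)$ into $C_2$ for even $n$ (to keep $C_2$ infinite, and incidentally to keep $\overline{M_1}\setminus C_2$ infinite --- note that your ``two fresh witnesses per stage'' does not by itself prevent $B$ from exhausting $D_0$) and diagonalizing against $\overline{W_n}$ at odd $n$. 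There cohesiveness of $C_2$ genuinely is inherited, because $C_2$ is an infinite subset of the cohesive set $\overline{M_1}$.
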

\begin{proof}
Let $C$ be a $\Pi^0_1$ cohesive set. As $\overline{C}$ is c.e., it has an infinite computable subset $D$, so that $\overline{D}$ is infinite and coinfinite.
As $D'' \equiv_T \null'$, by \Cref{MaxSup} $\overline{D}$ has a maximal superset $M$. Thus $\overline{M}$ is a cohesive subset of $D$, and hence of $\overline{C}$.
Let $C_2 = \{p_{\overline{M}}(n)\mid n \text{ is even or } p_{\overline{M}}(n)\in W_n\}\subseteq\overline{M}$.
As $\null'$ can compute $p_{\overline{M}}$ and every $W_e$, $C_2$ is $\Delta^0_2$.

If $n$ is even, then $p_{\overline{M}}(n)\in C_2$, so $C_2$ is infinite. If $n$ is odd, $p_{\overline{M}}(n)\in C_2$ iff $p_{\overline{M}}(n)\not\in\overline{W_n}$, so $C_2$ disagrees with every $\Pi^0_1$ set. Infinite subsets of cohesive sets are cohesive, so as $C_2\subset\overline{M}\subset D\subset\overline{C}$, it is cohesive and disjoint from $C$. Thus $C\cup C_2$ is $q$-cohesive and disagrees with every $\Pi^0_1$ set, and so is also not $\Pi^0_1$. But it has the $\Pi^0_1$ subset $C$, so it is not $\Pi^0_1$-immune.
\end{proof}
\section{$\Pi^0_1$-Immunity Below $\null'$: Highness and Lowness}\label{highlow}
It is not hard to construct a $\Pi^0_1$-immune set below $\null'$ --- as we will see below, there are low$_n$ and high$_n$ examples for every $n$, even properly so for $n>1$.

\subsection{Lowness}

\begin{definition} $A$ is low$_n$ iff $A^{(n)}\leq_T\null^{(n)}$. For $n=1$ we omit the subscript.
\end{definition}
\begin{definition}
A family of sets $\mathcal{D} = \{D_e\}_{e\in\omega}$ is \emph{uniformly} $\Delta^0_2$ iff $\{\langle x, e\rangle \mid x\in D_e\}\leq_T\null'$.
\end{definition}
\begin{theorem}\label{lowD}
For any uniformly $\Delta^0_2$ family $\mathcal{D}$, there is a low $\mathcal{D}$-immune real.
\end{theorem}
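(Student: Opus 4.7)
The plan is a finite-injury priority construction of $A$ using $\null'$ as an oracle, meeting for each $e$ the requirements $L_e$ (deciding $\Phi_e^A(e)\!\downarrow$ in a $\null'$-computable way) and $R_e$ (if $D_e$ is infinite, then some $n\in D_e$ has $A(n)=0$). I would assign priorities $L_0<R_0<L_1<R_1<\cdots$ and attach to each $R_e$ a movable marker $m_e\in\omega\cup\{\perp\}$ intended to witness $m_e\in D_e\cap\overline{A}$. I fix a computable approximation $D_{e,s}\to D_e$, available since $\mathcal{D}$ is uniformly $\Delta^0_2$.

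At stage $s+1$, with current initial segment $\sigma$, I proceed as follows. For each $e\le s$ in priority order, if $m_e$ is undefined or $m_e\notin D_{e,s}$, reset $m_e$, search $D_{e,s}$ for the least $n\ge|\sigma|$, and if found set $m_e:=n$ and extend $\sigma$ by $0$s up through position $n$. Then, for each $e\le s$ whose $L_e$ is still pending, I ask $\null'$ whether $(\exists\tau\supseteq\sigma)\,\Phi_e^\tau(e)\!\downarrow$; if so, I replace $\sigma$ by such a $\tau$ and mark $L_e$ acted-on. To ensure $A$ is infinite (so that immunity is non-vacuous), I append a single $1$ to $\sigma$ at the end of each stage.

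The verification turns on a single monotonicity observation: bits are only ever written at positions $\ge|\sigma|$ at the moment of action, so committed bits are never overwritten. This makes each $L_e$-decision permanent --- in the positive case via the committed $\tau$, in the negative case because no extension of the then-current $\sigma$ halts --- so $\null'$ computes $A'(e)$ by simulating the construction to the stage where $L_e$ is acted on. For $\mathcal{D}$-immunity, the marker values $m_e^{(0)}<m_e^{(1)}<\cdots$ are strictly increasing, and each replacement occurs only when a false positive in the approximation later disappears; pointwise convergence $D_{e,s}\to D_e$ bounds the number of false positives below any true member of $D_e$, so if $D_e$ is infinite the marker eventually stabilizes at some $n\in D_e$, and by construction $A(n)=0$.

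The main delicate point will be formalizing the stabilization in the immunity claim in the presence of finite injury: marker resets and $L$-extensions can both inflate $|\sigma|$ quickly, so one must check that for every $e$ with $D_e$ infinite there is nevertheless a late enough stage at which the search for $R_e$ locks onto a stable witness. I expect to resolve this by the standard finite-injury bookkeeping --- each position contributes only finitely many mind-changes to $D_{e,s}$, each higher-priority requirement settles after finitely many actions, and once every higher-priority requirement has settled the search for $m_e$ can be injured only finitely many more times before landing on a position whose $D_{e,s}$-value has already stabilized to $1$.
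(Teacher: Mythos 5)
Your overall architecture (a $\null'$-oracle finite-extension construction, forcing the jump for lowness and writing $0$s into $D_e$ for immunity) matches the paper's, but your decision to handle $D_e$ via a computable approximation $D_{e,s}$ rather than by exact $\null'$-queries introduces a gap that your sketch does not close, and it is exactly the ``delicate point'' you flag. Each time the marker $m_e$ lands on a false positive and is later reset, the new search is confined to $[\,|\sigma|,\infty)$ for the \emph{current} $\sigma$, and between those two events $\sigma$ may have grown arbitrarily: every stage appends a bit, every newly activated $L_i$ may replace $\sigma$ by an arbitrarily long $\tau$, and every other marker pads with $0$s. So each discarded false positive can push the search threshold past unboundedly many true elements of $D_e$. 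Your stabilization argument --- ``pointwise convergence bounds the number of false positives below any true member of $D_e$'' --- therefore never gets to apply to a \emph{fixed} true member: the relevant least true member $n^*\ge|\sigma|$ changes every time the window advances, and nothing rules out an infinite sequence of false positives, each of which is the least element of $D_{e,s}\cap[\,|\sigma_s|,\infty)$ at the moment it is chosen. This is not the finite-injury situation you invoke (finitely many actions by finitely many higher-priority requirements); the injuries here come from the approximation itself, and their number for a fixed $R_e$ is not a priori finite.

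The repair is to drop the approximation entirely: your construction already consults $\null'$ for the jump-forcing questions, and uniform $\Delta^0_2$-ness of $\mathcal{D}$ means $\null'$ decides $n\in D_e$ exactly. This is what the paper does: at each stage it searches only the \emph{finite} window of positions created at that stage (the interval $[\,|a_{s-1}|,|\tau|)$, or the single next bit) for a genuine element of $D_e$ using $\null'$, keeps $e$ on the list until such an element appears, and then sets that bit to $0$ once and for all. Since the windows exhaust $[\,|a_{e-1}|,\infty)$ and $D_e$ is infinite, the requirement is met with no false positives and no injury at all. With that change your argument goes through; as written, the immunity verification is incomplete.
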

\begin{proof} We will build such a real $A$ in segments $a_0\preceq a_1\preceq\cdots\prec A$ by forcing the jump under certain constraints. This follows a proof originally by Spector \cite{Spector}, as presented in \cite{OdiI} (Proposition V.2.21).

Index the elements of $\mathcal{D}$ as $D_0, D_1, D_2, \dots$ Begin with $a_0=\null$, and a list containing only $0$. At stage $s-1$, we have built a finite string $a_{s-1}$, and have a finite list of indices $e$ providing constraints. At stage $s$, add $s$ to the list and force the jump: ask $\null'$ if there is an extension $\tau\succ a_{s-1}$ such that $\Phi^{\tau}_{s}(s)\!\!\downarrow$. There are two cases to consider:\begin{itemize}
\item[1.] If a suitable extension $\tau$ exists, then for each $e$ on the list, run a $\null'$-computable check to find the least $n_e\in D_e\cap \left[|a_{s-1}|, |\tau|\right)$, if such values exist. Where they do, set $\tau(n_e) = 0$ to diagonalize against $D_e$, then remove $e$ from the list. If this causes $\Phi^{\tau}_s(s)\divs$, make the following query to $\null'$: \begin{center}
Does there exist a $\tau\succ a_s$ satisfying all $\tau(n_e)=0$ such that $\Phi^{\tau}_s(s)\halts$? \end{center}
As there are at most $s$ restrictions at this stage, this is a $\Sigma^0_1$ question. If such a $\tau$ exists, check again with the remaining $e$ on the list, repeating this process until either:\begin{itemize}
\item[a.] Some extension $\tau$ is found which meets all the restrictions imposed by indices on the list. Set $a_{s+1}=\tau$.
\item[b.] The restrictions cause every possible extension $\tau$ to have $\Phi^\tau_s(s)\divs$. In particular $a_s$ as currently defined has this property, so leave it be.
\end{itemize}
\item[2.] If no suitable extension exists, consult the list: if $|a_{s-1}|$ is in any $D_e$ for $e$ on the list, append a 0 to $a_{s-1}$ and remove those $e$ from consideration. Repeat this search-and-append process until the list is empty or the next bit is in none of the remaining $D_e$, at which point append a $1$ and call this string $a_{s+1}$.
\end{itemize}
Define $A = \bigcup_{s\in\omega}a_s$. To see that $A$ is infinite, note that infinitely many indices $e$ code for machines that never halt, so the second case occurs infinitely many times, each time adding a $1$ to $A$.

For $\mathcal{D}$-immunity, let $D_e$ be infinite. It is added to the list at stage $e$, so all $n\in D_e\cap[|a_{e-1}|,\infty)$ could be used to diagonalize. As $\left|D_e\right|=\infty$, this set is non-empty, so the diagonalization works and $D_e\not\subseteq A$.

Finally for lowness, as $\mathcal{D}$ is uniformly $\Delta^0_2$, each stage of the construction requires finitely many queries to $\null'$, so $A\leq_T\null'$. In addition, $e\in A'$ iff $\Phi^{a_{e}}_e(e)\!\!\downarrow$, so $\null'$ computes $A'$.\end{proof}
This theorem actually proves slightly more:
\begin{corollary}\label{PiIm1Gen} For any uniformly $\Delta^0_2$ family $\mathcal{D}$, there is a low $1$-generic $\mathcal{D}$-immune set.
\end{corollary}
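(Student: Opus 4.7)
The plan is to interleave 1-genericity requirements into the construction of \Cref{lowD}, then invoke the classical fact that every 1-generic real below $\null'$ is automatically low. First I would fix an enumeration $\{S_e\}_{e\in\omega}$ of all $\Sigma^0_1$ sets of binary strings; since this enumeration is uniformly $\Delta^0_2$, each membership question $\tau \in S_e$ can be decided by $\null'$.

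Next I would repeat the stage-by-stage construction of \Cref{lowD} almost verbatim, except that at stage $s$ the jump-forcing question ``does there exist $\tau \succ a_{s-1}$ with $\Phi^\tau_s(s)\halts$?'' is replaced by ``does there exist $\tau \succ a_{s-1}$ with $\tau \in S_s$?''. If yes, I would pick such a $\tau$ and run the $\mathcal{D}$-immunity diagonalization sub-routine exactly as in \Cref{lowD}; if no, then any extension of $a_{s-1}$ avoids $S_s$, and I would extend $a_s$ by appending bits to handle pending diagonalization constraints, as in case 2 of that proof. The arguments that $A$ is infinite, $\Delta^0_2$, and $\mathcal{D}$-immune transfer without modification, and by construction $A$ meets or avoids every $S_s$, hence is 1-generic.

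Lowness then comes for free: for each $e$, the set $T_e = \{\tau : \Phi^\tau_e(e)\halts\}$ is $\Sigma^0_1$, so by 1-genericity $\Phi^A_e(e)\halts$ if and only if some prefix of $A$ lies in $T_e$. This condition is a $\Sigma^0_1(A)$ question about a $\Delta^0_2$ real, hence decidable by $\null'$, giving $A' \leq_T \null'$.

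The main obstacle is the interaction between the bit-constraints imposed by the $\mathcal{D}$-immunity diagonalization and the requirement that some extension still lie in $S_s$: forcing $\tau(n_e)=0$ for various indices on the list may well push $\tau$ out of $S_s$. This is resolved exactly as in \Cref{lowD}, since the combined query ``does there exist $\tau \succ a_{s-1}$ in $S_s$ satisfying the currently imposed bit constraints?'' remains $\Sigma^0_1$; one iteratively shrinks the constraint list via $\null'$ queries until either a witnessing $\tau$ is found, or every such constrained extension fails to lie in $S_s$, in which case the 1-genericity requirement for $S_s$ is discharged by avoidance and any un-handled immunity indices simply persist on the list for later stages.
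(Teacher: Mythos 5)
Your proposal is correct and is essentially the paper's own approach: the paper's proof just observes that the construction of \Cref{lowD} already forces the jump at every index $e$, and forcing the jump at every index is the same thing as meeting-or-avoiding every $\Sigma^0_1$ set of strings (each such set arises as $\{\tau : \Phi^\tau_e(e)\halts\}$ for some $e$), so your explicit genericity requirements $S_e$ and the paper's jump-forcing steps are the same requirements under two names, and the immunity diagonalization interacts with them identically. One caveat: your inline justification of lowness (``a $\Sigma^0_1(A)$ question about a $\Delta^0_2$ real, hence decidable by $\emptyset'$'') is not valid as stated---that reasoning would make every $\Delta^0_2$ real low---but the classical fact you lead with (every $1$-generic is $\mathrm{GL}_1$, so a $\Delta^0_2$ $1$-generic is low, using genericity to turn ``some prefix of $A$ lies in $T_e$'' into a $\emptyset'$-decidable meet-or-avoid dichotomy) is correct and carries the argument.
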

\begin{proof} For any index $e$, the above $A$ forces the jump at stage $e$, so $A$ is $1$-generic \cite{ForcetheJump}. \end{proof}
Finally the desired theorem is a corollary.
\begin{corollary}\label{lowPi01set}
There is a low $1$-generic $\Pi^0_1$-immune set.
\end{corollary}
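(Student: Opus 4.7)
The plan is to obtain this as a direct application of \Cref{PiIm1Gen}, simply by exhibiting a uniformly $\Delta^0_2$ listing of all $\Pi^0_1$ sets.

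Specifically, I would take the family $\mathcal{D} = \{D_e\}_{e \in \omega}$ defined by $D_e = \overline{W_e}$. The standard index $e$ already enumerates all c.e.\ sets, so complementing gives an enumeration of all $\Pi^0_1$ sets. To check uniform $\Delta^0_2$-ness, note that $\langle x, e \rangle \in \{\langle x, e\rangle \mid x \in D_e\}$ iff $x \notin W_e$, which is a $\Pi^0_1$ predicate in $\langle x, e \rangle$ and therefore $\Delta^0_2$ (computable from $\null'$) uniformly in the pair.

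Now apply \Cref{PiIm1Gen} to this $\mathcal{D}$: it yields a low, $1$-generic set $A$ which is $\mathcal{D}$-immune, meaning that no infinite $D_e = \overline{W_e}$ is a subset of $A$. But a $\Pi^0_1$-immune set is by definition one with no infinite $\Pi^0_1$ subset, and every $\Pi^0_1$ set has the form $\overline{W_e}$ for some $e$, so $\mathcal{D}$-immunity and $\Pi^0_1$-immunity coincide here. Hence $A$ is the desired low $1$-generic $\Pi^0_1$-immune set.

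There is essentially no obstacle to overcome: the entire content of the corollary has been absorbed into \Cref{PiIm1Gen} and \Cref{lowD}, and the only nontrivial thing to verify is the trivial uniformity check above. The conceptual work was already done in setting up the general theorem with an arbitrary uniformly $\Delta^0_2$ family $\mathcal{D}$; this corollary is just the remark that the collection of all $\Pi^0_1$ sets is itself such a family.
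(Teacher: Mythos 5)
Your proposal is correct and is exactly the paper's argument: the paper's proof of this corollary consists precisely of observing that $\left\{\langle x, e\rangle \mid x\in\overline{W_e}\right\}\leq_T\null'$, so the $\Pi^0_1$ sets form a uniformly $\Delta^0_2$ family, and then applying \Cref{PiIm1Gen}. Nothing further is needed.
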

\begin{proof} The $\Pi^0_1$ sets for a uniformly $\Delta^0_2$ family: $\left\{\langle x, e\rangle\mid x\in\overline{W_e}\right\}\leq_T\null'$.
\end{proof}
There are other, stronger lowness notions; for instance this technique can be improved to produce a superlow $\Pi^0_1$-immune set. We will do so in \Cref{superlow1G}.

Finally, \Cref{lowPi01set} gives another way in which $\Pi^0_1$-immunity differs from cohesiveness:
\begin{corollary} The real constructed in \Cref{lowPi01set} is $\Pi^0_1$-immune, but not cohesive.
\end{corollary}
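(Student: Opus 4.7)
The plan is to exploit the $1$-genericity of the real $A$ produced by \Cref{lowPi01set}. By the corollary's construction, $A$ meets every dense $\Sigma^0_1$ set of strings, so I will show that any such real is split into two infinite pieces by the computable set of even numbers, which immediately contradicts cohesiveness.

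Concretely, let $E = \{2n : n \in \omega\}$, which is (co-)c.e. For each $k \in \omega$, consider
\[
\mathcal{S}_k = \{\sigma \in 2^{<\omega} : (\exists n)[\, k \le n < |\sigma|,\ n \text{ even},\ \sigma(n) = 1\,]\}.
\]
This is a $\Sigma^0_1$ (in fact $\Delta^0_1$) set of strings, and it is dense: any $\sigma$ extends to $\sigma\, 0^{j}1$ where the final $1$ lands on an even index $\ge k$. Since $A$ is $1$-generic, it meets each $\mathcal{S}_k$, so $A \cap E$ contains elements above every threshold $k$, hence is infinite. Running the same argument with ``$n$ odd'' shows $A \cap \overline{E}$ is infinite. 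Taking $W_e = E$ in the definition of cohesiveness, both $W_e \cap A$ and $\overline{W_e} \cap A$ are infinite, so $A$ is not cohesive.

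There is essentially no obstacle here beyond checking the density of $\mathcal{S}_k$ and the fact (already used in the proof of \Cref{PiIm1Gen}) that the construction in \Cref{lowD} forces the jump and so produces a $1$-generic real. The only mild subtlety is to ensure the diagonalization steps in the construction of $A$ do not incidentally drive $A \cap E$ finite; but each diagonalization requirement from the family of $\Pi^0_1$ sets zeroes out at most one bit per stage, so cofinitely many even positions remain available to be chosen freely, and $1$-genericity guarantees that cofinitely often the construction actually places a $1$ at such a position.
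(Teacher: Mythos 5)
Your proof is correct, but it takes a genuinely different route from the paper's. The paper disposes of non-cohesiveness in one line by invoking lowness: the real $A$ of \Cref{lowPi01set} is low, and cohesive sets are never low \cite{CooperHhi}, so $A$ cannot be cohesive. You instead exploit the $1$-genericity of $A$ directly: since a $1$-generic cannot avoid a dense $\Sigma^0_1$ set of strings, it must meet each of your sets $\mathcal{S}_k$ (and their odd-parity analogues), so the computable set $E$ of evens splits $A$ into two infinite pieces, defeating cohesiveness. Your argument is more elementary and self-contained --- it needs no external theorem about the jump of cohesive sets, and it actually proves the stronger statement that no weakly $1$-generic real is cohesive, with a computable witness to the splitting --- whereas the paper's argument is shorter but leans on a nontrivial cited result. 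One small criticism: your closing paragraph about whether the diagonalization steps in the construction of \Cref{lowD} might ``drive $A\cap E$ finite'' is a red herring. Once \Cref{PiIm1Gen} has established that $A$ is $1$-generic, the splitting follows from genericity alone, independently of how $A$ was built; no further inspection of the construction is needed, and phrasing it as if the construction must cooperate slightly muddies an otherwise clean argument.
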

\begin{proof}
Cohesive sets are not low \cite{CooperHhi}.
\end{proof}
\subsection{Highness}
Just as we adapted a proof of Spector in \Cref{lowD}, we could apply the same modification to a proof of Sacks. While we will do this this to construct a high bi-$\Pi^0_1$-immune in \Cref{biPi}, earlier results give several less involved proofs.
\begin{definition} $A$ is high$_n$ iff $A^{(n)}\geq_T\null^{(n+1)}$. For $n=1$ we omit the subscript.
\end{definition}
\begin{theorem}[High Domination Theorem \cite{Martin}]\label{HDT} $A$ is high iff $A$ computes a dominant function.
\end{theorem}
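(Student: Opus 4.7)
The plan is to prove both directions of the biconditional. The easier direction is ($\Leftarrow$): suppose $A$ computes a dominant function $f$, and aim to show $\emptyset'' \leq_T A'$. Let $T = \{e : \varphi_e \text{ total}\}$; since $T$ is $\Pi^0_2$-complete, $T \equiv_T \emptyset''$, so it suffices to prove $T \in \Delta^0_2(A)$ (equivalently $T \leq_T A'$ by the relativized Post's theorem). The set $T$ is $\Pi^0_2$ outright. For the $\Sigma^0_2(A)$ side, I would use that if $\varphi_e$ is total then its computable modulus $m_e(n) = \mu s[\,\varphi_{e,s}(n)\halts\,]$ is dominated by $f$, whence $\varphi_{e,f(n)}(n)\halts$ for almost every $n$. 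Thus $e \in T \iff \exists k\,\forall n \geq k\, (\varphi_{e,f(n)}(n)\halts)$, which is a $\Sigma^0_2(A)$ condition since $f \leq_T A$. Combining gives $T \in \Delta^0_2(A)$.

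For ($\Rightarrow$), suppose $A$ is high, so $T \leq_T A'$. By the relativized Shoenfield Limit Lemma, pick an $A$-computable $h:\omega^2 \to \{0,1\}$ with $\lim_s h(e,s) = 1_T(e)$ for every $e$. The plan is to $A$-compute a dominant function $f$ as follows: on input $n$, search (using oracle $A$) for a stage $s \geq n$ such that $\varphi_{e,s}(n)\halts$ for every $e \leq n$ with $h(e,s) = 1$, then set
\[
    f(n) = 1 + \max\{\varphi_{e,s}(n) : e \leq n,\ h(e,s) = 1\}.
\]
For any total $\varphi_e$, once $n$ is large enough that the approximation $h(e,\cdot)$ has stabilized at $1$, the maximum forces $f(n) > \varphi_e(n)$, so $f$ dominates every total computable function.

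The main obstacle is termination of the search: if $h(e,s) = 1$ holds momentarily but $\varphi_e$ is in fact not total, the naive wait for $\varphi_{e,s}(n)\halts$ loops forever. To address this I would dovetail the search with $A$-monitoring of $h$ --- whenever $h(e,\cdot)$ flips to $0$ during the wait on some $e \leq n$, drop $e$ from the current requirement and restart the dovetail. Since each $h(e,\cdot)$ changes value only finitely often and only finitely many $e \leq n$ are ever relevant, this $A$-effective procedure must terminate at every $n$. Making the dovetail precise, and verifying that the resulting $f$ is total, $A$-computable, and dominates every total computable function, constitutes the technical heart of the forward direction.
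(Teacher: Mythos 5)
The paper does not actually prove this theorem; it is quoted as Martin's High Domination Theorem with a citation to \cite{Martin}, so your argument can only be compared with the standard proof, which is essentially what you reconstruct. The strategy in both directions is the right one, but one step fails as written. In the ($\Leftarrow$) direction your displayed equivalence $e \in T \iff \exists k\,\forall n \geq k\,(\varphi_{e,f(n)}(n)\halts)$ is false from right to left: the right-hand side asserts only that $\varphi_e$ has cofinite domain, and an index $e$ whose function diverges at $0$ but converges everywhere else satisfies it with $k=1$ while $e \notin T$. Since this formula is precisely your $\Sigma^0_2(A)$ definition of $T$, the claimed membership $T \in \Sigma^0_2(A)$ does not follow from what you wrote. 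The repair is routine: take $e \in T \iff \exists k\,\exists s\,[\,\forall n \geq k\,(\varphi_{e,f(n)}(n)\halts) \wedge \forall n < k\,(\varphi_{e,s}(n)\halts)\,]$; after pushing the bounded universal quantifier inside the $\exists s$, the matrix is $A$-computable, so $T$ is genuinely $\Sigma^0_2(A)$ and hence $\Delta^0_2(A)$, giving $\emptyset'' \equiv_T T \leq_T A'$.

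In the ($\Rightarrow$) direction the construction is correct, but the ``main obstacle'' you identify is illusory, and the dovetail-and-restart machinery you defer to as the technical heart is unnecessary. Your search ranges over stages $s \geq n$, and at each candidate $s$ the condition ``$\varphi_{e,s}(n)\halts$ for every $e \leq n$ with $h(e,s)=1$'' is evaluated with the \emph{current} value $h(e,s)$; nothing ever waits on a fixed $e$. Termination is then immediate: choose $s \geq n$ so large that $h(e,\cdot)$ has settled for every $e \leq n$ and every genuinely total $\varphi_e$ with $e \leq n$ has converged on input $n$ within $s$ steps; that $s$ passes the test, so the least such $s$ is found $A$-effectively. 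The clause $s \geq n$ is exactly what you need for domination --- once $n$ exceeds both $e$ and the settling stage of $h(e,\cdot)$, the stage $s$ found satisfies $h(e,s)=1$, forcing $f(n) > \varphi_{e,s}(n) = \varphi_e(n)$ --- and you do include it, so with the spurious obstacle removed the forward direction is complete.
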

\begin{theorem}\label{PiImhigh}
There is a high $\Pi^0_1$-immune set.
\end{theorem}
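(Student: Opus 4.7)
The plan is to observe that the set constructed in \Cref{ID1} already has all the properties we need, once we combine it with the High Domination Theorem. Recall that Theorem \ref{ID1} produced a $\Delta^0_2$ set $A$ that is dense immune (its principal function $p_A$ is dominant) and whose complement $\overline{A}$ is $\Pi^0_1$-immune. I would simply take $\overline{A}$ as the desired high $\Pi^0_1$-immune set.

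The key step is checking highness. Since $p_A \leq_T A$ and $p_A$ is dominant, $A$ itself computes a dominant function, so by the High Domination Theorem \ref{HDT}, $A$ is high. Complementation preserves Turing degree, so $\overline{A} \equiv_T A$ is also high. Combined with $\Pi^0_1$-immunity from \Cref{ID1}, $\overline{A}$ witnesses the theorem.

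I do not expect any serious obstacle here, since both ingredients are already established: \Cref{ID1} delivers the $\Pi^0_1$-immunity and dense immunity in one construction, and the High Domination Theorem converts the dominant principal function into highness for free. The only thing worth emphasizing in the write-up is the transition from ``$\overline{A}$ is $\Pi^0_1$-immune'' to ``$\overline{A}$ is high,'' which passes through the fact that $A$ and $\overline{A}$ have the same Turing degree.
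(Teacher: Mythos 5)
Your proof is correct and is essentially the paper's own argument: take the set from \Cref{ID1}, use its dominant principal function together with \Cref{HDT} to get highness, and pass to the complement (which has the same Turing degree) to get the $\Pi^0_1$-immune witness. Your write-up is in fact slightly more careful than the paper's, which elides the distinction between the dense immune set and its $\Pi^0_1$-immune complement.
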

\begin{proof} The set in \Cref{ID1} is $\Pi^0_1$-immune and computes a dominant function (namely its own principal function). By \Cref{HDT}, it is high.
\end{proof}
It is a well-known theorem of Martin \cite{Martin} that the high c.e.\ degrees are exactly those containing a maximal (and hence co-cohesive) real. In fact slightly more is true:
\begin{theorem}[Jockusch \cite{JockHighCoh}]\label{highcoh} Every high degree contains a cohesive real.
\end{theorem}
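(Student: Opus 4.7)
The plan is to invoke the High Domination Theorem (\Cref{HDT}) just proved to obtain $f \le_T A$ dominating every total computable function, and then use $f$ as a look-ahead oracle to $A$-computably navigate the tree of ``atoms'' determined by the c.e.\ sets. The single observation powering the construction is this: if $W_e$ is infinite then the function $k \mapsto \min\{s : |W_{e,s}| \ge k\}$ is total computable, hence dominated by $f$, so $|W_{e, f(k)}| \ge k$ for all sufficiently large $k$, while the same inequality fails eventually whenever $W_e$ is finite. Thus the look-ahead budget $f(n)$ lets $A$ distinguish infinite from finite c.e.\ sets for all sufficiently large tests.

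Using this, I would build $C = \{c_0 < c_1 < \cdots\}$ in stages while maintaining a nested sequence of infinite candidate atoms $D_0 \supseteq D_1 \supseteq \cdots$ of the form $D_n = \bigcap_{e<n} W_e^{\sigma_n(e)}$ for some $\sigma_n \in 2^n$ (with $W_e^1 = W_e$ and $W_e^0 = \overline{W_e}$), chosen so that $c_k \in D_n$ for every $k \ge n$. At stage $n$, I inspect the two candidate refinements $D_n \cap W_n$ and $D_n \setminus W_n$ via their $f(n)$-stage approximations, and select whichever already contains at least $n$ elements above $c_{n-1}$; this becomes $D_{n+1}$, and I let $c_n$ be the least or second-least element of $D_{n+1}$ above $c_{n-1}$, using the one-bit ambiguity to code $A(n)$ into $C$. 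Because each $D_{n+1}$ is infinite, the coding step never fails.

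Verification: for any fixed $e$, all but finitely many stages $n > e$ refine into the true infinite piece at level $e$, so all but finitely many $c_n$ lie in a single atom with respect to $W_e$, i.e.\ $C \cap W_e$ or $C \setminus W_e$ is finite, so $C$ is cohesive. The construction runs uniformly in $A$, giving $C \le_T A$, and the parity coding gives $A \le_T C$, so $C \equiv_T A$ as required. The main obstacle is showing that the look-ahead test cannot persistently misfire on the correct branch: an infinite sequence of stages at which the finite side of a genuine split is selected would produce a total computable function escaping $f$, contradicting dominance. A secondary bookkeeping point is that $D_n$ involves complements of c.e.\ sets and so is not itself c.e., so the function $k \mapsto \min\{s : |D_n \cap W_{n,s}| \ge k\}$ must be phrased as an $A$-computable rather than bare computable enumeration, and the dominance argument must be applied with care to preserve its conclusion; this is a routine relativization.
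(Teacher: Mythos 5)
Your overall strategy --- extract a dominant $f \le_T A$ from the High Domination Theorem and then run an atom/$e$-state construction down the tree of sets $D_\sigma = \bigcap_{e<|\sigma|} W_e^{\sigma(e)}$ --- is the skeleton of Jockusch's argument (the paper gives no proof, only the citation to \cite{JockHighCoh}). But the step you dismiss as a ``routine relativization'' is exactly where the proof breaks, and it is not routine. Your powering observation is valid only for a bare c.e.\ set: $k \mapsto \min\{s : |W_{e,s}| \ge k\}$ is total computable when $W_e$ is infinite, hence dominated by $f$. The sets you actually need to test are $D_n \cap W_n$ and $D_n \setminus W_n$, where $D_n$ involves complements of c.e.\ sets; the corresponding settling functions are not computable but at best $A$-computable, and moreover they depend on $\sigma_n$, which is itself produced by the $f$-driven construction, so they are not even fixed in advance of choosing $f$. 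A dominant function dominates every total \emph{computable} function, but no $A$-computable function dominates every $A$-computable function (none dominates $f+1$), so the dominance hypothesis simply does not apply where your verification invokes it. In particular, the claimed contradiction ``persistently selecting the finite side of a genuine split would produce a total computable function escaping $f$'' yields a function that is not computable, so no contradiction is obtained.

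Two further problems follow. Since $\overline{W_{n,f(n)}} \supseteq \overline{W_n}$, the stage-$f(n)$ approximation to the complement side over-approximates: a branch can pass your ``$n$ elements above $c_{n-1}$'' test while being finite, and an element $c_k$ certified at stage $f(n)$ can later leave its atom when it enters some $W_e$ with $\sigma(e)=0$; you have no mechanism for moving or discarding such elements, so neither the infinitude of $D_{n+1}$ nor $c_k \in D_n$ for $k \ge n$ is secured. Separately, the coding $A \le_T C$ is circular as stated: decoding $A(n)$ from $c_n$ requires identifying the least element of $D_{n+1}$ above $c_{n-1}$, hence knowing $\sigma_{n+1}$, which was computed from $f \le_T A$. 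The standard repair for the main gap is the Friedberg--Yates $e$-state maximization used in the maximal set construction: by always driving the markers toward the lexicographically greatest attainable $e$-state, one only ever needs $f$ to dominate settling functions of \emph{intersections of c.e.\ sets} (which are c.e., so those functions really are total computable), and the complement side is handled by the maximization itself rather than by a test; the coding of $A$ into $C$ then requires a separate argument.
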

\begin{corollary}\label{highB}
Every high degree contains a $\Pi^0_1$-immune set.
\end{corollary}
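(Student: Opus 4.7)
The plan is to chain together the two results immediately preceding the corollary. By \Cref{highcoh} (Jockusch), if $\mathbf{a}$ is a high Turing degree, then there exists a cohesive real $C$ with $C \in \mathbf{a}$. Now apply \Cref{hcompPi01im}: every cohesive set $C$ admits a $\Pi^0_1$-immune subset $D$ with $D \equiv_T C$. Thus $D \in \mathbf{a}$, and $D$ is $\Pi^0_1$-immune, proving the corollary.

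There is essentially no obstacle here — this is a one-step consequence of the two cited results. The only thing to double-check is that the degree is preserved: Jockusch gives a cohesive $C$ in the high degree, and the construction in \Cref{hcompPi01im} (setting $D = C \oplus_C \emptyset$) produces an immune subset that is Turing equivalent to $C$, not merely computable from it. Since both reductions $D \leq_T C$ and $C \leq_T D$ were established in the proof of \Cref{hcompPi01im}, we remain inside the same Turing degree $\mathbf{a}$, completing the argument.
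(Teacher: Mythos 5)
Your proof is correct and is essentially the paper's own argument: the paper combines \Cref{highcoh} with \Cref{cohdeg}, and \Cref{cohdeg} is just the degree-theoretic restatement of \Cref{hcompPi01im} that you invoke directly. Your extra check that $D\equiv_T C$ (not merely $D\leq_T C$) is exactly the point of \Cref{hcompPi01im}, so nothing is missing.
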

\begin{proof}
Combine \Cref{highcoh} and \Cref{cohdeg}.
\end{proof}

\subsection{low$_n$, high$_n$, and Intermediate Sets}
As a warmup, we present another theorem along the lines of \Cref{PiImhigh}. We will make use of the $B=\null$ case of \Cref{oplus}, so note that $A\oplus_X \null = \{p_X(n)\mid n\in A\}$. That is, the $n$th $1$ in $X$ is replaced with the $n$th bit of $A$.
\theorem\label{inchighPi01} There is a high, incomplete, $\Pi^0_1$-immune set.
\begin{proof} Let $L$ be the set constructed in \Cref{lowPi01set}. By relativizing a construction of Sacks \cite{Sacks} to $L$, we can obtain an $H$ with $L<_T H<_TL'\equiv_T\null'$ and $H'\equiv_T L''\equiv_T\null''$, so that $H$ is high and incomplete\footnote{By the upward closure property in \cite{jockusch1973}, this is actually enough.}. As $L$ is $\Pi^0_1$-immune and $H\oplus_L\null\subseteq L$, $H\oplus_L\null$ and hence $L\oplus(H\oplus_L\null)$ are $\Pi^0_1$-immune. Now we can compute \vspace{-.25cm}
\begin{center}$H\leq_T L\oplus(H\oplus_L\null)\leq_T L\oplus H\leq_T H <_T\null'$\end{center}
to see that $L\oplus(H\oplus_L \null)\equiv_T H$ is incomplete and high.\end{proof}
This techinique of combining a set with a highness/lowness property with a $\Pi^0_1$-immune set can be used to show the existence of $\Pi^0_1$-immune degrees into every level of the high/low hierarchy. To that end, we work with psuedojumps $J_e(A)$:

\begin{definition}
$J_e(A) = A\oplus W_e^A$.
\end{definition}

The usual way to populate the high/low hierarchy uses a finite extension argument under $\null'$, so it may be possible to adapt it to produce $\Pi^0_1$-immune sets by adapting that proof (via the same modifications we made to Spector's proof that there is a $\Delta^0_2$ 1-generic in \Cref{lowD}). But it is easier to demonstrate $\Pi^0_1$-immune sets Turing equivalent to $J_e(A)$:
\definition $P_e(A) = A\oplus\left(W_e^A\oplus_A \null\right)$.
\lemma For any index $e$, if a real $A$ is $\Pi^0_1$-immune, then so is $P_e(A)$.
\begin{proof}
It suffices to show that the second summand is $\Pi^0_1$-immune, so notice $W_e^A\oplus_A\null\subseteq A$.
\end{proof}
\begin{lemma}\label{P=J} For any index $e$, $P_e(A)\equiv_T J_e(A)$.\end{lemma}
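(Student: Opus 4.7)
The plan is to prove both $P_e(A) \leq_T J_e(A)$ and $J_e(A) \leq_T P_e(A)$ by unwinding the $\oplus_A$ construction. Using Definition \ref{oplus}, the second summand of $P_e(A)$ is
\[
W_e^A \oplus_A \emptyset = \{p_A(n) \mid n \in W_e^A\},
\]
so once $A$ is in hand, the principal function $p_A$ is computable from $A$, and the two presentations of $W_e^A$ (raw in $J_e(A)$, ``sparsified along $A$'' in $P_e(A)$) are interconvertible.

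For $J_e(A) \leq_T P_e(A)$, I would first strip off the even bits of $P_e(A)$ to read off $A$. Then, computing $p_A$ from $A$, I would decide $n \in W_e^A$ by asking whether the bit at position $p_A(n)$ appears in the second summand of $P_e(A)$. This recovers $W_e^A$, hence $J_e(A) = A \oplus W_e^A$, all uniformly in $e$.

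For the reverse direction $P_e(A) \leq_T J_e(A)$, the even bits of $J_e(A)$ again give $A$ and hence $p_A$, while the odd bits give $W_e^A$ directly. The set $\{p_A(n) \mid n \in W_e^A\}$ is then computable from this data by listing $n \in W_e^A$ in increasing order and outputting the corresponding values of $p_A$. Joining with $A$ yields $P_e(A)$.

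There is no real obstacle here; the whole content is bookkeeping around the fact that $A$ together with a bit-string sitting on the $1$s of $A$ carries exactly the same information as $A$ together with that bit-string listed in order. Both reductions are uniform in $e$, which will be useful when applying \Cref{P=J} to propagate $\Pi^0_1$-immunity through the pseudojump hierarchy in the subsequent high$_n$/low$_n$ arguments.
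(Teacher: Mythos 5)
Your proof is correct and matches the paper's argument: both directions rest on computing $p_A$ from the $A$-summand and using the identity $W_e^A(n) = (W_e^A\oplus_A\emptyset)(p_A(n))$ to convert between the two presentations of $W_e^A$. The paper's version is just a terser statement of the same bookkeeping.
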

\begin{proof}
We must show that $A\oplus\left(W_e^A\oplus_A \null\right)\leq_T A\oplus W_e^A$ and $W_e^A\leq_T A\oplus\left(W_e^A\oplus_A\null\right)$.\\
The first inequality follows by definition. For the second, $W_e^A(n) = (W_e^A\oplus_A\null)(p_A(n))$.
\end{proof}
\begin{lemma}\label{2hops} There is a computable $f$ such that for all indices $e$ and sets $B$, $P_{f(e)}(B)>_TB$ and $P_e(P_{f(e)}(B))\equiv_T B'$.\end{lemma}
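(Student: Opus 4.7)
The plan is to reduce the claim to a statement about the pseudojumps $J_e$ and then invoke the uniform pseudojump inversion theorem of Jockusch and Shore. By Lemma \ref{P=J}, $P_e(A) \equiv_T J_e(A)$ uniformly in $e$ and $A$, so it suffices to find a computable $f$ with $J_{f(e)}(B) >_T B$ and $J_e(J_{f(e)}(B)) \equiv_T B'$ for all $e, B$.

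The pseudojump inversion theorem provides, uniformly in $e$, a $B$-c.e.\ set $W_{f(e)}^B$ such that the resulting pseudojump $A := J_{f(e)}(B) = B \oplus W_{f(e)}^B$ is low above $B$ (that is, $A' \equiv_T B'$) and satisfies $J_e(A) \equiv_T B'$. The construction is a $B'$-computable finite-extension argument: at stage $s$ one asks $B'$ whether some extension of the current approximation forces $W_e^A(s)\halts$, extending accordingly. This controls $A'$, and by simultaneously using the stage-by-stage ``yes/no'' answers to encode the bits of $B'$, one secures $W_e^A \geq_T B'$. Combined with $W_e^A \leq_T A' \equiv_T B'$, this gives $J_e(A) \equiv_T B'$.

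To obtain $J_{f(e)}(B) >_T B$ as well, I would interleave a diagonalization requirement into the finite-extension argument: at designated stages, extend to ensure $W_{f(e)}^B \neq \Phi_n^B$ at some fresh witness, for each $n$. Since $\Phi_n^B$ is $B$-computable (hence $B'$-computable), this is easily arranged. The result is $W_{f(e)}^B \not\leq_T B$, whence $J_{f(e)}(B) >_T B$. The index $f(e)$ is a computable function of $e$ because the entire $B'$-construction is uniform in $e$.

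The main obstacle is verifying that the three families of requirements (lowness, $B'$-coding, and diagonalization) can all be satisfied in a single finite-extension argument. This is routine rather than deep: each requirement consumes only a bounded number of bits per stage, all of the relevant queries are $B'$-decidable, and the coding and diagonalization requirements can be arranged to act on disjoint portions of the approximation so as not to disturb the lowness forcing.
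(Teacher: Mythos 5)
Your proposal is correct and takes essentially the same route as the paper: the paper's proof likewise reduces the statement to the pseudojump operator $J_e$ via \Cref{P=J} and then appeals to the Jockusch--Shore pseudojump inversion theorem, which you additionally sketch rather than merely cite. No gap beyond what the paper itself elides.
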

\begin{proof}
In \cite{Psuedo}, this is shown for $J_e$ in place of $P_e$. So apply \Cref{P=J}.
\end{proof}
\begin{theorem}\label{lownhighn} There are $\Pi^0_1$-immune degrees in every proper level $H_{n+1}\setminus H_n$ and $L_{n+1}\setminus L_n$ of the high/low hierarchy.
\end{theorem}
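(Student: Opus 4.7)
The base cases are already in hand: Corollary~\ref{lowPi01set} gives a $\Pi^0_1$-immune set in $L_1\setminus L_0$, and Theorem~\ref{inchighPi01} gives one in $H_1\setminus H_0$. For $n\geq 1$, my plan is to adapt the classical iterated-pseudojump construction of degrees in $L_{n+1}\setminus L_n$ and $H_{n+1}\setminus H_n$ from \cite{Psuedo}, substituting $P_e$ for $J_e$ at every step.

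The substitution is justified by Lemma~\ref{P=J}: since $P_e(A)\equiv_T J_e(A)$, every degree-theoretic inequality in the classical iteration is preserved when we replace $J_e$ with $P_e$, because the argument only manipulates Turing equivalence classes. The payoff is the preservation lemma stated just before Lemma~\ref{P=J}: whenever $A$ is $\Pi^0_1$-immune, so is $P_e(A)$. Indeed, the odd half of $P_e(A)=A\oplus(W_e^A\oplus_A\emptyset)$ is already a subset of $A$, so any infinite $\Pi^0_1$ set contained in $P_e(A)$ must, after splitting into even and odd coordinates, produce an infinite $\Pi^0_1$ subset of $A$.

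Concretely, start with the low $1$-generic $\Pi^0_1$-immune set $L$ from Corollary~\ref{lowPi01set} (which is low$_k$ for every $k$). For $L_{n+1}\setminus L_n$, iterate Lemma~\ref{2hops} to build a sequence $L=A_0<_T A_1<_T\cdots<_T A_{n+1}$ with $A_{k+1}=P_{f(e_k)}(A_k)$, choosing the indices $e_k$ according to the Jockusch--Shore recipe so that $A_{n+1}^{(n+1)}\equiv_T\emptyset^{(n+1)}$ while $A_{n+1}^{(n)}\not\equiv_T\emptyset^{(n)}$; Lemma~\ref{2hops} supplies the central bookkeeping identity $P_{e_k}(P_{f(e_k)}(A_k))\equiv_T A_k'$, which chains the levels together. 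Each $A_k$ is $\Pi^0_1$-immune by the preservation lemma, so $A_{n+1}$ is the required set. For $H_{n+1}\setminus H_n$, a parallel construction targeting jumps of degree $\emptyset^{(n+2)}$ rather than $\emptyset^{(n+1)}$ yields the corresponding result; alternatively, one may launch the iteration from the high $\Pi^0_1$-immune set of Theorem~\ref{PiImhigh}, of which Theorem~\ref{inchighPi01} is the base case.

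The main obstacle is checking that the detailed index-choice arguments of \cite{Psuedo}, originally formulated for $J_e$, go through for $P_e$. Since every such argument rests on degree-theoretic manipulations, and $P_e\equiv_T J_e$ at each invocation, the translation is mechanical: one re-labels $J_e$ as $P_e$ throughout and appends the observation that $\Pi^0_1$-immunity carries through each application. No new ideas beyond the preservation lemma and Lemma~\ref{2hops} are needed.
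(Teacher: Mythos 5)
Your proposal is correct and is essentially the paper's own argument: iterate the modified pseudojump $P_{f(e)}$ starting from the low $\Pi^0_1$-immune set of \Cref{lowPi01set}, using \Cref{2hops} for the degree computations at each level and the preservation lemma to carry $\Pi^0_1$-immunity through every application. The only difference is presentational --- where you defer to the Jockusch--Shore index-choice recipe as a ``mechanical translation,'' the paper writes out that bookkeeping explicitly as two dovetailing inductions (properly low$_n$ $\mapsto$ properly high$_n$ $\mapsto$ properly low$_{n+1}$), seeded by the uniform index for the jump.
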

\begin{proof}
We perform two dovetailing induction steps, following \cite{AMiller}. For the first, suppose for some index $e$ and all reals $X$, $[P_e(X)^{(n)}\equiv_T X^{(n)}$ and $P_e(X)^{n-1}\not\equiv_T X^{(n-1)}]$.
Then for an arbitrary $X$, we can use \Cref{2hops} with $B=P_{f(e)}(X)$ and apply this induction hypothesis to obtain
\begin{center}
    $X^{(n+1)}\equiv_T (X')^{(n)}\equiv_T P_e(P_{f(e)}(X))^{(n)}\equiv_T P_{f(e)}(X)^{(n)}$, and\\
$X^{(n)}\equiv_T (X')^{(n-1)}\equiv_T P_e(P_{f(e)}(X))^{(n-1)}\not\equiv_T P_{f(e)}(X)^{(n-1)}.$
\end{center}
Thus for all $X$, $P_{f(e)}(X)^{(n)}\equiv_T X^{(n+1)}$ and $P_{f(e)}(X)^{(n-1)}\not\equiv X^{(n)}$.

For the second induction, suppose the conclusion of the first: that for an index $e$ and all reals $X$,
    $[P_e(X)^{(n)}\equiv_T X^{(n+1)}$ and $P_e(X)^{n-1}\not\equiv_T X^{(n)}]$.
Then a similar computation shows that for all $X$, $P_{f(e)}(X)^{(n+1)}\equiv_T X^{(n+1)}$ and $P_{f(e)}(X)^{(n)}\not\equiv X^{(n)}$. Note that this is the first induction hypothesis, with $f(e)$ as the index.

Now to populate the high/low hierachy, it suffices to start with an index $i$ and a low, $\Pi^0_1$-immune $L$ that satisfies one of the induction hypotheses. Then $L^{(n)} = \null^{(n)}$, so that in this case the induction steps become\vspace{-.25cm} \begin{center}``if $X$ is properly low$_n$, then $P_{f(e)}(X)$ is properly high$_n$", and\\
``if $X$ is properly high$_n$, then $P_{f(e)}(X)$ is properly low$_{n+1}$".
\end{center}

Let $i$ be a uniform index for the jump, i.e.~for all $X, W_i^X = X'$. Using $i$ in \Cref{2hops}, \vspace{-.5cm} $$X'\equiv_TP_i(P_{f(i)}(X)) \equiv_T J_i(P_{f(i)}(X)) = P_{f(i)}(X)\oplus W^{P_{f(i)}(X)}_i = P_{f(i)}(X)\oplus P_{f(i)}(X)'\equiv_T P_{f(i)}(X)'\vspace{-.5cm}$$
for all $X$. Again by \Cref{2hops}, $P_{f(i)}(X)>_T X$, so that $i$ satisfies the first induction hypothesis. We constructed a low, $\Pi^0_1$-immune $L$ in \Cref{lowPi01set}, so using $e=i$ and $X=L$ populates the high/low hierarchy: $P_{f^{2n+1}(i)}(L)$ is properly low$_n$ while $P_{f^{2n+2}(i)}(L)$ is properly high$_n$.
\end{proof}

Finally we consider the \emph{intermediate} sets, i.e. those $A$ such that for all $n$, $\null^{(n)}<_T A^{(n)}<_T\null^{(n+1)}$.

\begin{theorem} There is a $\Pi^0_1$-immune set of intermediate degree.
\end{theorem}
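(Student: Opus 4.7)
The plan is to adapt the classical Sacks--Shoenfield priority construction of a $\Delta^0_2$ set of intermediate degree, adding $\Pi^0_1$-immunity requirements by the method of \Cref{lowD}. We construct $A\leq_T\emptyset'$ as a $\Delta^0_2$ approximation $\lim_s a_s$ satisfying, for each $n\geq 1$ and each $e\in\omega$: the ``lowness-avoiding'' requirements $L^n_e:\Phi_e^{\emptyset^{(n)}}\neq A^{(n)}$, forcing $A^{(n)}\not\leq_T\emptyset^{(n)}$; the ``highness-avoiding'' requirements $H^n_e:\Phi_e^{A^{(n)}}\neq \emptyset^{(n+1)}$, forcing $A^{(n)}\not\geq_T\emptyset^{(n+1)}$; and the immunity requirements $P_e$: if $\overline{W_e}$ is infinite, then $\overline{W_e}\not\subseteq A$. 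To keep $A$ non-computable we may also throw in the standard $R_e:A\neq\varphi_e$.

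The $L^n_e$ and $H^n_e$ requirements together are the classical ones producing an intermediate degree and are handled by Sacks's priority argument, re-cast as a $\Delta^0_2$ construction via Shoenfield's limit lemma. The $P_e$ requirements are injected as in \Cref{lowD}: when $P_e$ demands attention at stage $s$, we use $\emptyset'$ to search for some $n_e\in\overline{W_e}\cap [\vert a_s\vert,\infty)$, and, if one is found, declare $A(n_e)=0$. As observed in \Cref{lowD}, $P_e$-actions only ever \emph{fix bits of $A$ to be zero}, so they do not disturb previously-secured witnesses for $L^n_e$ or $H^n_e$ requirements, and conversely the diagonalization witnesses for those requirements can always be chosen past prior $P_e$-zeros. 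Thus all three families can be woven into a single finite-injury priority list.

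The main obstacle will be verifying that the interplay between these three families behaves well within a finite-injury structure --- in particular, ruling out that the cumulative $P_e$-padding inadvertently injures some $H^n_e$ by letting $A^{(n)}$ recover $\emptyset^{(n+1)}$. Since each $P_e$-action is itself decidable from $\emptyset'$ and injects only zeros whose positions are located inside a $\Pi^0_1$ set, the information coded into $A$ by the immunity actions is no richer than the ambient $\Delta^0_2$ content already permitted by Sacks's construction, and the standard jump-forcing estimates of the Sacks--Shoenfield argument continue to go through. The resulting $A\leq_T\emptyset'$ is then intermediate by the $L^n_e$ and $H^n_e$ requirements and $\Pi^0_1$-immune by the $P_e$ requirements, as desired.
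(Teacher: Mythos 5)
There is a genuine gap, and it is located exactly where your proposal is vaguest: the claim that the requirements $L^n_e:\Phi_e^{\emptyset^{(n)}}\neq A^{(n)}$ and $H^n_e:\Phi_e^{A^{(n)}}\neq\emptyset^{(n+1)}$ ``are handled by Sacks's priority argument, re-cast as a $\Delta^0_2$ construction.'' These requirements refer to $A^{(n)}$ for every $n$, and membership in $A^{(n)}$ is a $\Sigma^0_n(A)$ fact about the whole of $A$, not something you can fix by committing finitely many bits of $A$ at a stage of a $\emptyset'$-computable construction. To diagonalize at level $n$ you must decide (or force) $\Sigma^0_n(A)$ statements, which is precisely why the known direct constructions of a degree that is neither low$_n$ nor high$_n$ for any $n$ are not finite-injury arguments below $\emptyset'$ but level-$\omega$ arguments (the Sacks/Lachlan/Harrington ``workers'' machinery). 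There is no classical finite-injury template into which the $P_e$ requirements could simply be ``woven,'' so the assertion that the immunity actions ``do not disturb previously-secured witnesses'' for the $H^n_e$ strategies has no construction to attach to; and even granting such a construction, an interaction analysis at every jump level would be needed, not the one-line observation that $P_e$ only writes zeros.

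The paper avoids this entirely. It never performs a priority construction for intermediateness: it takes the low $\Pi^0_1$-immune set $L$ of \Cref{lowPi01set}, uses the pseudojump operators $P_e(A)=A\oplus(W_e^A\oplus_A\emptyset)$ together with the jump-inversion function $f$ of \Cref{2hops}, and applies the recursion theorem to get an index $e$ with $P_e=P_{f(e)}$ as operators, which makes $P_{f(e)}$ commute with the jump. Then $P_{f(e)}(L)^{(n)}$ sits strictly between $\emptyset^{(n)}$ and $\emptyset^{(n+1)}$ for every $n$ automatically, and $\Pi^0_1$-immunity is preserved because $W_e^A\oplus_A\emptyset\subseteq A$. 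If you want to salvage a direct construction, you would have to either reproduce the workers-method argument with immunity requirements added (a substantial undertaking), or, more in the spirit of what you wrote, prove instead the much easier fact that the paper actually uses: that the pseudojump image of a $\Pi^0_1$-immune set is $\Pi^0_1$-immune, and let the fixed-point trick do the intermediate-degree work.
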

\begin{proof} As the function $f$ in \Cref{2hops} is computable, it has a fixed point $e$ for which $W_e = W_{f(e)}$, and hence $P_e(A) = P_{f(e)}$ as operators. The lemmata give that the operator $P_e$ commutes with the jump, as $$P_e(A')\equiv_T P_e(P_e(P_{f(e)}(A)))\equiv_T P_e(P_{f(e)}(P_e(A)))\equiv_T P_e(A)',$$
so that $P_{f(e)}(A^{(n)}) =  P_{f(e)}(A)^{(n)}$. Thus for our low $\Pi^0_1$-immune set $L$, $$\null^{(n)}\equiv_T L^n<_T P_{f(e)}(A^{(n)})<_T  P_e(P_{f(e)}(L^{(n)}))\equiv_T L^{(n+1)}\equiv_T \null^{(n+1)}.$$

Replacing $P_{f(e)}(L^{(n)})$ with $P_{f(e)}(L)^{(n)}$ between the inequalities gives the result.
\end{proof}

\section{$\Pi^0_1$-Immunity below $\null'$: The Ershov Hierarchy}\label{Ershov}
\subsection{Definitions and Lemmata}
Having obtained results in the `vertical' stratification of $\Delta^0_2$ sets, we turn our attention `horizontally' to examples in the Ershov hierarchy. This statifies $\Delta^0_2$ reals by how many `mind changes' it takes to build them. For instance, a c.e.\ set $W_e$ can only change its mind about an element $x$ once, from $x\not\in W_{e,s}$ to $x\in W_{e,s+1}$. More formally, these mind changes are tracked by $\Delta^0_2$-approximations, as in the Shoenfield Limit Lemma:

\begin{lemma}[Shoenfield \cite{Shoenfield}]\label{limitlemma} A real $A$ is $\Delta^0_2$ iff there is a computable function $f:\omega^2\ra\omega$ (called a \emph{$\Delta^0_2$-approximation}) such that for all $x$, $\displaystyle\lim_{s\ra\infty}f(x,s) = A(s)$.
\end{lemma}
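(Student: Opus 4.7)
The plan is to prove the two directions separately, using $\Delta^0_2 = \Sigma^0_2 \cap \Pi^0_2$ on one side and the standard computable approximation $\emptyset'_s = \{\langle e,x\rangle \mid \varphi_{e,s}(x)\halts\}$ of the halting problem on the other.

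For the easier ($\Leftarrow$) direction, suppose $f$ is a computable $\Delta^0_2$-approximation to $A$. I would observe that by the definition of a limit, for each $x$ we have
\[
A(x)=1 \iff (\exists s)(\forall t\ge s)\ f(x,t)=1,\qquad A(x)=0 \iff (\exists s)(\forall t\ge s)\ f(x,t)=0,
\]
so $A$ is simultaneously $\Sigma^0_2$ and $\Pi^0_2$, hence $\Delta^0_2$.

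For the harder ($\Rightarrow$) direction, I would use Post's theorem to write $A = \Phi^{\emptyset'}_e$ for some index $e$. The approximation $\emptyset'_s$ is computable (and monotone nondecreasing as a set), so define
\[
f(x,s) = \begin{cases} \Phi^{\emptyset'_s}_{e,s}(x) & \text{if this computation halts,}\\ 0 & \text{otherwise.}\end{cases}
\]
This $f$ is computable because the approximation $\emptyset'_s$ and the staged computation $\Phi^{\emptyset'_s}_{e,s}$ are uniformly computable. To verify the limit, fix $x$. The true computation $\Phi^{\emptyset'}_e(x)$ halts with some finite use $u$, so there is a stage $s_0$ at which both the computation has converged within $s_0$ steps and $\emptyset'_s\upto u = \emptyset'\upto u$ for all $s\ge s_0$ (this uses that $\emptyset'_s$ only adds elements, so initial segments stabilize). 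For $s\ge s_0$, $f(x,s) = \Phi^{\emptyset'}_e(x) = A(x)$.

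The main obstacle is the subtle point in the $\Rightarrow$ direction that the stabilization stage $s_0$ is not computable from $x$ — the argument only needs that it exists, not that we can find it, which is why the result produces a $\Delta^0_2$-approximation rather than a computable function equal to $A$. I would make sure to emphasize that monotonicity of $\emptyset'_s$ is what guarantees that initial segments eventually settle, and that without it one would need a more delicate argument.
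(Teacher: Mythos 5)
Your proof is correct and is the standard argument for the Shoenfield Limit Lemma: the $\Leftarrow$ direction via the $\Sigma^0_2\cap\Pi^0_2$ characterization, and the $\Rightarrow$ direction via Post's theorem together with the monotone stage-wise approximation $\emptyset'_s$ of the halting set. The paper states this lemma as a cited classical result without proof, so there is no in-paper argument to compare against, but your route is exactly the canonical one (and you correctly read the statement's typo ``$A(s)$'' as ``$A(x)$'').
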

\begin{definition} A $\Delta^0_2$ real $A$ is \emph{$n$-c.e.}\ iff there is a $\Delta^0_2$-approximation $f$ for $A$ such that for all $x$, $f(x,0) = 0$ and $|\{s\in\omega\mid f(x,s)\neq f(x,s+1)\}|\leq n$.
\end{definition}
Notice that in this framework, the $1$-c.e.~sets are exactly the c.e.~sets, via $f(x,s) = W_{e,s}(x)$. Beyond the finite $n$, we also have the $\omega$-c.e.~sets:
\begin{definition}
A $\Delta^0_2$ real $A$ is \emph{$\omega$-c.e.}\ iff it there is a $\Delta^0_2$-approximation $f$ for $A$ and a computable function $g:\omega\ra\omega$ such that for all $x\in\omega$, $|\{s\in\omega\mid f(x,s)\neq f(x,s+1)\}|\leq g(x)$.
\end{definition}
\begin{definition}
A set is \emph{properly} $\alpha$-c.e. if is $\alpha$-c.e.\ but not $\lambda$-c.e.\ for any $\lambda<\alpha$.
\end{definition}
The following well-known results will be useful. They can be strengthened to iff \cite{Ershov}, but we will only need (and hence prove) one direction.
\begin{lemma}\label{EvenCeDecomp} If $C$ is $2k$-c.e., then there exist c.e.~sets $A_0\supseteq B_0\supseteq\cdots\supseteq A_{2k-1}\supseteq B_{2k-1}$ such that $C = \displaystyle \bigcup_{i\leq k}\left(A_i-B_i\right)$.
\end{lemma}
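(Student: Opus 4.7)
The plan is to extract the required c.e.\ sets directly from any $\Delta^0_2$-approximation witnessing that $C$ is $2k$-c.e. Let $f$ be such an approximation, so $f(x,0)=0$ for all $x$ and, for every $x$, the sequence $f(x,0), f(x,1), \ldots$ changes value at most $2k$ times. For each $x$ I set $c(x) = \abs{\{s : f(x,s)\neq f(x,s+1)\}}\leq 2k$. Because $f(x,0)=0$ and consecutive differing values alternate between $0$ and $1$, we have $\lim_s f(x,s)=1$ iff $c(x)$ is odd, i.e.\ $x\in C$ iff $c(x)$ is odd.

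Next, for each $i$ with $0\leq i<k$ I would define
\[
A_i = \{x : c(x)\geq 2i+1\}, \qquad B_i = \{x : c(x)\geq 2i+2\}.
\]
Each $A_i$ and $B_i$ is c.e., uniformly in $i$: enumerate $x$ into $A_i$ (resp.\ $B_i$) at the first stage $s$ by which $f(x,\cdot)$ has undergone $2i+1$ (resp.\ $2i+2$) sign changes, which is decidable from $f$. The inclusions $A_0\supseteq B_0\supseteq A_1\supseteq B_1\supseteq\cdots\supseteq A_{k-1}\supseteq B_{k-1}$ are then immediate from the numerical chain $2i+1\leq 2i+2\leq 2(i+1)+1$.

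Finally, I would observe that $A_i\setminus B_i = \{x : c(x)=2i+1\}$, so the sets $A_i\setminus B_i$ partition $\{x : c(x)\text{ is odd, } 1\leq c(x)\leq 2k-1\}$ as $i$ ranges, which is exactly $C$. This yields $C=\bigcup_i(A_i\setminus B_i)$ as required.

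There is no substantive obstacle; the lemma is essentially bookkeeping, and the only thing to watch is matching the indexing in the statement (the displayed chain as written in the excerpt appears to contain a typographical slip --- the natural construction produces exactly $k$ descending pairs $(A_i,B_i)$ indexed by $i<k$, which is what the union and the sign-change count demand).
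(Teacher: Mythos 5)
Your proof is correct and is essentially identical to the paper's: the paper defines $M_n=\{s: f(n,s)\neq f(n,s+1)\}$ and sets $A_i=\{n:\abs{M_n}\geq 2i+1\}$, $B_i=\{n:\abs{M_n}\geq 2i+2\}$, exactly your $c(x)$-based construction, and then verifies $x\in C$ iff $x\in A_i\setminus B_i$ for some $i$ via the same parity-of-changes observation. Your remark about the indexing in the statement is also apt --- the construction yields $k$ nested pairs indexed by $i<k$, and the bound $i\leq k$ in the displayed union is a slip present in the paper as well.
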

\begin{proof}
Let $C$ be a $2k$-c.e.~set with $\Delta^0_2$-approximation $f$. For notation, given a natural $n$ let $M_n = \{s\in\omega\mid f(n, s)\neq f(n, s+1)\}$. Define c.e.~sets $A_i = \{n\in\omega: |M_n|\geq 2i+1\}$, and $B_i=\{x\in\omega: |M_n|\geq 2i+2\}$. Clearly these $A_i$ and $B_i$ are nested as desired. Then\\[-6.5ex] \begin{align*}
x\in C&\Lra \lim_{s\ra\infty} f(x,s) = 1\\
&\Lra \exists t\ \forall s\geq t\ f(x, s) = 1\\
&\Lra \exists i\ x\in A_i\land x\not\in B_i\\
x\in C&\Lra x\in \bigcup_{i\leq k}(A_i-B_i)\qedhere
\end{align*}
\end{proof}
\begin{corollary}\label{OddCeDecomp} If $C$ is $(2k+1)$-c.e., it is the union of a $2k$-c.e.\ set and a c.e.~set.
\end{corollary}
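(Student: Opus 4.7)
The plan is to split $C$ according to how many mind changes its $\Delta^0_2$-approximation $f$ undergoes on each input --- either at most $2k$ (yielding a $2k$-c.e.\ piece) or exactly $2k+1$ (yielding a c.e.\ piece). Write $M_x = \{s : f(x,s) \neq f(x,s+1)\}$, so that by hypothesis $f(x,0) = 0$ and $|M_x| \leq 2k+1$. Since each mind change toggles $f(x,\cdot)$, we have $x \in C$ iff $|M_x|$ is odd, i.e.\ $|M_x| \in \{1,3,\ldots,2k+1\}$ whenever $x \in C$.

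First, set $E = \{x : |M_x| = 2k+1\}$. Because of the mind-change bound, this coincides with $\{x : |M_x| \geq 2k+1\}$, so $E$ is c.e.: enumerate $x$ into $E$ as soon as $f(x,\cdot)$ has witnessed $2k+1$ distinct mind changes. To see that $C' := C \setminus E$ is $2k$-c.e., I would introduce a modified $\Delta^0_2$-approximation
\[
g(x,s) = \begin{cases} f(x,s) & \text{if fewer than } 2k+1 \text{ mind changes have occurred in } f(x,\cdot) \text{ by stage } s, \\ 0 & \text{otherwise,} \end{cases}
\]
which is plainly computable, starts at $g(x,0) = 0$, and satisfies $\lim_s g(x,s) = C'(x)$: for $x \notin E$ the definition of $g$ never triggers its clamp and agrees with $f$, while for $x \in E$ the clamp forces the limit to $0$, which matches $C'(x) = 0$.

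The only real subtlety is verifying that $g$ uses at most $2k$ mind changes. Since $f(x,0) = 0$ and each mind change toggles $f$'s value, after $2k$ mind changes we have $f(x,\cdot) = 0$ again, so the $(2k+1)$st mind change in $f$ must be a $0 \to 1$ flip. At the stage just before that flip, $g$ had been tracking $f$ and so also equalled $0$; at the flip, $g$ is redefined by the clamp to $0$. Hence this transition is not inherited by $g$, and $g$ changes value at most on the first $2k$ mind changes of $f$, proving $C'$ is $2k$-c.e. The decomposition $C = C' \cup E$ then exhibits $C$ as the union of a $2k$-c.e.\ set and a c.e.\ set, as required. Essentially the only step requiring care is this parity argument about the $(2k+1)$st toggle.
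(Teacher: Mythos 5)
Your proof is correct and follows essentially the same decomposition as the paper: peel off the c.e.\ set of inputs with exactly $2k+1$ mind changes and observe the remainder of $C$ is $2k$-c.e. In fact you supply more detail than the paper does --- the explicit clamped approximation $g$ and the parity argument showing the $(2k+1)$st toggle is not inherited are exactly the verification the paper leaves implicit.
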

\begin{proof}
If $C$ is $(2k+1)$-c.e.\ via the $\Delta^0_2$ approximation $f$, let $A = \{x\in\omega: |M_x|\leq 2k\}$. This is almost all of $C$, but will not include those $x$ such that $|M_x| = 2k+1$. The set $B$ of these elements is enumerable, so $C=A\cup B$.
\end{proof}

We can also change $2k$ to $\omega$ in the statement and proof of \Cref{EvenCeDecomp} to obtain

\begin{lemma}\label{OmegaCeDecomp} If $C$ is $\omega$-c.e., there are c.e.\ sets $A_0\supseteq B_0\supseteq\cdots\supseteq A_{2k-1}\supseteq B_{2k-1}$ such that $C = \displaystyle \bigcup_{i\in\omega}\left(A_i-B_i\right)$.
\end{lemma}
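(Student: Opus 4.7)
The plan is to mimic the proof of \Cref{EvenCeDecomp} essentially verbatim, replacing the finite index range $i \leq k$ by $i \in \omega$. Let $f$ be a computable $\Delta^0_2$-approximation for $C$ with $f(x,0) = 0$, and let $g$ be the computable bound on mind changes provided by the $\omega$-c.e.\ hypothesis. As before, set $M_n = \{s \in \omega \mid f(n,s) \neq f(n,s+1)\}$, and for each $i \in \omega$ define
\[
A_i = \{n \in \omega : |M_n| \geq 2i+1\}, \qquad B_i = \{n \in \omega : |M_n| \geq 2i+2\}.
\]

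Each $A_i$ and $B_i$ is c.e.\ by the same reasoning as in \Cref{EvenCeDecomp}: ``$|M_n| \geq k$'' is a $\Sigma^0_1$ condition, detectable by running $f$ in stages, maintaining a running count of mind changes for $n$, and enumerating $n$ into the relevant set as soon as the count reaches the threshold. The nesting $A_0 \supseteq B_0 \supseteq A_1 \supseteq B_1 \supseteq \cdots$ is immediate from the definitions.

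For the union, observe that because $f(n,0) = 0$, the limit $\lim_s f(n,s) = C(n)$ equals $1$ iff $|M_n|$ is odd. The $\omega$-c.e.\ hypothesis gives $|M_n| \leq g(n) < \infty$, so whenever $n \in C$ the quantity $|M_n|$ is some specific odd integer $2i+1$, placing $n \in A_i - B_i$; conversely, $n \in A_i - B_i$ forces $|M_n| = 2i+1$, which is odd, so $n \in C$. Hence $C = \bigcup_{i \in \omega}(A_i - B_i)$.

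The argument is routine, and I do not expect any real obstacle. The only adjustment from the finite-$k$ version is that the index $i$ now ranges over all of $\omega$, but this causes no trouble: for each fixed $n$, the finite value $|M_n|$ places $n$ into at most one summand $A_i - B_i$, so the union is ``disjointly locally finite'' in exactly the same way the finite union was in \Cref{EvenCeDecomp}.
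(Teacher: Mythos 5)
Your proposal is correct and follows exactly the route the paper intends: the paper's proof of this lemma is literally the instruction to ``change $2k$ to $\omega$ in the statement and proof of \Cref{EvenCeDecomp},'' and your argument is that proof with the same $M_n$, $A_i$, $B_i$ and the same parity observation, just with $i$ ranging over $\omega$. The only (harmless) addition is your explicit normalization $f(x,0)=0$, which the paper's definition of $\omega$-c.e.\ omits but which is achievable by padding the approximation at the cost of one extra mind change.
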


With these lemmata in hand, we will now assume $n$-c.e.\ and $\omega$-c.e.\ sets are of the above forms.
\subsection{$2n$-c.e.\ sets}
\Cref{OddCeDecomp} shows that for odd $n$, $n$-c.e.\ sets cannot be $\Pi^0_1$-immune, as they have a c.e.\ subset, and so fail to be immune. We might hope to make use of \Cref{cohpi}, but a result of \cite{PostErshov} nixes this:

\begin{theorem}\label{ncecoh} For all $n$, if an $n$-c.e.\ set is cohesive, it is $\Pi^0_1$.
\end{theorem}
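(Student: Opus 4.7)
The plan is strong induction on $n$, using the Ershov decomposition lemmas to peel off one layer of complexity at a time.

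For $n = 1$: no infinite c.e.\ set is cohesive, since any infinite c.e.\ set has an infinite computable subset $D$ with $C \setminus D$ also infinite, splitting $C$ into two infinite c.e.\ halves. So the claim holds vacuously for $n=1$.

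For odd $n = 2k+1$ with $k \geq 1$: write $C = D \cup E$ with $D$ being $(2k)$-c.e.\ and $E$ c.e., via \Cref{OddCeDecomp}. Cohesiveness applied to the c.e.\ set $E \subseteq C$ forces either $E$ finite or $C \setminus E$ finite. The latter would make $C$ agree up to finite with the infinite c.e.\ set $E$, contradicting the $n = 1$ analysis. So $E$ is finite and $C$ agrees up to finite with the $(2k)$-c.e.\ set $D$; since cohesiveness is preserved under finite symmetric difference, the inductive hypothesis applies.

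For even $n = 2k$ with $k \geq 2$: via \Cref{EvenCeDecomp} write $C = \bigsqcup_{i<k}(A_i - B_i)$ with $A_0 \supseteq B_0 \supseteq A_1 \supseteq \cdots$. Since $A_i \subseteq B_0$ for $i \geq 1$, one has $C \setminus B_0 = A_0 - B_0$ and $C \cap B_0 = \bigsqcup_{i \geq 1}(A_i - B_i)$. Applying cohesiveness of $C$ to the c.e.\ set $B_0$, either $C$ agrees up to finite with the $2$-c.e.\ set $A_0 - B_0$ (reducing to the base case $n = 2$) or with the $(2k-2)$-c.e.\ set $\bigsqcup_{i \geq 1}(A_i - B_i)$ (where the inductive hypothesis applies).

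The main obstacle is the base case $n = 2$: show that for a cohesive $C = A - B$ (with $A, B$ c.e., $B \subseteq A$), the set $\overline{C} = \overline{A} \cup B$ is c.e. The sub-case $\overline{A}$ finite is immediate, since then $\overline{C}$ differs finitely from the c.e.\ set $B$. When $\overline{A}$ is infinite, the plan is to invoke \Cref{cohpi} and instead exhibit an infinite $\Pi^0_1$ subset of $C$---equivalently, a c.e.\ set $W \supseteq \overline{A} \cup B$ with $\overline{W}$ infinite. Enumerating $A = \{a_0, a_1, \ldots\}$, cohesiveness of $C$ transfers to cohesiveness of the co-c.e.\ index set $\{i : a_i \in C\}$ in $\N$; by \Cref{cocoh} this means $M = \{i : a_i \in B\} \subseteq \N$ is maximal c.e. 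The hard part will be lifting this maximality from $\N$ back to $\omega$ to produce the required c.e.\ enumeration, since $\overline{A}$ is not c.e.\ on its own: the argument must use cohesiveness together with the structure of $A$'s enumeration to show $\overline{A}$ is ``absorbed'' into a c.e.\ superset of $B$ whose complement lies in $C$.
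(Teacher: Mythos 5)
Your inductive superstructure is sound: the $n=1$ case, the passage from $(2k+1)$-c.e.\ to $2k$-c.e.\ via \Cref{OddCeDecomp}, and the passage from $2k$-c.e.\ to either the base case or $(2k-2)$-c.e.\ via \Cref{EvenCeDecomp} and cohesiveness applied to $B_0$ are all correct, using only that cohesiveness and the property of being $\Pi^0_1$ are each invariant under finite symmetric difference. (For what it's worth, the paper does not prove this theorem at all --- it is cited to \cite{PostErshov} --- so there is no in-text proof to compare against.)

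The problem is that the base case $n=2$, which you correctly identify as ``the main obstacle,'' is where the entire mathematical content of the theorem lives, and your sketch for it does not close. Writing $C=A\setminus B$ with $B\subseteq A$ c.e.\ and $\overline{A}$ infinite, the obstruction is enumerating $\overline{A}$: you must produce a c.e.\ set $V\supseteq\overline{A}$ with $V\cap C$ finite (equivalently, an infinite $\Pi^0_1$ subset of $C$, after which \Cref{cohpi} finishes). Your observation that $\{i: a_i\in B\}$ is maximal in index space is correct, but ``lifting'' it back to $\omega$ is not a technicality to be deferred --- the image of the cohesive $\Pi^0_1$ index set under the computable injection $i\mapsto a_i$ is exactly $C$ itself, which is d.c.e., so the lift hands you back the problem you started with. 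What is actually needed is to play cohesiveness of $C$ against c.e.\ sets extracted from the \emph{enumeration} of $A$. For instance, letting $D\subseteq A$ be the (computable, infinite) set of elements that are new maxima at the moment they enter $A$, cohesiveness gives either $C\subseteq^* D$, in which case $C=^*D\cap\overline{B}$ is $\Pi^0_1$ and you are done, or $C\cap D$ is finite --- and in the latter case one can iterate into $A\setminus D$, but nothing guarantees termination, which is exactly where a genuinely new idea is required. Until the d.c.e.\ case is proved, the proposal establishes only the (easy) reduction of the general case to it.
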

Instead, we can use major subsets:
\begin{definition} $A\subseteq^* B$ iff $A\cap\overline{B}$ is finite.
\end{definition}
\begin{definition}
Let $B\subseteq A$ be c.e.\ sets. $B$ is a \emph{major subset} of $A$, written $B\subset_m A$, iff $|A-B| = \infty$ and for all $e$, $\overline{A}\subseteq^* W_e\Ra \overline{B}\subseteq^*W_e$.
\end{definition}
\begin{lemma}[Lachlan \cite{LachlanMajor}]\label{cemajor} Every non-computable c.e.\ set has a major subset.
\end{lemma}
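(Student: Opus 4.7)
I would reproduce Lachlan's classical construction of a major subset via an infinite-injury priority argument based on the \emph{$e$-state method}. Fix a one-one computable enumeration $A=\bigcup_s A_s$. The requirements are, for each $e\in\omega$: $\mathcal{P}_e$: $|A\setminus B|\geq e$ (forcing $A\setminus B$ infinite), and $\mathcal{R}_e$: $\overline{A}\subseteq^* W_e \Rightarrow \overline{B}\subseteq^* W_e$.

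The key bookkeeping is the $e$-state of $x$ at stage $s$:
\[\sigma(e,x,s)=\langle\chi_{W_{0,s}}(x),\chi_{W_{1,s}}(x),\ldots,\chi_{W_{e,s}}(x)\rangle\in 2^{e+1},\]
ordered lexicographically with the lowest-index bit most significant. Each $\sigma(e,x,s)$ is monotone nondecreasing in $s$ with a limit $\sigma(e,x)$. At each stage I maintain \emph{followers} $x_0(s)<x_1(s)<\cdots$ in $A_s$ which are withheld from $B_s$; every other element of $A_s$ is enumerated into $B_s$. When a fresh element $y$ enters $A_{s+1}$, I scan $e=0,1,2,\ldots$: at the first $e$ where either $x_e$ is undefined or $y>x_e(s)$ with $\sigma(e,y,s+1)$ strictly dominating $\sigma(e,x_e(s),s+1)$, I install $y$ as the new $x_e$, dump all older followers $x_i(s)$ for $i\geq e$ (other than $y$) into $B_{s+1}$, and leave those slots undefined for now.

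Verification has three parts. (i)~Each $x_e$ stabilizes at some $x_e^*$: by induction, once $x_0,\ldots,x_{e-1}$ are fixed, $x_e$ changes only when its $e$-state strictly improves, which can happen at most $2^{e+1}$ times. (ii)~Since $x_0^*<x_1^*<\cdots$, $A\setminus B\supseteq\{x_e^*\}$ is infinite. (iii)~Assume $\overline{A}\subseteq^* W_e$. Since $A$ is non-computable, $A\cap W_e$ is infinite and $\omega\setminus(A\cup W_e)$ is finite, so arbitrarily large members of $A$ land in $W_e$. I want to conclude that $x_i^*\in W_e$ for almost all $i\geq e$; otherwise, for each such counterexample $i$, a late enough $y\in A\cap W_e$ with $y>x_i^*$ whose $e$-state agrees with $x_i^*$'s on bits $0,\ldots,e-1$ and flips bit $e$ to $1$ would strictly improve the $e$-state, contradicting the stability of $x_i^*$.

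The main obstacle is step~(iii): the naive one-shot construction may fail because at the stage $y$ enters $A$, its bits $0,\ldots,e-1$ need not yet match those of $x_i^*$, and by the time they do, $y$ has already been enumerated into $B$. The standard fix is to replace one-shot selection by a more refined priority setup—moving markers, or a tree of strategies—that anticipates the eventual $e$-state growth and guesses the true limit pattern. Handling this infinite-injury effect, and verifying that the limit $e$-states of the chosen followers are in fact optimal, is the delicate core of Lachlan's proof and the reason it rises beyond a finite-injury argument.
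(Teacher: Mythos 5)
The paper does not actually prove this lemma; it is quoted as a classical theorem of Lachlan with a citation, so there is no in-paper argument to compare against. Judged on its own terms, your proposal is an honest but incomplete sketch: you set up the standard $e$-state machinery, and then in your own final paragraph you concede that the verification of the requirements $\mathcal{R}_e$ does not go through for the construction you actually described. That concession is accurate, and it means the lemma is not proved. The specific failure is the one you name: an element $y$ that would witness a better $e$-state for the follower $x_i^*$ typically acquires that $e$-state (i.e., enters $W_e$) only \emph{after} it has already been dumped into $B$, and your construction inspects $y$ only at the single stage at which it enters $A$.

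What is missing is not generic ``infinite-injury machinery'' but one specific, stronger use of the non-computability of $A$. From $\overline{A}\subseteq^* W_e$ you extract only that $A\cap W_e$ is infinite, which is too weak. The consequence you need is: if $\overline{A}\subseteq^* W_e$ and $A$ is non-computable, then infinitely many $x\in A$ are enumerated into $W_e$ \emph{strictly before} they are enumerated into $A$ --- otherwise the rule ``declare $x\in A$ iff $x$ appears in the enumeration of $A$ no later than in that of $W_e$'' would decide $A$ outside a finite set. Such an $x$ already has its $e$-th $e$-state bit equal to $1$ at the moment it first becomes available to the construction, which is exactly what is needed to force the limiting $e$-states of almost all followers to have bit $e$ equal to $1$; combined with an induction (or a maximality choice) over the higher-priority bits $0,\dots,e-1$, and with markers permitted to move to any element currently in $A_s\setminus B_s$ rather than only to freshly enumerated ones, this closes your step (iii). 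Without that ingredient the argument as written does not establish $\mathcal{R}_e$, so the proposal stops short of a proof.
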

\begin{theorem}\label{PiImMaj} If $B\subseteq A$ are c.e.\ sets and $A\not\in\Delta^0_1$, then $A-B$ is $\Pi^0_1$-immune iff $B\subset_m A$.
\end{theorem}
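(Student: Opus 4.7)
The plan is to prove the biconditional directly by unwinding the definitions of $\Pi^0_1$-immunity and majorness, with the key observation being that both notions are about excluding infinite co-c.e.\ sets from a prescribed region of $\omega$. Since $A - B = A \cap \overline{B}$ sits ``between'' $\overline{B}$ and $\overline{A}$ in a natural way, majorness of $B$ in $A$ (which says $\overline{B}$ and $\overline{A}$ cannot be separated by a c.e.\ set, modulo finite differences) turns out to be exactly what prevents an infinite co-c.e.\ set from fitting inside $A - B$.

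For the forward direction, assume $B \subset_m A$. Then $|A - B| = \infty$ by definition, so I just need to forbid infinite co-c.e.\ subsets. Suppose toward contradiction that $\overline{W_e} \subseteq A - B$ is infinite. From $\overline{W_e} \subseteq A$ I get $\overline{A} \subseteq W_e$, which trivially gives $\overline{A} \subseteq^* W_e$. Majorness then yields $\overline{B} \subseteq^* W_e$, i.e.\ $\overline{W_e} \cap \overline{B}$ is finite. But $\overline{W_e} \subseteq A - B \subseteq \overline{B}$, so $\overline{W_e} \cap \overline{B} = \overline{W_e}$, which is supposed to be infinite --- contradiction.

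For the reverse direction, assume $A - B$ is $\Pi^0_1$-immune. Then $|A - B| = \infty$, so I only need to verify the majorness condition. Suppose instead that for some $e$ we have $\overline{A} \subseteq^* W_e$ but $\overline{B} \not\subseteq^* W_e$, so the set $\overline{B} \cap \overline{W_e}$ is infinite while $F := \overline{A} \cap \overline{W_e}$ is finite. Since $F$ is a finite (hence c.e.) set, $B \cup W_e \cup F$ is c.e., so $\overline{B \cup W_e \cup F} = \overline{B} \cap \overline{W_e} \cap \overline{F}$ is co-c.e.\ and still infinite. Any $x$ in this set lies in $\overline{B}$ and, not being in $F$, cannot be in $\overline{A}$ --- for otherwise $x \in \overline{A} \cap \overline{W_e} = F$ --- so $x \in A - B$. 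This yields an infinite $\Pi^0_1$ subset of $A - B$, contradicting $\Pi^0_1$-immunity.

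Neither direction looks genuinely difficult; the only mild point to watch is the ``shaving'' step in the reverse direction, where one must remember to toss out the finite set $F \subseteq \overline{A}$ to land cleanly inside $A - B$ rather than merely inside $\overline{B}$. The hypothesis $A \notin \Delta^0_1$ plays no role in the argument above and seems included only to exclude the degenerate case: if $A$ is computable then $A - B$ is itself co-c.e., so it is never $\Pi^0_1$-immune when infinite, and correspondingly no $B$ with $|A - B| = \infty$ can be major in $A$ (since $\overline{A} \subseteq^* \overline{A}$ would force $A \subseteq^* B$), making the equivalence vacuous on both sides.
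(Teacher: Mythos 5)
Your proof is correct and follows essentially the same definition-unwinding argument as the paper, with the two directions matching the paper's almost step for step. The only difference is that you explicitly handle the $\overline{A}\subseteq^* W_e$ (rather than $\overline{A}\subseteq W_e$) case by shaving off the finite set $F$, a detail the paper elides, and your closing observation that the hypothesis $A\notin\Delta^0_1$ is not actually needed (both sides failing when $A$ is computable) is also correct.
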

\begin{proof}\textbf{[$\Ra$]} $\Pi^0_1$-immune sets are infinite, so $|A-B| =\infty$. If $\overline{A}\subseteq W_e$, then $\overline{W_e}\subseteq A$, so that $\overline{W_e}\cap\overline{B}$ is a $\Pi^0_1$ subset of $A-B$, and thus finite. Immediately $\overline{B}\subseteq^*W_e$.\\
\textbf{[$\La$]} Let $\overline{W_e}\subseteq A-B$. Then $\overline{W_e}\subseteq A$, so $\overline{A}\subseteq W_e$. Thus $\overline{B}\subseteq^* W_e$, so as $\overline{W_e}\subseteq\overline{B}$, it must be that $\overline{W_e}$ is finite.
\end{proof}
\begin{theorem}\label{2kmajor} A $2n$-c.e.\ set $X = \bigcup_{i=1}^n \left(A_i- B_i\right)$
is $\Pi^0_1$-immune iff $i\leq n$, $B_i\subset_m A_i$ and $A_i\not\in\Delta^0_1$.
\end{theorem}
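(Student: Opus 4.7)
My plan is to split into forward and backward directions. The forward direction reduces immediately to Theorem~\ref{PiImMaj} once one notices the decomposition is essentially disjoint; the real content lies in the backward direction, which I would handle by induction on $n$ via a single reduction trick.

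First, I would observe that the nesting $A_1\supseteq B_1\supseteq\cdots\supseteq A_n\supseteq B_n$ forces the $A_i-B_i$ to be pairwise disjoint: for $i<j$, $A_j-B_j\subseteq A_j\subseteq B_i$ is disjoint from $A_i-B_i$. For the forward direction, assuming each $A_i-B_i$ is infinite (the nondegenerate case; otherwise the $i$th pair may be absorbed into a shorter decomposition), $A_i-B_i\subseteq X$ inherits $\Pi^0_1$-immunity from $X$, since any $\Pi^0_1$ subset of $A_i-B_i$ is a $\Pi^0_1$ subset of $X$. If $A_i\in\Delta^0_1$, then $A_i-B_i=A_i\cap\overline{B_i}$ would itself be an infinite $\Pi^0_1$ subset of $X$, a contradiction; so $A_i\notin\Delta^0_1$, and Theorem~\ref{PiImMaj} then delivers $B_i\subset_m A_i$.

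For the converse I would induct on $n$, with the base case $n=1$ being Theorem~\ref{PiImMaj} itself. For the step, set $Y=\bigcup_{i=2}^n(A_i-B_i)$, a $2(n-1)$-c.e.\ set satisfying the inductive hypothesis, hence $\Pi^0_1$-immune. Two structural facts will be key: $Y\subseteq B_1$ (since $A_i\subseteq A_2\subseteq B_1$ for all $i\geq 2$), and $X\cap B_1=Y$ (since $A_1-B_1$ is disjoint from $B_1$). Now given any $\Pi^0_1$ set $\overline{W_e}\subseteq X$, I would leverage $\overline{W_e}\subseteq X\subseteq A_1$ to get $\overline{A_1}\subseteq W_e\cup B_1$; feeding this c.e.\ test set into the definition of $B_1\subset_m A_1$ yields $\overline{B_1}\subseteq^* W_e\cup B_1$, which, since $W_e\cup B_1\supseteq B_1$, unpacks to $\overline{W_e}\cap\overline{B_1}$ being finite. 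Adding these finitely many elements to $W_e$ produces a c.e.\ set $W_{e'}$ whose complement $\overline{W_e}\cap B_1\subseteq Y$ must then be finite by $\Pi^0_1$-immunity of $Y$, so $\overline{W_e}$ is finite as well.

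The main obstacle is identifying the correct c.e.\ ``test set'' for the major subset property: one must recognize that pairing the given $\Pi^0_1$ subset of $X$ with $B_1$ itself creates an enumerable supercover of $\overline{A_1}$, thereby activating $B_1\subset_m A_1$ to squeeze the $\Pi^0_1$ subset inside $B_1$ (and thus inside $Y$) up to finitely many points. Once this step is in place, the rest is routine bookkeeping about the disjoint decomposition.
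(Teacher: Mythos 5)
Your proof is correct and follows essentially the same route as the paper's: both directions hinge on Theorem~\ref{PiImMaj}, and your backward direction's induction on $n$ --- peeling off $A_1-B_1$ by feeding a c.e.\ superset of $\overline{A_1}$ into the major-subset property $B_1\subset_m A_1$ and pushing the surviving part of $\overline{W_e}$ into the tail $\bigcup_{i\geq 2}(A_i-B_i)$ --- is just a formalized version of the paper's explicit ``repeat this reasoning'' iteration through the levels. If anything, your forward direction is slightly more careful than the paper's, since you derive $A_i\notin\Delta^0_1$ before invoking Theorem~\ref{PiImMaj}, which takes it as a hypothesis rather than yielding it as a conclusion.
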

\begin{proof}\textbf{[$\Ra$]} For all $i$, $(A_i-B_i)\subseteq X$, so as $X$ is $\Pi^0_1$-immune, so is $A_i-B_i$. So by \Cref{PiImMaj}, $B_i\subset_m A_i$ and $A_i\not\in\Delta^0_1$.
\\
\textbf{[$\La$]} Let $\overline{W_e}\subseteq X$. As $X\subseteq A_1$ and $B_1\subset_m A_1$, $\overline{W_e}\subseteq^*B_1$. So $\overline{W_e}\cap \overline{B_1}$ is finite, and hence so is $\overline{W_e}\cap(A_1-B_1)$. Thus $\overline{W_e}\subseteq^*\bigcup_{i=2}^n\left(A_i-B_i\right)$. This is a subset of $A_2$, so we can repeat this reasoning to get that only finitely many elements of $\overline{W_e}$ are in $A_2-B_2$, and indeed in any $A_i-B_i$. As $\overline{W_e} = \overline{W_e}\cap X =  \bigcup_{i=1}^n\left(\overline{W_e}\cap (A_i-B_i)\right)$ and each of these terms is finite, $\overline{W_e}$ is finite.
\end{proof}
\subsection{$\omega$-c.e.\ Sets and Superlowness}\label{sec:omegace}
For $\omega$-c.e.\ sets, while the proof of the reverse direction can no longer rely on there being finitely many terms in the union, the forward direction works just fine:
\begin{theorem}
If an $\omega$-c.e.\ set $X=\bigcup_{i=1}(A_i-B_i)$ is $\Pi^0_1$-immune, then for all $i$, $B_i\subset_m A_i$.
\end{theorem}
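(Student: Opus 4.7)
My plan is to adapt the forward direction of \Cref{2kmajor} essentially verbatim. That argument proceeded by observing $A_i-B_i\subseteq X$ for each $i\leq n$, inheriting $\Pi^0_1$-immunity to each difference, and then invoking \Cref{PiImMaj} to conclude $B_i\subset_m A_i$. The finiteness of the union was used only in the \emph{reverse} direction (to strip off $A_1-B_1$ and recurse on the residual $2(n-1)$-c.e.\ tail); the forward argument operates one index at a time and therefore transfers without modification to the $\omega$-c.e.\ decomposition provided by \Cref{OmegaCeDecomp}.

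Concretely, I would fix an arbitrary $i\in\omega$ with $A_i-B_i$ infinite and argue as follows. Since $A_i-B_i\subseteq X$, any infinite $\Pi^0_1$ subset of $A_i-B_i$ would also be an infinite $\Pi^0_1$ subset of $X$, contradicting $\Pi^0_1$-immunity of $X$. Thus $A_i-B_i$ is itself $\Pi^0_1$-immune, and the forward direction of \Cref{PiImMaj} yields $B_i\subset_m A_i$ directly. The side condition $A_i\not\in\Delta^0_1$ comes along for free, since otherwise $\overline{A_i}$ would be c.e.\ and the major-subset property would force $\overline{B_i}\subseteq^*\overline{A_i}$, contradicting $|A_i-B_i|=\infty$.

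The only point requiring care --- and the main would-be obstacle --- is ensuring that every $A_i-B_i$ in the chosen decomposition is infinite, so that the conclusion ``$B_i\subset_m A_i$'' is meaningful at each level. A properly $2k$-c.e.\ set is also $\omega$-c.e.\ with $A_i-B_i=\emptyset$ for $i>k$, so the statement is most naturally read as asserting $B_i\subset_m A_i$ precisely for those $i$ at which the $i$th level of the decomposition is nontrivial. This is a matter of interpretation rather than a substantive difficulty. A genuine obstacle would arise only in attempting the converse: the finite stripping used in \Cref{2kmajor} cannot be iterated through $\omega$ stages, which is presumably why this theorem is stated in one direction only.
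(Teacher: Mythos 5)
Your argument is exactly the paper's proof: for each $i$, $A_i-B_i\subseteq X$ inherits $\Pi^0_1$-immunity, and the forward direction of the major-subset characterization (\Cref{PiImMaj}) gives $B_i\subset_m A_i$, with the finiteness of the union never being needed in this direction. Your added remark about levels where $A_i-B_i$ is finite is a reasonable reading of a point the paper's one-line proof leaves implicit.
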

\begin{proof} For all $i$, $A_i-B_i\subseteq X$ is $\Pi^0_1$-immune, so $B_i\subset_m A_i$ and $A_i\not\in\Delta^0_1$ by \Cref{PiImMaj}.
\end{proof}
But do any properly $\omega$-c.e.\ $\Pi^0_1$-immune reals exist? In fact, we found one earlier:
\begin{theorem}There is a properly $\omega$-c.e., $\Pi^0_1$-immune real $D$.
\end{theorem}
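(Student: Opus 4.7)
The plan is to revisit the dense immune set $A$ from \Cref{ID1}, selecting the dominant function $f$ to itself be $\omega$-c.e., and set $D = \overline{A}$. By \Cref{PiDense1}, $D$ is already $\Pi^0_1$-immune. A convenient explicit choice is
\[
f(n) \;=\; 1 + \max\{\varphi_e(n) : e \leq n,\ \varphi_e(n)\halts\},
\]
which is a total dominant function with computable approximation $f_s(n) = 1 + \max\{\varphi_{e,s}(n) : e \leq n,\ \varphi_{e,s}(n)\halts\}$ that increases monotonically in $s$ with at most $n+1$ mind changes on input $n$ (after an obvious adjustment to enforce the strict monotonicity $f(n+1)>f(n)>n$ used in \Cref{ID1}).

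The first task is to verify that $D$ is $\omega$-c.e. Using the approximations above, define $A_s = \{p_{\overline{W_{e,s}}}(f_s(e)) : e \leq s,\ |\overline{W_{e,s}}| \geq f_s(e)\}$. Only indices $e < x$ can ever contribute to $x \in A$, since $p_{\overline{W_e}}(f(e)) \geq f(e) > e$; for each such $e$, $f_s(e)$ takes on at most $e+2$ values and, for any fixed $k$, the sequence $p_{\overline{W_{e,s}}}(k)$ is non-decreasing in $s$, so the flag ``$p_{\overline{W_{e,s}}}(f_s(e)) = x$'' toggles only a bounded (in fact $O(e)$) number of times. Summing these computable bounds over $e < x$ produces a computable bound on the mind changes of $A_s(x)$, witnessing $A$ -- and hence $D$ -- as $\omega$-c.e.

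Next I would rule out $D$ being $n$-c.e.\ for any finite $n$. If $n$ is odd, \Cref{OddCeDecomp} writes $D = E \cup F$ with $E$ an $(n-1)$-c.e.\ set and $F$ c.e.; any infinite c.e.\ $F \subseteq D$ would contain an infinite computable (hence $\Pi^0_1$) subset, contradicting $\Pi^0_1$-immunity, so $F$ is finite and $D =^* E$ reduces to the even case. For $n = 2k$, \Cref{EvenCeDecomp} provides nested c.e.\ sets $A_1 \supseteq B_1 \supseteq \cdots \supseteq A_k \supseteq B_k$ with $D = \bigcup_{i=1}^k(A_i - B_i)$, and \Cref{2kmajor} forces $B_i \subset_m A_i$ with each $A_i \notin \Delta^0_1$ (so every $B_i$ is c.e.\ and infinite). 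Since $B_k$ is the innermost set, $x \in B_k$ implies $x \in B_i \subseteq \overline{A_i} \cup B_i$ for all $i$, hence $B_k \subseteq \bigcap_i(\overline{A_i} \cup B_i) = \overline{D} = A$. But $A$ is dense immune, in particular immune, so it cannot contain an infinite c.e.\ subset -- contradiction.

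The main obstacle will be the bookkeeping in the $\omega$-c.e.\ step: one must carefully account for the interaction between mind changes in $f_s(e)$ and the monotone behavior of $p_{\overline{W_{e,s}}}(k)$ in $s$, and sum the resulting bounds over the finitely many relevant indices $e < x$. The properness argument, by contrast, falls out cleanly from the two structural decomposition lemmata combined with the immunity inherent in dense immunity.
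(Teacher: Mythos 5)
Your proof is correct in its essentials but takes a genuinely different route from the paper's. The paper sets $D = C\oplus_C\emptyset$ for $C=\overline{M}$ with $M$ maximal: $\omega$-c.e.-ness falls out of the movable-marker analysis already done in \Cref{hcompPi01im} (with bound $g(x)=x$), and properness is immediate from the fact that $D$ is cohesive but not $\Pi^0_1$, via \Cref{ncecoh}. You instead recycle the dense immune set of \Cref{ID1} with an $\omega$-c.e.\ choice of dominant function, and get properness from the Ershov decomposition lemmata together with the structural observation that the innermost c.e.\ set $B_k$ of a $2k$-c.e.\ approximation lands inside $\overline{D}=A$, which is immune. Your route is more self-contained (it uses only \Cref{EvenCeDecomp}, \Cref{OddCeDecomp} and \Cref{2kmajor}, all proved in the paper, rather than importing \Cref{ncecoh} from the literature), and it isolates the real mechanism: a set is expelled from every finite level of the Ershov hierarchy as soon as both it and its complement are immune, because the set of inputs realizing the maximal number of mind changes is c.e.\ and sits inside $D$ or $\overline{D}$ according to parity. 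The bookkeeping for the $\omega$-c.e.\ step is also sound as you set it up: $f_s(e)$ is non-decreasing in $s$, $p_{\overline{W_{e,s}}}(k)$ is non-decreasing in both $s$ and $k$, so $p_{\overline{W_{e,s}}}(f_s(e))$ is non-decreasing and each flag toggles at most twice, giving a computable (roughly $2x$) bound on the mind changes of $A_s(x)$, hence of $D_s(x)$.

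One soft spot: in the even case you invoke \Cref{2kmajor} to conclude that every $B_i$ is infinite, but the forward direction of that theorem tacitly assumes each difference $A_i - B_i$ is infinite; if the canonical decomposition of a $2k$-c.e.\ set has $A_k = B_k$ (for instance because $D$ is really only $2$-c.e.), then $B_k$ need not be a major subset of $A_k$ and may be finite, and your contradiction evaporates. The patch is exactly the move you already make in the odd case: if $B_k = \{x : |M_x| = 2k\}$ is finite, then after a finite modification $D$ is $(2k-1)$-c.e., and you induct downward; the base cases $n\in\{0,1\}$ are killed by the immunity of $D$ itself. With that downward induction made explicit, the argument is complete.
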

\begin{proof}
Let $M$ be a maximal set, so that $C=\overline{M}$ is $\Pi^0_1$ and cohesive. Then let $D = C\oplus_C \null$. \Cref{hcompPi01im} shows that $D$ is $\Pi^0_1$-immune, and clearly $D_s(x) = f(x,s) = (C_s\oplus_{C_s}\null)(x)$ is computable. As discussed in the proof of \Cref{hcompPi01im}, the $n$th element of $D_s$ only changes when some $m\leq n$ leaves $C$. So $D$ is $\omega$-c.e.\ via the computable bound $g(x) = x$.

As $D$ is $\Pi^0_1$-immune, by \Cref{cohpi} it is not $\Pi^0_1$. As $D\subseteq C$, $D$ is cohesive, so by \Cref{ncecoh}, $D$ is not $n$-c.e.\ for any $n$.
\end{proof}

The $\omega$-c.e.\ sets are closely tied to another notion of computational weakness, \emph{superlowness}, which is essentially lowness for a stronger notion of oracle reduction.

\begin{definition}\label{tttotal}
$A$ is \emph{truth-table reducible} to $B$ (written $A\leq_{tt}B$) iff $B$ computes $A$ via a total Turing reduction, i.e.\ $A = \Phi^B$ and for all $X\in 2^\omega$ and $n\in\omega$, $\Phi^X(n)$ is defined.
\end{definition}

The following minor lemma goes back to Post (a stronger, unrelativized result is proven in \cite{Post}).

\begin{lemma}\label{ttjump} For all $A\in 2^{\omega}$, $A\leq_{tt}A'$.
\end{lemma}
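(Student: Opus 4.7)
The plan is to exhibit an explicit many-one reduction from $A$ to $A'$, since $A \leq_m A'$ immediately yields $A \leq_{tt} A'$: a many-one reduction via a computable $f$ becomes a truth-table reduction given by the total oracle program ``on input $n$, query the oracle at $f(n)$ and return the answer.''

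First, I would fix a computable index $e_0$ such that the oracle machine $\Phi_{e_0}$ behaves as follows: on input $n$, the computation $\Phi_{e_0}^X(n)$ queries bit $X(n)$, halts if $X(n)=1$, and enters an infinite loop if $X(n)=0$. Such an index exists by the Church--Turing thesis (equivalently, by the s-m-n theorem). By construction, $\Phi_{e_0}^X(n)\halts$ if and only if $X(n)=1$, uniformly in the oracle $X$.

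Next, using the definition of the jump, $A' = \{\langle e, x\rangle \mid \Phi_e^A(x)\halts\}$, we obtain the equivalence
\[
A(n) = 1 \iff \Phi_{e_0}^A(n)\halts \iff \langle e_0, n\rangle \in A'.
\]
Setting $f(n) = \langle e_0, n\rangle$, which is total computable since the pairing function is, this shows $A(n) = A'(f(n))$ for all $n$, i.e.\ $A\leq_m A'$.

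Finally, I would package this as a truth-table reduction fitting \Cref{tttotal}: let $\Phi$ be the oracle program that on input $n$ computes $f(n)$ and outputs the bit of its oracle at position $f(n)$. For every oracle $X\in 2^\omega$ and every $n$, the computation $\Phi^X(n)$ halts (it performs a single computable step followed by one oracle query), so $\Phi$ is total in the sense required, and $\Phi^{A'} = A$. There is no real obstacle here; the only subtle point worth flagging is making clear that we are using the jump as an oracle for a bit-lookup, which is intrinsically total, rather than as something to be approximated in stages.
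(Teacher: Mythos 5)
Your proposal is correct and is essentially the paper's own argument: both fix an index $e_0$ for the oracle machine that halts on input $n$ iff the oracle contains $n$, observe $A(n)=1\iff\langle e_0,n\rangle\in A'$, and note that the resulting single-query lookup is a total (hence truth-table) reduction. Your extra framing of this as a many-one reduction first is a harmless repackaging of the same idea.
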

\begin{proof} Let $e$ be an index for a oracle program that, on input $n$, halts iff its oracle contains $n$. Define $\Phi^X_i(n) = 1$ iff $\langle e, n\rangle \in X$. This is clearly a total reduction, and by definition $\Phi^{A'}_i = A$. 
\end{proof}

The name ``truth-table reduction" derives from an equivalent definition (that we will use later, see \Cref{def:tt}). For more on this (and a proof of the following lemma), see section 3.8.3 in \cite{SoareTuring}.

\begin{definition}
$A$ is \emph{superlow} iff $A'\leq_{tt}\null'$.
\end{definition}
\begin{lemma}\label{ttomega}
$A$ is $\omega$-c.e. iff $A\leq_{tt}\null'$.
\end{lemma}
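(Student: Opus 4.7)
The plan is to handle both directions by translating between the $\omega$-c.e.\ mind-change structure and a pre-computed truth-table.

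For the $(\Rightarrow)$ direction, suppose $A$ is $\omega$-c.e.\ via a $\Delta^0_2$-approximation $f(x,s)$ with $f(x,0)=0$ and computable mind-change bound $g(x)$. Since $f(x,0)=0$, the value $A(x) = \lim_s f(x,s)$ equals the parity of the total number of mind changes of $f(x,\cdot)$. For each $i\leq g(x)$, the statement ``$f(x,\cdot)$ changes at least $i$ times'' is uniformly $\Sigma^0_1$ in $(x,i)$, so by the $s$-$m$-$n$ theorem it can be encoded as a single query to $\null'$ at a position $h(x,i)$, where $h$ is computable. Define $\Phi^X(x)$ to compute $g(x)$, query the bits $X(h(x,1)),\ldots,X(h(x,g(x)))$, and output the parity of the number of positive answers. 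Because $\Phi^X(x)$ makes exactly $g(x)$ queries and then halts with a computed output, $\Phi$ is total on every oracle, making it a truth-table reduction in the sense of \Cref{tttotal}; when $X=\null'$ the answers are correct and $\Phi^{\null'}=A$.

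For the $(\Leftarrow)$ direction, let $\Phi$ be total with $\Phi^{\null'}=A$. By totality and the compactness of $2^\omega$, there is a computable use bound $u(x)$: for each $x$, search for the least $n$ such that for every $\sigma\in 2^n$, $\Phi^\sigma(x)$ halts while reading only bits below $n$. König's lemma guarantees such an $n$ exists, and the search is effective from $x$. Let $\null'_s$ be an effective enumeration of $\null'$, and define $f(x,s)=\Phi^{\null'_s\upharpoonright u(x)}(x)$, which is computable and converges to $A(x)$. Because $\null'$ is c.e., each of the first $u(x)$ bits of $\null'_s$ flips from $0$ to $1$ at most once as $s$ increases; hence the relevant oracle initial segment takes at most $u(x)+1$ distinct values, bounding the mind changes of $f(x,\cdot)$ by $u(x)+1$. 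Taking $g(x)=u(x)+1$ (a computable function) exhibits $A$ as $\omega$-c.e.

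The main obstacle is the $(\Leftarrow)$ direction, specifically obtaining a \emph{computable} use bound $u(x)$ rather than merely a finite one: on a Turing reduction that is total only on $\null'$, the use function on $\null'$ need not be computable, so we crucially need the totality on \emph{every} oracle to invoke König's lemma and extract $u$ effectively. This is precisely the content of the truth-table hypothesis as formulated in \Cref{tttotal}. The $(\Rightarrow)$ direction has no significant obstacle beyond packaging the mind-change count as finitely many $\Sigma^0_1$ queries, with the query count bounded by $g(x)$ so that the algorithm halts on every oracle.
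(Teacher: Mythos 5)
Your proof is correct. Note that the paper does not actually prove this lemma --- it is stated with a citation to Section 3.8.3 of \cite{SoareTuring} --- and your argument is essentially the standard one found there: in the forward direction, convert the mind-change count into boundedly many $\Sigma^0_1$ queries to $\null'$ and recover $A(x)$ from the parity; in the reverse direction, use totality of the functional on all oracles plus K\"onig's lemma to extract a computable use bound $u(x)$, and then observe that the c.e.\ enumeration of $\null'$ below $u(x)$ can change only $u(x)$ many times. One tiny normalization point: the paper's definition of $\omega$-c.e.\ does not require $f(x,0)=0$ (unlike its definition of $n$-c.e.), so either pad the approximation with an initial $0$ (raising the bound to $g(x)+1$) or output the parity of the positive answers XORed with the computable value $f(x,0)$; this is harmless and does not affect the argument.
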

\begin{corollary}\label{superlowomega}
Superlow sets are $\omega$-c.e.
\end{corollary}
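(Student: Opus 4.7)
The plan is to chain together the two immediately preceding lemmas. Suppose $A$ is superlow, so by definition $A' \leq_{tt} \null'$. Lemma \ref{ttjump} gives $A \leq_{tt} A'$. Composing these two truth-table reductions yields $A \leq_{tt} \null'$, and then Lemma \ref{ttomega} immediately gives that $A$ is $\omega$-c.e.

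The only thing that needs justification beyond the cited lemmas is that truth-table reductions compose, i.e.\ that $\leq_{tt}$ is transitive. I would note this briefly: if $A = \Phi^{A'}$ with $\Phi$ total on all oracles, and $A' = \Psi^{\null'}$ with $\Psi$ total on all oracles, then the obvious composition $n \mapsto \Phi^{\Psi^{\null'}}(n)$ is computed by a single oracle machine that, on oracle $X$ and input $n$, simulates $\Phi$ on input $n$ and answers each oracle query $m$ by running $\Psi^X(m)$ to completion (which halts because $\Psi$ is total). Totality of $\Phi$ and $\Psi$ ensures this composite is total on every oracle, so it is a truth-table reduction. With $X = \null'$, this computes $A$, giving $A \leq_{tt} \null'$.

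I don't anticipate any real obstacle; the statement is essentially a one-line corollary of the previous two lemmas once transitivity of $\leq_{tt}$ is observed. The main writing choice is how much detail to give for transitivity — since the paper has just defined $\leq_{tt}$ via the totality condition in \Cref{tttotal}, a single sentence remarking on closure under composition should suffice.
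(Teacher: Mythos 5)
Your proof is correct and is essentially identical to the paper's: both chain $A\leq_{tt}A'\leq_{tt}\null'$ via Lemma \ref{ttjump} and superlowness, invoke transitivity of $\leq_{tt}$, and finish with Lemma \ref{ttomega}. The only difference is that you spell out why truth-table reductions compose, where the paper simply asserts transitivity; your extra justification is sound.
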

\begin{proof}
If $A$ is superlow, $A\leq_{tt}A'\leq_{tt}\null'$ by \Cref{ttjump}. By definition, $\leq_{tt}$ is transitive, so $A\leq_{tt}\null'$. Apply \Cref{ttomega}.
\end{proof}
As superlowness implies lowness, we adapt the technique of \Cref{lowD} to build a superlow real $G$ that is immune to uniformly $\Delta^0_2$ families. By \Cref{superlowomega}, with a little care we can get another example of a properly $\omega$-c.e.~$\Pi^0_1$-immune real.

\begin{theorem}\label{superlow1G}
For every uniformly $\Delta^0_2$ family $\mathcal{D}$, there is a superlow, $\mathcal{D}$-immune, $1$-generic real.
\end{theorem}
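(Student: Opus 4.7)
The plan is to refine the Spector-style finite-extension construction of \Cref{lowD} so that the total number of $\null'$-queries needed to compute $a_n$ (and hence $A'(n)$) is bounded by a computable function of $n$. By the equivalence of $A'\leq_{tt}\null'$ with superlowness, together with the characterization of $\omega$-c.e.\ sets in \Cref{ttomega}, this certifies $A$ as superlow. The $\mathcal{D}$-immunity and $1$-genericity of $A$ then follow by the same arguments as in \Cref{lowD} and \Cref{PiIm1Gen}.

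Fix a fast-growing computable function $h$. Where the original construction asks unbounded $\Sigma^0_1(\null')$-questions --- ``does some $\tau\succeq a_{s-1}$ satisfy $\Phi^\tau_s(s)\halts$?'' --- I would instead ask its $h(s)$-bounded counterpart: ``does some such $\tau$ of length $\leq h(s)$ satisfy $\Phi^\tau_s(s)\halts$ in at most $h(s)$ steps?'' Likewise, I would search for diagonalization witnesses $n\in D_e$ only within the interval $[|a_{s-1}|,h(s))$, asking $\null'$ the at most $h(s)$ membership questions required. The jump-forcing and diagonalization actions from \Cref{lowD} then take place within this bounded search space. Requirements not dischargeable at stage $s$ --- either because no halting use of $\Phi_s$ falls within $h(s)$, or because no element of $D_e$ lies in the bounded interval --- are deferred to later stages. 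Since every halting computation has some finite use and every infinite $D_e$ has elements arbitrarily far out, each requirement is eventually satisfied at some stage $s$ where $h(s)$ is large enough.

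The per-stage query count is then $O((s+1)\cdot h(s))$, so the cumulative number of queries through stage $n$ is a computable function of $n$, certifying $A'\leq_{tt}\null'$. The main subtlety lies in synchronizing the deferral of requirements with the commitment to $a_s$: committing to $a_s$ at stage $s$ must not frustrate a later attempt to discharge a deferred requirement. This is handled by a priority-style bookkeeping in which one only commits to bits of $a_s$ when doing so is consistent with every still-active higher-priority requirement remaining dischargeable at some later stage (for instance, by requiring $h$ to grow quickly enough that the deferred requirements accumulate no irrecoverable conflicts). With this bookkeeping in place, every requirement is met, and $A$ is simultaneously superlow, $1$-generic, and $\mathcal{D}$-immune.
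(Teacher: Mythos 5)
There is a genuine gap, and it lies exactly where the superlowness certificate is supposed to come from. Bounding the number of $\null'$-queries used to build $a_0,a_1,\dots,a_n$ by a computable function of $n$ would at best give $A\leq_{tt}\null'$ (i.e., $A$ is $\omega$-c.e.\ by \Cref{ttomega}); it does not give $A'\leq_{tt}\null'$. In the Spector construction of \Cref{lowD}, the value $A'(s)$ is settled at stage $s$ precisely because the \emph{unbounded} $\Sigma^0_1$ question ``does some $\tau\succ a_{s-1}$ have $\Phi^\tau_s(s)\halts$?'' receives a definitive answer: a ``no'' guarantees $\Phi^A_s(s)\divs$ no matter how $A$ is later extended. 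Once you replace this with an $h(s)$-bounded search, a failed search guarantees nothing, and the requirement is deferred. To decide $A'(s)$ one must now determine whether the deferred requirement is \emph{ever} discharged at a later stage, and the settling stage is not computably bounded in $s$: each time other requirements extend the current string, the witnessing extension for $\Phi_s(s)\halts$ becomes a moving target, possibly requiring ever larger bounds. Your parenthetical fix --- choosing $h$ to grow fast --- cannot repair this, since no computable growth rate for $h$ can anticipate the uses of halting computations. So the ``cumulative query count through stage $n$'' bounds the wrong quantity; it bounds the construction of $a_n$, not the information needed to evaluate $A'(n)$.

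The paper takes a different and more standard route: the construction is made \emph{fully computable} (no oracle queries at all), with bounded-length, bounded-time searches for extensions and a $\Delta^0_2$-approximation $f_{e,s}$ to the sets $D_e$, organized as a finite-injury priority argument. Superlowness then follows not from counting oracle queries but from counting \emph{injuries}: the guess $g(e,s)=1$ iff $\Phi^{a_s}_{e,s}(e)\halts$ changes only when $J_e$ is injured, which happens at most $2^{2e}-1$ times, so $A'$ is $\omega$-c.e.\ and hence $A'\leq_{tt}\null'$ by \Cref{ttomega}. If you want to salvage your approach, you would need to convert it into exactly this kind of mind-change counting --- i.e., show that your procedure's guess about $A'(e)$ changes a computably bounded number of times --- at which point the $\null'$-oracle and the query-counting become superfluous.
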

\begin{proof} Let $\mathcal{D} = \{D_e\}_{e\in\omega}$ be a uniformly $\Delta^0_2$ family.
Let $f(\langle x, e\rangle, s)$ be a $\Delta^0_2$-approximation for $\{\langle x, e\rangle\mid x\in D_e\}$, and define $f_{e, s}(x) = f(\langle x, e\rangle, s)$.

Build such a real $A$ in segments $a_0\preceq a_1\preceq\cdots\prec A$ as in \Cref{lowD}. There are two types of requirements to be met: \vspace{-.75cm}
\begin{align*}
J_e&: \Phi_e^A(e)\halts\text{ or }(\exists \sigma\prec G)(\forall\tau\succ\sigma) \Phi_e^\tau(e)\divs\\
N_e&: \text{If $D_e$ is infinite, then }D_e\not\subseteq A.
\vspace{-.75cm}
\end{align*}
\vspace{-1cm}

We will write $X_e$ to mean an arbitrary requirement of either type. Order the requirements in decreasing order of priority as $J_0<N_0<J_1<N_1<\cdots$.

Begin at stage $0$ with $a_0 = \null$. At each stage of the construction, give some requirement attention as defined below. Do this in such a way that over the course of the construction, each requirement receives attention infinitely often.

At stage $s+1$, we have defined a finite string $a_s$. When giving a requirement $X_e$ attention at this stage, simulate all $\Phi^{a_s}_{i, s}(i)$ for $i\leq e$. Define $u_{s+1}=\max\{\varphi^{a_s}_{i}(i)\mid i\leq e\text{ and }\Phi^{a_s}_{i,s}(i)\halts\}$. Let $\sigma_{s+1}$ be the shortest prefix of $a_s$ longer than $e$, $u_{s+1}$, and any $n$ mentioned by $N_i$ for $i<e$ (see below). Then do as follows, depending on the type of requirement:

\noindent \textbf{$J_e$ requirements}
\begin{enumerate}
    \item Search reverse lexicographically for a $\tau\succ\sigma_{s+1}$ of length $|a_s|+s$ such that $\Phi^{\tau}_{e,s}(e)\halts$.
    \item If such a $\tau$ is found, set $a_{s+1}=\tau$.
\end{enumerate}
\noindent\textbf{$N_e$ requirements}
\begin{enumerate}
    \item If there is a least $n\in [|\sigma_{s+1}|, s+1]$ such that $f_e(n, s+1)=1$, set $a_{s+1}(n) = 0$.
\end{enumerate}

\noindent\textbf{Verification:} Inductively assume that there is a stage $s$ when $X_e$ is given attention when all lower priority $X_i<X_e$ requirements have been satisfied, so that they never again make changes to $A$. As they never act again, $\sigma_t=\sigma_s$ for all later stages $t$ when $X_e$ is given attention. As higher priority requirements never act again and lower priority requirements cannot interfere with $X_e$, it suffices to show $X_e$ eventually meets its requirement at some stage $t\geq s$.

\noindent\textbf{Claim:} Every $J_e$ meets its requirement.

\noindent \emph{Proof:}
Suppose there exists an extension $\tau\succ\sigma_s$ such that $\Phi_e^\tau(e)\halts$. At each stage when $J_e$ is given attention, it searches for such an extension, so as $J_e$ is given attention infinitely often, there is a large enough stage $t$ such that some $\tau\succ\sigma_s$ is found and $a_t=\tau$.

If no such extension exists, then whenever $J_e$ is given attention, it makes no change to $\sigma_s$.

In either case, $A\succ \sigma_s$ and $J_e$ meets its requirement.

\noindent\textbf{Claim:} Every $N_e$ meets its requirement.

\noindent \emph{Proof:}
Suppose $D_e$ is infinite, so that there is a least $n$ greater than $|\sigma_{s+1}|$ such that $n\in D_e$. At some stage $t\geq s$ when $N_e$ is given attention, $N_e$ sets $a_t(n)=0$. At all subsequent stages, $N_e$ never changes another bit of $A$, so it meets its requirement.



\noindent\textbf{Claim:} $A$ is $1$-generic and $\mathcal{D}$-immune.

\noindent \emph{Proof:}
Every $J_e$ and $N_e$ requirement is met, respectively.

\noindent \textbf{Claim:} $A$ is superlow.

\noindent \emph{Proof:} Define $g(e, s) = 1$ iff $\Phi_{e, s}^{a_s}(e)\halts$, so that $e\in A'$ iff $\lim_{s\ra\infty} g(e, s) = 1$. The only times $g(e, s)$ changes are when $J_e$ is injured, whereupon it changes at most twice: first to $0$, if $\Phi^{a_s}_{e, s}(e)\divs$, then to $1$ if $J_e$ finds an extension which causes the computation to converge. As there are $2e$ requirements that can change the value of $e$, at most $2^{2e}-1$ injuries can occur. Thus $A'(e)$ changes at most $2^{2e}$ times, so $A'$ is $\omega$-c.e. By \Cref{ttomega}, $A'\leq_{tt}\null'$.
\end{proof}
\begin{lemma}\label{nceuniform}
The collection $\mathcal{E}$ of all $n$-c.e.\ reals is uniformly $\Delta^0_2$.
\end{lemma}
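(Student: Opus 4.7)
The plan is to use the decomposition lemmata \ref{EvenCeDecomp} and \ref{OddCeDecomp} to exhibit every $n$-c.e.\ set as a uniformly coded Boolean combination of finitely many c.e.\ sets, whereupon $\null'$ can decide membership by finitely many $\Sigma^0_1$ queries.

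Concretely, I would decode each index $e$ as a tuple $\langle n, i_1, \ldots, i_n\rangle$ (setting $E_e := \emptyset$ if $e$ admits no such decoding) and form the nested c.e.\ chain $C_\ell := W_{i_1}\cap\cdots\cap W_{i_\ell}$ for $1\leq \ell\leq n$, noting that each $C_\ell$ is c.e.\ uniformly in $\ell$ and the $i_j$. If $n=2k$, set $E_e := \bigcup_{j=1}^k (C_{2j-1}\setminus C_{2j})$; if $n=2k+1$, set $E_e := W_{i_{2k+1}} \cup \bigcup_{j=1}^k (C_{2j-1}\setminus C_{2j})$, with the extra c.e.\ piece accommodated by \Cref{OddCeDecomp}. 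To verify coverage in the even case, take any $2k$-c.e.\ set $\bigcup_j(A_j - B_j)$ with nesting $A_1\supseteq B_1\supseteq A_2\supseteq\cdots$ from \Cref{EvenCeDecomp}, and pick $W_{i_1}=A_1$, $W_{i_2}=B_1$, $W_{i_3}=A_2$, and so on; the preexisting containments force $C_{2j-1}=A_j$ and $C_{2j}=B_j$, so that $E_e$ matches the original set. The odd case follows analogously by invoking \Cref{OddCeDecomp}.

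For the $\Delta^0_2$ bound: given $\langle x, e\rangle$, the oracle $\null'$ computably decodes $e$, makes the finitely many queries of the form ``is $x\in W_{i_\ell}$?'' (each $\Sigma^0_1$ and uniformly so in $\langle x, i_\ell\rangle$), and then assembles the explicit Boolean combination dictated by the parity of $n$. This is a single $\null'$-computable procedure in $\langle x, e\rangle$, so $\{\langle x, e\rangle : x \in E_e\}\leq_T \null'$, as required. There is no real obstacle here; the only subtle point is confirming that forcing nesting by intersection does not cost any $n$-c.e.\ set, and this is precisely what the coverage argument above establishes.
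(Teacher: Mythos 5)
Your proof is correct and takes essentially the same route as the paper's: both decode each index $e$ into a tuple of c.e.\ indices, define $E_e$ as an explicit Boolean combination of the corresponding $W_i$, and observe that $\null'$ uniformly decides the finitely many $\Sigma^0_1$ queries needed. The only difference is that you justify exhaustiveness self-containedly via \Cref{EvenCeDecomp} and \Cref{OddCeDecomp} (checking that intersecting to force nesting loses nothing), whereas the paper simply cites Ershov's characterization of the finite levels as the Boolean combinations of c.e.\ sets; your version is slightly more careful on that point but otherwise identical in substance.
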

\begin{proof}
For all $e\in\omega$, decompose $e$ as $\langle n, \vec{x}\rangle$, where $\vec{x} = \langle x_0, \dots, x_{n-1}\rangle$. Let $f_e(y, s) = 1$ iff $$y\in (\cdots((W_{x_0,s}\cap\overline{W_{x_1,s}})\cup W_{x_3,s})\cdots )\cup W_{x_{n-1},s}$$ if $n$ is odd, and $\cdots\cap W_{x_{n-1},s}$ if $n$ is even. Define $E_e = \{y\mid \lim_{s\ra\infty} f_e(y, s) = 1\}$. The Ershov hierarchy contains exactly the Boolean combinations of c.e.~sets \cite{Ershov}, which this enumeration exhausts. Finally the halting problem computes every c.e.~and every co-c.e.~set, so $\mathcal{E}$ is uniformly $\Delta^0_2$. 
\end{proof}
\begin{corollary}
There is a superlow, properly $\omega$-c.e., $\Pi^0_1$-immune, $1$-generic real.
\end{corollary}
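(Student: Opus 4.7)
The plan is to apply Theorem \ref{superlow1G} to a judiciously chosen uniformly $\Delta^0_2$ family that simultaneously witnesses all four desired properties. Let $\mathcal{F}$ be the union of (i) the family $\{\overline{W_e}\}_{e\in\omega}$ of all $\Pi^0_1$ sets, which is uniformly $\Delta^0_2$ as observed in Corollary \ref{lowPi01set}, and (ii) the family $\mathcal{E}$ of all $n$-c.e.\ reals (ranging over every finite $n$), which is uniformly $\Delta^0_2$ by Lemma \ref{nceuniform}. The union of two uniformly $\Delta^0_2$ families is itself uniformly $\Delta^0_2$, since a $\null'$-computable indexing can interleave the indices for the two subfamilies. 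Thus Theorem \ref{superlow1G} applies to $\mathcal{F}$ and produces a superlow, $1$-generic, $\mathcal{F}$-immune real $A$.

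It then remains to verify the four adjectives in the conclusion. Superlowness and $1$-genericity are immediate from Theorem \ref{superlow1G}. Since every $\Pi^0_1$ set belongs to $\mathcal{F}$, the $\mathcal{F}$-immunity of $A$ specializes to $\Pi^0_1$-immunity. For the $\omega$-c.e.\ claim, superlowness combined with Corollary \ref{superlowomega} gives that $A$ is $\omega$-c.e.

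The only genuinely new step is upgrading ``$\omega$-c.e.'' to \emph{properly} $\omega$-c.e., and this is where the $\mathcal{E}$ half of the family earns its keep. Being immune to $\mathcal{F}$, $A$ is in particular infinite; and if $A$ were $n$-c.e.\ for some finite $n$, then $A \subseteq A$ would be an infinite $n$-c.e.\ subset of $A$, contradicting immunity to $\mathcal{E} \subseteq \mathcal{F}$. Hence $A$ is $\omega$-c.e.\ but not $n$-c.e.\ for any finite $n$. The main obstacle, such as it is, is simply recognizing this uniform treatment: once one notes that ``immune to $\mathcal{C}$'' forces an infinite set out of $\mathcal{C}$ itself, properness comes for free from the right choice of family, and all four adjectives are delivered by a single invocation of the preceding theorem.
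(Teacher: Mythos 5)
Your proposal is correct and follows essentially the same route as the paper: apply Theorem \ref{superlow1G} to a uniformly $\Delta^0_2$ family containing all $n$-c.e.\ sets, read off $\Pi^0_1$-immunity and non-$n$-c.e.-ness from immunity, and get $\omega$-c.e.-ness from superlowness via Corollary \ref{superlowomega}. The only difference is cosmetic: the paper observes that every $\Pi^0_1$ set is already $2$-c.e., so $\Pi^0_1\subseteq\mathcal{E}$ and your explicit union with $\{\overline{W_e}\}_{e\in\omega}$ is unnecessary.
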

\begin{proof} By \Cref{nceuniform}, we can use $\mathcal{E}$ in \Cref{superlow1G}. Every $\Pi^0_1$ set is 2-c.e.~$\left(\overline{W_e} = \omega-W_e\right)$, so $\Pi^0_1\subseteq\mathcal{E}$ and the resulting real $A$ is $\Pi^0_1$-immune. For all $n$, $A$ is immune to $n$-c.e.\ sets, so $A$ is not $n$-c.e.\ for any $n$. By \Cref{ttjump}, $A\leq_{tt} A'\leq_{tt} \null'$. By \Cref{superlowomega}, $A$ is $\omega$-c.e.
\end{proof}
\section{Bi-$\Pi^0_1$-Immunity Below $\null'$}\label{biPi}
So far, we have not encountered any bi-$\Pi^0_1$-immune reals, and the obvious candidates are $\Delta^0_3$ at best (for instance a Martin-L\"of random relative to $\null'$, which is bi-$\Delta^0_2$-immune). To obtain $\Delta^0_2$ such reals, we can extend the proof technique of \Cref{lowPi01set}:
\begin{theorem}\label{lowbiPi}
There is a $\Delta^0_2$ bi-$\Pi^0_1$-immune. In particular there is a low, properly 1-generic, bi-$\Pi^0_1$-immune real.
\end{theorem}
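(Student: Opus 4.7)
The plan is to extend the forcing-the-jump construction of \Cref{lowPi01set} (in the priority-argument framework of \Cref{superlow1G}) by adding, alongside each negative diagonalization requirement, a symmetric \emph{positive} requirement that kills $\Pi^0_1$ subsets of $\overline{A}$. For each $e$ we impose three requirements: a jump-forcing requirement $J_e$; a requirement $N_e$ asserting that if $\overline{W_e}$ is infinite then $\overline{W_e} \not\subseteq A$; and a requirement $P_e$ asserting that if $\overline{W_e}$ is infinite then $\overline{W_e} \not\subseteq \overline{A}$. Order them by priority $J_0 < N_0 < P_0 < J_1 < N_1 < P_1 < \cdots$ and run the finite-extension construction with oracle $\null'$ as in \Cref{superlow1G}, giving each requirement attention infinitely often.

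When $N_e$ (respectively $P_e$) is given attention at stage $s+1$, compute $\sigma_{s+1}$ as the shortest prefix of $a_s$ whose length exceeds $e$, the current use $u_{s+1}$, and every position already modified by a higher-priority requirement; search for the least $n \in [|\sigma_{s+1}|, s+1]$ with $n \notin W_{e,s+1}$; and set $a_{s+1}(n) = 0$ for $N_e$, or $a_{s+1}(n) = 1$ for $P_e$, filling the remaining bits of $a_{s+1}$ arbitrarily. Because every diagonalizing action lies strictly above the positions claimed by higher-priority requirements, $N_e$ and $P_e$ neither overwrite one another's commitments nor disturb any higher-priority $J_i$'s jump-forcing; the mind-change analysis for $A'$ is identical to \Cref{superlow1G} up to a larger (but still computable) constant.

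The verifications mirror those in \Cref{superlow1G}. If $\overline{W_e}$ is infinite, then $W_{e,s} \cap [0,M]$ stabilizes at $W_e \cap [0,M]$ for every fixed $M$, so at late enough stages the least $n \in [|\sigma_{s+1}|, s+1]$ with $n \notin W_{e,s+1}$ is genuinely in $\overline{W_e}$, witnessing the desired diagonalization on the appropriate side of $A$. Lowness and 1-genericity then follow exactly as in \Cref{lowPi01set}: the construction uses only finitely many $\null'$-queries per stage, so $A \leq_T \null'$, and $e \in A'$ iff $\Phi^{a_e}_e(e)\halts$ gives $A' \leq_T \null'$. Since $A$ is low we have $A \leq_T \null'$, and since no $\Delta^0_2$ real can be 2-generic, $A$ is \emph{properly} 1-generic.

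The main obstacle is confirming that positive and negative diagonalization can coexist under the priority ordering without undermining either jump-forcing or each other. This reduces to the window observation above: each diagonalizing action sets a single bit in a fresh interval strictly above all previously claimed positions, so no bit is ever overwritten, the jump-forcing threshold $u_{s+1}$ remains respected, and the finitely-many-queries-per-stage property needed for $A \leq_T \null'$ is preserved.
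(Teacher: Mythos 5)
Your proposal is correct and follows essentially the same route as the paper's proof: the paper likewise doubles each diagonalization constraint into a $0$-version $(e,0)$ and a $1$-version $(e,1)$, imposes a lexicographic priority ordering so that lower-priority restrictions cannot disturb bits claimed by higher-priority ones, and verifies by induction on priority that each pair is eventually permanently satisfied by a genuine element of $\overline{W_e}$ lying above all higher-priority activity. The only cosmetic difference is that the paper modifies the $\emptyset'$-oracle list construction of \Cref{lowD} (querying $\emptyset'$ directly for membership in $\overline{W_e}$) rather than the stage-approximation framework of \Cref{superlow1G} that you invoke, but this changes nothing essential.
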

\begin{proof} In the proof of \Cref{lowD}, whenever adding constraints to the list, add $e$ to the list twice, as $(e,0)$ and $(e,1)$ to handle $\Pi^0_1$- and co-$\Pi^0_1$-immunity, respectively. Rather than diagonalizing against an index $e$ by setting a certain bit to $0$, we diagonalize against $(e,i)$ by instead setting the relevant bit to $i$.

Unlike the original proof, restrictions may now conflict, as they no longer all prescribe the same value. To organize the construction, we give them a lexicographic priority ordering, so that lower priority restrictions cannot interfere with bits assigned by higher priority requirements. The modified procedure still forces the jump, ensuring 1-genericity and lowness, but we need to check that the priority ordering ensures bi-$\Pi^0_1$-immunity.

Suppose $\overline{W_e}$ is infinite, and for induction let $s$ be a stage large enough that\begin{enumerate}
    \item[-] all $(i,1-k)<(e,0)$ representing infinite $\overline{W_i}$ have been diagonalized against, and
    \item[-] all $(i,1-k)<(e,0)$ representing finite $\overline{W_i}$ have $\max\left\{n\in\overline{W_i}\right\}<|a_s|$.
    \end{enumerate}
At this stage, all the diagonalizations that could interfere with the restriction imposed by $(e,0)$ have been performed, so every $n\in \overline{W_e}\cap[|a_{s}|,\infty)$ could be used to satisfy the $(e,0)$ requirement. As $\overline{W_e}$ is infinite, this set is non-empty, so some $x$ will be found and $(e,0)$ will be removed from the list. Similarly for $(e,1)$.

Finally if $\overline{W_e}$ is infinite, as $(e,0)$ is removed from the list, there is an $n$ such that $n\in \overline{A}\cap\overline{W_e}$, so that $\overline{W_e}\not\subseteq A$. Similarly the removal of $(e,1)$ from the list ensures an $n\in A\cap\overline{W_e}$, so that $\overline{W_e}\not\subseteq\overline{A}$, and $A$ is bi-$\Pi^0_1$-immune.
\end{proof}
To obtain a high bi-$\Pi^0_1$-immune set, we modify a proof of Sacks' incomplete high degree \cite{Sacks}, as presented in \cite{OdiII} (Proposition XI.1.11).
\newpage
\begin{theorem}\label{highbiPi}There is an incomplete, high, bi-$\Pi^0_1$-immune real.\end{theorem}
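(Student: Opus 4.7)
The plan is to adapt Sacks' construction of an incomplete high $\Delta^0_2$ degree (as in Proposition XI.1.11 of \cite{OdiII}) in the same spirit that \Cref{lowbiPi} adapted Spector's construction from \Cref{lowPi01set}. Sacks produces $A \leq_T \null'$ via a $\null'$-oracle priority argument meeting, for each $e$, a highness requirement $H_e$ (coding $\null''(e)$ into the behavior of $A'$) and an incompleteness requirement $I_e$ asserting $\Phi_e^A \neq \null'$. I will add, for each $e \in \omega$ and $i \in \{0,1\}$, a new bi-$\Pi^0_1$-immunity requirement
$$P_{(e,i)}: \text{ if } \overline{W_e} \text{ is infinite, then } (\exists n \in \overline{W_e})\ A(n) = i.$$

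Order all requirements into a single priority list interleaving the $H_e$, $I_e$, and $P_{(e,i)}$. The construction runs exactly as in Sacks, except that whenever $P_{(e,i)}$ is given attention and is not yet satisfied, use $\null'$ to search for an $n \in \overline{W_e}$ that lies above the current restraint imposed by every higher-priority $H_j$ and $I_j$; if such $n$ is found, permanently commit $A(n) := i$ and declare $P_{(e,i)}$ satisfied. As in \Cref{lowbiPi}, the split into $(e,0)$ and $(e,1)$ handles the two halves of bi-$\Pi^0_1$-immunity symmetrically.

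The verification follows Sacks' original argument. Each $P_{(e,i)}$ acts at most once, so it contributes only finite injury to lower-priority requirements, leaving the usual Sacks analysis of $H_e$ and $I_e$ essentially intact. Conversely, once all higher-priority requirements have stabilized along the true path (which is the content of the Sacks analysis: restraints are finite at each stage and stabilize on the true path), the active restraint visible to $P_{(e,i)}$ is finite, so the infinitude of $\overline{W_e}$ guarantees that a legitimate witness $n$ eventually appears. The resulting $A$ is $\Delta^0_2$, high, and Turing-incomplete by Sacks, and satisfies $\overline{W_e} \not\subseteq A$ and $\overline{W_e} \not\subseteq \overline{A}$ for every $e$ with $\overline{W_e}$ infinite.

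The main obstacle is simply verifying that a $P_{(e,i)}$ witness above the current restraints can always be located; this is precisely where the standard Sacks feature that each $I_j$-restraint is finite at every stage (and stabilizes on the true path) is used. Given that feature, the bi-$\Pi^0_1$-immunity requirements are essentially free additions to the priority tree, echoing the pattern of \Cref{lowbiPi}: whenever the ambient construction is a $\null'$-oracle priority argument that tolerates single-bit commitments as finite injury, the same trick upgrades the resulting real to bi-$\Pi^0_1$-immunity.
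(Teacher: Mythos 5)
Your overall plan (graft the paired requirements $P_{(e,0)},P_{(e,1)}$ onto Sacks' high-incomplete construction, in analogy with how \Cref{lowbiPi} modifies \Cref{lowD}) is the right instinct, but there is a genuine gap in the interaction with the highness requirements. You treat the higher-priority $H_j$ as imposing only a finite, stabilizing restraint, like the $I_j$. They do not: in Sacks' construction highness is obtained by committing \emph{infinitely many} bits of $A$ --- for each $x$, cofinitely many positions $\langle x,t\rangle$ must code $W^{\emptyset'}_{j,t}(x)$. An infinite $\Pi^0_1$ set $\overline{W_e}$ can be entirely contained in a single column $\{\langle x_0,t\rangle : t\in\omega\}$ (such a column is computable, hence $\Pi^0_1$). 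If that column has higher priority than $P_{(e,i)}$ and its coded values converge to $1-i$, then every sufficiently large candidate witness $n\in\overline{W_e}$ has its value already forced to $1-i$, and $P_{(e,i)}$ can never act without injuring the higher-priority coding infinitely often across the construction. "The restraint is finite and $\overline{W_e}$ is infinite" does not resolve this, because the obstruction is not a finite restraint but an infinite coding commitment.

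The paper's proof circumvents exactly this by first fixing a $\Pi^0_1$-immune real $A<_T\emptyset'$ (available by \Cref{lowbiPi} or \Cref{lowPi01set}) and placing the coding locations at $\langle x,t\rangle_A = p_A(\langle x,t\rangle)$, i.e.\ inside $A$; the diagonalization witnesses are then required to lie in $\overline{W_e}\cap[|b_s|,\infty)\cap\overline{A}$, so they never touch a coding location. The $\Pi^0_1$-immunity of $A$ is precisely what guarantees such a witness exists: $\overline{W_e}\cap[|b_s|,\infty)$ is an infinite $\Pi^0_1$ set, hence not contained in $A$. Without this (or some equivalent device ensuring every infinite $\Pi^0_1$ set escapes the coding locations), your witness search can fail, so the proof as written does not go through.
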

\begin{proof} Let $A<_T\null'$ be a $\Pi^0_1$-immune real, and define an $A$-recursive bijection between $\omega^2$ and $A$ by composing the principal function of $A$ with $\langle\cdot,\cdot\rangle$: $\langle x,t\rangle_A = p_A\left(\langle x, t\rangle\right).$  Fix an index $j$ with $W_j^{\null'} = \null''$. We will use $\null'$ to build a real $B$ as a union of finite $\null'$-computable partial functions $f_s$, and ensure that columns $\langle x, t\rangle_A$ encode $W^{\null'}_{j,t}(x)$ cofinitely often, so that by the Limit \Cref{limitlemma}, $A\oplus B'$ can compute $\null''$. 

First, some notation: say that $\sigma\in2^\omega$ is an \emph{$i$-usable} extension of $f_s$ if $\sigma$ and $f_s$ agree whenever they are both defined, and if for $x<i$, $\sigma$ obeys the restrictions $\langle x, t \rangle_A = W^{\null'}_{j,t}(x)$ wherever $f_s(\langle x, t \rangle_A)$ is not already defined.

Beginning with $f_0=\null$, we can proceed in stages. At stage $s+1$ we have a finite partial function $f_s$ and a finite list of restrictions $(e,k)$ not yet diagonalized against, prioritized lexicographically. Write $b_s$ for the longest initial segment of $f_s$, i.e.\ $|b_s|$ is least such that $f_s(|b_s|)$ is undefined. 
\begin{enumerate}
\item[-] At odd stages $s=2d+1$, add $(d,0)$ and $(d,1)$ to the list. Then pick the least $i\leq d$ that has not yet been \emph{attended to}\footnote{We would ordinarily say the index has been ``diagonalized", but that terminology is already in use in this proof.} (defined below), and that meets the condition
\begin{center}$\exists z\ \exists \text{ $i$-usable extensions } \tau_0, \tau_1\text{ of $f_s$ such that } \Phi^{\tau_0}_i(z)\halts\neq \Phi^{\tau_1}_i(z)\halts$.\end{center}
Let $\tau$ be the extension whose computation disagrees with $\null'(z)$. Now consult the list: for the highest priority $(e, k)$ on the list, search for the least
$n_i\in \overline{W_i}\cap [|b_s|,|\tau|-1]\cap \overline{A}.$
If such an $n_i$ exists, set $\tau(n_i) = k$ to diagonalize against $\overline{W_i}$, then remove $(e,k)$ from the list. If this causes $\Phi^{\tau}(z)\!\!\uparrow$ or $\Phi^{\tau}(z)=\null'(z)$, check again for $\tau_0,\tau_1,$ and $z$ meeting the above condition. Then consult with the remaining $(e,k)$ on the list, and repeat this process until either:\begin{itemize}
\item[a.] $z,\tau_0,$ and $\tau_1$ are found that meet the restrictions and the condition. Again set $\tau$ to have the computation that disagrees with $\null'$, and \emph{attend to} $i$ by setting $f_{s+1} = f_s\cup \tau$.
\item[b.] No $z$ and $i$-usable extensions that meet the restrictions. Proceed to the next stage.
\end{itemize}
\item[-] At even stages $s=2d+2$, for any $x, t<s$ where $f_s(\langle x, t \rangle_A)\!\!\uparrow$, put $f_{s+1}(\langle x, t \rangle_A) = W^{\null'}_{j,t}(x).$
\end{enumerate}
\textbf{Claim:} $\displaystyle\lim_{t\ra\infty} B(\langle x, t \rangle_A) = \null''(x)$.

\noindent \emph{Proof:} Fix $x$. It suffices to show that cofinitely many $t$ have $\langle x, t \rangle_A=W^{\null'}_{j, t}(x)$. As diagonalizations avoid $A$ (and hence all $\langle x, t\rangle_A$), the only times this equality could fail are when $i$-usable extensions $\tau$ code over $\langle x, t \rangle_A$. But this can only happen when indices $i\leq x$ are attended to, as for $i>x$, $i$-usable extensions preserve the coding. As each $i$ is attended to at most once, there are at most $x$ values of $t$ such that $\langle x, t\rangle_A\neq W^{\null'}_{j, t}(x)$, as desired.

\noindent \textbf{Claim:} $B<_T\null'$ and $\null''\leq_T B'$.

\noindent \emph{Proof:} By assumption, $A\leq_T\null'$. So for odd stages of the construction of $B$, $\null'$ suffices to search for $i$-usable extensions, check for $n_i\in\overline{W_i}$, and decide whether $\Phi_i^{\tau}(z)\!\!\downarrow$. For even stages, $\null'$ computes the $A$-recursive $\langle\cdot,\cdot\rangle_A$ and enumerates $\null''$. Altogether, $B\leq_T \null'$.

To see that the inequality is strict, fix $e$ such that $\Phi^{ B}_e$ is total. Let $s$ be an odd stage large enough that that for all $x<e$ and $t\geq s$, $W^{\null'}_{j,t}(x) = \null''(x)$, and such that for all indices $i<e$ that are attended to, this happens before stage $s$.

If we attend to $e$, then immediately $\Phi^{ B}_e\neq\null'$, so suppose not, that for all $x$, the $e$-usable extensions $\tau$ of $f_s$ such that $\Phi_e^{\tau}(x)\!\!\downarrow$ 
all agree. Notice that any extension of $f_s$ that is a prefix of $B$ is $e$-usable, since we have chosen $s$ large enough that attending to any remaining index cannot injure the relevant $\langle x, t\rangle_A$ columns, which for $t>s$ are constant and equal to $\null''(x)$.

Finally to compute $\Phi^{B}_e(x)$ it suffices to search for any $e$-usable extension $\tau$ of $f_s$ with $\Phi^{\tau}_e(x)\!\!\downarrow$. As $A$ can compute all locations $\langle x, t\rangle_A$, and since $s$ is large enough that subsequent $t$ have $\langle x, t\rangle_A = W^{\null'}_{j,t}(x)=\null''(x)$, $A$ and $f_s$ is suffice to decide $e$-usability. As $f_s$ is finite, there is an index $a$ such that $\Phi^{ B}_e=\Phi^A_a\leq_T A<_T\null'$.

For highness, as $A$ computes the encoding $\langle\cdot,\cdot\rangle_A$, by \Cref{limitlemma}  $\null''\leq_T A\oplus B'\leq_T\null'\oplus B'\equiv_T B'$.

\noindent\textbf{Claim:} $B$ is bi-$\Pi^0_1$-immune.

\noindent \emph{Proof:} Suppose $(e,k)$ has that $\overline{W_e}$ is infinite, and for induction let $s$ be a stage large enough that\begin{enumerate}
    \item[-] all $(i,1-k)<(e,k)$ representing infinite $\overline{W_i}$ have been diagonalized against, and
    \item[-] all $(i,1-k)<(e,k)$ representing finite $\overline{W_i}$ have $\max\left\{n\in\overline{W_i}\right\}<|b_s|$.
    \end{enumerate}
At this stage, all the diagonalizations that could interfere with the restriction imposed by $(e,k)$ have been performed, so any $n\in \overline{W_e}\cap [|b_s|,\infty) \cap \overline{A}$ could be used to satisfy said restriction. As $\overline{W_e}\cap [|b_s|,\infty)$ is an infinite $\Pi^0_1$ set and $A$ is $\Pi^0_1$-immune, some $n$ will be found and $(e,k)$ will be removed from the list. As in \Cref{lowbiPi}, the removal of $(e,0)$ and $(e,1)$ from the list guarantees the bi-$\Pi^0_1$-immunity of $B$.\qedhere
\end{proof}
\newpage
\section{Randomness, Genericity, and Typicality}
\label{sec:BN}

Here we connect $\Pi^0_1$-immunity to several commonly studied notions in computability theory.

\begin{definition}
A real $R$ is \emph{weakly $n$-random} iff $R$ is a member of all $\Sigma^0_n$ classes with measure $1$.
\end{definition}

\begin{definition}
A real $G$ is \emph{weakly $n$-generic} iff $G$ meets every dense $\Sigma^0_n$ set of strings.
\end{definition}

\begin{definition}A real $X$ is \emph{weakly n-typical} if $X$ is in every full measure $\Sigma^0_1(\null^{n-1})$ set.\begin{footnote} {Equivalently, $X$ is Kurtz random relative to $\null^{n-1}$ \cite{KurtzRandom}.}
\end{footnote}\end{definition}

Weak $n$-typicality is mentioned in \cite{bn1g}, albeit without a definition.

Notice that both weak $n$-randomness and weak $n$-genericity imply weak $n$-typicality, which matches the intuition that generic strings and random strings are both ``typical" reals. Indeed a number of proofs about weakly 2-randoms are also true for weakly 2-typical reals. For instance, relativizing a result of Jockusch (see Kurtz \cite{KurtzRandom}) shows that weakly 2-randoms are bi-immune to $\Delta^0_2$ sets, so it follows that they are $\Pi^0_1$-immune. In fact being weakly 2-typical suffices:

\begin{theorem} Weak $2$-typicality implies $\Pi^0_1$-immunity.
\end{theorem}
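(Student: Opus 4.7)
The plan is to produce, for each index $e$ with $\overline{W_e}$ infinite, a full-measure $\Sigma^0_1(\emptyset')$ class $\mathcal{U}_e \subseteq 2^\omega$ such that $X \in \mathcal{U}_e$ forces $\overline{W_e} \not\subseteq X$ (identifying $X$ with $\{n : X(n)=1\}$). Weak $2$-typicality then deposits $X$ in every such $\mathcal{U}_e$, which says precisely that no infinite $\Pi^0_1$ subset of $\omega$ is contained in $X$, so $X$ is $\Pi^0_1$-immune.

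Explicitly I would take
$$\mathcal{U}_e = \{X \in 2^\omega : (\exists n \in \overline{W_e})\ X(n) = 0\}.$$
First I would check that $\mathcal{U}_e$ is $\Sigma^0_1(\emptyset')$ uniformly in $e$: since $\overline{W_e}$ is $\Pi^0_1$ and therefore $\emptyset'$-c.e., an $\emptyset'$-oracle procedure can enumerate, for each $n$ certified to lie in $\overline{W_e}$, the clopen set $\{X : X(n) = 0\}$, and $\mathcal{U}_e$ is exactly the union of these clopen sets. Next I would compute
$$\mu(2^\omega \setminus \mathcal{U}_e) = \mu\bigl(\{X : \overline{W_e} \subseteq X\}\bigr) = \prod_{n \in \overline{W_e}} \tfrac{1}{2},$$
which equals $0$ whenever $\overline{W_e}$ is infinite. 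Hence $\mathcal{U}_e$ has full measure in exactly the cases that matter, and weak $2$-typicality gives $X \in \mathcal{U}_e$ for all such $e$.

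I do not foresee a substantive obstacle: the argument is a one-line measure calculation together with the observation that $\overline{W_e}$ is uniformly $\emptyset'$-c.e. The only point requiring any care is keeping the two roles of $X$ straight, since the hypothesis of weak $2$-typicality views $X$ as a point of $2^\omega$ with respect to the uniform measure while the conclusion of $\Pi^0_1$-immunity views $X$ as a subset of $\omega$; the standard bijection $X \leftrightarrow \{n : X(n) = 1\}$ makes this routine.
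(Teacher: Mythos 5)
Your proof is correct and is essentially the paper's own argument: you use the same class $\mathcal{U}_e=\{X:\overline{W_e}\not\subseteq X\}$, the same observation that $\emptyset'$ can certify membership in $\overline{W_e}$ to exhibit it as a $\Sigma^0_1(\emptyset')$ union of clopen sets, and the same measure computation (the paper asserts $\mu(\mathcal{U}_e)=1$ directly where you compute the complement's measure as $\prod 1/2=0$). No substantive differences.
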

\begin{proof} Let $Y\in2^\omega$ be weakly 2-typical, let $W_e$ be coinfinite, and consider \vspace{-.5cm}\begin{align*} U_e &=\{X\in2^\omega\mid\overline{W_e}\not\subseteq X\}\\
&=\{X\in2^\omega\mid(\exists n\not\in W_e)\ n\not\in X\}\\
&= \bigcup_{n\not\in W_e}\bigcup_{|\sigma|=n}[[\sigma0]].\end{align*}
As $\null'$ can decide $n\not\in W_e$, this is a $\Sigma^0_1(\null')$ class, and as $W_e$ is coinfinite, $\mu(U_e) = 1$. Now $Y\in U_e$, so $\overline{W_e}\not\subseteq Y$. As this holds for every coinfinite $e$, $Y$ is $\Pi^0_1$-immune.\end{proof}

In fact by closely examining a result in \cite{DownMin}, we can say more about weakly 2-typical reals:

\begin{theorem}[\cite{SolovayMartin}] Every weakly 2-typical real $Y$ forms a minimal pair with $\null'$. That is, if $X\leq_T Y$ and $X\leq_T\null'$, then $X$ is computable. \end{theorem}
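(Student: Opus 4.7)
I would prove the contrapositive: suppose $X \le_T Y$ via a Turing functional $\Phi$, $X \le_T \null'$, and $X$ is not computable; the goal is to exhibit a $\Pi^0_1(\null')$ class of measure $0$ containing $Y$, which by the footnote's equivalence of weak $2$-typicality with $\null'$-Kurtz randomness contradicts $Y$ being weakly $2$-typical.

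The first step is the standard measure-theoretic lemma that $c_n := \mu\{Z \in 2^\omega : \Phi^Z\upto n = X\upto n\}$ tends to $0$. The sequence $(c_n)$ is non-increasing in $n$; if it were bounded below by some $\epsilon > 0$, then by compactness $\{Z : \Phi^Z = X\}$ would have measure $\ge \epsilon$, and Lebesgue density would produce a cylinder $[\sigma]$ in which this set has relative density $> 1/2$. Then $X(n)$ could be read off computably as the majority value of $\Phi^Z(n)$ over $Z \succ \sigma$, contradicting non-computability of $X$.

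The second and much subtler step is constructing the required $\Pi^0_1(\null')$ null class. The natural candidate $\{Z : \Phi^Z = X\}$ is only $\Pi^0_2(\null')$, which suffices for the classical Martin--Solovay theorem about weakly $2$-random reals but is one quantifier too complex for the sharpening here. Following \cite{DownMin}, I would use $\null'$ to define a total $\null'$-computable modulus $f(m)\to\infty$, arising from a bounded search over length-$m$ oracles against the $\null'$-decidable predicate ``$\Phi^\sigma$ run in $m$ steps agrees with $X$ on the first $n$ bits''; the non-computability of $X$ forces $f(m)\to\infty$, else a single $\sigma$ would witness $\Phi^\sigma = X$ by monotonicity of $\Phi$. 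Then put $\sigma\in T$ iff every prefix $\tau\preceq\sigma$ has its time-$|\tau|$, use-$|\tau|$ $\Phi$-behavior consistent with $X$ on the first $f(|\tau|)$ bits. This is a bounded conjunction of $\null'$-decidable clauses, so $T$ is $\Delta^0_1(\null')$ and $[T]$ is $\Pi^0_1(\null')$; the path $Y$ lies in $[T]$ because $Y$'s prefixes realize the bound $f$ by construction, while any $Z \in [T]$ is forced by $f(m)\to\infty$ to have $\Phi^Z$ total and equal to $X$, of which the set has measure $0$ by the first step.

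The main obstacle is exactly this trade-off: arranging $f$ to grow fast enough that $f(m)\to\infty$ forces totality of $\Phi^Z$ on paths of $T$, while also being realized by $Y$'s prefixes and remaining total $\null'$-computable. Managing this delicate balance — replacing the natural ``$\Phi^Z$ is total and equal to $X$'' condition (which is $\Pi^0_2(\null')$) by a $\Pi^0_1(\null')$-definable tree with an essentially identical path set — is the essential refinement of \cite{DownMin} over the classical construction in \cite{SolovayMartin}.
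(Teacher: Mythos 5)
Your overall strategy differs from the paper's, and the difference matters: the paper never constructs a null $\Pi^0_1(\null')$ class at all. It writes $S=\{Z \mid \Phi^Z = X\}$ as a \emph{lightface} $\Pi^0_2$ class by replacing ``$\Phi^Z(n)=X(n)$'' with ``infinitely many stages $t$ have $\Phi^Z_t(n)\halts\,=X_t(n)$'' for a $\Delta^0_2$ approximation $X_t$ of $X$ (this is where $X\le_T\null'$ is used). Since $Y\in S$, weak $2$-typicality applied to the full-measure complement forces $\mu(S)>0$, and the majority-vote argument then runs \emph{forward} to conclude that $X$ is computable. The only ingredient you share with the paper is the majority-vote/Lebesgue-density lemma of your first step, which is correct.

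The gap is in your second step, and it is exactly the obstacle you name without resolving: the tree condition cannot simultaneously make $[T]$ null and contain $Y$. If ``consistent with $X$ on the first $f(|\tau|)$ bits'' means only that the \emph{convergent} values agree with $X$, then any $Z$ for which $\Phi^Z(0)\divs$ lies vacuously on $T$, and nothing prevents the set of such $Z$ from having positive measure, so $[T]$ need not be null. If instead the condition requires $\Phi^\tau$ to actually converge on the first $f(|\tau|)$ bits within $|\tau|$ steps, then $[T]$ is null by your first step, but $Y$ need not be a path: your $f(m)$ is a maximum over \emph{all} length-$m$ oracles, so the witness may be a string other than $Y\upto m$, and $Y\upto m$ may converge on far fewer bits of $X$ in $m$ steps. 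What you would need is a $\null'$-computable, nondecreasing, unbounded lower bound for the $Y$-computable modulus of convergence of $\Phi^Y$, and no such bound is available in general; producing a usable substitute is precisely the difficulty in sharpening the weakly $2$-random case (where the $\Pi^0_2$ null class suffices) to the weakly $2$-typical case. (A smaller point: what forces any such modulus to be unbounded is that $Y$ computes $X$, not the non-computability of $X$ as you assert.) As written the proof is incomplete at its crucial step; the clean fix is to argue on the full-measure side as the paper does.
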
 
\begin{proof} Let $A$ be $\Delta^0_2$, with approximation $A_t$ by the Limit \Cref{limitlemma}. Let $T$ be a weakly 2-typical set and $\Phi_e$ a Turing reduction such that $A = \Phi^T_e$. Notice that $T$ is contained in the $\Pi^0_2$ class\vspace{-.5cm}
$$S = \{X\mid \forall n\forall s \exists t>s\ \Phi^X_{e, t}(n)\halts\ = A_t(n) \}.\vspace{-.5cm}$$
It cannot be that $\overline{S}$ has measure 0, as then $T\in \overline{S}$, so $\mu(S)\neq 0$.

Let $r\in\Q$ be such that $\frac{1}{2}\mu(S) < r < \mu(S)$, and select a finite set $F\subseteq 2^{<\omega}$ with $\mu[[F]]<\mu(S)$ and $\mu([[F]]\cap S)>r$. For any $n$, we can search a finite $G\subseteq 2^{<\omega}$ such that $\mu[[G]]>r$, $[[G]]\subseteq [[F]]$, and all $\sigma, \tau \in G$ have $\Phi^\sigma_e(n)\halts = \Phi^\tau_e(n)\halts$. Such a set will exist, since $\mu([[F]]\cap S)>r$, so this process is computable. It cannot be that $\Phi^\sigma(n)_e\neq A(n)$, as then $\mu([[F]]\cap S)\leq \mu([[F]]\setminus[[G]])<\mu(S)-r<r.$ This search shows that $A$ is computable, so $T$ computes no non-recursive $\Delta^0_2$ sets.\end{proof}
\corollary\label{w2tdelta2} Weakly 2-typical sets are not $\Delta^0_2$.

In a sense we have `upper bounds' in the randomness and genericity notions for being guaranteed to be $\Pi^0_1$-immune. In fact this bound is tight for genericity --- while there are $\Pi^0_1$-immune sets that are 1-generic but not weakly 2-generic (see \Cref{PiIm1Gen}), we'll show now that there are also 1-generics that are not $\Pi^0_1$-immune.

\begin{definition} For a set $A$ in a class $\mathcal{C}$ we say that $I\subseteq\N$ is \emph{$\mathcal{C}$-indifferent for $A$} if no matter how we change bits of $A$ at locations in $I$, the resulting real is still in $\mathcal{C}$.
\end{definition}
Let $\mathrm{MLR}$ and $\mathrm{1G}$ be the classes of Martin-L\"of random and 1-generic reals, respectively.
\begin{theorem}[Figueira, Miller, Nies \cite{Indifferent}]\label{RandomIndiff} Every low $R\in\mathrm{MLR}$ has an infinite $\Pi^0_1$ subset that is $\mathrm{MLR}$-indifferent for $R$.

\end{theorem}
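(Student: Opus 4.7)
The plan is to construct the set $I$ as the complement of a coinfinite c.e.\ set $W$. Fix a universal Martin-L\"of test $\{V_e\}_{e\in\omega}$ given by uniformly c.e.\ prefix-free sets $B_e\subseteq 2^{<\omega}$ with $V_e=\bigcup_{\tau\in B_e}[[\tau]]$ and $\mu(V_e)\leq 2^{-e}$. Since $R$ is ML-random, $R\notin V_d$ for some $d$, and since $R$ is low (so $R\leq_T\null'$ and $\null'$ can settle $\Sigma^0_1(R)$ questions) we may let $\null'$ compute the least such $d$ and fix it. For each $\tau\in B_d$, let $q(\tau)<|\tau|$ be the least position with $\tau(q(\tau))\neq R(q(\tau))$; this exists because no prefix of $R$ lies in $B_d$. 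Setting $Q:=\{q(\tau):\tau\in B_d\}$, the task reduces to producing a coinfinite c.e.\ set $W\supseteq Q$: any modification of $R$ supported on $I:=\omega\setminus W$ then agrees with $R$ at every $q(\tau)$, disagrees with every $\tau\in B_d$, avoids $V_d$, and is therefore ML-random.

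A crucial combinatorial identity underpinning both the enumeration and the sparsity of $W$ is that $n\in Q$ if and only if $R^{(n)}\in V_d$, where $R^{(n)}$ denotes $R$ with its $n$th bit flipped (to see this, if $\tau\in B_d$ with $q(\tau)=n$ then $\tau\prec R^{(n)}$, while conversely any $\tau\in B_d$ that is a prefix of $R^{(n)}$ must have length exceeding $n$ and disagree with $R$ first at $n$). I would use this identity in two places. For the enumeration, fix a computable approximation $R_s\to R$ (available since $R\leq_T\null'$) and, as $\tau$ appears in $B_{d,s}$, compute a tentative disagreement location using $R_s$ and put it into $W$. The approximation may be wrong on finitely many initial guesses, so an extra layer of ``wait-for-settling'' (afforded by $R'\leq_T\null'$ unfolded into a dovetailed c.e.\ procedure) ensures that $W$ is genuinely c.e., eventually captures every true $q(\tau)$, and incurs only a finite overhead from incorrect early guesses.

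The main obstacle I anticipate is verifying that $W$ is coinfinite. Fubini applied to the measure-preserving bit-flip map $\phi_n$ gives $\int |Q\cap[0,N]|\,dR = \sum_{n\leq N}\mu(\phi_n^{-1}(V_d)) \leq N\cdot 2^{-d}$, so $Q$ has average density at most $2^{-d}$. To lift this to a pointwise statement for our particular low ML-random $R$, I would use either an effective pointwise ergodic theorem or, more elementarily, a Solovay-test argument showing that $\{R : |Q\cap[0,N]|/N > 2^{-d+1}\text{ for infinitely many }N\}$ is an effectively null class and hence avoided by every ML-random. Choosing $d$ large enough at the outset then provides the slack needed to absorb both $Q$ and the finite overhead from the approximation, leaving $W$ coinfinite and $I$ infinite.
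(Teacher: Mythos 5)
Your overall strategy---fix a level $V_d=\bigcup_{\tau\in B_d}\llb\tau\rrb$ of the universal test avoided by $R$, protect one disagreement position per $\tau$, and take $I$ to be the complement of a c.e.\ set covering those positions---has a fatal gap, and it sits exactly at the biconditional ``$n\in Q\iff R^{(n)}\in V_d$.'' The left-to-right direction is false: a string $\tau\in B_d$ whose \emph{first} disagreement with $R$ is at $n$ will in general disagree with $R$ again at later positions, so $\tau\not\prec R^{(n)}$. Consequently your Fubini/Solovay-test computation bounds the density of $\{n: R^{(n)}\in V_d\}$, which is only a (typically finite) subset of $Q$, and says nothing about the size of $Q$ itself. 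In fact $Q$ is all of $\omega$ for the standard universal test: that test contains, at level $d$ up to reindexing, a component of the form $U=\bigcup_{m\ge 1}\bigcup_{\rho\in 2^m}\llb\rho\,0^{m+d+1}\rrb$, whose measure is at most $\sum_{m}2^{-m-d-1}\le 2^{-d}$, and for \emph{every} real $R$ and every $m\ge 1$ the generator $(R\upto (m-1))^\frown(1-R(m-1))^\frown 0^{m+d+1}$ first disagrees with $R$ exactly at $m-1$. So $Q=\omega$, any c.e.\ $W\supseteq Q$ is all of $\omega$, and $I=\emptyset$. (These witnessing strings are almost never prefixes of $R^{(m-1)}$, since a random $R$ has no long blocks of zeros---which is precisely why your equivalence, and hence your sparsity argument, breaks.)

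The conceptual lesson is that ``which positions must be protected'' cannot be decided one test-string at a time; it depends on which other positions have already been ceded to the adversary. The Figueira--Miller--Nies proof (which this paper only cites, so I am comparing you to the literature rather than to an in-paper argument) chooses $I=\{i_0<i_1<\cdots\}$ adaptively, maintaining the invariant that none of the $2^n$ variants of $R$ on $\{i_0,\dots,i_{n-1}\}$ lies in the test, and uses a measure-counting argument (a cube of $2^n$ variants, relative to a sufficiently long initial segment of $R$ along which the test has small conditional measure) to show that admissible next positions always exist. Lowness is then used not to approximate the bits of $R$, as in your ``wait-for-settling'' enumeration of $W$, but to let $\emptyset'$ answer the $\Sigma^0_1(R)$ question ``does some variant of $R$ on this finite set enter the test?''; the resulting $\emptyset'$-construction is converted into a co-c.e.\ presentation of $I$ by extracting positions from $I$ whenever the approximation changes its mind. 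Your proposal would need to be rebuilt along these lines---as written, even its target set $Q$ is the wrong object. A smaller omission: the theorem asks for $I\subseteq R$, which your construction does not arrange.
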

\begin{theorem}[Fitzgerald \cite{IndifferentDay}]\label{GenericIndiff} Every $\Delta^0_2$ 1-generic $G$ has an infinite $\Pi^0_1$ subset that is $1$G-indifferent for $G$.
\end{theorem}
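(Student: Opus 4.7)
The plan is to construct $I$ as the complement of a c.e.~set $C\subseteq\omega$, using the $\Delta^0_2$ approximation $(G_s)_{s\in\omega}$ of $G$ to drive the enumeration. The key observation is that because $G$ is $1$-generic, $G$ has, for each index $e$, a shortest prefix witnessing $W_e$ --- either in $W_e$ (meeting) or a dead end for $W_e$ (avoiding). Moreover, finite modifications preserve $1$-genericity: if $F\subset\omega$ is finite and $\tau\in 2^F$, then $G_\tau$ (defined by $G_\tau(n)=\tau(n)$ for $n\in F$ and $G_\tau(n)=G(n)$ otherwise) is still $1$-generic, so it too has a witness for $W_e$, of some length $\ell_e(F,\tau)$. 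If $F=I\cap[0,L)$ and $L\ge\max_{\tau\in 2^F}\ell_e(F,\tau)$, then modifications at positions in $I\cap[L,\infty)$ do not change any prefix of length $\le L$, and the witness for $W_e$ in $G_{\tau'}$, $\tau'\in 2^I$, is determined entirely by $\tau'\upto F$.

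Since $G\le_T\emptyset'$ and the avoiding condition is $\Pi^0_1$, the map $(e,F,\tau)\mapsto\ell_e(F,\tau)$ is $\emptyset'$-computable; by the Limit Lemma (\Cref{limitlemma}) its values admit computable approximations. The construction is a finite-injury priority argument with requirements $R_e$ asserting that there exist $L_e$ and $F_e=I\cap[0,L_e)$ such that every one of the $2^{|F_e|}$ modifications at $F_e$ has a witness for $W_e$ of length $\le L_e$. At stage $s$, I would service the highest-priority unsatisfied $R_e$ with $e\le s$ by computing a tentative $L_{e,s}$ from the approximation and enumerating into $C$ any current candidate for $I$ that lies in a region where $R_e$ would be violated. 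Lower-priority requirements are injured when $L_{e,s}$ changes, but once the approximation stabilizes each $R_e$ contributes only finitely many enumerations.

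The main obstacle is keeping $I=\omega\setminus C$ infinite. The quantities $L_e$ typically grow without bound in $e$, so a naive execution in which each $R_e$ simply forces $I\cap[0,L_e)=\emptyset$ would leave $I$ empty. The remedy is to allow $F_e$ to be nonempty and only to control its size: $R_e$ does not need $F_e=\emptyset$, only that every modification at $F_e$ admits a witness by length $L_e$, which is achievable precisely because finite modifications of a $1$-generic are again $1$-generic. Along the true path of the priority tree one preserves an infinite reservoir of positions placed at sparse intervals, each beyond the $L_e$ of every previously satisfied requirement, so that infinitely many candidates survive into $I$. The most delicate part of the verification is extending indifference from finite $\tau$ to arbitrary $\tau\in 2^I$: the priority structure guarantees that, for each $e$, the witness for $W_e$ in $G_{\tau}$ depends only on $\tau\upto F_e$ and not on the rest of $\tau$, so indifference for all $\tau\in 2^I$ follows.
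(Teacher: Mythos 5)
The paper does not actually prove this theorem: it is quoted from Fitzgerald \cite{IndifferentDay} and used as a black box in the corollary that follows, so there is no in-paper argument to compare yours against. Judged on its own terms, your outline is the standard proof and its load-bearing ideas are correct: finite modifications of a $1$-generic are $1$-generic, so for each $e$ and finite $F$ all $2^{|F|}$ variants of $G$ on $F$ have meeting-or-avoiding witnesses for $W_e$, with a common length bound that is $\emptyset'$-computable (the meeting condition is $\Sigma^0_1$, the avoiding condition is $\Pi^0_1$, and $G\le_T\emptyset'$ supplies the prefixes, so the search terminates by genericity of each variant); and once the construction enforces $I\cap[0,L_e)=F_e$, any $\tau\in 2^I$ agrees below $L_e$ with one of the variants already accounted for, so its witness of length at most $L_e$ survives all further modifications at positions $\ge L_e$. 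That localization step, which you correctly identify as the crux, is what upgrades indifference from finite to arbitrary modifications, and Shoenfield approximation plus removal-only updates is the right way to make $I$ co-c.e.

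Two soft spots worth tightening. First, no priority tree or ``true path'' is needed; a plain movable-marker, finite-injury argument suffices: let $m_e$ track the $e$-th surviving element, have $R_e$ push $m_e$ and its successors to fresh positions beyond the current estimate of $L_e=\max_\tau \ell_e(\{m_0,\dots,m_{e-1}\},\tau)$, and note that each marker moves only finitely often once its predecessors and the relevant approximations have settled; markers moving only rightward to fresh positions is exactly what keeps the complement c.e.\ and $I$ infinite, since in the limit $|I\cap[0,L_e)|=e$ while $L_e\to\infty$. Second, the statement says ``subset,'' and for the companion Figueira--Miller--Nies theorem the indifferent set genuinely is a subset of the real; if that reading is intended you need one more removal-only (hence harmless) rule: discard a candidate $n$ whenever the $\Delta^0_2$ approximation shows $G(n)=0$. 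Infinitely many candidates survive because a $1$-generic has infinitely many $1$s. Neither point is a gap in the idea, but both belong in a complete write-up.
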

\theorem There are $1$-random and $1$-generic reals that are not $\Pi^0_1$-immune.
\begin{proof} For a low random or $\Delta^0_2$ 1-generic, let $I$ be as given in \Cref{RandomIndiff} and \Cref{GenericIndiff}. Set all the bits of $I$ to 1, so that the resulting real has $I$ as a subset.\end{proof}
\section{Reals That Can (Not) Co-Enumerate a $\Pi^0_1$-Immune}
Having studied $\Pi^0_1$-immunity, we now return to \Cref{PiImLem}. In its unrelativized form, it says 

\begin{conjecture}\label{PiBNconj}
The only reals that do not co-enumerate a $\Pi^0_1$-immune set are computable.
\end{conjecture}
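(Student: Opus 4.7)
I would attack the conjecture by splitting on the complexity of $A$. If $A\not\leq_T\emptyset'$, the set $S(A)=\{\sigma\in 2^{<\omega}:\sigma\prec A\}$ is $\Delta^0_1(A)\subseteq\Pi^0_1(A)$ and is $\Pi^0_1$-immune by \Cref{BNDelta}, dispatching that case immediately. If instead $A$ is c.e.\ and non-computable, take a major subset $F\subset_m A$ (guaranteed by \Cref{cemajor}). Then $\overline{A}\in\Sigma^0_1(A)$ and $F\in\Sigma^0_1$, so $\overline{A}\cup F\in\Sigma^0_1(A)$, and its complement $A\setminus F\in\Pi^0_1(A)$ is $\Pi^0_1$-immune by \Cref{PiImMaj}.

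The substantive case is $A$ properly $\Delta^0_2$ (i.e.\ $\Delta^0_2\setminus\Sigma^0_1$) and non-computable. Here I would attempt a priority construction relative to $A$ to build an $A$-c.e.\ set $E$ with complement $\Pi^0_1$-immune. The requirements are
\[R_e: \text{if } |\overline{W_e}|=\infty, \text{ then } \overline{W_e}\cap E\neq\emptyset,\]
together with the global condition that $\overline{E}$ be infinite. To force $E$ to be properly $\Sigma^0_1(A)$ rather than merely $\Sigma^0_1$, I would use $A$-permitting keyed to the $\Delta^0_2$-approximation $A_s$: an enumeration into $E$ is triggered only when $A_s$ changes on some relevant bit. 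Since $A$ is non-computable, such changes occur infinitely often, supplying enough permissions.

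The main obstacle lies entirely in this third case. Detecting $|\overline{W_e}|=\infty$ is a $\Pi^0_2$ question, beyond $A$'s direct computational power when $A\not\geq_T\emptyset'$, so the construction cannot simply ``look up'' which $\Pi^0_1$ sets must be hit. Worse, for minimal $\Delta^0_2$ degrees (which exist and bound no non-zero c.e.\ degree), the permitting strategy cannot reduce to the c.e.\ case, since there is no non-computable c.e.\ set below $A$ from which to draw permissions. A successful construction will likely require a qualitatively new technique --- perhaps an application of the $\oplus_X$ operation of \Cref{oplus} to encode $A$'s $\Delta^0_2$-approximation into a $\Pi^0_1(A)$ subset of a fixed $\Pi^0_1$-immune skeleton. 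If no such construction materializes, I would pivot to seeking a counterexample: a non-computable $A$, likely of minimal $\Delta^0_2$ degree, all of whose $\Pi^0_1(A)$ sets either are already $\Pi^0_1$ or contain an infinite $\Pi^0_1$ subset.
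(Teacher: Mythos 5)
The statement you are asked to prove is labelled a \emph{conjecture} in the paper, and the paper does not prove it; it explicitly says ``while we do not settle this conjecture, we make substantial progress towards it.'' Your proposal, read as a proof, therefore has a genuine gap --- namely the entire third case --- and to your credit you say so yourself. Your first two cases are correct and coincide exactly with the paper's partial results: the non-$\Delta^0_2$ case is \Cref{BNDelta} and its corollaries \Cref{nonDelta2PiIm} and \Cref{BNPiImDelta2}, and the major-subset argument for c.e.\ sets is precisely the proof of \Cref{ceBPiIm} (which the paper states slightly more generally: it suffices that $A$ \emph{computes} a non-computable c.e.\ set, and, after relativizing, that $A$ bounds a CEA set, equivalently a $1$-generic; see \Cref{PiBNPiBnCEA}). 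So the degrees genuinely at issue are the non-computable $\Delta^0_2$ degrees bounding no CEA set --- e.g.\ minimal $\Delta^0_2$ degrees --- exactly as you identify.

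Two remarks on your sketch for the open case. First, note that any set $E\in\Sigma^0_1(A)$ with $\overline{E}$ $\Pi^0_1$-immune must be \emph{properly} $A$-c.e.: if $E$ were c.e.\ then $\overline{E}$ would be an infinite $\Pi^0_1$ subset of itself. The difficulty is not really the permitting (one can always permit from the $\Delta^0_2$ approximation of $A$ itself; no c.e.\ set below $A$ is needed for that), but that the diagonalization must be driven by genuinely non-c.e.\ information from $A$, and for degrees bounding nothing CEA it is unclear where that information comes from --- this is why the paper leaves the conjecture open and proves instead that it is \emph{equivalent} to the $\Pi^0_1$ Immunity Lemma (\Cref{PiImLem}) and to \Cref{w1gconj}. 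Second, a caution about the counterexample route: by \Cref{lowceBN}, low c.e.\ degrees compute no $\Pi^0_1$-immune set yet do co-enumerate one, so any candidate counterexample must be tested against co-enumeration, not just computation; merely exhibiting a degree computing no $\Pi^0_1$-immune set would not refute the conjecture.
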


While we do not settle this conjecture, we make substantial progress towards it. In particular, our work in \Cref{Ershov} allows us to eliminate many degrees from contention:
\newpage
\theorem\label{ceBPiIm} If $A$ computes a c.e.\  $B\not\in\Delta^0_1$, then $A$ co-enumerates a $\Pi^0_1$-immune real.
\begin{proof}
By \Cref{cemajor}, $B$ has a major subset $C$. By \Cref{PiImMaj}, $B-C$ is $\Pi^0_1$-immune. Since $B\in\Delta^0_1(A)$ and $C\in\Sigma^0_1\subseteq\Sigma^0_1(A)$, both $B$ and $\overline{C}$ are $\Pi^0_1(A)$, so that $A$ co-enumerates $B\cap\overline{C} = B-C$.
\end{proof}
\begin{corollary}\label{randomcoPi}
If $A$ computes a Martin-L\"of random $R$, then $A$ co-enumerates a $\Pi^0_1$-immune real.
\end{corollary}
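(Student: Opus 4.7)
The plan is to produce, from a Martin-L\"of random $R$ with $R\leq_T A$, a $\Pi^0_1$-immune real that is computable (hence co-enumerable) from $A$. The argument naturally splits into two cases according to whether $R\leq_T \null'$, because the tools available in the two regimes are rather different: only a $\Delta^0_2$ random can fail to be handled by the prefix-set trick from \Cref{BNDelta}, and only a random below $\null'$ needs the major-subset machinery of \Cref{ceBPiIm}.

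If $R\not\leq_T\null'$, then $R\notin\Delta^0_2$, and I would appeal directly to \Cref{BNDelta}: the prefix set $S(R)$ is $\Pi^0_1$-immune. Since $A$ computes $R$ it also computes $S(R)$, and as $\Delta^0_1(A)\subseteq\Pi^0_1(A)$ this means $A$ co-enumerates the $\Pi^0_1$-immune set $S(R)$, finishing this case.

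If instead $R\leq_T\null'$, the goal is to hand the problem off to \Cref{ceBPiIm} by producing a non-computable c.e.\ set $B$ with $B\leq_T A$. For this I would invoke a classical theorem of Demuth: every $\Delta^0_2$ Martin-L\"of random real is Turing-equivalent to a left-c.e.\ real, and hence (via its associated c.e.\ set of rational lower approximants) to a c.e.\ set $B$. Because $R$ is Martin-L\"of random and therefore non-computable, so is $B$. Then $B\leq_T R\leq_T A$ is a non-computable c.e.\ set computed by $A$, and \Cref{ceBPiIm} delivers a $\Pi^0_1$-immune real co-enumerable from $A$.

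The main obstacle is Demuth's theorem itself, which is not developed in this chapter. In a self-contained write-up one would either cite it outright or reproduce the standard construction of a left-c.e.\ real of the same Turing degree as a given $\Delta^0_2$ random. Note that one cannot avoid this by simply trying to show ``every Martin-L\"of random computes a non-computable c.e.\ set'': hyperimmune-free Martin-L\"of randoms exist (by applying the hyperimmune-free basis theorem to a positive-measure $\Pi^0_1$ subclass of $\mathrm{MLR}$), and any c.e.\ set below a hyperimmune-free real is computable. This is precisely why the case split on $\Delta^0_2$-ness is needed — in the hyperimmune-free regime the random is not $\Delta^0_2$ and the prefix-set route via \Cref{BNDelta} is what saves us. Once Demuth is in hand, the remainder of the proof is a routine synthesis of \Cref{BNDelta} and \Cref{ceBPiIm}.
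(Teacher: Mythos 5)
Your overall architecture matches the paper's: split on $\Delta^0_2$-ness, handle the non-$\Delta^0_2$ case via the prefix set $S(R)$ and \Cref{BNDelta} (the paper words this case via \Cref{BNPiImDelta2}, which is the same fact), and in the $\Delta^0_2$ case feed a non-computable c.e.\ set into \Cref{ceBPiIm}. The non-$\Delta^0_2$ half is correct. The $\Delta^0_2$ half, however, rests on a false statement. There is no theorem of Demuth (or anyone) that every $\Delta^0_2$ Martin-L\"of random is Turing-equivalent to a left-c.e.\ real. Being Turing-equivalent to a left-c.e.\ real is the same as having c.e.\ degree (a left-c.e.\ real is Turing-equivalent to its left Dedekind cut), and no incomplete random has c.e.\ degree: every Martin-L\"of random computes a fixed-point-free function (Ku\v{c}era), and by Arslanov's completeness criterion any c.e.\ set computing a fixed-point-free function is Turing equivalent to $\null'$. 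Since low Martin-L\"of randoms exist (apply the low basis theorem to a positive-measure $\Pi^0_1$ class of randoms), there are $\Delta^0_2$ randoms whose degree is not c.e.; this is exactly the Arslanov argument used in item 19 of the lowness proofs accompanying \Cref{fig:lowness}. Demuth's actual theorem concerns truth-table reductions (every non-computable set tt-below a random is Turing-equivalent to a random) and does not give what you want here.

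The repair is to notice that you are asking for more than you need: \Cref{ceBPiIm} only requires that $A$ \emph{compute} some non-computable c.e.\ set, not that $A$ compute a set of c.e.\ degree. Ku\v{c}era's theorem states precisely that every $\Delta^0_2$ Martin-L\"of random bounds a non-computable c.e.\ set, and that is the result the paper invokes. With that substitution your argument closes and coincides with the paper's proof. Your closing aside about hyperimmune-free randoms is correct and correctly explains why the case split is unavoidable, but it does not excuse reaching for the stronger, false equivalence in the $\Delta^0_2$ case.
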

\begin{proof}
If $A\not\in\Delta^0_2$, it computes (and hence co-enumerates) a $\Pi^0_1$-immune set by \Cref{BNPiImDelta2}.

If $R$ is random and $R\leq_T A\in \Delta^0_2$, then $R\in\Delta^0_2$. So $R$ bounds a non-computable c.e.\ set \cite{Kucera}. 
\end{proof}
We can strengthen \Cref{ceBPiIm} by relativizing:
\begin{theorem} If $A$ computes sets $B$ and $X$ such that $B\in\Sigma^0_1(X)-\Delta^0_1(X)$, then $A$ co-enumerates a $\Pi^0_1$-immune real.
\end{theorem}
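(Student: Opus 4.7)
The plan is to directly relativize the proof of Theorem~\ref{ceBPiIm}. The unrelativized argument used two main ingredients: Lachlan's theorem (Lemma~\ref{cemajor}) that every non-computable c.e.\ set has a major subset, and Theorem~\ref{PiImMaj} characterizing $\Pi^0_1$-immunity of $B - C$ in terms of majorness. Both of these are stated for the unrelativized c.e.\ setting, but their proofs carry over without change when ``c.e." is replaced by ``$X$-c.e." and $W_e$ is replaced by $W_e^X$ throughout.

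First I would invoke the relativization of Lemma~\ref{cemajor}: since $B \in \Sigma^0_1(X) - \Delta^0_1(X)$, there is an $X$-c.e.\ $X$-major subset $C \subseteq B$, meaning $|B - C| = \infty$ and for every index $e$, $\overline{B} \subseteq^* W_e^X$ implies $\overline{C} \subseteq^* W_e^X$. Next I would apply the relativization of Theorem~\ref{PiImMaj}, whose proof is word-for-word the same with $W_e$ replaced by $W_e^X$, to conclude that $B - C$ is $\Pi^0_1(X)$-immune. Since $\Pi^0_1 \subseteq \Pi^0_1(X)$, the set $B - C$ is in particular $\Pi^0_1$-immune, which is the immunity notion we want.

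Finally, I would check that $A$ actually co-enumerates $B - C$. Because $A$ computes $B$, we have $B \in \Delta^0_1(A) \subseteq \Pi^0_1(A)$. Because $A$ computes $X$, every $X$-c.e.\ set is $A$-c.e., so in particular $C \in \Sigma^0_1(A)$ and hence $\overline{C} \in \Pi^0_1(A)$. Therefore $B - C = B \cap \overline{C} \in \Pi^0_1(A)$, which is exactly the statement that $A$ co-enumerates the $\Pi^0_1$-immune set $B - C$.

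I do not expect any substantive obstacle: the argument is a clean relativization and the only thing to verify is that both Lachlan's major subset construction and the proof of Theorem~\ref{PiImMaj} relativize uniformly, which they do since neither uses any feature of the empty oracle beyond having access to an enumeration of the relevant c.e.\ sets. The mildly delicate point worth stating explicitly is the containment $\Pi^0_1 \subseteq \Pi^0_1(X)$, which is what lets us upgrade $\Pi^0_1(X)$-immunity of $B - C$ (the natural output of the relativized argument) to the $\Pi^0_1$-immunity demanded by the conclusion.
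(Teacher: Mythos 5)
Your proposal is correct and follows essentially the same route as the paper: relativize Lachlan's major subset lemma to get an $X$-major $C\subseteq B$, relativize the characterization of $\Pi^0_1$-immunity via major subsets to conclude $B-C$ is $\Pi^0_1(X)$-immune (hence $\Pi^0_1$-immune), and then observe $B\in\Delta^0_1(A)\subseteq\Pi^0_1(A)$ and $\overline{C}\in\Pi^0_1(X)(A)\subseteq\Pi^0_1(A)$ so that $A$ co-enumerates $B\cap\overline{C}$. No gaps.
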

\begin{proof}
As $B$ is $\Sigma^0_1(X)-\Delta^0_1(X)$, relativizing \Cref{cemajor} gives $B$ a subset $C\in \Sigma^0_1(X)$ that is $X$-major, i.e.\ if $\overline{B}\subseteq W_e^X$, then $\overline{C}\subseteq^* W_e^X$. Relativizing \Cref{PiImMaj}, $B-C$ is $\Pi^0_1(X)$-immune, and so $\Pi^0_1$-immune. Since $B\in\Delta^0_1(A)$ and $C\in\Sigma^0_1(X)\subseteq\Sigma^0_1(A)$, both $B$ and $\overline{C}$ are $\Pi^0_1(A)$, so that $A$ co-enumerates $B\cap\overline{C} = B-C$.
\end{proof}

The following lemmata allow us to restate this result quite cleanly:

\begin{definition}
A real $A$ is \emph{computably enumerable in and above} (CEA) if there is an $X<_T A$ such that $A$ is $X$-c.e. We also say $A$ is CEA($X$) for that $X$.
\end{definition}

\begin{lemma}\label{CEAequiv} For any real $A$ the following are equivalent:\begin{enumerate}
    \item For all $B, X\leq_T A$, if $B\not\leq_TX$, then $B\not\in\Sigma^0_1(X)$.
    \item $A$ computes no CEA $B$.
\end{enumerate}
\end{lemma}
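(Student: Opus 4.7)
The plan is to prove both directions by contradiction / contrapositive, with the reverse direction requiring a small trick to pass from the hypothesis $B \not\leq_T X$ to the stronger condition $X <_T B$ required by the definition of CEA.

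For $(i)\Rightarrow(ii)$, I would suppose toward contradiction that $A$ computes a CEA set $B$. Unpacking the definition of CEA, there is an oracle $X<_T B$ with $B\in\Sigma^0_1(X)$. Since $B\leq_T A$ and $X<_T B\leq_T A$, both $B$ and $X$ are computable from $A$. The strict inequality $X<_T B$ gives in particular $B\not\leq_T X$, so (i) applies and forces $B\not\in\Sigma^0_1(X)$, contradicting our choice of $X$.

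For $(ii)\Rightarrow(i)$, I would argue by contrapositive: assume (i) fails and produce a CEA set computable from $A$. By hypothesis there are $B,X\leq_T A$ with $B\in\Sigma^0_1(X)$ but $B\not\leq_T X$. The natural candidate $B$ itself need not be CEA, since we only know $B\not\leq_T X$, not $X\leq_T B$. The fix is to consider the join $C:=B\oplus X$. Clearly $C\leq_T A$, and $X\leq_T C$ by reading odd bits. If we had $C\leq_T X$, then $B\leq_T C\leq_T X$, contradicting $B\not\leq_T X$; hence $X<_T C$. Finally, $C\in\Sigma^0_1(X)$ because $B\in\Sigma^0_1(X)$ and $X\in\Delta^0_1(X)\subseteq\Sigma^0_1(X)$, and the recursive join of two $X$-c.e.\ sets is $X$-c.e. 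So $C$ is CEA($X$) and $C\leq_T A$, contradicting (ii).

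The main (and really only) obstacle is the mismatch between the asymmetric condition $B\not\leq_T X$ in clause (i) and the genuine Turing strict inequality $X<_T B$ required in the definition of CEA; the join trick $C = B\oplus X$ bridges this gap cleanly. The rest is bookkeeping about what operations preserve $\Sigma^0_1(X)$-ness and about Turing reducibility between joins.
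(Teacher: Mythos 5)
Your proposal is correct and uses essentially the same argument as the paper: the forward direction is the same trivial observation, and the reverse direction hinges on exactly the paper's join trick with $B\oplus X$ (the paper phrases it as a direct implication from (ii), you as a contrapositive, but the content is identical). No gaps.
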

\begin{proof} Certainly (i) implies (ii), as $B>_T X$ implies $B\not\leq_T X$. For the reverse, suppose (ii), and let $B,X\leq_T A$ with $B\not\leq_T X$. As $B\oplus X\leq_T A$, $B\oplus X$ is not CEA($X$). But $X<_T B\oplus X$, so $B\oplus X$ must not be $X$-c.e.\ Trivially, $X$ is $X$-c.e., so $B$ must not be.
\end{proof}
\begin{corollary}\label{PiBNPiBnCEA}
If $A$ is does not co-enumerate a $\Pi^0_1$-immune real, then $A$ bounds no CEA $B$.
\end{corollary}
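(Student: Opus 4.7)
The plan is to obtain this corollary as an immediate consequence of the preceding theorem (that if $A$ computes $B, X$ with $B\in\Sigma^0_1(X)-\Delta^0_1(X)$, then $A$ co-enumerates a $\Pi^0_1$-immune real) together with \Cref{CEAequiv}. Essentially the work has already been done; only a small repackaging remains.

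First I would take the contrapositive of the preceding theorem: assuming $A$ does not co-enumerate any $\Pi^0_1$-immune real, $A$ cannot compute any pair $B, X$ with $B\in\Sigma^0_1(X)-\Delta^0_1(X)$. Since $\Delta^0_1(X)=\{C\in 2^\omega\mid C\leq_T X\}$, the condition $B\in\Sigma^0_1(X)-\Delta^0_1(X)$ is equivalent to $B\in\Sigma^0_1(X)$ and $B\not\leq_T X$. Hence the contrapositive reads: for all $B, X\leq_T A$, if $B\not\leq_T X$ then $B\not\in\Sigma^0_1(X)$, which is precisely clause (i) of \Cref{CEAequiv}.

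Applying \Cref{CEAequiv} converts (i) into (ii), namely that $A$ computes no CEA real, which is exactly the conclusion sought. There is no real obstacle here beyond ensuring the set-theoretic identification $\Delta^0_1(X)=\{C\mid C\leq_T X\}$ is applied correctly in the negation step, so the ``proof'' is just a two-line derivation chaining the preceding theorem with \Cref{CEAequiv}.
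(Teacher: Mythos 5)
Your derivation is correct and is exactly the intended argument: the corollary follows by taking the contrapositive of the preceding theorem (using the paper's definition $\Delta^0_1(X)=\{C\mid C\leq_T X\}$ to see that the hypothesis $B\in\Sigma^0_1(X)-\Delta^0_1(X)$ is the same as $B\in\Sigma^0_1(X)$ and $B\not\leq_T X$), which yields clause (i) of \Cref{CEAequiv}, and then applying that lemma to conclude (ii). The paper gives no separate proof precisely because this two-step chaining is immediate.
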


The following lemma is well-known: for a proof, see Theorems 2.24.9 and 8.21.15 in \cite{DaH}.

\begin{lemma}\label{1GCEA} $A$ computes a $1$-generic $G$ iff $A$ computes a CEA $B$.
\end{lemma}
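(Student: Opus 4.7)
The plan is to prove the two directions of Lemma \ref{1GCEA} separately, following Jockusch's classical approach as laid out in \cite{DaH}.

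$(\Leftarrow)$ Suppose $B \leq_T A$ is CEA, say $B = W_e^X$ with $X <_T B \leq_T A$. I would construct a 1-generic $G \leq_T B$ by a stagewise forcing argument. Enumerate all c.e.\ sets of strings $\{V_i\}_{i\in\omega}$. Beginning with $\sigma_0 = \emptyset$, at each stage $i+1$ I choose $\sigma_{i+1}\succeq \sigma_i$ that either meets $V_i$ (some $\tau\succeq\sigma_i$ is in $V_i$) or avoids it (no extension of $\sigma_{i+1}$ is ever enumerated into $V_i$). Meeting is uniformly $\Sigma^0_1$ and easy to carry out; avoiding is $\Pi^0_1$ and is the obstacle. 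The CEA structure rescues the construction via an Arslanov-completeness-style argument: because $B$ is $X$-c.e.\ yet not $X$-computable, the enumeration function of $B$ relative to $X$ provides a $B$-computable but not $X$-computable "timer" whose divergence lets me commit $B$-effectively to an avoiding extension when no meeting extension ever appears. This yields a $B$-computable (hence $A$-computable) 1-generic $G$.

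$(\Rightarrow)$ Suppose $G \leq_T A$ is 1-generic. I would show that $G$ itself is CEA; then $G$ is the required CEA set computed by $A$. The plan is to construct $X <_T G$ such that $G \in \Sigma^0_1(X)$. I would build $X$ and an enumeration procedure $\Psi$ in tandem, exploiting 1-genericity to meet two families of dense c.e.\ sets of strings: one family that forces the enumeration $\Psi^X = G$ to succeed (so that $G$ is c.e.\ in $X$), and another that diagonalizes against every potential reduction $\Phi_e^X$ computing $G$ (so that $G \not\leq_T X$). Because $G$ is 1-generic, for each such dense c.e.\ set it either meets the set or has a prefix avoiding it, and the construction is arranged so that avoidance on the diagonalization side and meeting on the enumeration side can be achieved simultaneously.

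\textbf{Main obstacle.} The forward direction is the substantive one. The construction of a proper reduct $X<_T G$ in which $G$ is still c.e.\ is delicate: one must design the families of dense c.e.\ sets carefully so that enumeration-of-$G$-from-$X$ and non-computability-of-$G$-from-$X$ are not in tension, and the verification relies sensitively on the meet-or-avoid dichotomy of 1-genericity. This is the content of Jockusch's theorem, presented as Theorems~2.24.9 and 8.21.15 in \cite{DaH}, and I would follow their presentation for the combinatorial details. The backward direction, by contrast, is a relatively standard application of forcing combined with the Arslanov-type trick.
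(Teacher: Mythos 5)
The paper offers no proof of this lemma at all: it is flagged as well known and cited to Theorems~2.24.9 and 8.21.15 of \cite{DaH}, which are precisely the two results you name, so your top-level decomposition (forward: every 1-generic is CEA, Jockusch; backward: every degree bounding a CEA set bounds a 1-generic) is the intended one. Your forward direction is essentially right but over-engineered: $G$ is given, so nothing is ``built in tandem.'' One simply writes down $X=\Theta^G$ for the explicit functional $\Theta^Y=\{\langle i,j\rangle \mid i\in Y\land \langle i,j\rangle\notin Y\}$ (the paper itself reproduces this in item 17 of its lowness-notions section), observes outright that $\Theta^G\leq_{tt}G$ and that $G$ is c.e.\ in $\Theta^G$, and invokes 1-genericity exactly once, to meet the dense c.e.\ sets of strings witnessing $G\not\leq_T\Theta^G$. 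There is no tension to manage between the ``enumeration'' and ``diagonalization'' families.

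The genuine gap is in your $(\Leftarrow)$ direction, which you call relatively standard. As described, it fails: you run a single-pass forcing construction and, for each $V_i$, either find a meeting extension or ``commit'' to avoidance when a $B$-computable timer expires. But avoidance is a $\Pi^0_1$ condition, and no bounded search licenses a permanent commitment --- an extension of $\sigma_{i+1}$ may enter $V_i$ after the timer runs out, and then $G$ neither meets nor avoids $V_i$. What the modulus of the enumeration of $B$ over $X$ actually gives (being $B$-computable but escaping every $X$-computable function, since otherwise $B\leq_T X$) is that sufficiently long searches succeed \emph{infinitely often}, not always; converting this into genericity requires a permitting-style priority argument in which requirements are injured and retried, i.e., the relativization to $X$ of the construction of a 1-generic below a noncomputable c.e.\ set. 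You also omit the relativization step that makes this legitimate: the construction produces an $X$-1-generic $G\leq_T B\oplus X\equiv_T B$, which is 1-generic because $\Sigma^0_1\subseteq\Sigma^0_1(X)$. Finally, ``Arslanov completeness'' is not the relevant tool here; it concerns fixed-point-free functions and complete c.e.\ degrees, not permitting below an arbitrary (possibly incomplete) CEA degree.
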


\begin{corollary}
If $A$ is does not co-enumerate a $\Pi^0_1$-immune real, then $A$ bounds no 1-generic $G$.
\end{corollary}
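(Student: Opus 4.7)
The statement is a direct consequence of the two preceding results: \Cref{PiBNPiBnCEA} tells us that if $A$ does not co-enumerate a $\Pi^0_1$-immune real, then $A$ bounds no CEA set, and \Cref{1GCEA} tells us that $A$ bounds a $1$-generic iff $A$ bounds a CEA set. My plan is simply to chain these two facts via contrapositive.

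Concretely, I would argue as follows. Suppose for contradiction that $A$ does not co-enumerate a $\Pi^0_1$-immune real, yet $A$ bounds some $1$-generic $G$. Then by the easy direction of \Cref{1GCEA}, the existence of a $1$-generic $G\leq_T A$ yields a CEA real $B\leq_T A$. But \Cref{PiBNPiBnCEA} asserts that under our hypothesis $A$ bounds no CEA real, a contradiction. Hence no such $G$ can exist.

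Since both ingredients are already in place, there is no real obstacle. The only thing to watch is that the direction of \Cref{1GCEA} being invoked is the one that goes from a $1$-generic to a CEA set (not the reverse), and that \Cref{PiBNPiBnCEA} is being applied in its stated direction. No new construction, diagonalization, or technical lemma is needed; the proof is a one-line corollary.
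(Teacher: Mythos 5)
Your proposal is correct and is exactly the argument the paper intends (it leaves the corollary's proof implicit, as an immediate combination of \Cref{PiBNPiBnCEA} and the forward direction of \Cref{1GCEA}). You correctly identify which direction of \Cref{1GCEA} is needed, so nothing further is required.
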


One possible way to improve this result would be to weaken the notion of genericity in the conclusion to \emph{weak} $1$-genericity. However this merely yields a previous conjecture:
\begin{conjecture}\label{w1gconj}
If $A$ is does not co-enumerate a $\Pi^0_1$-immune real, then $A$ does not compute any weakly 1-generic $G$.
\end{conjecture}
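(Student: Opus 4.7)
The strategy is to chain together two characterizations and thereby reduce the conjecture to \Cref{PiBNPiBnCEA}. By a theorem of Kurtz, $A$ computes a weakly 1-generic real iff $A$ is of hyperimmune degree, i.e., iff $A$ computes a function $f$ that is not dominated by any total computable function. So it suffices to prove: every hyperimmune degree co-enumerates a $\Pi^0_1$-immune real.

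\textbf{Key step.} Invoke the result noted in the introduction as one of independent interest, that the hyperimmune-free degrees coincide with those computing no truth-table CEA set. Taking contrapositives, $A$ of hyperimmune degree computes some truth-table CEA set $B$. Since $\leq_{tt}$ refines $\leq_T$, in particular $B$ is CEA in the sense of \Cref{CEAequiv}, so \Cref{PiBNPiBnCEA} yields that $A$ co-enumerates a $\Pi^0_1$-immune real.

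\textbf{Expected obstacle.} The entire argument hinges on unpacking the ``truth-table CEA'' notion: one must verify that the witness $X$ to $B$'s being tt-CEA satisfies $X <_T B$ (so that $B$ really is CEA), rather than merely $X <_{tt} B$ without a corresponding Turing-strictness. If that unpacking fails, one would instead have to build the $\Pi^0_1(A)$-immune set directly: given $f\leq_T A$ escaping every total computable function, co-enumerate from $A$ a set $C$ diagonalizing against each $\overline{W_e}$. The naive choice $C=\mathrm{range}(f)$ does not suffice, since an infinite hypersimple $\overline{W_e}$ can sit inside $\mathrm{range}(f)$; one would need a more subtle interleaving in which the escape property of $f$ is used to outrun the $\Delta^0_2$ principal functions of infinitely many candidate $\Pi^0_1$ subsets simultaneously. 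This is the step that I expect to be delicate, and which presumably explains why the statement remains conjectural.
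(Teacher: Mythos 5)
The statement you are trying to prove is stated in the paper as a \emph{conjecture}, not a theorem: the paper proves only that it is equivalent to \Cref{PiBNconj} (the unrelativized $\Pi^0_1$ Immunity Lemma), which the paper explicitly leaves open. So any complete proof of it would resolve that open problem, and indeed your argument breaks down exactly at the step you flagged as the "expected obstacle." The paper's definition reads: $B$ is ttCEA iff there is an $X<_{tt}B$ with $B\in\Sigma^0_1(X)$. Here $X<_{tt}B$ means $X\leq_{tt}B$ and $B\not\leq_{tt}X$; it does \emph{not} give $B\not\leq_T X$. It is entirely possible that $B\leq_T X$ while $B\not\leq_{tt}X$, in which case $B\equiv_T X$ and the witness $X$ does not make $B$ CEA. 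So "tt-CEA" does not reduce to "CEA," and \Cref{PiBNPiBnCEA} cannot be invoked. The paper's own \Cref{fig:lowness} records this: the implication $\mathrm{BNttCEA}\Rightarrow\mathrm{BNCEA}$ (equivalently $\mathrm{BNW1G}\Rightarrow\mathrm{BN1G}$) is marked as strict.

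A concrete obstruction: let $A$ be a set of minimal degree below $\emptyset'$. Then $A$ is hyperimmune, so by item 17 it computes a ttCEA set. But $A$ bounds no $1$-generic (any noncomputable set it computes has the same minimal degree, and $1$-generic degrees are not minimal), so by \Cref{1GCEA} it bounds no CEA set, and \Cref{PiBNPiBnCEA} yields nothing. Whether such an $A$ must nonetheless co-enumerate a $\Pi^0_1$-immune set is precisely (an instance of) the open question. Your fallback plan --- directly co-enumerating from a hyperimmune degree a set diagonalizing against every infinite $\overline{W_e}$ --- is not carried out in your proposal and, by the paper's equivalence theorem together with item 15, carrying it out would settle \Cref{PiBNconj}; it is not a routine repair.
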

\begin{theorem}\Cref{PiBNconj} and \Cref{w1gconj} are equivalent.
\end{theorem}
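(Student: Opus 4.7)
\begin{proofsketch}
The plan is to prove each implication separately; together they reduce the equivalence to results already in the chapter plus one classical characterization theorem.

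The direction $\ref{PiBNconj}\Ra\ref{w1gconj}$ is essentially immediate. Suppose $A$ does not co-enumerate any $\Pi^0_1$-immune real; by \Cref{PiBNconj}, $A$ is computable, and a computable real computes only computable reals. No computable $X$ can be weakly 1-generic, since $X$ avoids the dense computable set $\{\sigma\in 2^{<\omega} : \sigma\not\prec X\}$ (any proper extension of a prefix of $X$ that disagrees with $X$ at some position lies in this set, and since $X$ is computable such an extension can be located effectively from any $\sigma$). Hence $A$ computes no weakly 1-generic.

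For the reverse direction, assume \Cref{w1gconj} and suppose $A$ does not co-enumerate a $\Pi^0_1$-immune real. Since $\Delta^0_1(A)\subseteq\Pi^0_1(A)$, $A$ in particular computes no $\Pi^0_1$-immune set, so \Cref{BNPiImDelta2} gives $A\in\Delta^0_2$. By \Cref{w1gconj}, $A$ computes no weakly 1-generic, so by Kurtz's theorem that a Turing degree computes a weakly 1-generic iff it is hyperimmune, $A$ has hyperimmune-free degree. But the Martin--Miller result \cite{MillerMartin} (cited in \Cref{BNDelta}) says every non-computable $\Delta^0_2$ degree is hyperimmune, so $A$ must be computable, yielding \Cref{PiBNconj}.

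I expect no real obstacle: the argument is a short chain of invocations. The only external ingredient is Kurtz's characterization of the weakly-1-generic-computing degrees as exactly the hyperimmune ones, which, combined with \Cref{BNPiImDelta2} and Martin--Miller, squeezes any hypothetical counterexample to \Cref{PiBNconj} (under the assumption of \Cref{w1gconj}) into a $\Delta^0_2$, hyperimmune-free, non-computable real --- an impossibility.
\end{proofsketch}
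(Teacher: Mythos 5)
Your proposal is correct and follows essentially the same route as the paper: both directions rest on Kurtz's characterization of the weakly-1-generic-bounding degrees as the hyperimmune degrees, together with \Cref{BNPiImDelta2} and the Miller--Martin theorem that non-computable $\Delta^0_2$ degrees are hyperimmune. (Minor quibble: in the forward direction you say a computable $X$ ``avoids'' the set of non-prefixes of $X$ --- in the paper's terminology it merely fails to \emph{meet} that dense $\Sigma^0_1$ set, which is what weak 1-genericity requires; the paper instead argues via computable sets being non-hyperimmune, but both observations are fine.)
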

\begin{proof}
If the first conjecture holds, then as $A$ is computable, it computes no immune set, and hence no hyperimmune set. Weakly 1-generic sets are hyperimmune \cite{KurtzGen}, so $A$ bounds no weakly 1-generic.

If the second conjecture holds, then as hyperimmune degrees bound weakly 1-generic sets \cite{KurtzGen}, $A$ does not compute any hyperimmune set. By \Cref{BNPiImDelta2}, $A$ is $\Delta^0_2$, and every non-computable $\Delta^0_2$ degree computes a hyperimmune real \cite{MillerMartin}. So $A$ must be computable.
\end{proof}

Now we turn to the case of computing no $\Pi^0_1$-immune. Here we encounter a stark contrast between c.e.\ and non-c.e.\ degrees. In the latter case, $\Pi^0_1$-immune sets exist at every level of the low$_n$ and high$_n$ hierarchies (\Cref{lownhighn}). But in the former case, we have the following:

\begin{theorem}[due to D. Turetsky \cite{Turetsky}]\label{lowceBN} Every low c.e.\ $A$ computes no $\Pi^0_1$-immune set.
\end{theorem}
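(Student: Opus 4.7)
The plan is to show that every infinite $B \leq_T A$ has an infinite $\Pi^0_1$ subset; then $B$ is not $\Pi^0_1$-immune. Fix a Turing reduction $\Phi$ with $B = \Phi^A$ infinite. I would aim to construct a c.e.\ set $W \supseteq \overline{B}$ with $\overline{W}$ infinite, so that $\overline{W}$ is the desired co-c.e.\ subset of $B$.

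The natural first try is $W_0 = \{n : \exists s\, \Phi^{A_s}_s(n)\!\!\downarrow = 0\}$, which is clearly c.e.\ and contains $\overline{B}$: if $n \in \overline{B}$, then $\Phi^A(n) = 0$, and once $A_s$ stabilizes on the use we get $\Phi^{A_s}_s(n) = 0$ at that stage, so $n$ enters $W_0$. The difficulty is that some $n \in B$ may temporarily display value $0$ because $A$ has not yet enumerated the elements below the use that would flip the answer to $1$, so $W_0$ might accidentally be cofinite.

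To correct this, I would run a finite-injury priority argument with requirements $R_e$: reserve a stable witness $w_e \in B \setminus W$. At stage $s$, $R_e$ tentatively picks, as $w_e$, the least $n$ above all higher-priority witnesses with $\Phi^{A_s}_s(n) = 1$; the current $w_e$ is discarded (and $R_e$ re-picks) if later $\Phi^{A_t}_t(w_e) = 0$. Whenever $n \leq s$ is not currently any $w_e$ and $\Phi^{A_s}_s(n) = 0$, enumerate $n$ into $W$. Elements of $\overline{B}$ all eventually become unprotected and are then enumerated, so $W \supseteq \overline{B}$; if every $R_e$ stabilizes on a genuine witness in $B$, then $\overline{W}$ is infinite, as desired.

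The critical step — and the main obstacle — is showing that each $R_e$ settles. For arbitrary c.e.\ $A$ this fails: for instance $A \equiv_T \emptyset'$ does compute $\Pi^0_1$-immune reals (as follows from the $\Delta^0_2$ bi-$\Pi^0_1$-immune construction of \Cref{lowbiPi}), so an adversarial reduction can feed $R_e$ infinitely many phantom $1$-computations on elements of $\overline{B}$. The lowness of $A$ is what rules this out. Because $A' \equiv_T \emptyset'$, the settling-time function $s_A(n) = \min\{s : A_s\upto n = A\upto n\}$ is $\emptyset'$-computable, and hence by Shoenfield's Limit \Cref{limitlemma} admits a computable $\Delta^0_2$ approximation $s_A^*(n,t)$. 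I would integrate this approximation into a Soare-style guessing tree of strategies, in the spirit of the Robinson low trick used for low c.e.\ sets in Sacks's density and splitting theorems. Along the true path of the tree, $R_e$ only commits to witnesses at stages where the approximation of $s_A$ up to the relevant use is correct, and after the finitely many injuries caused by higher-priority requirements and by incorrect guesses of $s_A$ it locks onto some $w_e \in B$ that $\Phi^A$ never flips back to $0$. The hard part of the verification is organizing the guessing tree so that the enumeration of $W$ depends only on computable data while the ``correctness'' of each $R_e$'s choice is certified along the true path — essentially, showing that for low c.e.\ $A$ the $\emptyset'$-level settling information can be smuggled into a purely c.e.\ enumeration without actually querying $\emptyset'$.
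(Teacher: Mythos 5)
Your overall framing is the same as the paper's: you seek an infinite $\Pi^0_1$ subset of $B=\Phi^A$ (your $\overline{W}$ is the paper's co-c.e.\ set $C$), you set up requirements $R_e$ that each protect one witness with an apparent $\Phi^{A_s}_s$-value of $1$, and you correctly diagnose that the entire content of the theorem is in getting each $R_e$ to settle, which must use lowness. But the proposal stops exactly at that point: the ``hard part'' you defer \emph{is} the proof. Moreover, the specific device you suggest --- a $\Delta^0_2$ approximation $s_A^*(n,t)$ to the settling function $s_A(n)$ --- does not by itself do the job. A single requirement $R_e$ may cycle through infinitely many candidate witnesses with infinitely many distinct uses $u_1<u_2<\cdots$, and the Limit Lemma only guarantees that $s_A^*(u,\cdot)$ is \emph{eventually} correct for each \emph{fixed} $u$; at the moment $R_e$ consults the approximation on a fresh use it may be wrong, so $R_e$ can be fooled infinitely often and never lock on. What is needed, and what the paper's proof (following the Robinson low-guessing method) actually does, is to tie each $R_e$ to a \emph{single} $A'$-question: by the Recursion Theorem one builds auxiliary $A$-c.e.\ sets $V_e^A$ into which $R_e$ enumerates a marker with use $\sigma$ each time it commits to a computation, so that ``$V_e^A\neq\emptyset$'' means ``at least one of my chosen uses is a true initial segment of $A$.'' Lowness gives a computable $g$ with $\lim_s g(f(e),s)=A'(f(e))$, i.e.\ a single approximation per requirement that stabilizes, and $R_e$ only acts (removes elements from $C$) when this guess says ``believe it,'' returning to search mode when the guess flips back and the use is injured. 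Your sketch contains no analogue of this per-requirement fixed question, and ``organizing the guessing tree'' is precisely the missing construction.

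A secondary but genuine problem is your enumeration rule for $W$: putting $n$ into $W$ whenever it is unprotected and currently shows $\Phi^{A_s}_s(n)=0$ can irrevocably destroy future witnesses, since an element of $B$ may first exhibit a phantom $0$-computation (on a use not yet corrected by the enumeration of $A$) before any strategy has had a chance to protect it; nothing in your argument rules out this happening to cofinitely many elements of $B$. The paper avoids this by never consulting the apparent value of the discarded elements at all: when $R_e$'s chosen computation $\Phi^\sigma(x)=1$ is certified by the lowness guess, it removes from $C$ \emph{all} $y<x$ except the elements protected by higher-priority strategies, so that in the end $C$ consists exactly of the final chosen witnesses, each of which is verified to lie in $B$. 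You should restructure your construction so that elements leave $\overline{W}$ only under the control of a certified requirement, rather than in response to possibly spurious $0$-computations.
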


We will need the iconic Recursion Theorem of Kleene \cite{kleene} to prove this:
\begin{theorem}[The Formal Recursion Theorem]
For any total computable function $f$, there is an index $e$ such that $\varphi_e = \varphi_{f(e)}$.
\end{theorem}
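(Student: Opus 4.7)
The plan is to prove the Recursion Theorem by the classical diagonal argument of Kleene, combining the $s$-$m$-$n$ theorem with a clever self-application. The goal is, given total computable $f$, to produce an index $e$ satisfying $\varphi_e = \varphi_{f(e)}$, and the strategy is to build $e$ so that applying $f$ to it amounts to ``doing what $e$ would do anyway,'' so the two indices compute the same partial function.

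First I would define an auxiliary partial computable function $h:\omega\times\omega\to\omega$ by
\[
h(x,y) = \varphi_{\varphi_x(x)}(y),
\]
which is meant to be evaluated by first computing $\varphi_x(x)$, and if that halts with output $n$, then simulating $\varphi_n(y)$. By the $s$-$m$-$n$ theorem, there is a total computable function $d:\omega\to\omega$ such that $\varphi_{d(x)}(y) = h(x,y)$ for all $x,y$. The point is that $d(x)$ is an index for the program ``on input $y$, first compute $\varphi_x(x)$, then run that index on $y$.''

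Next I would use $f$ to close the loop. The composition $f\circ d$ is total computable (since $d$ is total computable and $f$ is total computable), so it has an index $v\in\omega$ with $\varphi_v = f\circ d$. In particular $\varphi_v(v) = f(d(v))$ is defined. Setting $e := d(v)$, I would compute, for every $y$,
\[
\varphi_e(y) \;=\; \varphi_{d(v)}(y) \;=\; h(v,y) \;=\; \varphi_{\varphi_v(v)}(y) \;=\; \varphi_{f(d(v))}(y) \;=\; \varphi_{f(e)}(y),
\]
where each equality holds (or both sides diverge) by the definitions above. Hence $\varphi_e = \varphi_{f(e)}$ as required.

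The main conceptual obstacle is the diagonal step --- writing down the correct $h$ so that the $s$-$m$-$n$ index $d$ has the self-referential property one needs. Everything else (the existence of $d$, the existence of an index $v$ for $f\circ d$, and the final chain of equalities) is routine once the definition of $h$ is in place. I would emphasize in the exposition that the argument does not require $f$ to have any fixed point in the naive sense: it is $\varphi_e$, not $e$, that is fixed by the induced map $f$ on partial computable functions.
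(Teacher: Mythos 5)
Your proof is correct and is exactly the classical Kleene diagonal argument (define $h(x,y)=\varphi_{\varphi_x(x)}(y)$, get $d$ via $s$-$m$-$n$, take an index $v$ for $f\circ d$, and set $e=d(v)$); the paper itself states this theorem without proof, citing Kleene, so there is nothing to compare against beyond the standard argument you have reproduced. The one point worth keeping explicit in the write-up is the observation you already make implicitly: $\varphi_v(v)=f(d(v))$ is guaranteed to converge because $f$ and $d$ are total, which is what licenses the equality $h(v,y)=\varphi_{f(d(v))}(y)$ as an equality of partial functions.
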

Or, in its more commonly used form:
\begin{theorem}[The Informal Recursion Theorem] When defining a c.e.\ set, without loss of generality we may assume we know the index of that set.
\end{theorem}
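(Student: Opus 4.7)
The plan is to derive the informal statement as an immediate consequence of the formal Recursion Theorem via the $s$-$m$-$n$ theorem. Suppose we are given a construction which, at each stage, enumerates elements into a set $W$ and whose instructions may refer to ``the index of $W$ itself.'' Although at first glance this is circular, for every choice of putative index $e\in\omega$ we get an unambiguous construction $C_e$: simply run the given procedure but whenever it queries ``the index of $W$,'' answer $e$.

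Next I would observe that $C_e$ enumerates a c.e.\ set uniformly in $e$. That is, by the $s$-$m$-$n$ theorem there is a total computable function $f:\omega\to\omega$ such that $W_{f(e)}$ is precisely the set enumerated by $C_e$. Apply the Formal Recursion Theorem to $f$ to obtain an index $e^*$ with $\varphi_{e^*}=\varphi_{f(e^*)}$, and hence $W_{e^*}=W_{f(e^*)}$. Running the construction under the assumption ``the index of $W$ is $e^*$'' produces the set $W_{f(e^*)}=W_{e^*}$, so the self-referential assumption is internally consistent: the set constructed really does have index $e^*$, exactly as the construction assumed.

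The main (modest) obstacle is verifying that the informal construction, parameterized by the guessed index $e$, genuinely yields a computable function $f$ independent of that guess; this is where the $s$-$m$-$n$ theorem is invoked, and in practice is immediate because the construction is described by a fixed algorithm taking $e$ as a parameter. Once $f$ is total computable, the Formal Recursion Theorem does all remaining work, and the conclusion is that writing constructions which ``know their own index'' is a legitimate, if informal, shorthand.
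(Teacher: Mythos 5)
Your proposal is correct: it is the standard justification of the informal recursion theorem, parameterizing the construction by a guessed index, invoking the $s$-$m$-$n$ theorem to obtain a total computable $f$ with $W_{f(e)}$ the set built under guess $e$, and then applying the Formal Recursion Theorem to get $e^*$ with $W_{e^*}=W_{f(e^*)}$. The paper states the informal version without proof as a rephrasing of the formal one, and your argument supplies exactly the intended derivation.
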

\begin{proof}[Proof of \Cref{lowceBN}] Suppose $A$ is a low c.e. set, and that $B = \Phi^{A}$ is infinite. As $A$ is low, the Limit \Cref{limitlemma} gives a computable $g$ such that for all $n, \lim_{s\ra\infty}g(n, s) =A'(n)$. We will build a sequence of $A$-c.e.~sets $\langle V_{n}^{A}\rangle_{n\in\omega}$, and by the recursion theorem we will assume we already know their indices. As determining whether $V_{n}^{A}=\null$ is $\Sigma_{1}^{0}(A)$, there is a total computable function $f$ such that $f(n) \in A' \Leftrightarrow V_{n}^{A}\neq\null.$

We will build a $\Pi_{1}^{0}$ set $C\subseteq B$, meeting the following requirement for every $e$:\vspace{-.25cm}\begin{itemize}
\item[$R_{e}$:] $C$ contains an element greater than $e.$
\end{itemize}

To ensure $C$ is $\Pi^0_1$, we will only remove elements from it.

\noindent Strategy for $R_{e}$:\begin{itemize}
\item[(1)] Wait for a stage $s$ when there is some $x>e$ with $\Phi_{s}^{A_{s}}(x) = 1$ and $x\in C_{s}.$
At such a stage, choose the oldest such computation. Let $\sigma \prec A_{s}$ be the use of this computation. $x$ and $\sigma$ are now our {\it chosen} element and use, respectively.
\item[(2)] Enumerate $0$ into $V_{e}^{\sigma}$, with use $\sigma.$
\item[(3)] Wait until one of the following occurs at some stage $s$:\begin{itemize}
\item[(a)] $\sigma$ is no longer an initial segment of $A_{s}$. In this case, return to step 1.
\item[(b)] $g(f(e), s)=1$. In this case, remove all elements $y<x$ from $C$, except those elements which have been chosen by an $R_{i}$-strategy for some $i\leq e$, then proceed to step 4.\end{itemize}
\item[(4)] Wait until some stage $s$ when $g(f(e), s) = 0$ and $\sigma$ is no longer an initial segment of $A_{s}$. While waiting, begin running the $R_{e+1}$-strategy. If $g(f(e), s) =0$ occurs, discard $x$ (it is no longer {\it chosen}), terminate all $R_{j}$-strategies for $j>e$, and return to step 1.
\end{itemize}
The construction begins by starting the $R_{0}$-strategy at stage $0$, and proceeds from there.

\noindent \textbf{Claim 1:} For each $e$, $R_e$ eventually waits forever at step 4.

\noindent \emph{Proof:}
Induction on $e$. Suppose this holds for all $i<e$. As after some stage each $R_i$ never again returns from step 4 to step 1, eventually $R_e$ is never again terminated. As $\lim_{s}g(f(e), s)$ converges, $R_e$ cannot pass through steps 3b and 4 infinitely often, so fix a stage $s_0$ after which $R_e$ never again returns from step 4 to step 1.

From stage $s_0$ until the strategy returns to step 4, no elements are removed from $C$. By assumption, $B$ is infinite, so $B\cap C_{s_0} \neq\null$. Fix the element of $B\cap C_{s_{0}}$ greater than $e$ with oldest $\Phi^{A}$ computation. Call this element $z$, and let $\tau$ be the use of this true computation. As $R_e$ returns from step 3a to step 1 when $\sigma\not\prec A_s$, this computation will eventually be the oldest, and so $x=z$ will chosen with use $\sigma=\tau\prec A$. Then at step 2, $0$ is enumerated into $V_{e}^{\tau} \subseteq V_{e}^{A}.$ Thus $V_{e}^{A} \neq \null$, and so $\lim_s g(f(e), s) = 1$. Thus the strategy will eventually reach step $3\mathrm{b}$ and so step 4, and so will wait forever at step 4.

\noindent\textbf{Claim 2:} For each $e$, the $R_{e}$-strategy's final chosen element is an element of $C.$

\noindent \emph{Proof:} By construction, the chosen element $x$ is an element of $C_{s}$ at the stage it is chosen. No lower priority strategy can remove $x$ at a later stage, while no higher priority strategy will ever act again.

\noindent\textbf{Claim 3:} For each $e$, if the $R_e$-strategy reaches step 4 with chosen element $x$ and use $\sigma \prec A$, then the strategy waits forever at step 4 with this element.

\noindent \emph{Proof:} By construction, if we reach step 4 at stage $s$, the there is some $t<s$ with $\sigma\prec A_t$. As $A$ is c.e., it can never move away from a true initial segment, so $\sigma\prec A_r$ for all $r>t$, and so we never return to step 1.

\noindent\textbf{Claim 4:} For each $e$ the $R_e$-strategy's final chosen element is in $B$.

\noindent \emph{Proof:} Towards a contradiction, suppose we are waiting forever at step 4 with a chosen use $\sigma\not\prec A$. By the contrapositive of the previous claim, all prior chosen uses were also not initial segments of $A$. So by construction, $V^A_e = \null$, and so $\lim_s g(f(e), s) = 0$. So we will eventually see what we are waiting for at step 4, and will return to step 1, contrary to assumption. Now our final $\sigma \prec A$, and since $B =\Phi^A$ and $\Phi^\sigma(x) = 1$, it follows that $x\in B$.

\noindent\textbf{Claim 5:} If $x\in C$, it is the final chosen element of some strategy.

\noindent \emph{Proof:} Suppose $y$ is not a final chosen element of any strategy. Eventually all $R_i$-strategies with $i < y$ will have settled on their final element, while $R_j$-strategies with $j\geq y$ are not permitted to choose $y$. So eventually there will be a stage after which $y$ is never again chosen. When some large strategy later reaches step 3b, $y$ will be removed from $C$.\end{proof}

Note that by \Cref{PiBNPiBnCEA}, every c.e.~set co-enumerates a $\Pi^0_1$-immune set, so \Cref{lowceBN} cannot be strengthened to settle \Cref{PiBNconj}.

\section{Other Lowness Notions}
In this section we consolidate a number of results about lowness notions, with an eye toward their relation to those those reals which cannot compute/co-enumerate a $\Pi^0_1$-immune real. In doing so, we define several new lowness notions related to highness, maximality, and domination, that arose in the course of trying to prove \Cref{PiBNconj}. Of independent interest is a new characterization of the hyperimmune-free degrees as those that do not compute a truth-table CEA degree.\footnote{This result is claimed without proof in Kjos-Hanssen's computability diagram \cite{bn1g}.}

\newpage
\subsection{Definitions}
Many of these have appeared in earlier sections, but we gather them here for convenience.

\noindent
$A$ is $\Pi^0_1$-immune iff $|A|=\infty$ and $A$ has no infinite $\Pi^0_1$ subset.\\
$A$ bounds no member of a class $\mathcal{C}$ (BN$\mathcal{C})$ iff for all $C\in\mathcal{C}$, $C\not\leq_T A$.\\
$A$ enumerates no member of a class $\mathcal{C}$ ($\mathrm{\Sigma^0_1BN\mathcal{C}}$) iff for all $C\in\mathcal{C}$, $C\not\in\Sigma^0_1(A)$\\
$A$ co-enumerates no member of a class $\mathcal{C}$ ($\mathrm{\Pi^0_1}$BN$\mathcal{C}$) iff for all $C\in\mathcal{C}$, $C\not\in\Pi^0_1(A)$.\\
$A$ is $\mathrm{low}_n$ iff $A^{(n)} \equiv_T \null^{(n)}$.\\
$A$ is $\mathrm{GL}_n$ iff $A^{(n)}\equiv_T(A\oplus\null')^{(n-1)}$.\\
$A$ is $\mathrm{high}$ iff $A'\geq_T\null''$.\\
$A$ is $\mathrm{Low(High)}$ iff any high $B$ has that $(B\oplus A)'\geq_T A''$ ($B$ is \emph{high for} $A$). \\
$A$ is $\mathrm{Low(High\ c.e.)}$ iff any high c.e.\ $B$ has that $(B\oplus A)'\geq_T A''$.\\
$A$ is $\mathrm{Low(Max)}$ iff every maximal (high c.e.) degree contains an $A$-maximal set.\\ 
$A$ is $\mathrm{Low(Dom)}$ iff every function $f$ that dominates all $\Delta^0_1$ $g$ also dominates all $\Delta^0_1(A)$ $h$.\\
$A$ is hyperimmune-free ($\mathrm{HIF}$) iff any $f\leq_T A$ is dominated by a $\Delta^0_1$ function.\\
$A$ is 1-generic (1G) iff it meets or avoids every $\Sigma^0_1$ set of strings.\\
$A$ is weakly 1-generic (W1G) iff it meets every dense $\Sigma^0_1$ set of strings.\\
$A$ is 1-random (1R) iff it is Martin-L\"of random.\\
$A$ is (tt)CEA iff there is a $B<_T A$ (resp. $B<_{tt} A$) such that $A\in\Sigma^0_1(B)$.

\begin{centering}
\begin{figure}
\[
\xymatrix{
&&&\mathrm{\Sigma^0_1BN\Pi^0_1IM}\ar@{<->}[d]^-{3}
\\
&&[A\text{-max}\Ra\text{c.e.}]\ar[ur]^-{2}\ar[d]&\mathrm{BN\Pi^0_1IM}\ar@{.>}[dr]^-4\ar@{.>}[dddr]^>>>>>>>>>>>>>>5
\\
&[A\text{-max}\Lra\text{Max}]\ar[ddd]\ar[ur] &[A\text{-max}\Ra\Delta^0_2]\ar@{.>}[rr]^<<<<<<<<<{6}\ar@{.>}[d]_-7\ar@{<->}[dr]_-8&&\Delta^0_2
\\
\mathrm{\Pi^0_1BN\Pi^0_1IM}\ar@{.>}[dddd]\ar[ur]^-1\ar@{.>}[ddddr]_-{18}&&\mathrm{GL}_1\ar@{<->}[r]_{\Delta^0_2}\ar@{.>}[d]&\text{low}\ar@{.>}[d]\ar[uu]^<<<<<<{9}_<<<<<<{\text{c.e.}}
\\
&&\mathrm{GL}_2\ar@{<->}[r]_-{\Delta^0_3}&\text{low}_2\ar[dr]\ar@{.>}[r]&\text{not high}
\\
&\mathrm{Low(Max)}
\ar[r]_-{10}&\mathrm{Low(High\ c.e.)}\ar[u]^-{11}\ar[urr]&\mathrm{Low(High)}\ar[l]&A''\leq_TA'\oplus\null''\ar[l]^-{12}
\\
&\mathrm{BN1G}\ar@{<->}[d]^-{16}&\mathrm{BNWIG}\ar@{.>}[l]\ar@{<->}[r]^-{15}\ar@{<->}[d]_{17}&\mathrm{HIF}\ar[r]\ar@{.>}_-{13}[ur]\ar@{<->}[d]^-{14}&\Delta^0_1\text{ or not }\Delta^0_2
\\
\mathrm{BN1R}&\mathrm{BNCEA}\ar@{.>}[l]^-{\Delta^0_2}_-{19}&\mathrm{BNttCEA}\ar@{<->}[r]&\mathrm{Low(Dom)}
}
\]
    \caption[Implications between lowness notions]{Implications between lowness notions. Dotted implications are strict. Some implications only hold for certain reals, denoted on the arrows.}
    \label{fig:lowness}
\end{figure}
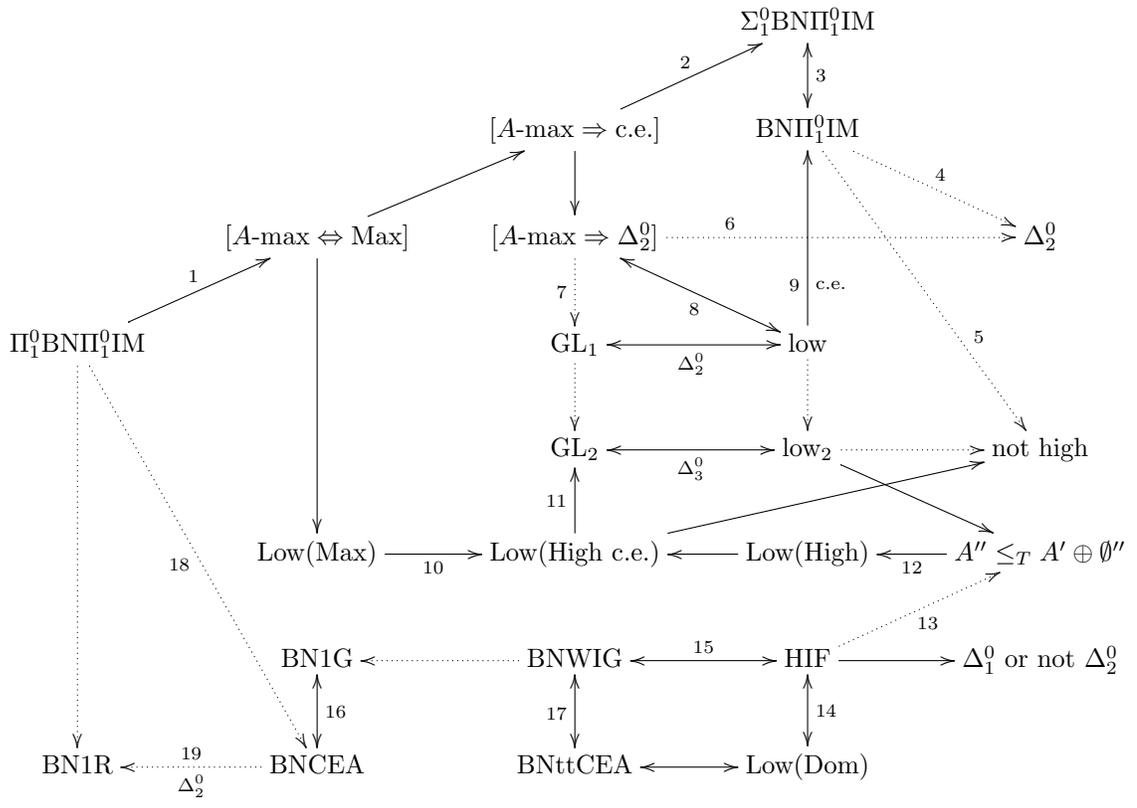
\end{centering}
\newpage
\subsection{Proofs}
\textbf{1.} Let $A\in\mathrm{\Pi^0_1BN\Pi^0_1IM}$, and let $B$ be $A$-maximal. As $\overline{B}$ is $A$-cohesive and $\Pi^0_1(A)$, it is cohesive and not $\Pi^0_1$-immune. By \Cref{cohpi}, $\overline{B}$ is $\Pi^0_1$, so $B$ is $\Sigma^0_1$. As $\Sigma^0_1\subseteq\Sigma^0_1(A)$, $M$ is maximal.

Now let $M$ be maximal, and suppose $M\subseteq W_e^A$. As $\overline{W_e^A}\subseteq\overline{M}$, a cohesive set,  $\overline{W_e^A}$ is cohesive as well. Since $\overline{W_e^A}\in\Pi^0_1(A)$, it is not $\Pi^0_1$-immune, and so must be $\Pi^0_1$ by \Cref{cohpi}. Now as $W_e^A$ is c.e., it is either cofinite or only finitely extends $M$. As $W_e^A$ was arbitrary, $M$ is $A$-maximal.\\
\\
\textbf{2.} Let $B\in\Sigma^0_1(A)$. $B$ has an $A$-computable subset $C$, so $\overline{C}$ has an $A$-maximal superset $M$ by \Cref{MaxSup}. By hypothesis $M$ is c.e., so as $\overline{M}\subseteq C\subseteq B$, $B$ is not $\Pi^0_1$-immune.
\\ \\
\textbf{3.} Certainly if $A$ cannot enumerate a $\Pi^0_1$-immune, it cannot compute such a set. But if $A$ can enumerate a $\Pi^0_1$-immune $B$, it has an $A$-computable subset $C$ which inherits $\Pi^0_1$-immunity.\\
\\
\textbf{4.} See \Cref{BNPiImDelta2}. As there are $\Delta^0_2$ $\Pi^0_1$-immune reals, the reverse implication fails.\\
\\
\textbf{5.} This is the contraposition of \Cref{highB}. We constructed a low $\Pi^0_1$-immune $G$ in \Cref{lowPi01set}, so the reverse implication fails.
\\
\\
\textbf{6.} Suppose $A$-maximal sets are $\Delta^0_2$, and let $B\leq_T A$. Relativizing \Cref{MaxSup}, there is an $A$-maximal $M$ with $\overline{B}\subseteq M$. As $M$ is $\Delta^0_2$, so is $\overline{M}$, so as $\overline{M}\subseteq B$, $A$ computes no $\Delta^0_2$-immune. Now as in \Cref{BNDelta}, as $S(A)$ is not $\Delta^0_2$-immune, $A\in\Delta^0_2$.
\\
\\
\textbf{7.} Suppose $A$-maximal reals are $\Delta^0_2$. Relativizing Corollary 2 of \cite{Yates}, there is an $A$-maximal $M$ with $A\oplus M\equiv_T A'$, so that $A\oplus\null'\equiv_T A'$.\\
\\
\textbf{8. [$\Ra]$} Let $A$-maximal sets be $\Delta^0_2$. By \textbf{6}, $A$ is $\Delta^0_2$. By \textbf{7}, $A$ is $\mathrm{GL}_1$. So $A'\leq_T A\oplus\null'\leq_T\null'$.\\
\textbf{[$\La]$} If $A$ is low then any $A$-maximal set $M$ has $M\leq_T A'\leq_T\null'$.
\\
\\
\textbf{9.} See \Cref{lowceBN}.\\
\\
\textbf{10.} Let $A$ be $\mathrm{Low(Max)}$, and let $H$ be a high c.e.\ set. By Martin \cite{Martin}, there is a maximal $M$ with $H\equiv_T M$. As $M$ is maximal, its degree contains an $A$-maximal real, so it is $A$-high.\\
\\
\textbf{11.} If $A$ is Low(High c.e.), then as $\null'$ is a high c.e.\ set, it is $A$-high, so that $(\null'\oplus A)'\geq_T A''$.\\
\\
\textbf{12.} If $A''\leq_T A'\oplus \null''$ and $B'\geq_T\null''$, then $(B\oplus A)'\geq_T B'\oplus A'\geq_T \null''\oplus A'\geq_T A''$.
\\
\\
\textbf{13.} This is well-known, see for instance Theorem 5.16 of \cite{HIFJump}. This cannot be reversed, as $\Delta^0_2$ low$_2$ degrees are hyperimmune or computable \cite{MillerMartin}.
\\
\\
\textbf{14. [$\Ra$]} If $A$ is hyperimmune-free, then any $f\leq_T A$ is dominated by some computable function. Any function that dominates all computable functions thus dominates $f$.\\
\\
\textbf{[$\La$]} If $A$ is hyperimmune, some $g\leq_T A$ is not dominated by any computable function. Fix an enumeration $\{f_i\}_{i=1}^\infty$ of the computable functions.
Define $F_i(n) = \max\{f_k(n)\mid k\leq i\}$, and notice that if $i<j$ then for all $n$, $F_i(n)\leq F_j(n)$. Each $F_i$ is a computable function, so the hyperimmunity of $g$ guarantees the existence of an increasing sequence $\{n_i\}_{i=1}^\infty$ such that $g(n_i)>F_i(n_i)$.\linebreak For $n\in[n_i,n_{i+1})$, define $h(n) = F_i(n)$. Now for a fixed $i$ and $m\geq n_i$, $h(m)\geq F_i(m)\geq f_i(m)$.\linebreak As $i$ was arbitrary, $h$ is dominant. But for all $i$, $g(n_i)>F_i(n_i) = h(n_i)$, so $g$ escapes $h$ infinitely often and $A$ is not low for domination.\\
\\
\textbf{15.} The weakly 1-generic degrees are exactly the hyperimmune degrees (Corollary 2.10 of \nolinebreak \cite{KurtzGen}).\\
\\
\textbf{16.} See \Cref{1GCEA}.
\\
\\
\textbf{17. [$\Ra$]} Let $A$ bound no weakly 1-generic. By \textbf{15}, $A$ is hyperimmune-free. Certainly $A$ bounds no 1-generic, so by \textbf{16}, $A$ bounds no CEA degree. Let $X<_{tt}B\leq_T A$ with $B\not\leq_{tt}X$. Then $X$ is also hyperimmune-free, so $B\leq_T X\iff B\leq_{tt} X$ (see Theorem 8 of \cite{HIFttT}). Thus $B\not\leq_T{X}$, so as $A$ bounds no CEA degree, $B\not\in\Sigma^0_1(X)$. As $X$ and $B$ were arbitrary, $A$ bounds no ttCEA degree.
\\
\\
\textbf{[$\La$]} We adapt the proof that every 1-generic is CEA given in \cite{Generic} (Theorem 2.24.9).

Define a total functional $\Theta^X = \{\langle i, j\rangle\mid i\in X\land \langle i, j\rangle\not\in X\}$, so that for any $X$, $\Theta^X\leq_{tt}X$. Note also that $X$ is $\Theta^X$-c.e.

Let $\Phi$ be a total reduction, and let $f(n)$ be a computable function bounding its use. \linebreak Let $\rho\in 2^{<\omega}$, and define $i = \langle |\rho|, 0\rangle >|\rho|$ so that for all $j$, $\rho(\langle i, j\rangle)$ is undefined\footnote{Here we are using that $\langle x, y\rangle \geq \max\{x, y\}$, a property of the Cantor pairing function.}. Let $\sigma\succ\rho$ be of length at least $f(i)$ such that $|\rho|, i\not\in\sigma$ and for all $j$, if $n=\langle i, j\rangle\leq|\sigma|$, then $n\in\sigma$. Finally define $\tau=\sigma$ except $i\in\tau$.

As $\sigma$ and $\tau$ only disagree on $i = \langle |\rho|, 0\rangle$, but $|\rho|\not\in \sigma, \tau$, we have that $\Theta^{\sigma}(i) = \Theta^{\tau}(i)=0$, so that $\Theta^{\sigma} = \Theta^{\tau}$. Hence $\Phi^{\Theta^\sigma}=\Phi^{\Theta^\tau}$, so either $\Phi^{\Theta^{\tau}}(i)=0\neq \tau(i)$ or $\Phi^{\Theta^{\sigma}}(i)=1\neq \sigma(i)$.

As $\rho$ was arbitrary, $\{\sigma\mid \exists n<|\sigma|\  \Phi^{\Theta^\sigma}(n)\neq \sigma(n)\}$ is a dense $\Sigma^0_1$ set of strings. Now any weakly 1-generic $A$ meets this set, i.e.~there is an $n$ such that $\Phi^{\Theta^A}(n)\neq A(n)$. So $\Phi$ does not truth-table compute $A$ from $\Theta^A$, so as $\Phi$ was arbitrary, $A\not\leq_{tt}\Theta^A$.\\
\\
\textbf{18.} See \Cref{PiBNPiBnCEA}. To see that the reverse implication does not hold, consider a non-computable, hyperimmune-free $A$. By \textbf{15} and \textbf{16}, $A$ bounds no CEA real. But $A$ is necessarily not $\Delta^0_2$, so by \textbf{4}, $A$ computes (and thus co-enumerates) a $\Pi^0_1$-immune set.\\
\\
\textbf{19.} Every random $R\in\Delta^0_2$ computes a non-computable c.e.\ set \cite{Kucera}. To see that the reverse does not hold, every random $R$ computes a fixed-point free function $f$: for all $e$, $W_{f(e)}\neq W_e$ \cite{Kucera2}. By Arslanov’s Completeness Criterion \cite{arslanov}, any $W_i$ computes a fixed point free function iff $W_i\equiv_T\null'$. So every $W_i<_T\null'$ is CEA, but does not compute a random $R$.
\chapter{When You Have Two Hammers and One of Them Works}\label{ch:either}
\renewcommand{\MLR}{\mathrm{MLR}}
\renewcommand{\C}{\mathcal C}
\newcommand{\D}{\mathcal D}
\noindent The material in this section (except \Cref{SomeManyOne}) previously appeared in print in \cite{CiE2022}.
\renewcommand{\B}{\mathcal{B}}
\section{The Kolmogorov--Loveland Randomness Problem}

A major open problem of algorithmic randomness asks whether each Kolmogorov--Loveland random (KL-random) real is Martin-L\"of random (ML-random). Recall that a real $A$ is Martin-L\"of random iff there is a positive constant $c$  so that for any $n$, the Kolmogorov complexity of the first $n$ bits of $A$ is at least $n-c$, (that is, $\forall n,\ K(A_i\upto n)\geq n-c$).
	
KL-randomness is most commonly defined using martingales, which we will not have cause to consider here. In brief: $A$ is KL-random iff no computable nonmonotonic martingale succeeds on it. There is also a martingale characterization of ML-randomness --- $A$ is ML-random iff no c.e.\ martingale succeeds on it. For more on this approach to the study of algorithmic randomness, see sections 6.3 and 7.5 of \cite{DaH}.

Instead, we will examine a generalization of KL-randomness, motivated by the following result: one can compute an ML-random real from a KL-random real \cite{MR2183813} and even uniformly so \cite{KHW}. This uniform computation succeeds in an environment of uncertainty, however: one of the two halves of the KL-random real is already ML-random and we can uniformly stitch together a ML-random without knowing which half. Here we pursue this uncertainty and are concerned with uniform reducibility when information has been hidden in such a way.
	Namely, for any class of reals $\C\subseteq 2^\omega$, we write
	\[
		\Either(\C) = \{A\oplus B : A\in \C \text{ or }B\in \C\},
	\]
	where $A\oplus B$ is as in \Cref{directsum}.
	For notation, we often refer to `even' bits of such a real as those coming from $A$, and `odd' bits coming from $B$.
	
	An element of $\Either(\C)$ has an element of $\C$ available within it, although in a hidden way. We are not aware of the $\Either$ operator being studied in the literature, although
	Higuchi and Kihara \cite[Lemma 4]{HIGUCHI20141201} (see also \cite{HIGUCHI20141058}) considered the somewhat more general operation $f(\C,\D)=(2^\omega\oplus\C)\cup (\D\oplus 2^\omega)$, where $\mathcal{A}\oplus\mathcal{B} = \{A\oplus B\mid A\in\mathcal{A}\text{ and } B\in\mathcal{B}\}$.

\begin{definition}
    Let $\B$ and $\C$ be subsets of $2^{\omega}$. $\B$ is \emph{Medvedev} or \emph{strongly reducible} to $\C$, written $\B\leq_s\C$, iff there is a uniform reduction $\Phi$ such that for all $B\in\B$, $\Phi^B\in\C$. $\B$ is \emph{Muchnik} or \emph{weakly reducible} to $\C$ iff for any $B\in\B$, there is a reduction $\Phi$ such that $\Phi^B\in\C$.
\end{definition}
These partial orders induce degree structures on the subsets of $2^\omega$, just as Turing reducibility induces a degree structure on subsets of $\omega$.

As $\MLR\subseteq\KLR$ \cite{MLKL}, KLR is trivially Medvedev reducible to MLR via the identity function. In \cite{KHW}, $\Either$ is implicitly used to show the reverse, that MLR is Medvedev reducible to KLR. In fact it shows something slightly stronger:

\begin{definition}
Let $r$ be a subscript in \Cref{fig:table}, such as $r=tt$. Write $\le_{s,r}$ to denote strong reducibility using $r$-reductions, and $\le_{w,r}$ for the corresponding weak reducibility.
\end{definition}

	\begin{theorem}\label{ref-cie}
	    $\MLR\le_{s,tt}\Either(\MLR)$.
	\end{theorem}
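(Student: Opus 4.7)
The plan is to exhibit an explicit truth-table functional $\Phi$ that maps $\Either(\MLR)$ into $\MLR$. The obstruction is that we must use the hypothesis ``$A \in \MLR$ or $B \in \MLR$'' uniformly, without knowing which half of $Z = A \oplus B$ is random, so $\Phi$ must combine bits of both halves in a way that extracts randomness from whichever source supplies it.

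First, I would define $\Phi$ explicitly, following the construction implicit in \cite{KHW} and made uniform in \cite{review}. For each $n$, the bit $\Phi^Z(n)$ is to be computed by probing a computably bounded prefix of $Z$ and applying a fixed Boolean combination of the queried bits. Because the use function is computable and $\Phi^Z(n)$ is always defined, this automatically yields the truth-table property.

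Second, to show that $\Phi^Z \in \MLR$ whenever $Z \in \Either(\MLR)$, I would argue contrapositively. Assuming $\Phi^Z$ is captured by some Martin-L\"of test $(U_m)_{m \in \omega}$, I would pull this test back through $\Phi$ to construct Martin-L\"of tests $(V_m)$ for $A$ and $(W_m)$ for $B$, such that $A \in \bigcap_m V_m$ and $B \in \bigcap_m W_m$. Since $\Phi$ has computable use, the pulled-back classes remain uniformly c.e.\ and the measure bounds are preserved up to a constant. This would force both $A$ and $B$ to fail ML-randomness, contradicting the assumption that at least one of them is ML-random.

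The main obstacle is the design of the mixing operation in $\Phi$, since simple constructions fail: bitwise XOR produces $0^{\omega}$ when $A = B$, and projection onto one coordinate fails when the other half is the random one. A satisfactory construction must guarantee that the pullback of any ML-test along $\Phi$ simultaneously implicates \emph{both} halves, a nontrivial requirement precisely because the reduction cannot tell which half is random. This is the technical heart of the proof, and I would appeal to the specific mixing scheme developed in \cite{review} to realize the needed symmetric dependence of $\Phi^Z$ on $A$ and $B$.
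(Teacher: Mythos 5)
Your proposal defers the entire mathematical content of the theorem to an unspecified ``mixing scheme,'' and the verification strategy you sketch in its place does not work as stated. The claim that a Martin-L\"of test $(U_m)$ capturing $\Phi^Z$ can be pulled back to tests capturing \emph{both} $A$ and $B$, with ``measure bounds preserved up to a constant,'' is unjustified: $\Phi^{-1}(U_m)$ is an effectively open subset of the product space $2^\omega\times 2^\omega$, and a set of small measure in a product need not project to, or be contained in, sets of small measure in either factor. So nothing in your outline forces both halves into a test, and this is exactly the point at which any symmetric bitwise ``fixed Boolean combination'' of the two halves breaks down, for essentially the reason you yourself note for XOR and for projection. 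The correct reduction is not a combination of the two halves at all.

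The construction the paper relies on (Theorem 2 of \cite{KHW}, reproduced in generalized form in \Cref{ttSome}) is an adaptive \emph{selection} between the halves driven by the computable approximation $K_s$ to prefix-free complexity: output bits of one half until some stage $s+1$ reveals a violation $(\exists n\le s+1)\ K_{s+1}(A\upto n)<n-c_s$ of the current candidate deficiency constant $c_s$, then switch to the other half with a larger constant, and so on, returning to each half infinitely often. The switching condition is decidable from a computably bounded prefix of the oracle, so the functional is total on all oracles and hence a truth-table reduction. If one half is genuinely ML-random with deficiency $c^*$, then once the process lands on that half with $c_s\ge c^*$ it never sees another violation, so switching happens only finitely often and the output has a tail equal to a tail of an ML-random real; it is therefore ML-random by Schnorr's characterization $n\le^+ K(X\upto n)$. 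The verification is this direct tail argument, not a test pullback, and the switching idea is the missing heart of your proof.
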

	\begin{proof}
	    \cite[Theorem 2]{KHW} shows that $\MLR\le_{s,tt}\KLR$. The proof demonstrates that $\MLR\le_{s,tt}\Either(\MLR)$ and notes, by citation to \cite{MR2183813}, that $\KLR\subseteq\Either(\MLR)$. 
	\end{proof}
	In fact, the proof shows that the two are truth-table Medvedev equivalent. A natural question is whether they are Medvedev equivalent under any stronger reducibility.

    Letting $\DIM_{1/2}$ be the class of all reals of effective Hausdorff dimension 1/2, 
    \Cref{ref-cie} is a counterpoint to Miller's result $\MLR\not\le_{w}\DIM_{1/2}$ \cite{ExtractInfo}, since $\MLR\not\le_{s,tt}\DIM_{1/2}\supseteq\Either(\MLR)$.

	\begin{definition}\label{def:tt}
		Let $\{\sigma_n\mid n\in\omega\}$ be a uniformly computable list of all the finite propositional formulas in variables $v_1,v_2,\dots$.
		Let the variables in $\sigma_n$ be $v_{n_1},\dots,v_{n_d}$ where $d$ depends on $n$.
		We say that $X\models \sigma_n$ if $\sigma_n$ is true with 
		$X(n_1),\dots,X(n_d)$ substituted for $v_{n_1},\dots,v_{n_d}$.
		A reduction $\Phi$ is a \textbf{truth-table} reduction if there is a computable function $f$ such that for each $n$ and $X$, $n\in\Phi^X$ iff $X\models \sigma_{f(n)}$. 
	\end{definition}

	As shown in \Cref{fig:Degtev}, the next three candidates to strengthen the result (by weakening the notion of reduction under consideration) are the positive, linear, and bounded truth-table reducibilties. Unfortunately, any proof technique using $\Either$ will no longer work, as for these weaker reducibilities, $\MLR$ is not Medvedev reducible to $\Either(\MLR)$.

\section{The Failure of Weaker Reducibilities}
	When discussing the variables in a table $\sigma_{f(n)}$, we say that a variable is of a certain parity if its index is of that parity, e.g.\ $n_2$ is an even variable. As our reductions operate on $2^\omega$, we identify the values $X(n_i)$ with truth values as $1=\top$  and $0=\bot$.

	\subsection{Positive Reducibility}
	\begin{definition}
		A truth-table reduction $\Phi$ is a \textbf{positive} reduction if the only connectives in each $\sigma_{f(n)}$ are $\lor$ and $\land$.
	\end{definition}
	\begin{theorem}\label{positive}
		$\MLR\not\le_{s,p}\Either(\MLR)$.
	\end{theorem}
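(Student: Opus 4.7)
The key structural feature of a positive truth-table reduction $\Phi$ is monotonicity: since each formula $\sigma_{f(n)}$ uses only $\land$ and $\lor$, if $X \leq Y$ pointwise (in the coordinate-wise order on $2^\omega$) then $\Phi^X \leq \Phi^Y$ pointwise. The plan is to combine this monotonicity with the density characterization of Martin-L\"of randomness to derive a contradiction from the assumption that $\Phi$ reduces $\Either(\MLR)$ to $\MLR$.

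Assume toward a contradiction that $\Phi^Z \in \MLR$ for every $Z \in \Either(\MLR)$. Using van Lambalgen's theorem, fix $R_1, R_2$ with $R_1 \oplus R_2 \in \MLR$, and set $Z_1 := R_1 \oplus 0^\omega$, $Z_2 := 0^\omega \oplus R_2$, and $Z_3 := R_1 \oplus R_2$. Each lies in $\Either(\MLR)$, and $Z_1, Z_2 \leq Z_3$ pointwise, so monotonicity yields $\Phi^{Z_1} \lor \Phi^{Z_2} \leq \Phi^{Z_3}$ pointwise. By hypothesis each $\Phi^{Z_i}$ lies in $\MLR$, and hence by the effective strong law of large numbers has asymptotic density of ones equal to $1/2$. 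The pivotal observation is that if moreover $\Phi^{Z_1} \oplus \Phi^{Z_2} \in \MLR$, then the effective SLLN applied to the window function $g(a,b) = a \lor b$ (whose expectation under uniform measure is $3/4$) forces $\Phi^{Z_1} \lor \Phi^{Z_2}$ to have density of ones equal to $3/4$; pointwise domination then forces $\Phi^{Z_3}$ to have density of ones at least $3/4$, contradicting its ML-randomness.

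The main obstacle is establishing the auxiliary ML-randomness $\Phi^{Z_1} \oplus \Phi^{Z_2} \in \MLR$. Observe that the maps $\Psi_1 : R \mapsto \Phi^{R \oplus 0^\omega}$ and $\Psi_2 : R \mapsto \Phi^{0^\omega \oplus R}$ are total computable truth-table reductions that, by the contradiction hypothesis, send $\MLR$ into $\MLR$. Invoking the classical theorem (Kautz, Hertling--Weihrauch) characterizing total computable ML-randomness-preserving maps as exactly the Lebesgue-measure-preserving ones, each $\Psi_i$ preserves measure; consequently the product map $R_1 \oplus R_2 \mapsto \Psi_1(R_1) \oplus \Psi_2(R_2)$ is a total computable measure-preserving map $2^\omega \to 2^\omega$. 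Such maps preserve ML-randomness, so applying it to $R_1 \oplus R_2 \in \MLR$ yields $\Phi^{Z_1} \oplus \Phi^{Z_2} = \Psi_1(R_1) \oplus \Psi_2(R_2) \in \MLR$, completing the plan.
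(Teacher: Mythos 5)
Your skeleton (monotonicity of positive reductions, plus a density count on $\Phi^{Z_1}\lor\Phi^{Z_2}\le\Phi^{Z_3}$) is an attractive idea, but it hinges on the claim that $\Phi^{Z_1}\oplus\Phi^{Z_2}\in\MLR$, and the justification you give for that claim is a false theorem. Total computable maps that send $\MLR$ into $\MLR$ are \emph{not} characterized as the Lebesgue-measure-preserving ones: the map $X\mapsto 1^\frown X$ is a total truth-table functional sending $\MLR$ into $\MLR$, yet its pushforward of Lebesgue measure is supported on the cylinder $[1]$ and is not Lebesgue measure. (The true direction of randomness conservation is the converse: computable measure-preserving maps preserve ML-randomness.) So you cannot conclude that $\Psi_1$ and $\Psi_2$ are measure-preserving, and the mutual randomness of $\Phi^{Z_1}$ and $\Phi^{Z_2}$ is left unestablished. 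This is not a cosmetic gap: without mutual randomness, $\Phi^{Z_1}$ and $\Phi^{Z_2}$ could be so highly correlated that the density of ones in $\Phi^{Z_1}\lor\Phi^{Z_2}$ is exactly $1/2$ rather than $3/4$, in which case the pointwise domination $\Phi^{Z_1}\lor\Phi^{Z_2}\le\Phi^{Z_3}$ yields no contradiction with the ML-randomness of $\Phi^{Z_3}$. What you would actually need is that $\Psi_1(R_1)$ is ML-random \emph{relative to} $\Psi_2(R_2)$, and the contradiction hypothesis only supplies unrelativized randomness of the outputs; there is no evident way to relativize it.

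For comparison, the paper's proof avoids measure-theoretic machinery entirely: writing each $\sigma_{f(n)}$ in conjunctive normal form, either infinitely many $n$ have every clause touching (say) the even side, in which case the oracle $\omega\oplus R$ makes $\Phi$ output $1$ on an infinite computable set, or else almost every $n$ has a clause confined to one side, in which case zeroing out that side makes $\Phi$ output $0$ cofinitely often. In both cases the output fails to be immune or is outright computable, hence is not random. If you want to salvage your density approach, the missing ingredient is precisely a proof that the two ``one-sided'' outputs cannot be almost-everywhere correlated, and that appears to require a combinatorial analysis of the tables not unlike the paper's case split.
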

	\begin{proof}
		Let $\Phi$ be a positive reduction. By definition, for each input $n$, $\sigma_{f(n)}$ can be written in conjunctive normal form:
		$\sigma_{f(n)} = \bigwedge_{k=1}^{t_n} \bigvee_{i=1}^{m_k}v_{f(n),i,k}$. We say that a clause of $\sigma_{f(n)}$ is a disjunct $\bigvee_{i=1}^{m_k}v_{f(n),i,k}$. There are two cases to consider:
		
		\noindent\emph{Case 1:} There is a parity such that there are infinitely many $n$ such that every clause of $\sigma_{f(n)}$ contains a variable.

		Without loss of generality, consider the even case. Let $A = \omega\oplus R$ for $R$ an arbitrary random real.
		Each $\bigvee_{i=1}^{m_k}v_{n,i,k}$ that contains an even variable is true.
		So for the infinitely many $n$ whose disjunctions all query an even variable, $\sigma_{f(n)} = \bigwedge_{k=1}^{t_n} \top = \top$.
		As these infinitely many $n$ can be found computably, $\Phi^A$ is not immune, and so not random.

		\noindent\emph{Case 2:} For either parity, for almost all inputs $n$, there is a clause of $\sigma_{f(n)}$ containing only variables of that parity.

		Set $A = R\oplus \emptyset$ for an arbitrary random real $R$. For almost all inputs, some clause is a disjunction of $\bot$, so that the entire conjunction is false.
		Thus $\Phi^A$ is cofinitely often 0, and hence computable, and so not random.
	\end{proof}
	\begin{table}
		\centering
		\begin{tabular}{c|c|c}
			
			Reducibility	&Subscript	&Connectives\\
			\hline
			truth table		&$tt$		&any\\
			bounded $tt$	&$btt$		&any\\
			$btt(1)$		&$btt(1)$	&$\{\lnot\}$\\
			linear			&$\ell$		&$\{+\}$\\
			positive		&$p$		&$\{\land,\lor\}$\\
			conjunctive		&$c$		&$\{\land\}$\\
			disjunctive		&$d$		&$\{\lor\}$\\
			many-one		&$m$		&none\\
			
		\end{tabular}
		\caption{Correspondences between reducibilities and sets in Post's Lattice. Here $+$ is addition mod 2 (also commonly written XOR).
			Note that while a $btt$ reduction can use any connectives, there is a bound $c$ on how many variables each $\sigma_{f(n)}$ can have, hence if $c=1$ the only connective available is $\lnot$.}
		\label{fig:table}
	\end{table}
	\begin{figure}
		\centerline{
		\xymatrix{
			&&d\ar[r]&p\ar[dr]\\
			1\ar[r]&m\ar[ur]\ar[r]\ar[dr]&c\ar[ur]&\ell\ar[r]&tt\ar[r]&T\\
			&&btt(1)\ar[ur]\ar[r]&btt\ar[ur]
		}}
		\caption[Implications between reducibility notions]{\cite{OdiRed} The relationships between reducibilities in \Cref{fig:table}, which themselves are between $\leq_1$ and $\leq_T$.
			Here $x\rightarrow y$ indicates that if two reals $A$ and $B$ enjoy $A\leq_x B$, then also $A\leq_y B$. 
		}
		\label{fig:Degtev}
	\end{figure}
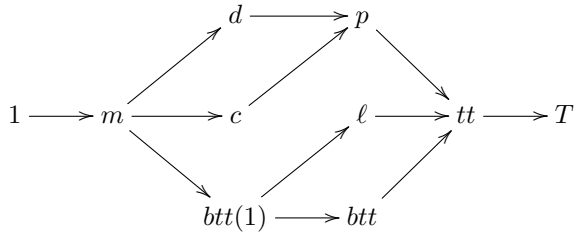
	
	\begin{remark}
		The proof of \Cref{positive} also applies to randomness over $3^{\omega}$ (and beyond).
		To see this, we consider the alphabet $\{0,1,2\}$ and let each $p(j)$ be an identity function and $\vee,\wedge$ be the maximum and minimum under the ordering $0<1<2$.
	\end{remark}
	\subsection{Linear Reducibility}
	\begin{definition}
		A truth-table reduction $\Phi$ is a \textbf{linear} reduction if
		each $\sigma_{f(n)}$ is of the form $\sigma_{f(n)} = \sum_{k=1}^{t_n} v_{f(n),k}$ or $\sigma_{f(n)} = 1+\sum_{k=1}^{t_n} v_{f(n),k} $ where addition is mod 2.
	\end{definition}
	\begin{theorem}\label{linear}
		$\MLR\not\le_{s,\ell}\Either(\MLR)$.
	\end{theorem}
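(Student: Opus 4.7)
The plan is to mirror the structure of the proof of \Cref{positive}, replacing its use of the absorbing elements of $\lor$ and $\land$ with the interaction of XOR with $0$ and the cancellation identity $v + v = 0$. Each clause of a linear reduction can be written uniformly in $n$ as
$$\sigma_{f(n)}(X) = \epsilon_n + \sum_{k \in E_n} X(2k) + \sum_{k \in O_n} X(2k+1) \pmod 2,$$
with $\epsilon_n \in \{0,1\}$ and finite $E_n, O_n \subseteq \omega$ computable from $n$. I would split into three cases based on the behaviour of the sets $E_n$ and $O_n$.

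First, suppose infinitely many $n$ satisfy $E_n = \emptyset$ (or, symmetrically, $O_n = \emptyset$). Let $A$ be ML-random and $B = 0^\omega$, so $A \oplus B \in \Either(\MLR)$; on the infinite computable set $\{n : E_n = \emptyset\}$ we have $\sigma_{f(n)}(A \oplus 0^\omega) = \epsilon_n$, a computable value, so $\Phi^{A \oplus B}$ cannot be ML-random. Otherwise, cofinitely many $n$ have both $E_n, O_n$ nonempty; within this case, suppose also that infinitely many such $n$ satisfy $E_n = O_n$. Then take $A$ ML-random and set $B = A$; on these $n$ the sums over $E_n$ and $O_n$ coincide and cancel mod $2$, so $\sigma_{f(n)}(A \oplus A) = \epsilon_n$, again computable on an infinite computable set. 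In either situation the output agrees with a computable sequence on an infinite computable set of positions, precluding ML-randomness.

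The residual and main-obstacle case is when cofinitely many $n$ have $E_n, O_n$ both nonempty \emph{and} distinct, since then neither zeroing a half nor doubling a random real produces immediate cancellation. The plan here is to take $A$ ML-random and $B$ a computable $\mathbb{F}_2$-linear transform of $A$ — for instance $B(k) = A(\psi(k))$ for a suitable computable permutation $\psi$ — so that $\beta_n(B) = \alpha_n(A)$ on an infinite computable set of $n$, forcing $\sigma_{f(n)} = \epsilon_n$ there. Constructing $B$ itself splits on whether $\bigcup_n O_n$ is infinite or finite: in the infinite case one extracts a system of distinct representatives $k_n \in O_n$ along an infinite computable subsequence and inductively defines $B(k_n)$ so as to zero out $\alpha_n(A) + \beta_n(B)$; in the finite case $\beta_n$ takes only finitely many forms, and iterating the pigeonhole principle (first on $\beta_n$, then on $\alpha_n$) extracts an infinite computable set of $n$ on which $\sigma_{f(n)}$ reduces to the same affine function of a fixed finite set of input bits, once again giving a computable collapse. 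Since $A$ remains ML-random throughout, $A \oplus B \in \Either(\MLR)$ in every sub-case, and $\Phi^{A \oplus B}$ always ends up agreeing with a computable sequence on an infinite computable set, completing the argument.
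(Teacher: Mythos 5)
Your Cases 1 and 2 are correct, but Case 3 --- which you rightly identify as the main obstacle --- contains two steps that fail as written. In sub-case (a) ($\bigcup_n O_n$ infinite), a system of distinct representatives $k_n\in O_n$ is not enough to make the inductive definition of $B$ work: when you set $B(k_{n_i})$ to satisfy $\sum_{k\in O_{n_i}}B(k)=\alpha_{n_i}(A)+\epsilon_{n_i}$, the set $O_{n_i}$ may still contain representatives $k_{n_j}$ with $j>i$ whose values are assigned later, retroactively breaking the equation for $n_i$; and the system genuinely need not be solvable by greedy assignment along an SDR (e.g.\ $O_{n_1}=\{1,2\}$, $O_{n_2}=\{2,3\}$, $O_{n_3}=\{1,3\}$ admits the SDR $1,2,3$, yet the three rows sum to $0$ over $\mathbb{F}_2$, so only special right-hand sides are consistent). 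The missing idea is a stronger freshness discipline: restrict to inputs $n_i$ that query an odd bit \emph{not queried by $\Phi$ on any earlier input}, and modify only the least such bit. This exists infinitely often when $\bigcup_n O_n$ is infinite, and it guarantees both that the flipped bit is free and that no bit of $O_{n_i}$ is ever touched at a later stage, so one flip pins $\Phi^X(n_i)$ permanently (using, as you note, that a linear table is sensitive to each of its variables). In sub-case (b) ($\bigcup_n O_n$ finite) the second pigeonhole is unjustified: the sets $E_n$ need not range over a finite family, so $\alpha_n$ does not take finitely many forms and $\sigma_{f(n)}$ does not collapse to a fixed affine function of finitely many bits. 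That sub-case is in fact the easy one: take $A$ computable and $B$ ML-random; then $\alpha_n(A)$ is computable and $\beta_n(B)$ depends only on the finitely many bits $B\upto \max\bigcup_n O_n$, so $\Phi^{A\oplus B}$ is outright computable and hence not random.

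For comparison, the paper's proof dispenses with the case analysis entirely: either $\Phi$ queries only finitely many distinct oracle bits in total (so $\Phi^X$ is computable for every $X$), or it infinitely often queries a fresh bit, without loss of generality an even one; starting from $\emptyset\oplus R$ with $R$ random and flipping, for each such input $n_i$, the least freshly queried even bit whenever $\Phi^{A_s}(n_i)=0$ forces $\Phi^A(n_i)=1$ on an infinite computable set, so $\Phi^A$ is not immune. Your Cases 1 and 2 are subsumed by this single construction, and your Case 3(a), once repaired with the freshness discipline above, essentially becomes it.
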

	\begin{proof}
		We may assume that $\Phi$ infinitely often queries a bit that it has not queried before (else $\Phi^A$ is always computable).
		Without loss of generality, suppose $\Phi$ infinitely often queries an even bit it has not queried before.
		We construct $A$ in stages, beginning with $A_0 = \emptyset\oplus R$ for $R$ an arbitrary random real.

		For the infinitely many $n_i$ that query an unqueried even bit, let $v_i$ be the least such bit.
		Then at stage $s+1$, set $v_i=1$ if $\Phi^{A_s}(n_i) = 0$.
		Changing a single bit in a linear $\sigma_{f(n_i)}$ changes the output of $\sigma_{f(n_i)}$, so that $\Phi^{A}(n) = \Phi^{A_{s+1}}(n_i) = 1$.

		As these $n_i$ form a computable set, $\Phi^A$ fails to be immune, and so cannot be random.
	\end{proof}
	\subsection{Bounded Truth-Table Reducibility}

	\begin{definition}
		A truth-table reduction $\Phi$ is a \textbf{bounded truth-table} reduction if
		there is a $c$ such that there are most $c$ variables in each $\sigma_{f(n)}$ (in particular we say it is a \textbf{$btt(c)$} reduction).
	\end{definition}
	\begin{theorem}\label{btt}
		$\MLR\not\le_{s,btt}\Either(\MLR)$.
	\end{theorem}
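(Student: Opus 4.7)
The proof will follow the case-analysis pattern of Theorems~\ref{positive} and~\ref{linear}, extended to arbitrary Boolean truth-tables. For each $n$ let $E_n$ and $O_n$ be the sets of even- and odd-indexed positions of $A\oplus B$ queried by $\sigma_{f(n)}$, so $|E_n|+|O_n|\leq c$. The plan is to construct some $A\oplus B\in\Either(\MLR)$ together with an infinite computable set $S\subseteq\omega$ on which $\Phi^{A\oplus B}$ is constant, contradicting bi-immunity and hence ML-randomness.

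The easy cases come first. If infinitely many $n$ have $O_n=\emptyset$, setting $A=\emptyset$ and $B\in\MLR$ makes $\Phi^{A\oplus B}(n)=\sigma_{f(n)}(\vec{0})$ computable in $n$, and pigeonhole yields a monochromatic infinite computable subset; the case $E_n=\emptyset$ is dual. Next, if for some $k\leq c$ and vector $\vec{r}\in\{0,1\}^k$ there are infinitely many $n$ with $|E_n|=k$ and $\sigma_{f(n)}(\vec{r},\cdot)$ constant---or the dual statement with some $\vec{s}$ on the $B$-side---I greedily thin to a sub-sequence with pairwise disjoint query-position sets, hard-code $A$'s (respectively $B$'s) bits at those positions to match $\vec{r}$ (respectively $\vec{s}$), and take the other side to lie in $\MLR$, again producing a constant-valued infinite computable subset.

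The remaining case is when for cofinitely many $n$, every one-sided restriction of $\sigma_{f(n)}$ is non-constant. Since there are only finitely many Boolean truth-table types on at most $c$ variables, pigeonhole fixes a common type $\sigma$ for an infinite computable sub-sequence $\{n_i\}$. Thinning further so that the sets $E_{n_i}$ are pairwise disjoint, I take $B=R\in\MLR$, read off $\beta_i$ as the values of $R$ on the positions of $Q_{n_i}$, and---because $\sigma(\cdot,\beta_i)$ is non-constant---choose $\alpha_i\in\{0,1\}^{|E|}$ with $\sigma(\alpha_i,\beta_i)=0$. Setting the bits of $A$ on $E_{n_i}$ equal to $\alpha_i$ (and $0$ elsewhere) forces $\Phi^{A\oplus B}(n_i)=0$ for every $i$, while $A\oplus B\in\Either(\MLR)$ because $B=R\in\MLR$.

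The main obstacle is the subcase where $\bigcup_i E_{n_i}$ turns out to be finite, so the required disjointness cannot be enforced and only boundedly many patterns of $A$ on those positions are realizable. I would address this by swapping $A$ and $B$'s roles when the dual union $\bigcup_i Q_{n_i}$ is instead infinite, and---if both unions are finite---by a final pigeonhole over the finitely many reductions on finitely many input bits, which directly yields an infinite computable subsequence on which $\Phi^{A\oplus B}$ is constant.
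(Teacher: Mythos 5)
Your overall strategy---case analysis on whether one side's variables can force the table to be constant, plus disjointification of the query sets so that the forcing assignments do not conflict---is the same idea as the paper's proof, and your Case~3 (both one-sided restrictions non-constant, read $\beta_i$ off the random side, choose $\alpha_i$ to force output $0$) matches the paper's final case. But there is a genuine gap in how you handle the failure of disjointification. You identify the obstacle as ``$\bigcup_i E_{n_i}$ is finite,'' but that is not the right dichotomy: an infinite pairwise disjoint subfamily can fail to exist even when the union is infinite. For example, if $Q(n)=\{0\}\cup\{\text{a fresh position depending on }n\}$ for every $n$, the union of the query sets is infinite, no two query sets are disjoint, and yet almost none of the reduction lives on a fixed finite set of input bits---so neither your ``swap the roles of $A$ and $B$'' fix nor your ``pigeonhole over finitely many reductions on finitely many input bits'' fix applies. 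The correct dichotomy is: either the greedy algorithm produces an infinite pairwise disjoint subfamily, or it halts, in which case the finitely many query sets it did collect form a finite \emph{hitting set} $H$ meeting $Q(n)$ for every sufficiently large $n$.

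Handling the hitting-set case is where the real work lies, and it is why the paper's proof is an induction on the bound $c$: hard-coding the bits of $H$ to $0$ (permissible since $\MLR$, hence $\Either(\MLR)$, is closed under finite differences) turns $\Phi$ into a $btt(c-1)$ reduction, because every large $Q(n)$ loses at least one element. One cannot avoid the induction by redoing your case analysis on the residual tables, since the residual family may again fail to disjointify, and without a decreasing parameter the argument does not terminate. So your proposal needs two repairs: replace the ``finite union'' condition by the greedy/hitting-set dichotomy, and add the induction on $c$ (with base case $c=1$ handled by Theorem~\ref{linear}) to dispose of the hitting-set branch. With those changes your argument becomes essentially the paper's.
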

	\begin{proof}
		Suppose that $\Phi$ is a $btt$-reduction from $\Either(\MLR)$ to $\MLR$ and let $c$ be its bound on the number of oracle bits queried.
		We proceed by induction on $c$, working to show that an $X=X_0\oplus X_1$ exists with $X_0$ or $X_1$ ML-random, for which $\Phi^X$ is not bi-immune.

		\noindent\emph{Base for the induction ($c=1$).} As $btt(1)$ reductions are linear, it is enough to appeal to \Cref{linear}.
		But as a warmup for what follows, we shall prove this case directly.
		Let $\Phi$ be a $btt(1)$ reduction. Here $\Phi^X(n)=f_n(X(q(n))$ where $f_n:\{0,1\}\to\{0,1\}$, $q:\omega\to\omega$ is computable, and $\{f_n\}_{n\in\omega}$ is computable.
		(If no bits are queried on input $n$, let $f_n$ be the appropriate constant function.)

		If for infinitely many $n$, $f_n$ is the constant function $1$ or $0$, and the claim is obvious.

		Instead, suppose $f_n$ is only constant finitely often, i.e. $f_n(x) = x$ or $f_n(x) = 1-x$ cofinitely often.
		Without loss of generality, there are infinitely many $n$ such that $q(n)$ is even. Let $X = \emptyset\oplus R$, where $R$ is an arbitrary ML-random set.

		As $X(q(n)) = 0$ and $f(x)$ is either identity or $1-x$ infinitely often,
		there is an infinite computable subset of either $\Phi^X$ or $\overline{\Phi^{X}}$ so $\Phi^X$ is not bi-immune.

		\noindent\emph{Induction step.} Assume the $c-1$ case, and consider a $btt(c)$ reduction $\Phi$.

		Now there are uniformly computable finite sets $Q(n)=\{q_1(n),\dots,q_{d_n}(n)\}$ and Boolean functions $f_n:\{0,1\}^{d_n}\to \{0,1\}$ such that for all $n$,
		$\Phi^X(n)=f_n(X({q_1(n)}),\dots,X(q_{d_n}(n)))$ and $d_n\le c$.

		Consider the greedy algorithm that tries to find a collection of pairwise disjoint $Q(n_i)$ as follows:
		\begin{itemize}
			\item[-] $n_0=0$.
			\item[-] $n_{i+1}$ is the least $n$ such that $Q(n)\cap\bigcup_{k\leq i}Q(n_k) = \emptyset$.
		\end{itemize}

		If this algorithm cannot find an infinite sequence, let $i$ be least such that $n_{i+1}$ is undefined, and define $H = \bigcup_{k\leq i}Q(n_k)$.
		It must be that for $n>n_i$ no intersection $Q(n)\cap H$ is empty.
		Thus there are finitely many bits that are in infinitely many of these intersections, and so are queried infinitely often.
		We will ``hard code" the bits of $H$ as $0$ in a new function $\hat{\Phi}$.

		To that end, define $\hat{Q}(n) = Q(n)\setminus H$, and let $\hat f$ be the function that outputs the same truth tables as $f$, but for all $n\in H$, $v_{n}$ is replaced with $\bot$.
		List the elements of $\hat{Q}$ in increasing order as $\{\hat{q}_1(n), \dots, \hat{q}_{e_n}(n)\}$.
		Now if $X\cap H = \emptyset$, any $q_i(n)\in H$ have $X(q_i(n)) = 0$, so that $\Phi^X = \hat\Phi^X$, as for every $n$,
		\[
			f(X(q_1(n)),\dots X(q_{d_n}(n)))= \hat{f_n}(X(\hat{q}_1(n)),\dots,X(\hat{q}_{e_n}(n)).
		\]

		As $Q$ and the $f_n$ are uniformly computable and $H$ is finite, $\hat{Q}$ and the $\hat{f}_n$ are also uniformly computable.
		As no intersection $Q(n)\cap H$ was empty, $e_n < d_n \leq c$. So $\hat{Q}$ and the $\hat{f}_n$ define a $btt(c-1)$-reduction.
		By the induction hypothesis, there is a real $A\in \Either(\MLR)$ such that $\hat\Phi^A$ is not random.
		$\Either(\MLR)$ is closed under finite differences (as $\MLR$ is), so the set $B = A\setminus H$ witnesses $\Phi^B = \hat\Phi^A$, and $\Phi^B$ is not random as desired.

		This leaves the case where the algorithm enumerates a sequence of pairwise disjoint $Q(n_i)$.

		Say that a collection of bits $C(n)\subseteq Q(n)$ can \emph{control} the computation $\Phi^X(n)$ if
		there is a way to assign the bits in $C_n$ so that $\Phi^X(n)$ is the same no matter what the other bits in $Q(n)$ are.
		For example, $(a \land b)\lor c$ can be controlled by $\{a, b\}$, by setting $a=b=1$.
		Note that if the bits in $C(n)$ are assigned appropriately, $\Phi^X(n)$ is the same regardless of what the rest of $X$ looks like.

		Suppose now that there are infinitely many $n_i$ such that some $C(n_i)$ containing only even bits controls $\Phi^X(n_i)$.
		Collect these $n_i$ into a set $E$. Let $X_1$ be an arbitrary ML-random set.
		As there are infinitely many $n_i$, and it is computable to determine whether an assignment of bits controls $\Phi^X(n)$, $E$ is an infinite computable set.
		For $n\in E$, we can assign the bits in $Q(n)$ to control $\Phi^X(n)$, as the $Q(n)$ are mutually disjoint. Now one of the sets
		\begin{align*}
			\{n\in E\mid \Phi^X(n) = 0\}&&\text{or}&&\{n\in E\mid \Phi^X(n) = 1\}
		\end{align*} is infinite. Both are computable, so in either case $\Phi^X$ is not bi-immune.

		Now suppose that cofinitely many of the $n_i$ cannot be controlled by their even bits. Here let $X_0$ be an arbitrary ML-random set.
		For sufficiently large $n_i$, no matter the values of the even bits in $Q(n_i)$, there is a way to assign the odd bits so that $\Phi^X(n_i) = 1$.
		By pairwise disjointness, we can assign the odd bits of $\bigcup Q(n_i)$ as needed to ensure this, and assign the rest of the odd bits of $X$ however we wish.
		Now the $n_i$ witness the failure of $\Phi^X$ to be immune. 
	\end{proof}

\section{Infinitely Many Hammers}
	It is worth considering direct sums with more than two summands.
	In this new setting, we first prove the analog of Theorem 2 of \cite{KHW} for more than two columns,
	before sketching the modifications necessary to prove analogues of \Cref{positive,linear,btt}.

	Recall that a real $A$ can be written as an infinite direct sum of columns $A^{[i]}$, $A = \bigoplus_{i=0}^\omega A^{[i]}$, where $A^{[i]}= \{n\mid \langle i, n\rangle \in A\}$ for a fixed computable bijection
	$\langle \cdot, \cdot\rangle:\omega^2\rightarrow\omega$.

	\begin{definition}
		For each $\C\subseteq 2^\omega$ and ordinal $\alpha\leq\omega$, define
		\begin{eqnarray*}
		\Some(\C, \alpha) &=& \left\{\bigoplus_{i=0}^\alpha A_i \in 2^\omega\middle| \exists i\ A_i\in\mathcal{C}\right\},
		\\
		\Many(\C) &=& \left\{\bigoplus_{i=0}^\omega A^{[i]} \in 2^\omega\middle| \exists^\infty i\ A^{[i]}\in\mathcal{C}\right\}.
		\end{eqnarray*}
	\end{definition}
\begin{remark}
As written, technically $\displaystyle\bigoplus^n A_i$ is not the same real as $\displaystyle\bigoplus^n A^{[i]}$, but the two are equivalent via a recursive bijection.
\end{remark}
	These represent different ways to generalize $\Either(\C)$ to the infinite setting:
	we may know that some possibly finite number of columns $A^{[i]}$ are in $\C$, or that infinitely many columns are in $\C$. If $\alpha=\omega$, these notions are $m$-equivalent, so we can restrict our attention to $\Some(\MLR, \alpha)$ without loss of generality:

\begin{theorem}[due to Reviewer 2 of \cite{CiE2022}]\label{thm:reviewer-2}
    $\Some(\C, \omega)\equiv_{s,m}\Many(\C)$.
\end{theorem}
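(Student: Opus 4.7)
I will prove the two many-one Medvedev reductions separately by exhibiting computable $f:\omega\to\omega$ and taking the uniform reduction to be $\Phi^X(n) = X(f(n))$, which is exactly a many-one reduction in the sense of the $m$ row of \Cref{fig:table} (no connectives, one variable).

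The reduction $\Many(\C) \leq_{s,m} \Some(\C,\omega)$ is essentially vacuous. Since having infinitely many $\C$-columns implies having at least one, we have the set-theoretic containment $\Many(\C) \subseteq \Some(\C,\omega)$, so the identity $f(n)=n$ witnesses the reduction.

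The content is in the direction $\Some(\C,\omega) \leq_{s,m} \Many(\C)$. The plan is to broadcast each column of the input to infinitely many output columns. Define $f$ by $f(\langle\langle i,k\rangle, j\rangle) = \langle i, j\rangle$, which is computable, and set $\Phi^X(n)=X(f(n))$. Then by construction the $\langle i,k\rangle$-th column of $\Phi^X$ equals $X^{[i]}$ for every $k\in\omega$. If $X \in \Some(\C,\omega)$ with some distinguished column $X^{[i_0]}\in\C$, then every column of $\Phi^X$ of the form $\langle i_0,k\rangle$ (for $k\in\omega$) is a copy of $X^{[i_0]}$ and hence lies in $\C$; in particular $\Phi^X$ has infinitely many $\C$-columns, so $\Phi^X \in \Many(\C)$.

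There is no real obstacle; the whole argument is just writing down the two functions and verifying that they preserve column structure. The only care needed is notational: one must identify the ``direct sum'' indexing $\bigoplus_{i=0}^\omega A_i$ used in the definition of $\Some$ with the column indexing $\bigoplus_{i=0}^\omega A^{[i]}$ used in $\Many$, but both are given by the same fixed pairing $\langle \cdot,\cdot\rangle$ from the preliminaries, so the identification is canonical.
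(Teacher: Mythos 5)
Your proposal is correct and is essentially the paper's own argument: the trivial direction via the containment $\Many(\C)\subseteq\Some(\C,\omega)$, and the substantive direction by broadcasting column $i$ of the input to all output columns indexed $\langle i,k\rangle$, which is exactly the map $\langle\langle i,j\rangle,n\rangle\in A\iff\langle i,n\rangle\in B$ used in the paper. No gaps; the verification that infinitely many output columns land in $\C$ is the same in both.
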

\begin{proof}
    The $\leq_{s, m}$ direction follows from the inclusion $\Many(\C)\subseteq\Some(\C, \omega)$.
    
    For $\geq_{s, m}$,
    let $B\in\Some(\C,\omega)$ and define $A$ by:
    \[\langle \langle i,j\rangle,n\rangle \in A \iff \langle i,n\rangle\in B.\]
    Now $A\le_m B$ by definition. Notice that for all $i$ and $j$, $A^{[\langle i, j\rangle]} = B^{[i]}$.
    As some column $B^{[k]}$ is random, for all $j$, $A^{[\langle k, j\rangle]}\in\MLR$. Thus $A\in\Many(\C)$, so that $\Some(\C, \omega)\geq_{s, m}\Many(\C)$. 
\end{proof}
In the case of $\C = \MLR$, this can be strengthened to a 1-equivalence. 
\begin{lemma}[Corollary 6.9.6 in \cite{DaH}]\label{VanLamCor}
    If $A=\bigoplus_{i=0}^\omega A^{[i]}\in\MLR$, then for all $i$, $A^{[i]}\in\MLR$.
\end{lemma}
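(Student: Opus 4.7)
The plan is to prove the contrapositive by lifting a covering Martin-Löf test on a single column to a covering test on all of $A$. Suppose some column $A^{[i]}$ is not ML-random, witnessed by a Martin-Löf test $\{U_n\}_{n\in\omega}$, i.e.\ a uniformly $\Sigma^0_1$ sequence of classes with $\mu(U_n)\leq 2^{-n}$ and $A^{[i]}\in\bigcap_n U_n$.

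The key step is to define $V_n = \pi_i^{-1}(U_n)$, where $\pi_i: 2^\omega\to 2^\omega$ is the computable projection $\pi_i(X) = X^{[i]}$. I would first check that $\{V_n\}$ is a Martin-Löf test. Uniform effective openness of the $V_n$ is immediate from that of the $U_n$, since $\pi_i$ is computable and maps basic cylinders to basic cylinders (or unions thereof) in a uniformly computable way. Second, because the uniform measure on $2^\omega$ is a product measure and $\pi_i$ picks out an independent subsequence of coordinates, the projection is measure-preserving, so $\mu(V_n) = \mu(U_n) \leq 2^{-n}$.

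Finally, since $A^{[i]}\in U_n$ for all $n$, we get $A\in V_n$ for all $n$, so $A\in\bigcap_n V_n$ and $A$ fails the Martin-Löf test $\{V_n\}$, contradicting $A\in\MLR$. The only mildly subtle point — though hardly an obstacle — is the verification that $\pi_i$ is measure-preserving, which is just the standard Fubini observation for the fair-coin measure on $2^\omega$ under the computable reindexing $k\mapsto \langle i,k\rangle$. Everything else is routine.
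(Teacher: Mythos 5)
Your argument is correct. Note, though, that the paper does not prove this lemma at all: it is quoted directly from Downey--Hirschfeldt (Corollary 6.9.6), where it is obtained as a consequence of van Lambalgen's theorem --- one observes that $A$ is computably isomorphic to $A^{[i]}\oplus\bigl(\bigoplus_{j\neq i}A^{[j]}\bigr)$, that ML-randomness is invariant under computable permutations of $\omega$, and then applies van Lambalgen to conclude that $A^{[i]}$ is random (indeed random relative to the other columns). Your route is the more elementary one: a direct pullback of a covering test along the computable, measure-preserving projection $\pi_i$. It avoids van Lambalgen entirely and proves exactly what the lemma asserts, at the cost of not yielding the stronger relativized conclusion that the textbook version gives. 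The one point worth writing out rather than waving at is the measure computation: take a prefix-free presentation $U_n=\bigcup_j[\sigma_{n,j}]$; since $k\mapsto\langle i,k\rangle$ is injective, $\mu\bigl(\pi_i^{-1}([\sigma_{n,j}])\bigr)=2^{-|\sigma_{n,j}|}$, and subadditivity already gives $\mu(V_n)\leq\sum_j 2^{-|\sigma_{n,j}|}=\mu(U_n)\leq 2^{-n}$, which is all the test condition needs. Everything else in your write-up is sound.
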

\begin{theorem}\label{SomeManyOne}
    $\Some(\MLR, \omega)\equiv_{s,1}\Many(\MLR)$.
\end{theorem}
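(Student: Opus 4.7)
The plan is to strengthen the many-one reduction from Theorem \ref{thm:reviewer-2} to a 1-reduction. One direction is immediate: since $\Many(\MLR) \subseteq \Some(\MLR, \omega)$, the identity map serves as a computable injection witnessing $\Some(\MLR, \omega) \leq_{s,1} \Many(\MLR)$.

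For the converse, I would begin from the observation that the reviewer's many-one map $f(\langle\langle i, j\rangle, n\rangle) = \langle i, n\rangle$ fails to be injective because every value of $j$ is collapsed to the same image. The remedy is to carry the duplicate label $j$ into the bit index of the target column rather than discarding it, by defining
\[
f(\langle\langle i, j\rangle, n\rangle) = \langle i, \langle j, n\rangle \rangle.
\]
Composed of two applications of the Cantor pairing function, $f$ is a computable bijection of $\omega$ with itself, and so in particular is a valid 1-reduction.

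To verify that this $f$ witnesses $\Many(\MLR) \leq_{s,1} \Some(\MLR, \omega)$, let $B \in \Some(\MLR, \omega)$ have random column $B^{[i_0]}$ and set $A = f^{-1}(B)$. Unwinding the definition yields $A^{[\langle i, j\rangle]} = \{n : \langle j, n\rangle \in B^{[i]}\}$, so that $B^{[i_0]} = \bigoplus_{j \in \omega} A^{[\langle i_0, j\rangle]}$ as reals. Applying Lemma \ref{VanLamCor} to this decomposition of the random real $B^{[i_0]}$, each slice $A^{[\langle i_0, j\rangle]}$ is itself ML-random, supplying the infinitely many random columns needed to conclude $A \in \Many(\MLR)$. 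The only real obstacle is spotting that one can remember $j$ by pairing it with $n$ in the target; once this is done, van Lambalgen preservation handles the rest without further randomness-theoretic machinery.
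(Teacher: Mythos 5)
Your proposal is correct and follows essentially the same route as the paper: both define a computable bijection of indices arranged so that $A^{[\langle i,j\rangle]} = \bigl(B^{[i]}\bigr)^{[j]}$, and then apply \Cref{VanLamCor} to the random column $B^{[i_0]}$ to obtain infinitely many random columns of $A$. The only difference is cosmetic (your map $\langle\langle i,j\rangle,n\rangle\mapsto\langle i,\langle j,n\rangle\rangle$ versus the paper's indexing), so nothing further is needed.
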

\begin{proof}
    Again, $\leq_{s, 1}$ follows from subset inclusion.
    
    For $\geq_{s, 1}$, let $B\in\Some(\MLR,\omega)$ and define $A$ by:
    \[\langle \langle i, j\rangle, n \rangle \in A \iff \langle \langle n, j\rangle, i\rangle\in B.\]
    Again, $A\leq_1 B$ by definition. Now for all $i$ and $j$, $A^{[\langle i, j\rangle]} = \left(B^{[i]}\right)^{[j]}$.
    Some column $B^{[k]}$ is random, so by \Cref{VanLamCor}, its columns $\left(B^{[k]}\right)^{[j]}$ are random for all $j$. Thus for that $k$ and every $j$, $A^{[\langle k, j\rangle]}$ is random. Finally $A\in\Many(\C)$ and $\Some(\MLR, \omega)\geq_{s, 1}\Many(\MLR)$.
\end{proof}
\begin{remark}
\Cref{thm:reviewer-2} can be improved to $\equiv_1$ for any $\mathcal{C}\subseteq 2^\omega$ that satisfies the following: for all $D\in\Delta^0_1$ and $A\oplus_D B\in\mathcal{C}$, $A\in\mathcal{C}$. This is one direction of van Lambalgen's theorem \cite{vL} (the so-called `easy' direction --- see \cite{VanLam} for more discussion of this in the context of randomness notions).
\end{remark}

\subsection{Truth-Table Reducibility}
	Recall that a real $A$ is Martin-L\"of random iff there is a positive constant $c$ (the randomness deficiency) so that for any $n$, $K(A_i\upto n)\geq n-c$).
	Let $K_s(\sigma)$ be a computable, non-increasing approximation of $K(\sigma)$ at stages $s\in\omega$.

	\begin{theorem}\label{ttSome}
		For all ordinals $\alpha\leq\omega$, $\MLR\leq_{s,tt}  \Some(\MLR, \alpha)$.
	\end{theorem}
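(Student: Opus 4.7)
My plan is to prove this by induction on $\alpha$, with the finite and infinite cases handled somewhat differently. For $\alpha = 1$, the identity reduction suffices since $\Some(\MLR, 1) = \MLR$; for $\alpha = 2$, the result is exactly Theorem~\ref{ref-cie}. For the inductive step at finite $\alpha \geq 3$, I would peel off columns two at a time: given $A = A_0 \oplus A_1 \oplus \cdots \oplus A_{\alpha-1}$ with some $A_i \in \MLR$, apply the truth-table Either reduction $\Phi$ from Theorem~\ref{ref-cie} to the first two columns and form the real $\Phi(A_0 \oplus A_1) \oplus A_2 \oplus \cdots \oplus A_{\alpha-1}$. If the random column is $A_0$ or $A_1$ then the output of $\Phi$ is random, and otherwise the random $A_i$ survives untouched, so in either case the resulting real lies in $\Some(\MLR, \alpha-1)$. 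Composing with the truth-table reduction $\Some(\MLR, \alpha-1) \to \MLR$ provided by the induction hypothesis gives a truth-table reduction $\Some(\MLR, \alpha) \to \MLR$, since truth-table reductions are closed under finite composition.

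For $\alpha = \omega$ the straightforward iteration fails because an $\omega$-fold composition is no longer truth-table (the use for a single output bit would have to consult unboundedly many columns). My proposed approach is to build the output in blocks, letting block $k$ be computed by the finite reduction $\Psi_k : \Some(\MLR, k) \to \MLR$ from the previous paragraph applied to $A^{[0]} \oplus \cdots \oplus A^{[k-1]}$. This gives a truth-table reduction because bit $n$ of the output, lying in block $k(n)$ for some computable $k(n)$, depends only on finitely many bits of the first $k(n)$ columns. An alternative route is to first invoke Theorem~\ref{SomeManyOne} to replace the input by a member of $\Many(\MLR)$, in which infinitely many columns are random, and then exploit this density.

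The main obstacle is arranging the block structure so that the resulting $R$ is genuinely Martin-Löf random, not merely eventually built from randoms. If $A^{[k_0]}$ is the first random column, then blocks $k < k_0$ are produced by $\Psi_k$ running on non-random inputs and may be any computable string, so a naive concatenation could yield an initial segment of very low Kolmogorov complexity and fail $K(R \upharpoonright n) \geq n - c$. To avoid this I would choose the block lengths (and the way later blocks are produced) so that the randomness deficiency of the overall output can be bounded by a constant depending only on the deficiency of the true random column $A^{[k_0]}$; for instance, by making each later block strictly dominate the combined length of all earlier blocks (in the spirit of the segment constructions of Chapter~\ref{ch:dim}) and applying the subadditivity of prefix-free complexity to control the contribution of the junk prefix. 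Verifying that this yields $K(R \upharpoonright n) \geq n - c$ uniformly in $n$, while keeping the use function genuinely computable, is the delicate part of the argument.
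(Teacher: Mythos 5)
Your finite-$\alpha$ argument is correct and genuinely different from the paper's: you iterate the $\Either$ reduction of \Cref{ref-cie}, collapsing two columns into one and composing $\alpha-1$ truth-table reductions, whereas the paper gives a single direct construction that works uniformly for all $\alpha\leq\omega$ at once (output bit $s$ of a ``current'' column, and switch columns and increment a deficiency threshold $c_s$ whenever $K_{s+1}(A_i\upto n)<n-c_s$ is observed; a map $\pi$ with infinite preimages guarantees the true random column is revisited with ever larger thresholds, so switching eventually stops and the output is a \emph{finite variant of one fixed random column}). Your composition scheme is a clean alternative for finite $\alpha$, but it buys you nothing at $\omega$, which is exactly where your proof is not complete.

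The gap is in the $\alpha=\omega$ case, and it is not merely a matter of delicate bookkeeping: the block-concatenation plan attacks the problem from the wrong side of an inequality. Prefix-free complexity is subadditive, not superadditive, so even if every block $B_k$ ($k\geq k_0$) is a long segment of a random real, there is no lower bound of the form $K(B_{k_0}\cdots B_{k})\geq \sum|B_j|-c$; concatenations of segments of \emph{distinct} random reals need not be random (e.g.\ splicing a segment of a random $X$ with a segment of a shift of $X$ produces a prefix containing the same random block twice, hence compressible). Your blocks are segments of the different reals $\Psi_k(A^{[0]}\oplus\cdots\oplus A^{[k-1]})$, which are highly correlated with one another since they are built from overlapping inputs by nested compositions of the same $\Either$ reduction, so such coincidences cannot be ruled out; and the deficiency constant of $\Psi_k(\cdot)$ degrades with each composition, so the per-block guarantee weakens as $k\to\infty$. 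Growing the block lengths does not repair this, because the prefixes $R\upto n$ that must satisfy $K(R\upto n)\geq n-c$ include those ending just inside block $k$ for every $k$. The route through \Cref{SomeManyOne} also does not by itself produce a single random output. The fix is to abandon concatenation of outputs from different reductions and instead arrange, as the paper does, that all but finitely many bits of the output are copied verbatim from a single random column, so that randomness follows from closure of $\MLR$ under finite differences rather than from any complexity calculation.
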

	\begin{proof}
		Given a set $A=\bigoplus_{i=0}^\alpha A_i$, we start by outputting bits from $A_0$,
		switching to the next $A_i$ whenever we notice that the smallest possible randomness deficiency increases.
		This constant $c$ depends on $s$ and changes at stage $s+1$ if
		\begin{equation}\label{Kolcond}
			(\exists n\le s+1)\quad K_{s+1}(A_i\upto n)<n-c_s.
		\end{equation}
		In detail, fix a map $\pi:\omega\rightarrow\alpha$ so that for all $y$, the preimage $\pi^{-1}(\{y\})$ is infinite. Let $n(0) = 0$, and if \Cref{Kolcond} occurs at stage $s$, set $n(s+1) = n(s) + 1$, otherwise $n(s+1) = n(s)$. Finally, define $A(s) = A_{\pi(n(s))}(s)$.
		
		As some $A_i$ is in $\MLR$, switching will only occur finitely often. So there is an stage $s$ such that for all larger $t$, $A(t) = A_i(t)$. Thus our output will have an infinite tail that is ML-random, and hence will itself be ML-random.

		To guarantee that this is a truth-table reduction, we must check that this procedure always halts, so that the reduction is total.\footnote{This is not the definition usually used in this section, but instead \Cref{tttotal}. As mentioned in \Cref{sec:omegace}, it is equivalent to \Cref{def:tt}.}
		But this is immediate, as \Cref{Kolcond} is computable for all $s\in\omega$ and $A_i\in 2^\omega$.
	\end{proof}
	
	\subsection{Positive Reducibility}
	We say that a variable is from a certain column if its index codes a location in that column, i.e. $n_k$ is from $A_i$ if $k = \langle i, n\rangle$ for some $n$.

	\begin{theorem}\label{positiveSome} For all $\alpha\leq\omega$, $\MLR\not\leq_{s, p} \Some(\MLR, \alpha)$.
	\end{theorem}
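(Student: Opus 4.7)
The plan is to adapt the dichotomy from the proof of \Cref{positive} to the multi-column setting. Put each truth table $\sigma_{f(n)}$ into conjunctive normal form $\sigma_{f(n)} = \bigwedge_{k=1}^{t_n}\bigvee_{i=1}^{m_k}v_{n,i,k}$, and split into cases according to how the variables appearing in the clauses are distributed among the $\alpha$ columns. Say that a variable $v_{n,i,k}$ lies \emph{in column $j$} if the bit it queries has the form $\langle j, m\rangle$ for some $m$.

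First I would handle the ``saturated column'' case: suppose there is some column index $j$ and infinitely many $n$ such that every clause of $\sigma_{f(n)}$ contains at least one variable from column $j$. Define $A = \bigoplus_{i<\alpha}A_i$ by taking $A_j = \omega$ (identically $1$), $A_{j'} = R$ for some chosen $j'\neq j$ where $R$ is an arbitrary Martin-L\"of random, and $A_i = \emptyset$ for all other $i$. Then at each of those infinitely many $n$, every clause is satisfied by a $1$ from column $j$, so $\sigma_{f(n)} = \top$ and $\Phi^A(n) = 1$. This set of $n$ is computable, so $\Phi^A$ is not immune and therefore not random, while $A\in \Some(\MLR,\alpha)$ via column $j'$.

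In the complementary case, the negation of the above asserts that for every column $j$, cofinitely many $n$ have some clause of $\sigma_{f(n)}$ with no variable from column $j$. Pick any column $k$, define $A$ by setting $A_k = R$ for an arbitrary ML-random $R$ and $A_i = \emptyset$ for all $i\neq k$. Applying the hypothesis to $j=k$, for cofinitely many $n$ there is a clause whose variables lie entirely in columns other than $k$, hence whose variables are all $0$. This makes the clause false, so $\sigma_{f(n)} = \bot$ and $\Phi^A(n) = 0$. Thus $\Phi^A$ is cofinitely $0$, so computable, and again not random, while $A\in\Some(\MLR,\alpha)$ via column $k$.

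There is no serious obstacle here; the argument is a direct generalization of \Cref{positive}, and the positivity of $\Phi$ does all the work — a single $1$ forces a clause true, and an all-$0$ clause forces the conjunction false. The only subtlety worth flagging is that Case~1 requires a column distinct from $j$ on which to place $R$, which is why one must assume $\alpha\geq 2$; for $\alpha=1$ the statement is trivially false since $\Some(\MLR,1) = \MLR$ and the identity is a positive reduction.
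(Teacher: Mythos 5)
Your proof is correct and follows essentially the same route as the paper's: the identical dichotomy on whether some single column contributes a variable to every clause of infinitely many tables (set that column to $\omega$, put the random elsewhere) versus its negation (set one column random, the rest empty, and use the all-zero clause to force $\bot$ cofinitely often). The only quibble is your closing remark: under the paper's indexing $\bigoplus_{i=0}^{1}A_i = A_0\oplus A_1$, so $\Some(\MLR,1)$ is $\Either(\MLR)$ and is covered by the two-column theorem; the genuinely degenerate single-column case is $\alpha=0$, not $\alpha=1$.
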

	\begin{proof}
		Let $\Phi^X$ be a positive reduction. Assume each $\sigma_{f(n)}$ is written in conjunctive normal form. We sketch the necessary changes to the proof of \Cref{positive}:

		\noindent\emph{Case 1:} There is an $i$ such that there are infinitely many $n$ such that every clause of $\sigma_f(n)$ contains a variable from $A_i$.

		Without loss of generality, let that column be $A_0=\omega$. The remaining $A_i$ can be arbitrary, as long as one of them is random.

		\noindent\emph{Case 2:} For all $i$, for almost all $n$, there is a clause in $\sigma_f(n)$ that contains no variables from $A_i$.

		In particular this holds for $i=0$, so let $A_0\in\MLR$ and the remaining $A_i=\emptyset$.
	\end{proof}

\subsection{Linear Reducibility}

	\begin{theorem}\label{linearSome}
	For all $\alpha\leq\omega$, $\MLR\not\leq_{s, \ell}\Some(\MLR, \alpha)$.
	\end{theorem}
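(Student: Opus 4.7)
The plan is to extend the argument of \Cref{linear} to the multi-column setting by carefully choosing which column will act as the protected Martin-L\"of random column. Given a linear reduction $\Phi$, the goal is to construct $A = \bigoplus_{i<\alpha} A_i \in \Some(\MLR, \alpha)$ with $\Phi^A \notin \MLR$, by arranging that $\Phi^A(n) = 1$ on some infinite computable set $N$.

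First, as in \Cref{linear}, one may assume $\Phi$ queries infinitely many distinct oracle bits in total, else $\Phi^A$ depends on only finitely many bits of $A$ and is computable for every $A$. Equivalently, there are infinitely many inputs $n$ at which $\sigma_{f(n)}$ queries some bit not appearing in any $\sigma_{f(m)}$ with $m < n$; call such an $n$ a \emph{fresh} input and the least such bit its \emph{witness}. The key new step is to pick a column $i^*$ so that infinitely many fresh inputs have their witness outside column $A_{i^*}$. If some column $j$ hosts the witnesses of infinitely many fresh inputs, use $\alpha \geq 2$ to pick any $i^* \neq j$. Otherwise every column hosts only finitely many witnesses, and having infinitely many fresh inputs then forces $\alpha = \omega$; set $i^* = 0$, so that only finitely many witnesses lie in column $0$ and infinitely many lie outside it.

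Next, fix any Martin-L\"of random $R$, set $A_{i^*} = R$, and build the remaining columns in stages. At stage $n$, if $\sigma_{f(n)}$ contains any bit in a column $\neq i^*$ that is still uncommitted, declare $n \in N$ and commit all such bits simultaneously to values that make $\sigma_{f(n)}$ evaluate to $1$ --- this is possible since one may fix all but one of them to $0$ and then solve for the last, using that flipping any single variable in a linear formula flips its value. Set any bits outside $A_{i^*}$ still uncommitted at the end of the construction to $0$. Then $N$ is computable because the construction is deterministic and the tables $\sigma_{f(n)}$ are uniform, and $N$ is infinite because every fresh input whose witness lies outside $A_{i^*}$ lies in $N$ (its witness is uncommitted by freshness and in a permitted column by the choice of $i^*$). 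Hence $A \in \Some(\MLR, \alpha)$ since $A_{i^*} = R$, while $\Phi^A(n) = 1$ for all $n \in N$, so $\Phi^A$ is not immune and thus not Martin-L\"of random.

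The main obstacle is the case analysis selecting $i^*$: when no single column absorbs infinitely many witnesses, one must use the conclusion $\alpha = \omega$ to still find a safe column. Once $i^*$ is in hand, the stagewise construction is a direct extension of the argument of \Cref{linear}.
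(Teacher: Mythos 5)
Your proof is correct and follows essentially the same route as the paper's: reduce to the single-column diagonalization of \Cref{linear} by choosing one column to keep Martin-L\"of random, with a case analysis on whether some single column absorbs infinitely many fresh queries (the failure of which forces $\alpha=\omega$). The only difference is that where the paper handles that second case by a separate argument (setting all queried bits to $0$, which makes $\Phi^X$ outright computable while preserving the randomness of column $0$), you fold it into the same diagonalization by observing that the fresh witness bits need only lie outside the protected column $i^*$ rather than inside one fixed column; both are valid.
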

	\begin{proof}
		We may assume that $\Phi$ infinitely often queries a bit it has not queried before (else $\Phi^A$ is always computable).
		If there is an $i$ such that $\Phi$ infinitely often queries a bit of $A_i$ it has not queried before,
		the stage construction from \Cref{linear} can be carried out with $A_i$ standing in for $A_0$, and some other $A_j\in\MLR$.

		That case always occurs for $\alpha<\omega$, but may not when $\alpha = \omega$.
		That is, it may the the case that $\Phi$ only queries finitely many bits of each $A_i$.
		Letting each $A_i$ be random, these bits may be set to $0$ without affecting the randomness of any given column, so we could set $A_0\in\MLR$ while other $A_i=\emptyset$.
	\end{proof}

\subsection{Bounded Truth-Table Reducibility}

	As $btt(1)$ reductions are linear, Theorem \ref{linearSome} provides the base case for induction arguments in the vein of \Cref{btt}.
	So we can focus our attention on the induction step:
	\begin{theorem}
		For all $\alpha\leq\omega$, $\MLR\not\leq_{s, btt}\Some(\MLR, \alpha)$.\footnote{This statement of the theorem corrects a typographical error in \cite{CiE2022}.}
	\end{theorem}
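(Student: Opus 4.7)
The plan is to induct on the $btt$-bound $c$, mimicking the two-column proof of Theorem \ref{btt}. The base case $c=1$ is immediate from Theorem \ref{linearSome}, since every $btt(1)$-reduction is linear. For the induction step, fix a $btt(c)$-reduction $\Phi$ with uniformly computable query sets $Q(n)$ of size $d_n\le c$ and Boolean tables $f_n$, and run the greedy algorithm of Theorem \ref{btt} to extract a maximal pairwise disjoint subsequence $Q(n_0),Q(n_1),\dots$.

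\textbf{If the greedy halts} at stage $i$, then $H=\bigcup_{k\le i}Q(n_k)$ is finite and every $Q(n)$ with $n>n_i$ meets $H$. Hard-code $H$-bits to $0$ by setting $\hat Q(n)=Q(n)\setminus H$ and replacing every variable indexed by $H$ with $\bot$ in $f_n$; then $|\hat Q(n)|\le c-1$ for large $n$, producing a $btt(c-1)$-reduction $\hat\Phi$ modulo finitely many exceptions. The induction hypothesis supplies $A\in\Some(\MLR,\alpha)$ with $\hat\Phi^A\notin\MLR$, and since $\MLR$ (and hence $\Some(\MLR,\alpha)$) is closed under finite modification, $B=A\setminus H\in\Some(\MLR,\alpha)$ satisfies $\Phi^B=\hat\Phi^A\notin\MLR$, as required.

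\textbf{If the greedy produces an infinite pairwise disjoint sequence}, the plan is to dichotomize on the notion of control from Theorem \ref{btt}: a subset $C\subseteq Q(n)$ \emph{controls} $\Phi^X(n)$ if some assignment of $C$ fixes $\Phi^X(n)$ regardless of the remaining bits of $Q(n)$. Subcase~A: some column index $i$ admits infinitely many $n_{j_k}$ such that $Q(n_{j_k})\cap A_i$ controls $\Phi^X(n_{j_k})$. Pick any column index $i'\neq i$ (possible for $\alpha\ge 2$), set $A_{i'}$ to an arbitrary ML-random real, and use pairwise disjointness to assign the controlling column-$i$ bits at each $n_{j_k}$, forcing a uniformly computable value $v_k\in\{0,1\}$. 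Pigeonholing on the computable sequence $\{n_{j_k}\}$ yields an infinite computable subset of $\Phi^X$ or $\overline{\Phi^X}$, so $\Phi^X$ is not bi-immune. Subcase~B (complementary): for every column $i$, only finitely many $n_j$ are controlled by $Q(n_j)\cap A_i$. Pick column $0$, set $A_0$ to an arbitrary ML-random real, and note that for cofinitely many $n_j$, no assignment of $Q(n_j)\cap A_0$ determines $\Phi^X(n_j)$; hence for the actual assignment inherited from $A_0$, there is an assignment of the non-$A_0$ bits in $Q(n_j)$ forcing $\Phi^X(n_j)=1$. Pairwise disjointness lets these be realized simultaneously, so $\Phi^X$ is cofinitely $1$ and not immune.

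The main obstacle relative to the two-column case is phrasing the control dichotomy correctly when $\alpha=\omega$: although each $Q(n)$ may involve up to $c$ columns drawn from an infinite pool, single-column control remains the right dichotomy, because only finitely many columns appear in any one $Q(n)$ and failure of control by a set of bits always yields both output values via the complementary bits — exactly the leverage needed in Subcase~B. Once this is in place, the argument is a transcription of the two-column proof, and for $\alpha\ge 2$ there is always a spare column available to serve either as the random witness (Subcase~A) or as the locus of forcing (Subcase~B).
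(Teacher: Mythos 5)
Your proposal is correct and follows essentially the same route as the paper's proof: induction on the bound $c$ with \Cref{linearSome} as the base case, the hard-coding argument when the greedy algorithm halts, and the per-column control dichotomy (some column controls infinitely many $n_i$, versus every column controls only finitely many, in which case only the $A_0$ instance is needed) when it produces an infinite disjoint sequence. The only cosmetic slip is the phrase ``$\Phi^X$ is cofinitely $1$'' in Subcase~B --- it is $1$ on cofinitely many of the $n_j$, hence contains an infinite computable set and fails to be immune, which is the conclusion you draw anyway.
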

	\begin{proof}
		In the induction step, the case where the greedy algorithm fails is unchanged. Instead, consider the case where the algorithm enumerates a sequence of pairwise disjoint $Q(n_i)$.
		If there is a column $A_j$ such that there are infinitely many $n_i$ such that some $C(n_i)$ containing only bits from $A_j$ controls $\Phi^X(n)$, then we proceed as in \Cref{btt}:
		start with some other $A_k\in\MLR$ while the remaining columns are empty.
		We can then set the bits in each $Q(n_i)$ to control $\Phi^X(n_i)$ to guarantee that $\Phi^X$ is not bi-immune.
		This only changes bits in $A_j$, not $A_k$, so the final $A\in\Some(\MLR,\alpha)$.

		This leaves the case where for each $A_j$, cofinitely many of the $n_i$ cannot be controlled by their bits in $A_j$.
		Here put $A_0\in\MLR$ and assign bits to the other columns as in \Cref{btt}.
	\end{proof}


\nocite{*}
\bibliographystyle{plain}
\bibliography{dissertation}


\end{document}